\documentclass[10pt,reqno,oneside]{amsproc} \title[{On the pointwise Schauder estimates for elliptic equations}]{On the pointwise Schauder estimates for elliptic equations} \author[I.~Kukavica]{Igor Kukavica} \address{Department of Mathematics, University of Southern California, Los Angeles, CA 90089} \email{kukavica@usc.edu} \author[Q.~Le]{Quinn Le} \address{Department of Mathematics, University of Southern California, Los Angeles, CA 90089} \email{ntle@usc.edu}   \chardef\forshowkeys=0   \chardef\refcheck=0   \chardef\showllabel=0   \chardef\sketches=0 \usepackage{enumitem} \usepackage{fancyhdr} \usepackage{comment} \ifnum\forshowkeys=1      \usepackage[notref,notcite,color]{showkeys} \fi \usepackage[margin=1in]{geometry} \usepackage{amsmath, amsthm, amssymb} \usepackage{times} \usepackage{graphicx} \usepackage[usenames,dvipsnames,svgnames,table]{xcolor} \usepackage{marginnote} \usepackage[unicode,breaklinks=true,colorlinks=true,linkcolor=blue,urlcolor=blue,citecolor=blue]{hyperref} \usepackage{tikz}  \ifnum\refcheck=1   \usepackage{refcheck} \fi \begin{document} \baselineskip=4.5truemm \def\FF{\tilde F} \def\PP{\mathcal{P}} \def\dPP{\dot{\mathcal{P}}} \def\dQQ{\dot{\mathcal{Q}}} \def\LL{\mathcal L} \def\qd{q_{\textrm D}^{}} \def\AA{C_2} \def\BB{(C_1+C_2)} \def\pp{p} \def\qq{q} \def\MM{M} \def\KK{K} \def\CC{C} \def\uu{\tilde u } \def\vv{\tilde v } \def\ww{\tilde w } \def\ggg{u} \def\UU{\tilde U} \def\eps{\epsilon} \def\QQ{{\bar Q}} \def\cc{c} \def\RR{\mathbb R} \def\TT{\mathbb T} \def\ZZ{\mathbb Z} \def\erf{\mathrm{Erf}} \def\red#1{\textcolor{red}{#1}} \def\blue#1{\textcolor{blue}{#1}} \def\mgt#1{\textcolor{magenta}{#1}} \def\ppsi{(u - P)} \def\les{\lesssim} \def\ges{\gtrsim} \renewcommand*{\Re}{\ensuremath{\mathrm{{\mathbb R}e\,}}} \renewcommand*{\Im}{\ensuremath{\mathrm{{\mathbb I}m\,}}} \ifnum\showllabel=1  \def\llabel#1{\marginnote{\color{lightgray}\rm\small(#1)}[-0.0cm]\notag} \else  \def\llabel#1{\notag} \fi \newcommand{\norm}[1]{\left\|#1\right\|} \newcommand{\nnorm}[1]{\lVert #1\rVert} \newcommand{\abs}[1]{\left|#1\right|} \newcommand{\NORM}[1]{|\!|\!| #1|\!|\!|} \newtheorem{Theorem}{Theorem}[section] \newtheorem{Corollary}[Theorem]{Corollary} \newtheorem{Definition}[Theorem]{Definition} \newtheorem{Proposition}[Theorem]{Proposition} \newtheorem{Lemma}[Theorem]{Lemma} \newtheorem{Remark}[Theorem]{Remark} \def\theequation{\thesection.\arabic{equation}} \numberwithin{equation}{section} \definecolor{myblue}{rgb}{.8, .8, 1} \newlength\mytemplen \newsavebox\mytempbox \makeatletter \makeatother \def\biglinem{\vskip0.5truecm\par==========================\par\vskip0.5truecm} \def\inon#1{\hbox{\quad{} }\hbox{#1}}                 \def\and{\text{\indeq and\indeq}} \def\onon#1{\quad{}\text{on~$#1$}} \def\inin#1{\quad{}\text{in~$#1$}} \def\startnewsection#1#2{\section{#1}\label{#2}\setcounter{equation}{0}}    \def\sgn{\mathop{\rm sgn\,}\nolimits}     \def\Tr{\mathop{\rm Tr}\nolimits}     \def\div{\mathop{\rm div}\nolimits} \def\curl{\mathop{\rm curl}\nolimits} \def\dist{\mathop{\rm dist}\nolimits}   \def\sp{\mathop{\rm sp}\nolimits}   \def\supp{\mathop{\rm supp}\nolimits} \def\indeq{\quad{}}            \def\colr{\color{red}} \def\colb{\color{black}} \def\coly{\color{lightgray}} \definecolor{colorgggg}{rgb}{0.1,0.5,0.3} \definecolor{colorllll}{rgb}{0.0,0.7,0.0} \definecolor{colorhhhh}{rgb}{0.3,0.75,0.4} \definecolor{colorpppp}{rgb}{0.7,0.0,0.2} \definecolor{coloroooo}{rgb}{0.45,0.0,0.0} \definecolor{colorqqqq}{rgb}{0.1,0.7,0} \def\colg{\color{colorgggg}} \def\collg{\color{colorllll}} \def\cole{\color{coloroooo}} \def\coleo{\color{colorpppp}} \def\cole{} \def\colu{\color{blue}} \def\colc{\color{colorhhhh}} \def\colW{\colb}    \definecolor{coloraaaa}{rgb}{0.6,0.6,0.6} \def\colw{\color{coloraaaa}} \def\comma{ {\rm ,\qquad{}} }             \def\commaone{ {\rm ,\quad{}} }           \def\nts#1{{\color{red}\hbox{\bf ~#1~}}}  \def\ntsf#1{\footnote{\color{colorgggg}\hbox{#1}}}  \def\blackdot{{\color{red}{\hskip-.0truecm\rule[-1mm]{4mm}{4mm}\hskip.2truecm}}\hskip-.3truecm} \def\bluedot{{\color{blue}{\hskip-.0truecm\rule[-1mm]{4mm}{4mm}\hskip.2truecm}}\hskip-.3truecm} \def\purpledot{{\color{colorpppp}{\hskip-.0truecm\rule[-1mm]{4mm}{4mm}\hskip.2truecm}}\hskip-.3truecm} \def\greendot{{\color{colorgggg}{\hskip-.0truecm\rule[-1mm]{4mm}{4mm}\hskip.2truecm}}\hskip-.3truecm} \def\cyandot{{\color{cyan}{\hskip-.0truecm\rule[-1mm]{4mm}{4mm}\hskip.2truecm}}\hskip-.3truecm} \def\reddot{{\color{red}{\hskip-.0truecm\rule[-1mm]{4mm}{4mm}\hskip.2truecm}}\hskip-.3truecm} \def\tdot{{\color{green}{\hskip-.0truecm\rule[-.5mm]{2mm}{4mm}\hskip.2truecm}}\hskip-.2truecm}
\def\gdot{\greendot} \def\bdot{\bluedot} \def\ydot{\cyandot} \def\zxvczxbcvdfghasdfrtsdafasdfasdfdsfgsdgf{\int}\def\zxvczxbcvdfghasdfrtsdafasdfasdfdsfgsdgh{\Vert} \def\rdot{\cyandot} \def\fractext#1#2{{#1}/{#2}} \def\quinn#1{\text{{\textcolor{colorqqqq}{#1}}}} \def\igor#1{\footnote{\text{{\textcolor{colorqqqq}{#1}}}}} \begin{abstract} We consider the pointwise in space $L^p$-type regularity for elliptic and parabolic equations of order $m$ in $\mathbb{R}^{n}$. We provide pointwise Schauder estimates for the general range of $L^{p}$ exponents, extending previous results from $p>n/m$ to $1<p<n/m$. \end{abstract}
\maketitle \setcounter{tocdepth}{2}  \tableofcontents \colb \startnewsection{Introduction}{sec01} We address Schauder estimates for elliptic equations      \begin{equation}       Lu        \equiv \sum_{0\leq \nu\leq m}^m a_{\nu}(x) \partial^{\nu} u        = f        \comma       x \in B_1     \label{8ThswELzXU3X7Ebd1KdZ7v1rN3GiirRXGKWK099ovBM0FDJCvkopYNQ2aN94Z7k0UnUKamE3OjU8DFYFFokbSI2J9V9gVlM8ALWThDPnPu3EL7HPD2VDaZTggzcCCmbvc70qqPcC9mt60ogcrTiA3HEjwTK8ymKeuJMc4q6dVz200XnYUtLR9GYjPXvFOVr6W1zUK1WbPToaWJJuKnxBLnd0ftDEbMmj4loHYyhZyMjM91zQS4p7z8eKa9h0JrbacekcirexG0z4n3xz0QOWSvFj3jLhWXUIU21iIAwJtI3RbWa90I7rzAIqI3UElUJG7tLtUXzw4KQNETvXzqWaujEMenYlNIzLGxgB3AuJEQ01}     \end{equation} of an arbitrary order~$m$, where $f \in L^p(B_1)$ and all $p \in [ 1, \infty)$. In particular, we determine the range of $q$ such that we obtain the vanishing order of solution $u$ of \eqref{8ThswELzXU3X7Ebd1KdZ7v1rN3GiirRXGKWK099ovBM0FDJCvkopYNQ2aN94Z7k0UnUKamE3OjU8DFYFFokbSI2J9V9gVlM8ALWThDPnPu3EL7HPD2VDaZTggzcCCmbvc70qqPcC9mt60ogcrTiA3HEjwTK8ymKeuJMc4q6dVz200XnYUtLR9GYjPXvFOVr6W1zUK1WbPToaWJJuKnxBLnd0ftDEbMmj4loHYyhZyMjM91zQS4p7z8eKa9h0JrbacekcirexG0z4n3xz0QOWSvFj3jLhWXUIU21iIAwJtI3RbWa90I7rzAIqI3UElUJG7tLtUXzw4KQNETvXzqWaujEMenYlNIzLGxgB3AuJEQ01} in $L^q$ by approximating  with polynomials. Additionally, we derive the pointwise Schauder estimate of the form      \begin{equation}       \lbrack u \rbrack_{C^{d,\alpha}_{L^q}(0)}       \les \zxvczxbcvdfghasdfrtsdafasdfasdfdsfgsdgh u \zxvczxbcvdfghasdfrtsdafasdfasdfdsfgsdgh_{L^p(B_1)}              + \zxvczxbcvdfghasdfrtsdafasdfasdfdsfgsdgh f \zxvczxbcvdfghasdfrtsdafasdfasdfdsfgsdgh_{L^p(B_1)}              + \lbrack f \rbrack_{C^{d-m,\alpha}_{L^p}(0)}       ,    \llabel{8Th sw ELzX U3X7 Ebd1Kd Z7 v 1rN 3Gi irR XG KWK0 99ov BM0FDJ Cv k opY NQ2 aN9 4Z 7k0U nUKa mE3OjU 8D F YFF okb SI2 J9 V9gV lM8A LWThDP nP u 3EL 7HP D2V Da ZTgg zcCC mbvc70 qq P cC9 mt6 0og cr TiA3 HEjw TK8ymK eu J Mc4 q6d Vz2 00 XnYU tLR9 GYjPXv FO V r6W 1zU K1W bP ToaW JJuK nxBLnd 0f t DEb Mmj 4lo HY yhZy MjM9 1zQS4p 7z 8 eKa 9h0 Jrb ac ekci rexG 0z4n3x z0 Q OWS vFj 3jL hW XUIU 21iI AwJtI3 Rb W a90 I7r zAI qI 3UEl UJG7 tLtUXz w4 K QNE TvX zqW au jEMe nYlN IzLGxg B3 A uJ8 6VS 6Rc PJ 8OXW w8im tcKZEz Ho p 84G 1gS As0 PC owMI 2fLK TdD608ThswELzXU3X7Ebd1KdZ7v1rN3GiirRXGKWK099ovBM0FDJCvkopYNQ2aN94Z7k0UnUKamE3OjU8DFYFFokbSI2J9V9gVlM8ALWThDPnPu3EL7HPD2VDaZTggzcCCmbvc70qqPcC9mt60ogcrTiA3HEjwTK8ymKeuJMc4q6dVz200XnYUtLR9GYjPXvFOVr6W1zUK1WbPToaWJJuKnxBLnd0ftDEbMmj4loHYyhZyMjM91zQS4p7z8eKa9h0JrbacekcirexG0z4n3xz0QOWSvFj3jLhWXUIU21iIAwJtI3RbWa90I7rzAIqI3UElUJG7tLtUXzw4KQNETvXzqWaujEMenYlNIzLGxgB3AuJEQ02}     \end{equation} where      \begin{equation}       \lbrack u \rbrack_{C^{d,\alpha}_{L^p}(0)}       = \inf_{P \in \mathbb P_d} \sup_{0<r<1}          \frac{\zxvczxbcvdfghasdfrtsdafasdfasdfdsfgsdgh u-P \zxvczxbcvdfghasdfrtsdafasdfasdfdsfgsdgh_{L^p(B_r)}}{r^{d+\alpha+n/p}}    \llabel{ mt6 0og cr TiA3 HEjw TK8ymK eu J Mc4 q6d Vz2 00 XnYU tLR9 GYjPXv FO V r6W 1zU K1W bP ToaW JJuK nxBLnd 0f t DEb Mmj 4lo HY yhZy MjM9 1zQS4p 7z 8 eKa 9h0 Jrb ac ekci rexG 0z4n3x z0 Q OWS vFj 3jL hW XUIU 21iI AwJtI3 Rb W a90 I7r zAI qI 3UEl UJG7 tLtUXz w4 K QNE TvX zqW au jEMe nYlN IzLGxg B3 A uJ8 6VS 6Rc PJ 8OXW w8im tcKZEz Ho p 84G 1gS As0 PC owMI 2fLK TdD60y nH g 7lk NFj JLq Oo Qvfk fZBN G3o1Dg Cn 9 hyU h5V SP5 z6 1qvQ wceU dVJJsB vX D G4E LHQ HIa PT bMTr sLsm tXGyOB 7p 2 Os4 3US bq5 ik 4Lin 769O TkUxmp I8 u GYn fBK bYI 9A QzCF w3h0 8ThswELzXU3X7Ebd1KdZ7v1rN3GiirRXGKWK099ovBM0FDJCvkopYNQ2aN94Z7k0UnUKamE3OjU8DFYFFokbSI2J9V9gVlM8ALWThDPnPu3EL7HPD2VDaZTggzcCCmbvc70qqPcC9mt60ogcrTiA3HEjwTK8ymKeuJMc4q6dVz200XnYUtLR9GYjPXvFOVr6W1zUK1WbPToaWJJuKnxBLnd0ftDEbMmj4loHYyhZyMjM91zQS4p7z8eKa9h0JrbacekcirexG0z4n3xz0QOWSvFj3jLhWXUIU21iIAwJtI3RbWa90I7rzAIqI3UElUJG7tLtUXzw4KQNETvXzqWaujEMenYlNIzLGxgB3AuJEQ03}     \end{equation} denotes a pointwise continuity norm~\cite{H1}. The employed method also works for a parabolic equation with  maintaining the minimal requirements for $d$, $m$, $p$, and the coefficients $a_{\nu}(x)$ from the elliptic case, as shown in the last section. \par Schauder estimates are a cornerstone of the regularity theory for elliptic and parabolic equations. For the classical theory of pointwise Schauder estimates for the second order elliptic equations, see the classical textbook~\cite{GT}. In \cite{B}, Bers considered an elliptic equation of a general order and established the existence of an asymptotic polynomial at a point where the solution vanishes.  He namely determined the behavior of solution near $0$ with an estimate for error term, i.e, $u(x) = P_d(x) + O(|x|^{d+\eps})$ where $P_d$ being a polynomial of degree $d$ and $\eps$ being H\"older exponent of the leading coefficients of~$L$.  This in a sense generalizes the existence of a Taylor polynomial to solutions of elliptic equations with rough coefficients (the leading coefficient is only H\"older). An asymptotic polynomial for solutions of the second order parabolic equations was obtained by Alessandrini and Vessella in~\cite{AV}, where they proved that  a solution $u$ either has a zero of infinite order or $u$ can be approximated by some polynomial $P$ of order $d$ such that they can estimate $u-P$ in $L^{\infty}(B_r)$ for small~$r>0$.  The Schauder estimates in $L^{q}$ (rather then pointwise) were obtained by Han in  \cite{H1} in the elliptic and in \cite{H2} for the parabolic case. He namely  derived an estimation for $\zxvczxbcvdfghasdfrtsdafasdfasdfdsfgsdgh u - P \zxvczxbcvdfghasdfrtsdafasdfasdfdsfgsdgh_{L^{\infty}(B_r)}$ under the restriction $p > m/n$.  Note that this restriction is natural for the purpose of Schauder estimates due to the Sobolev embedding $W^{m,p}\hookrightarrow L^{\infty}$ under this condition. We refer the reader to \cite{Br,DK1,DK2,L,M,T} for some other works on the Schauder estimates for elliptic and to \cite{CK,D,S,W} for parabolic type equations. \par In this paper, we address the remaining range $p < m/n$ and  introduce an interval of $q$ where we can estimate $\zxvczxbcvdfghasdfrtsdafasdfasdfdsfgsdgh u - P\zxvczxbcvdfghasdfrtsdafasdfasdfdsfgsdgh_{L^q(B_r)}$  for a suitable asymptotic polynomial. Our results are in correspondence with Han's result when $p$ approaches $m/n$, in which the end point of the interval for $q$ goes to infinity.  We obtain explicit estimates for $\zxvczxbcvdfghasdfrtsdafasdfasdfdsfgsdgh u - P\zxvczxbcvdfghasdfrtsdafasdfasdfdsfgsdgh_{L^q(B_r)}$ and $\zxvczxbcvdfghasdfrtsdafasdfasdfdsfgsdgh D^i(u-P) \zxvczxbcvdfghasdfrtsdafasdfasdfdsfgsdgh_{L^q(B_r)}$ where $1 \leq q < np/(n-mp)$ and $P$ is a homogeneous polynomial. We also obtain an upper bound for $\lbrack u \rbrack_{C^{d+l,\alpha}_{L^q}(0)}$ where the Schauder pointwise norm      \begin{equation}       \lbrack u \rbrack_{C^{d,\alpha}_{L^p}(0)}        = \inf_{P \in \mathbb P_d} \sup_{0<r<1}          \frac{\zxvczxbcvdfghasdfrtsdafasdfasdfdsfgsdgh u-P \zxvczxbcvdfghasdfrtsdafasdfasdfdsfgsdgh_{L^p(B_r)}}{r^{d+\alpha+n/p}}    \llabel{Q OWS vFj 3jL hW XUIU 21iI AwJtI3 Rb W a90 I7r zAI qI 3UEl UJG7 tLtUXz w4 K QNE TvX zqW au jEMe nYlN IzLGxg B3 A uJ8 6VS 6Rc PJ 8OXW w8im tcKZEz Ho p 84G 1gS As0 PC owMI 2fLK TdD60y nH g 7lk NFj JLq Oo Qvfk fZBN G3o1Dg Cn 9 hyU h5V SP5 z6 1qvQ wceU dVJJsB vX D G4E LHQ HIa PT bMTr sLsm tXGyOB 7p 2 Os4 3US bq5 ik 4Lin 769O TkUxmp I8 u GYn fBK bYI 9A QzCF w3h0 geJftZ ZK U 74r Yle ajm km ZJdi TGHO OaSt1N nl B 7Y7 h0y oWJ ry rVrT zHO8 2S7oub QA W x9d z2X YWB e5 Kf3A LsUF vqgtM2 O2 I dim rjZ 7RN 28 4KGY trVa WW4nTZ XV b RVo Q77 hVL X6 K2kq 8ThswELzXU3X7Ebd1KdZ7v1rN3GiirRXGKWK099ovBM0FDJCvkopYNQ2aN94Z7k0UnUKamE3OjU8DFYFFokbSI2J9V9gVlM8ALWThDPnPu3EL7HPD2VDaZTggzcCCmbvc70qqPcC9mt60ogcrTiA3HEjwTK8ymKeuJMc4q6dVz200XnYUtLR9GYjPXvFOVr6W1zUK1WbPToaWJJuKnxBLnd0ftDEbMmj4loHYyhZyMjM91zQS4p7z8eKa9h0JrbacekcirexG0z4n3xz0QOWSvFj3jLhWXUIU21iIAwJtI3RbWa90I7rzAIqI3UElUJG7tLtUXzw4KQNETvXzqWaujEMenYlNIzLGxgB3AuJEQ04}     \end{equation} was introduced in \cite{H1}; note that, by translation, we may always restrict the analysis to the point~$0$ and thus all the statements apply to an arbitrary point in space. For elliptic case, we obtain      \begin{equation}       \lbrack u \rbrack_{C^{d+l,\alpha}_{L^q}(0)}             \les \zxvczxbcvdfghasdfrtsdafasdfasdfdsfgsdgh u \zxvczxbcvdfghasdfrtsdafasdfasdfdsfgsdgh_{L^p(B_1)}             + \lbrack f \rbrack_{C^{d-m}_{L^p}}(0)              + \lbrack f \rbrack_{C^{d-m+l,\alpha}_{L^p}(0)}             ,    \llabel{y nH g 7lk NFj JLq Oo Qvfk fZBN G3o1Dg Cn 9 hyU h5V SP5 z6 1qvQ wceU dVJJsB vX D G4E LHQ HIa PT bMTr sLsm tXGyOB 7p 2 Os4 3US bq5 ik 4Lin 769O TkUxmp I8 u GYn fBK bYI 9A QzCF w3h0 geJftZ ZK U 74r Yle ajm km ZJdi TGHO OaSt1N nl B 7Y7 h0y oWJ ry rVrT zHO8 2S7oub QA W x9d z2X YWB e5 Kf3A LsUF vqgtM2 O2 I dim rjZ 7RN 28 4KGY trVa WW4nTZ XV b RVo Q77 hVL X6 K2kq FWFm aZnsF9 Ch p 8Kx rsc SGP iS tVXB J3xZ cD5IP4 Fu 9 Lcd TR2 Vwb cL DlGK 1ro3 EEyqEA zw 6 sKe Eg2 sFf jz MtrZ 9kbd xNw66c xf t lzD GZh xQA WQ KkSX jqmm rEpNuG 6P y loq 8hH lSf Ma 8ThswELzXU3X7Ebd1KdZ7v1rN3GiirRXGKWK099ovBM0FDJCvkopYNQ2aN94Z7k0UnUKamE3OjU8DFYFFokbSI2J9V9gVlM8ALWThDPnPu3EL7HPD2VDaZTggzcCCmbvc70qqPcC9mt60ogcrTiA3HEjwTK8ymKeuJMc4q6dVz200XnYUtLR9GYjPXvFOVr6W1zUK1WbPToaWJJuKnxBLnd0ftDEbMmj4loHYyhZyMjM91zQS4p7z8eKa9h0JrbacekcirexG0z4n3xz0QOWSvFj3jLhWXUIU21iIAwJtI3RbWa90I7rzAIqI3UElUJG7tLtUXzw4KQNETvXzqWaujEMenYlNIzLGxgB3AuJEQ05}     \end{equation} while for the for parabolic case     \begin{equation}     \sum_{i=d}^{d+1} \lbrack u \rbrack_{C^i_{L^q}(0)}       + \lbrack u \rbrack_{C^{d+l,\alpha}_{L^q}(0)}       \les \zxvczxbcvdfghasdfrtsdafasdfasdfdsfgsdgh u \zxvczxbcvdfghasdfrtsdafasdfasdfdsfgsdgh_{L^p(Q_1)}             + \sum_{i=d-m}^{d-m+l} \lbrack f \rbrack_{C^i_{L^p}(0)}             + \lbrack f \rbrack_{C^{d-m+l+\alpha}_{L^p}(0)}     ,    \llabel{geJftZ ZK U 74r Yle ajm km ZJdi TGHO OaSt1N nl B 7Y7 h0y oWJ ry rVrT zHO8 2S7oub QA W x9d z2X YWB e5 Kf3A LsUF vqgtM2 O2 I dim rjZ 7RN 28 4KGY trVa WW4nTZ XV b RVo Q77 hVL X6 K2kq FWFm aZnsF9 Ch p 8Kx rsc SGP iS tVXB J3xZ cD5IP4 Fu 9 Lcd TR2 Vwb cL DlGK 1ro3 EEyqEA zw 6 sKe Eg2 sFf jz MtrZ 9kbd xNw66c xf t lzD GZh xQA WQ KkSX jqmm rEpNuG 6P y loq 8hH lSf Ma LXm5 RzEX W4Y1Bq ib 3 UOh Yw9 5h6 f6 o8kw 6frZ wg6fIy XP n ae1 TQJ Mt2 TT fWWf jJrX ilpYGr Ul Q 4uM 7Ds p0r Vg 3gIE mQOz TFh9LA KO 8 csQ u6m h25 r8 WqRI DZWg SYkWDu lL 8 Gpt ZW1 0G8ThswELzXU3X7Ebd1KdZ7v1rN3GiirRXGKWK099ovBM0FDJCvkopYNQ2aN94Z7k0UnUKamE3OjU8DFYFFokbSI2J9V9gVlM8ALWThDPnPu3EL7HPD2VDaZTggzcCCmbvc70qqPcC9mt60ogcrTiA3HEjwTK8ymKeuJMc4q6dVz200XnYUtLR9GYjPXvFOVr6W1zUK1WbPToaWJJuKnxBLnd0ftDEbMmj4loHYyhZyMjM91zQS4p7z8eKa9h0JrbacekcirexG0z4n3xz0QOWSvFj3jLhWXUIU21iIAwJtI3RbWa90I7rzAIqI3UElUJG7tLtUXzw4KQNETvXzqWaujEMenYlNIzLGxgB3AuJEQ06}     \end{equation} where the implicit constants does not depend on $u$ or~$f$. In our approach we use many ideas from the works \cite{H1,H2}, which in turn draw from \cite{B} and~\cite{AV}. Throughout this paper, we use several results on the bounds on the fundamental solutions. For elliptic equation, we use the upper bound established by Bers in~\cite{B} (for the second order  parabolic equation see~\cite{AV}) as well as results on homogeneity of polynomials.  \par The paper is structured as follows. In Section~\ref{sec02}, we provide the setup for the problem, introduce the terminology that we use throughout, and state two main theorems on Schauder estimates for elliptic equations. Section~\ref{sec03} contains the proofs of the elliptic theorems from Section~\ref{sec02}. In Section~\ref{sec04}, we provide an analogous results for parabolic equations and state the necessary modifications of the proofs from Section~\ref{sec03}. \par \startnewsection{The main results}{sec02} We derive the Schauder estimate for the general order elliptic equation    \begin{equation}    Lu = f    ,    \label{8ThswELzXU3X7Ebd1KdZ7v1rN3GiirRXGKWK099ovBM0FDJCvkopYNQ2aN94Z7k0UnUKamE3OjU8DFYFFokbSI2J9V9gVlM8ALWThDPnPu3EL7HPD2VDaZTggzcCCmbvc70qqPcC9mt60ogcrTiA3HEjwTK8ymKeuJMc4q6dVz200XnYUtLR9GYjPXvFOVr6W1zUK1WbPToaWJJuKnxBLnd0ftDEbMmj4loHYyhZyMjM91zQS4p7z8eKa9h0JrbacekcirexG0z4n3xz0QOWSvFj3jLhWXUIU21iIAwJtI3RbWa90I7rzAIqI3UElUJG7tLtUXzw4KQNETvXzqWaujEMenYlNIzLGxgB3AuJEQ07}   \end{equation}  where $L$ is an $m$-th order homogeneous elliptic linear operator in $B_1 \subseteq \mathbb R^n$ given by   \begin{equation}       L = \sum_{|\nu| \leq m} a_{\nu}(x) \partial^{\nu}       ,     \label{8ThswELzXU3X7Ebd1KdZ7v1rN3GiirRXGKWK099ovBM0FDJCvkopYNQ2aN94Z7k0UnUKamE3OjU8DFYFFokbSI2J9V9gVlM8ALWThDPnPu3EL7HPD2VDaZTggzcCCmbvc70qqPcC9mt60ogcrTiA3HEjwTK8ymKeuJMc4q6dVz200XnYUtLR9GYjPXvFOVr6W1zUK1WbPToaWJJuKnxBLnd0ftDEbMmj4loHYyhZyMjM91zQS4p7z8eKa9h0JrbacekcirexG0z4n3xz0QOWSvFj3jLhWXUIU21iIAwJtI3RbWa90I7rzAIqI3UElUJG7tLtUXzw4KQNETvXzqWaujEMenYlNIzLGxgB3AuJEQ08}   \end{equation} where we assume   $    m<n    .   $ (The case $m\geq n$ is covered by the results in~\cite{H1}.) Suppose that the coefficients of $L$ satisfy the ellipticity condition     \begin{equation}       \sum_{|\nu| = m} a_{\nu}(x) \xi^{\nu}        \geq \frac{1}{K}       \comma \xi \in \mathbb \partial B_1       \commaone x \in B_1       ,     \label{8ThswELzXU3X7Ebd1KdZ7v1rN3GiirRXGKWK099ovBM0FDJCvkopYNQ2aN94Z7k0UnUKamE3OjU8DFYFFokbSI2J9V9gVlM8ALWThDPnPu3EL7HPD2VDaZTggzcCCmbvc70qqPcC9mt60ogcrTiA3HEjwTK8ymKeuJMc4q6dVz200XnYUtLR9GYjPXvFOVr6W1zUK1WbPToaWJJuKnxBLnd0ftDEbMmj4loHYyhZyMjM91zQS4p7z8eKa9h0JrbacekcirexG0z4n3xz0QOWSvFj3jLhWXUIU21iIAwJtI3RbWa90I7rzAIqI3UElUJG7tLtUXzw4KQNETvXzqWaujEMenYlNIzLGxgB3AuJEQ09}     \end{equation} the boundedness     \begin{equation}       \sum_{|\nu| \leq m} |a_{\nu}(x)|        \leq K       \comma x \in B_1       ,     \label{8ThswELzXU3X7Ebd1KdZ7v1rN3GiirRXGKWK099ovBM0FDJCvkopYNQ2aN94Z7k0UnUKamE3OjU8DFYFFokbSI2J9V9gVlM8ALWThDPnPu3EL7HPD2VDaZTggzcCCmbvc70qqPcC9mt60ogcrTiA3HEjwTK8ymKeuJMc4q6dVz200XnYUtLR9GYjPXvFOVr6W1zUK1WbPToaWJJuKnxBLnd0ftDEbMmj4loHYyhZyMjM91zQS4p7z8eKa9h0JrbacekcirexG0z4n3xz0QOWSvFj3jLhWXUIU21iIAwJtI3RbWa90I7rzAIqI3UElUJG7tLtUXzw4KQNETvXzqWaujEMenYlNIzLGxgB3AuJEQ10}     \end{equation} and the H\"older continuity of the leading coefficients      \begin{equation}       \sum_{|\nu| = m} |a_{\nu}(x) - a_{\nu} (0)|        \leq K |x|^{\alpha}        \comma  x \in B_1       ,     \label{8ThswELzXU3X7Ebd1KdZ7v1rN3GiirRXGKWK099ovBM0FDJCvkopYNQ2aN94Z7k0UnUKamE3OjU8DFYFFokbSI2J9V9gVlM8ALWThDPnPu3EL7HPD2VDaZTggzcCCmbvc70qqPcC9mt60ogcrTiA3HEjwTK8ymKeuJMc4q6dVz200XnYUtLR9GYjPXvFOVr6W1zUK1WbPToaWJJuKnxBLnd0ftDEbMmj4loHYyhZyMjM91zQS4p7z8eKa9h0JrbacekcirexG0z4n3xz0QOWSvFj3jLhWXUIU21iIAwJtI3RbWa90I7rzAIqI3UElUJG7tLtUXzw4KQNETvXzqWaujEMenYlNIzLGxgB3AuJEQ11}     \end{equation} for some positive constants $K$ and $\alpha \in (0,1)$.
\par In the sequel, $C$ denotes a positive constant, which may change from line to line and is allowed to depend on the space dimension~$n$ and the order of the differential operator~$m$. We also allow all implicit constants to depend on $K$ and $\alpha$ without mention. We write $a\les b$ when $a\leq C b$ for some constant~$C$. Denote by $\mathcal{P}_d$ the set of polynomials in $n$ variables of degree at most $d$ and $\dot{\mathcal{P}}_d$ the set of homogeneous polynomials of degree exactly $d$, with an addition of the zero polynomial.  \par \begin{Definition} \label{D01} Let $u\in L^p(B_1)$ for $p \in \lbrack 1, \infty \rbrack$. For an integer $d \geq 1$, we say that $u \in C^d_{L^p}(0)$ if there exists $P\in \PP_{d-1}$ satisfying     \begin{equation}       \sup_{r \leq 1}\frac{\zxvczxbcvdfghasdfrtsdafasdfasdfdsfgsdgh u - P \zxvczxbcvdfghasdfrtsdafasdfasdfdsfgsdgh_{L^p(B_r)}}{r^{d+n/p}}       < \infty       .    \llabel{FWFm aZnsF9 Ch p 8Kx rsc SGP iS tVXB J3xZ cD5IP4 Fu 9 Lcd TR2 Vwb cL DlGK 1ro3 EEyqEA zw 6 sKe Eg2 sFf jz MtrZ 9kbd xNw66c xf t lzD GZh xQA WQ KkSX jqmm rEpNuG 6P y loq 8hH lSf Ma LXm5 RzEX W4Y1Bq ib 3 UOh Yw9 5h6 f6 o8kw 6frZ wg6fIy XP n ae1 TQJ Mt2 TT fWWf jJrX ilpYGr Ul Q 4uM 7Ds p0r Vg 3gIE mQOz TFh9LA KO 8 csQ u6m h25 r8 WqRI DZWg SYkWDu lL 8 Gpt ZW1 0Gd SY FUXL zyQZ hVZMn9 am P 9aE Wzk au0 6d ZghM ym3R jfdePG ln 8 s7x HYC IV9 Hw Ka6v EjH5 J8Ipr7 Nk C xWR 84T Wnq s0 fsiP qGgs Id1fs5 3A T 71q RIc zPX 77 Si23 GirL 9MQZ4F pi g dru N8ThswELzXU3X7Ebd1KdZ7v1rN3GiirRXGKWK099ovBM0FDJCvkopYNQ2aN94Z7k0UnUKamE3OjU8DFYFFokbSI2J9V9gVlM8ALWThDPnPu3EL7HPD2VDaZTggzcCCmbvc70qqPcC9mt60ogcrTiA3HEjwTK8ymKeuJMc4q6dVz200XnYUtLR9GYjPXvFOVr6W1zUK1WbPToaWJJuKnxBLnd0ftDEbMmj4loHYyhZyMjM91zQS4p7z8eKa9h0JrbacekcirexG0z4n3xz0QOWSvFj3jLhWXUIU21iIAwJtI3RbWa90I7rzAIqI3UElUJG7tLtUXzw4KQNETvXzqWaujEMenYlNIzLGxgB3AuJEQ19}     \end{equation} For $\alpha \in (0,1)$, we say that $u \in C^{d,\alpha}_{L^p}(0)$ if there exists $P\in \mathcal{P}_{d}$ such that     \begin{equation}       \sup_{r \leq 1}\frac{\zxvczxbcvdfghasdfrtsdafasdfasdfdsfgsdgh u - P \zxvczxbcvdfghasdfrtsdafasdfasdfdsfgsdgh_{L^p(B_r)}}{r^{d+\alpha+n/p}}       < \infty       .    \llabel{LXm5 RzEX W4Y1Bq ib 3 UOh Yw9 5h6 f6 o8kw 6frZ wg6fIy XP n ae1 TQJ Mt2 TT fWWf jJrX ilpYGr Ul Q 4uM 7Ds p0r Vg 3gIE mQOz TFh9LA KO 8 csQ u6m h25 r8 WqRI DZWg SYkWDu lL 8 Gpt ZW1 0Gd SY FUXL zyQZ hVZMn9 am P 9aE Wzk au0 6d ZghM ym3R jfdePG ln 8 s7x HYC IV9 Hw Ka6v EjH5 J8Ipr7 Nk C xWR 84T Wnq s0 fsiP qGgs Id1fs5 3A T 71q RIc zPX 77 Si23 GirL 9MQZ4F pi g dru NYt h1K 4M Zilv rRk6 B4W5B8 Id 3 Xq9 nhx EN4 P6 ipZl a2UQ Qx8mda g7 r VD3 zdD rhB vk LDJo tKyV 5IrmyJ R5 e txS 1cv EsY xG zj2T rfSR myZo4L m5 D mqN iZd acg GQ 0KRw QKGX g9o8v8 wm B 8ThswELzXU3X7Ebd1KdZ7v1rN3GiirRXGKWK099ovBM0FDJCvkopYNQ2aN94Z7k0UnUKamE3OjU8DFYFFokbSI2J9V9gVlM8ALWThDPnPu3EL7HPD2VDaZTggzcCCmbvc70qqPcC9mt60ogcrTiA3HEjwTK8ymKeuJMc4q6dVz200XnYUtLR9GYjPXvFOVr6W1zUK1WbPToaWJJuKnxBLnd0ftDEbMmj4loHYyhZyMjM91zQS4p7z8eKa9h0JrbacekcirexG0z4n3xz0QOWSvFj3jLhWXUIU21iIAwJtI3RbWa90I7rzAIqI3UElUJG7tLtUXzw4KQNETvXzqWaujEMenYlNIzLGxgB3AuJEQ20}     \end{equation} We also introduce the corresponding semi-norms as     \begin{equation}       [ u ]_{C^d_{L^p}(0)}       = \inf_{P\in\PP_{d-1}} \sup_{0<r<1} \frac{\zxvczxbcvdfghasdfrtsdafasdfasdfdsfgsdgh u - P \zxvczxbcvdfghasdfrtsdafasdfasdfdsfgsdgh_{L^p(B_r)}}{r^{d+n/p}}    \llabel{d SY FUXL zyQZ hVZMn9 am P 9aE Wzk au0 6d ZghM ym3R jfdePG ln 8 s7x HYC IV9 Hw Ka6v EjH5 J8Ipr7 Nk C xWR 84T Wnq s0 fsiP qGgs Id1fs5 3A T 71q RIc zPX 77 Si23 GirL 9MQZ4F pi g dru NYt h1K 4M Zilv rRk6 B4W5B8 Id 3 Xq9 nhx EN4 P6 ipZl a2UQ Qx8mda g7 r VD3 zdD rhB vk LDJo tKyV 5IrmyJ R5 e txS 1cv EsY xG zj2T rfSR myZo4L m5 D mqN iZd acg GQ 0KRw QKGX g9o8v8 wm B fUu tCO cKc zz kx4U fhuA a8pYzW Vq 9 Sp6 CmA cZL Mx ceBX Dwug sjWuii Gl v JDb 08h BOV C1 pni6 4TTq Opzezq ZB J y5o KS8 BhH sd nKkH gnZl UCm7j0 Iv Y jQE 7JN 9fd ED ddys 3y1x 52pbiG 8ThswELzXU3X7Ebd1KdZ7v1rN3GiirRXGKWK099ovBM0FDJCvkopYNQ2aN94Z7k0UnUKamE3OjU8DFYFFokbSI2J9V9gVlM8ALWThDPnPu3EL7HPD2VDaZTggzcCCmbvc70qqPcC9mt60ogcrTiA3HEjwTK8ymKeuJMc4q6dVz200XnYUtLR9GYjPXvFOVr6W1zUK1WbPToaWJJuKnxBLnd0ftDEbMmj4loHYyhZyMjM91zQS4p7z8eKa9h0JrbacekcirexG0z4n3xz0QOWSvFj3jLhWXUIU21iIAwJtI3RbWa90I7rzAIqI3UElUJG7tLtUXzw4KQNETvXzqWaujEMenYlNIzLGxgB3AuJEQ21}     \end{equation} and    \begin{equation}       \lbrack u \rbrack_{C^{d,\alpha}_{L^p}(0) }      = \inf_{P\in\PP_{d}} \sup_{0<r<1} \frac{\zxvczxbcvdfghasdfrtsdafasdfasdfdsfgsdgh u - P \zxvczxbcvdfghasdfrtsdafasdfasdfdsfgsdgh_{L^p(B_r)}}{r^{d+\alpha +n/p}}      .    \llabel{Yt h1K 4M Zilv rRk6 B4W5B8 Id 3 Xq9 nhx EN4 P6 ipZl a2UQ Qx8mda g7 r VD3 zdD rhB vk LDJo tKyV 5IrmyJ R5 e txS 1cv EsY xG zj2T rfSR myZo4L m5 D mqN iZd acg GQ 0KRw QKGX g9o8v8 wm B fUu tCO cKc zz kx4U fhuA a8pYzW Vq 9 Sp6 CmA cZL Mx ceBX Dwug sjWuii Gl v JDb 08h BOV C1 pni6 4TTq Opzezq ZB J y5o KS8 BhH sd nKkH gnZl UCm7j0 Iv Y jQE 7JN 9fd ED ddys 3y1x 52pbiG Lc a 71j G3e uli Ce uzv2 R40Q 50JZUB uK d U3m May 0uo S7 ulWD h7qG 2FKw2T JX z BES 2Jk Q4U Dy 4aJ2 IXs4 RNH41s py T GNh hk0 w5Z C8 B3nU Bp9p 8eLKh8 UO 4 fMq Y6w lcA GM xCHt vlOx Mq8ThswELzXU3X7Ebd1KdZ7v1rN3GiirRXGKWK099ovBM0FDJCvkopYNQ2aN94Z7k0UnUKamE3OjU8DFYFFokbSI2J9V9gVlM8ALWThDPnPu3EL7HPD2VDaZTggzcCCmbvc70qqPcC9mt60ogcrTiA3HEjwTK8ymKeuJMc4q6dVz200XnYUtLR9GYjPXvFOVr6W1zUK1WbPToaWJJuKnxBLnd0ftDEbMmj4loHYyhZyMjM91zQS4p7z8eKa9h0JrbacekcirexG0z4n3xz0QOWSvFj3jLhWXUIU21iIAwJtI3RbWa90I7rzAIqI3UElUJG7tLtUXzw4KQNETvXzqWaujEMenYlNIzLGxgB3AuJEQ22}   \end{equation} \end{Definition} \par The following is the main result of this paper; it provides a Schauder estimate for the elliptic equation~\eqref{8ThswELzXU3X7Ebd1KdZ7v1rN3GiirRXGKWK099ovBM0FDJCvkopYNQ2aN94Z7k0UnUKamE3OjU8DFYFFokbSI2J9V9gVlM8ALWThDPnPu3EL7HPD2VDaZTggzcCCmbvc70qqPcC9mt60ogcrTiA3HEjwTK8ymKeuJMc4q6dVz200XnYUtLR9GYjPXvFOVr6W1zUK1WbPToaWJJuKnxBLnd0ftDEbMmj4loHYyhZyMjM91zQS4p7z8eKa9h0JrbacekcirexG0z4n3xz0QOWSvFj3jLhWXUIU21iIAwJtI3RbWa90I7rzAIqI3UElUJG7tLtUXzw4KQNETvXzqWaujEMenYlNIzLGxgB3AuJEQ07}. See Section~\ref{sec04} for the analogous results for parabolic type equations. \par \cole \begin{Theorem} \label{T02} Let $u \in W^{m,p}$, where   \begin{equation}    1 < p < \frac{n}{m}    ,    \label{8ThswELzXU3X7Ebd1KdZ7v1rN3GiirRXGKWK099ovBM0FDJCvkopYNQ2aN94Z7k0UnUKamE3OjU8DFYFFokbSI2J9V9gVlM8ALWThDPnPu3EL7HPD2VDaZTggzcCCmbvc70qqPcC9mt60ogcrTiA3HEjwTK8ymKeuJMc4q6dVz200XnYUtLR9GYjPXvFOVr6W1zUK1WbPToaWJJuKnxBLnd0ftDEbMmj4loHYyhZyMjM91zQS4p7z8eKa9h0JrbacekcirexG0z4n3xz0QOWSvFj3jLhWXUIU21iIAwJtI3RbWa90I7rzAIqI3UElUJG7tLtUXzw4KQNETvXzqWaujEMenYlNIzLGxgB3AuJEQ13}   \end{equation} be a solution of $Lu=f$ in $B_1$ for $f \in L^p(B_1)$ and $L$ an elliptic operator \eqref{8ThswELzXU3X7Ebd1KdZ7v1rN3GiirRXGKWK099ovBM0FDJCvkopYNQ2aN94Z7k0UnUKamE3OjU8DFYFFokbSI2J9V9gVlM8ALWThDPnPu3EL7HPD2VDaZTggzcCCmbvc70qqPcC9mt60ogcrTiA3HEjwTK8ymKeuJMc4q6dVz200XnYUtLR9GYjPXvFOVr6W1zUK1WbPToaWJJuKnxBLnd0ftDEbMmj4loHYyhZyMjM91zQS4p7z8eKa9h0JrbacekcirexG0z4n3xz0QOWSvFj3jLhWXUIU21iIAwJtI3RbWa90I7rzAIqI3UElUJG7tLtUXzw4KQNETvXzqWaujEMenYlNIzLGxgB3AuJEQ08}  satisfying \eqref{8ThswELzXU3X7Ebd1KdZ7v1rN3GiirRXGKWK099ovBM0FDJCvkopYNQ2aN94Z7k0UnUKamE3OjU8DFYFFokbSI2J9V9gVlM8ALWThDPnPu3EL7HPD2VDaZTggzcCCmbvc70qqPcC9mt60ogcrTiA3HEjwTK8ymKeuJMc4q6dVz200XnYUtLR9GYjPXvFOVr6W1zUK1WbPToaWJJuKnxBLnd0ftDEbMmj4loHYyhZyMjM91zQS4p7z8eKa9h0JrbacekcirexG0z4n3xz0QOWSvFj3jLhWXUIU21iIAwJtI3RbWa90I7rzAIqI3UElUJG7tLtUXzw4KQNETvXzqWaujEMenYlNIzLGxgB3AuJEQ09}--\eqref{8ThswELzXU3X7Ebd1KdZ7v1rN3GiirRXGKWK099ovBM0FDJCvkopYNQ2aN94Z7k0UnUKamE3OjU8DFYFFokbSI2J9V9gVlM8ALWThDPnPu3EL7HPD2VDaZTggzcCCmbvc70qqPcC9mt60ogcrTiA3HEjwTK8ymKeuJMc4q6dVz200XnYUtLR9GYjPXvFOVr6W1zUK1WbPToaWJJuKnxBLnd0ftDEbMmj4loHYyhZyMjM91zQS4p7z8eKa9h0JrbacekcirexG0z4n3xz0QOWSvFj3jLhWXUIU21iIAwJtI3RbWa90I7rzAIqI3UElUJG7tLtUXzw4KQNETvXzqWaujEMenYlNIzLGxgB3AuJEQ11} in $B_1$. Suppose that      \begin{equation}       c       = \sup_{r \leq 1}\frac{\zxvczxbcvdfghasdfrtsdafasdfasdfdsfgsdgh u \zxvczxbcvdfghasdfrtsdafasdfasdfdsfgsdgh_{L^p(B_r)}}{r^{d+n/p}}       < \infty     \label{8ThswELzXU3X7Ebd1KdZ7v1rN3GiirRXGKWK099ovBM0FDJCvkopYNQ2aN94Z7k0UnUKamE3OjU8DFYFFokbSI2J9V9gVlM8ALWThDPnPu3EL7HPD2VDaZTggzcCCmbvc70qqPcC9mt60ogcrTiA3HEjwTK8ymKeuJMc4q6dVz200XnYUtLR9GYjPXvFOVr6W1zUK1WbPToaWJJuKnxBLnd0ftDEbMmj4loHYyhZyMjM91zQS4p7z8eKa9h0JrbacekcirexG0z4n3xz0QOWSvFj3jLhWXUIU21iIAwJtI3RbWa90I7rzAIqI3UElUJG7tLtUXzw4KQNETvXzqWaujEMenYlNIzLGxgB3AuJEQ23}     \end{equation} and      \begin{equation}      c_f      = \sup_{r \leq 1}\frac{\zxvczxbcvdfghasdfrtsdafasdfasdfdsfgsdgh f \zxvczxbcvdfghasdfrtsdafasdfasdfdsfgsdgh_{L^p(B_r)}}{r^{d-m+n/p}}       < \infty       ,     \llabel{fUu tCO cKc zz kx4U fhuA a8pYzW Vq 9 Sp6 CmA cZL Mx ceBX Dwug sjWuii Gl v JDb 08h BOV C1 pni6 4TTq Opzezq ZB J y5o KS8 BhH sd nKkH gnZl UCm7j0 Iv Y jQE 7JN 9fd ED ddys 3y1x 52pbiG Lc a 71j G3e uli Ce uzv2 R40Q 50JZUB uK d U3m May 0uo S7 ulWD h7qG 2FKw2T JX z BES 2Jk Q4U Dy 4aJ2 IXs4 RNH41s py T GNh hk0 w5Z C8 B3nU Bp9p 8eLKh8 UO 4 fMq Y6w lcA GM xCHt vlOx MqAJoQ QU 1 e8a 2aX 9Y6 2r lIS6 dejK Y3KCUm 25 7 oCl VeE e8p 1z UJSv bmLd Fy7ObQ FN l J6F RdF kEm qM N0Fd NZJ0 8DYuq2 pL X JNz 4rO ZkZ X2 IjTD 1fVt z4BmFI Pi 0 GKD R2W PhO zH zTLP lb8ThswELzXU3X7Ebd1KdZ7v1rN3GiirRXGKWK099ovBM0FDJCvkopYNQ2aN94Z7k0UnUKamE3OjU8DFYFFokbSI2J9V9gVlM8ALWThDPnPu3EL7HPD2VDaZTggzcCCmbvc70qqPcC9mt60ogcrTiA3HEjwTK8ymKeuJMc4q6dVz200XnYUtLR9GYjPXvFOVr6W1zUK1WbPToaWJJuKnxBLnd0ftDEbMmj4loHYyhZyMjM91zQS4p7z8eKa9h0JrbacekcirexG0z4n3xz0QOWSvFj3jLhWXUIU21iIAwJtI3RbWa90I7rzAIqI3UElUJG7tLtUXzw4KQNETvXzqWaujEMenYlNIzLGxgB3AuJEQ24}     \end{equation} for some integer~$d\geq m$. Assume additionally that $f\in C^{d-m+k,\alpha}_{L^p}(0)$ and $a_{\nu} \in C^{k-m+|\nu|,\alpha}_{L^p}(0)$ for some $\alpha \in (0,1)$ and $k \in \mathbb N$, for any $|\nu|\geq \max \{ m-k, 0 \}$. Then for all $q\in [1,np/(n-mp))$, we have $u \in C^{d+k, \alpha}_{L^q}(0)$, and     \begin{equation}       \lbrack u \rbrack_{C^{d+k,\alpha}_{L^q}(0)}       \les \zxvczxbcvdfghasdfrtsdafasdfasdfdsfgsdgh u \zxvczxbcvdfghasdfrtsdafasdfasdfdsfgsdgh_{L^p(B_1)}             + \lbrack f \rbrack_{C^{d-m}_{L^p}(0)}             + \lbrack f \rbrack_{C^{d-m+k,\alpha}_{L^p}(0)}             + 1             ,     \label{8ThswELzXU3X7Ebd1KdZ7v1rN3GiirRXGKWK099ovBM0FDJCvkopYNQ2aN94Z7k0UnUKamE3OjU8DFYFFokbSI2J9V9gVlM8ALWThDPnPu3EL7HPD2VDaZTggzcCCmbvc70qqPcC9mt60ogcrTiA3HEjwTK8ymKeuJMc4q6dVz200XnYUtLR9GYjPXvFOVr6W1zUK1WbPToaWJJuKnxBLnd0ftDEbMmj4loHYyhZyMjM91zQS4p7z8eKa9h0JrbacekcirexG0z4n3xz0QOWSvFj3jLhWXUIU21iIAwJtI3RbWa90I7rzAIqI3UElUJG7tLtUXzw4KQNETvXzqWaujEMenYlNIzLGxgB3AuJEQ25}     \end{equation}  where the implicit constant in \eqref{8ThswELzXU3X7Ebd1KdZ7v1rN3GiirRXGKWK099ovBM0FDJCvkopYNQ2aN94Z7k0UnUKamE3OjU8DFYFFokbSI2J9V9gVlM8ALWThDPnPu3EL7HPD2VDaZTggzcCCmbvc70qqPcC9mt60ogcrTiA3HEjwTK8ymKeuJMc4q6dVz200XnYUtLR9GYjPXvFOVr6W1zUK1WbPToaWJJuKnxBLnd0ftDEbMmj4loHYyhZyMjM91zQS4p7z8eKa9h0JrbacekcirexG0z4n3xz0QOWSvFj3jLhWXUIU21iIAwJtI3RbWa90I7rzAIqI3UElUJG7tLtUXzw4KQNETvXzqWaujEMenYlNIzLGxgB3AuJEQ25} depends only on $p$, $d$, $k$, $c$, $c_{f}$, and $\lbrack a_{\nu} \rbrack_{C^{l-m+|\nu|,\alpha}_{L^p}(0)}$ for $|\nu| \geq m-k$. \end{Theorem}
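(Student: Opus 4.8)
The plan is to run an iteration-on-scales argument à la Bers and Han, carried out in $L^p$ on small balls and then upgraded to $L^q$ by Sobolev embedding at a single fixed scale. First I would treat the base case $k=0$ (or $k$ small enough that no derivatives of the coefficients are needed): the hypotheses give that $u$ vanishes to order $d$ in $L^p$ and $f$ vanishes to order $d-m$ in $L^p$. Freezing the leading coefficients at $0$, one writes $L_0 u = f - (L-L_0)u$, where $L_0 = \sum_{|\nu|=m} a_\nu(0)\partial^\nu$ is a constant-coefficient elliptic operator with fundamental solution enjoying the homogeneity and size bounds from Bers' estimate quoted above. Rescaling to the unit ball via $u_r(x) = u(rx)/r^{d+n/p}$ (which has controlled $L^p(B_1)$ norm by the hypothesis $c<\infty$) and solving $L_0 v = $ (the rescaled right-hand side) with the fundamental solution, I would show by interior estimates and the $L^p\to L^q$ mapping properties of the Riesz-potential-type convolution — this is exactly where $1<p<n/m$ and the Sobolev exponent $np/(n-mp)$ enter — that $u_r$ is within $Cr^{\alpha}$ (in the appropriate norm on a smaller ball) of a homogeneous polynomial $P^{(r)} \in \dPP_d$ solving $L_0 P^{(r)} = 0$. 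The Hölder continuity hypothesis $|a_\nu(x)-a_\nu(0)|\le K|x|^\alpha$ produces the gain $r^\alpha$ in the perturbation term $(L-L_0)u$, and the lower-order terms $a_\nu\partial^\nu u$ with $|\nu|<m$ contribute strictly higher powers of $r$ and are harmless.

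The iterative step is the standard Campanato-type dichotomy: comparing the polynomials $P^{(r)}$ and $P^{(r/2)}$ at successive dyadic scales, their difference is a homogeneous polynomial of degree $d$ whose coefficients form a geometrically convergent (in $r$) sequence because of the $r^\alpha$ error; hence $P^{(r)} \to P$ as $r\to 0$ for a fixed homogeneous polynomial $P\in\dPP_d$, and summing the geometric series yields
\begin{equation}
  \frac{\nnorm{u-P}_{L^q(B_r)}}{r^{d+\alpha+n/q}} \les c + c_f + 1
\end{equation}
uniformly in $r\le 1$, which is precisely $[u]_{C^{d,\alpha}_{L^q}(0)} \les \nnorm{u}_{L^p(B_1)} + [f]_{C^{d-m}_{L^p}(0)} + [f]_{C^{d-m,\alpha}_{L^p}(0)} + 1$ after absorbing $c, c_f$ into the right-hand side via \eqref{8ThswELzXU3X7Ebd1KdZ7v1rN3GiirRXGKWK099ovBM0FDJCvkopYNQ2aN94Z7k0UnUKamE3OjU8DFYFFokbSI2J9V9gVlM8ALWThDPnPu3EL7HPD2VDaZTggzcCCmbvc70qqPcC9mt60ogcrTiA3HEjwTK8ymKeuJMc4q6dVz200XnYUtLR9GYjPXvFOVr6W1zUK1WbPToaWJJuKnxBLnd0ftDEbMmj4loHYyhZyMjM91zQS4p7z8eKa9h0JrbacekcirexG0z4n3xz0QOWSvFj3jLhWXUIU21iIAwJtI3RbWa90I7rzAIqI3UElUJG7tLtUXzw4KQNETvXzqWaujEMenYlNIzLGxgB3AuJEQ23} and standard elliptic $W^{m,p}$ bounds. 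This handles $k=0$; note the norm of the limiting polynomial $P$ is controlled by the same right-hand side, since its coefficients are the limit of the geometric sum.

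For general $k\ge 1$ I would induct on $k$. Once $u$ is known to be in $C^{d+k-1,\alpha}_{L^q}(0)$ with the asymptotic polynomial $P_{d+k-1}$, I subtract $P_{d+k-1}$ and apply the equation to $w = u - P_{d+k-1}$: the function $w$ now vanishes to higher order, $Lw = f - LP_{d+k-1}$ is controlled in the next $C^{\cdot}_{L^p}(0)$ class using the hypotheses $f\in C^{d-m+k,\alpha}_{L^p}(0)$ and $a_\nu \in C^{k-m+|\nu|,\alpha}_{L^p}(0)$ (the latter exactly supplies enough regularity of the coefficients to differentiate the frozen-coefficient expansion $k$ times and to Taylor-expand $a_\nu(x)$ to order $k-m+|\nu|$), and one extracts the next homogeneous block of the asymptotic polynomial by repeating the one-step argument above. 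Summing the blocks gives $P_{d+k}\in\PP_{d+k}$ with $[u]_{C^{d+k,\alpha}_{L^q}(0)}$ bounded by the stated right-hand side, the dependence of the constant on $p,d,k,c,c_f$ and the $[a_\nu]$ seminorms being explicit because every estimate in the iteration is.

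The main obstacle I anticipate is the mapping-property step at low $p$: for $p>n/m$ one simply uses $W^{m,p}\hookrightarrow L^\infty$ and works pointwise, but here one must instead track that convolution against the fundamental solution (and its derivatives up to order $m-1$) maps $L^p(B_1)$ into $L^q(B_1)$ for all $q<np/(n-mp)$ with constants uniform under the rescaling, and that the same holds for the error terms generated by the Hölder-continuous coefficients — this is the technical heart that replaces Han's use of Sobolev embedding into $L^\infty$, and getting the scaling of these estimates to close the geometric iteration (rather than merely being finite) is the delicate point.
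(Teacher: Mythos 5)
Your proposal is correct in spirit but organizes the base case differently from the paper. The paper deduces Theorem~\ref{T02} from Theorem~\ref{T01} (plus induction via Lemma~\ref{L05}), and the proof of Theorem~\ref{T01} has two pieces: (i) a bootstrap lemma (Lemma~\ref{L04}) that raises the $L^p$-vanishing order of $u$ in increments of $\alpha$ until it reaches $d$ --- this step is needed there because T01 only assumes vanishing of order $d-1+\eta$, and it is essentially inert under your stronger hypothesis \eqref{8ThswELzXU3X7Ebd1KdZ7v1rN3GiirRXGKWK099ovBM0FDJCvkopYNQ2aN94Z7k0UnUKamE3OjU8DFYFFokbSI2J9V9gVlM8ALWThDPnPu3EL7HPD2VDaZTggzcCCmbvc70qqPcC9mt60ogcrTiA3HEjwTK8ymKeuJMc4q6dVz200XnYUtLR9GYjPXvFOVr6W1zUK1WbPToaWJJuKnxBLnd0ftDEbMmj4loHYyhZyMjM91zQS4p7z8eKa9h0JrbacekcirexG0z4n3xz0QOWSvFj3jLhWXUIU21iIAwJtI3RbWa90I7rzAIqI3UElUJG7tLtUXzw4KQNETvXzqWaujEMenYlNIzLGxgB3AuJEQ23} --- and (ii) a single decomposition $u=v+w$ performed once at the fixed scale $1/2$, with $w(x)=\int_{B_{1/2}}\Gamma(x-y)\bigl(L(0)u(y)-Q(y)\bigr)\,dy$ an explicit convolution against the fundamental solution of the frozen operator $L(0)$, $v=u-w$ solving $L(0)v=Q$, and the approximating polynomial $P=P_v+P_w$ given as the sum of the degree-$d$ Taylor polynomials of $v$ and of $w$. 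The crux there is a uniform-in-$r$ Taylor-remainder bound on $w-P_w$ (Lemma~\ref{L10}), which uses the decay of $\Gamma$ and the $L^p\to L^q$ mapping properties that you also identify as the heart of the matter. Your base case instead freezes at each dyadic scale, solves a frozen problem at scale $r$ to produce $P^{(r)}$, and extracts $P$ as a geometric limit, which is a legitimate alternative organization; what the paper's one-shot decomposition buys is the polynomial in closed form, from which the bound $\|P\|_{L^p(B_r)}\lesssim r^{d+n/p}$ (needed to close the estimate, since terms $\|P\|_{L^p(B_1)}$ appear in the intermediate inequalities and must be re-absorbed) is immediate rather than recovered from a convergent sum. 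Your inductive step --- subtract the order-$(d+k-1)$ asymptotic polynomial, use the H\"older regularity of $f$ and of $a_\nu$ to approximate the new right-hand side by a homogeneous polynomial of degree $d-m+k$, and repeat the one-step result --- is essentially the paper's Lemma~\ref{L05} and its iteration.

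Two imprecisions worth correcting. The approximating polynomial $P^{(r)}\in\dPP_d$ should solve $L(0)P^{(r)}=Q$, where $Q\in\dPP_{d-m}$ is the leading homogeneous block of $f$, not $L(0)P^{(r)}=0$; the solvability of $\sum_{|\nu|=m}a_\nu(0)\partial^\nu P=Q$ in $\dPP_d$ is a nontrivial (though classical) fact about the elliptic symbol that the paper invokes explicitly, and without it the scheme does not produce the right leading block. Also, the normalizing rescaling is $u_r(x)=u(rx)/r^d$ rather than $u(rx)/r^{d+n/p}$, since $\|u(r\cdot)\|_{L^p(B_1)}=r^{-n/p}\|u\|_{L^p(B_r)}$ already supplies the volume factor; as written your $u_r$ has $L^p(B_1)$ norm blowing up like $r^{-n/p}$, which would break the uniformity of the iteration.
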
 \colb \par The theorem extends the main result in \cite{H1} to the case~$p<m/n$. It also asserts the continuity for the range $q\in [1,np/(n-mp))$. \par Observe that if a function $u$ satisfies $|u|\les |x|^{d}$ on $B_1$, where $d\geq 0$ and $n\geq2$, then $\zxvczxbcvdfghasdfrtsdafasdfasdfdsfgsdgh u\zxvczxbcvdfghasdfrtsdafasdfasdfdsfgsdgh_{L^{p}(B_r)}\les r^{d+\fractext{n}{p}}$. Thus the assumption \eqref{8ThswELzXU3X7Ebd1KdZ7v1rN3GiirRXGKWK099ovBM0FDJCvkopYNQ2aN94Z7k0UnUKamE3OjU8DFYFFokbSI2J9V9gVlM8ALWThDPnPu3EL7HPD2VDaZTggzcCCmbvc70qqPcC9mt60ogcrTiA3HEjwTK8ymKeuJMc4q6dVz200XnYUtLR9GYjPXvFOVr6W1zUK1WbPToaWJJuKnxBLnd0ftDEbMmj4loHYyhZyMjM91zQS4p7z8eKa9h0JrbacekcirexG0z4n3xz0QOWSvFj3jLhWXUIU21iIAwJtI3RbWa90I7rzAIqI3UElUJG7tLtUXzw4KQNETvXzqWaujEMenYlNIzLGxgB3AuJEQ23} may be interpreted as $u$ vanishes at~$0$ with of order at least $d$ in the $L^{p}$ sense. \par In the following auxiliary statement, we estimate the high $L^{p}$-H\"older type norms around the points where $u$ vanishes of order $d$ in the $L^{p}$ sense and $f$ vanishes of order $d-m$ in the $L^{p}$ sense. The result we obtain applies to the $L^q$-norm where $q$ belongs to $[1,np/(n-mp))$, which includes $p$ itself. \par \cole \begin{Theorem} \label{T01} Let $u \in W^{m,p}(B_1)$, with $p$ as in \eqref{8ThswELzXU3X7Ebd1KdZ7v1rN3GiirRXGKWK099ovBM0FDJCvkopYNQ2aN94Z7k0UnUKamE3OjU8DFYFFokbSI2J9V9gVlM8ALWThDPnPu3EL7HPD2VDaZTggzcCCmbvc70qqPcC9mt60ogcrTiA3HEjwTK8ymKeuJMc4q6dVz200XnYUtLR9GYjPXvFOVr6W1zUK1WbPToaWJJuKnxBLnd0ftDEbMmj4loHYyhZyMjM91zQS4p7z8eKa9h0JrbacekcirexG0z4n3xz0QOWSvFj3jLhWXUIU21iIAwJtI3RbWa90I7rzAIqI3UElUJG7tLtUXzw4KQNETvXzqWaujEMenYlNIzLGxgB3AuJEQ13}, be a solution of    $     Lu = f $, where $L$ is the elliptic operator \eqref{8ThswELzXU3X7Ebd1KdZ7v1rN3GiirRXGKWK099ovBM0FDJCvkopYNQ2aN94Z7k0UnUKamE3OjU8DFYFFokbSI2J9V9gVlM8ALWThDPnPu3EL7HPD2VDaZTggzcCCmbvc70qqPcC9mt60ogcrTiA3HEjwTK8ymKeuJMc4q6dVz200XnYUtLR9GYjPXvFOVr6W1zUK1WbPToaWJJuKnxBLnd0ftDEbMmj4loHYyhZyMjM91zQS4p7z8eKa9h0JrbacekcirexG0z4n3xz0QOWSvFj3jLhWXUIU21iIAwJtI3RbWa90I7rzAIqI3UElUJG7tLtUXzw4KQNETvXzqWaujEMenYlNIzLGxgB3AuJEQ08}  satisfying \eqref{8ThswELzXU3X7Ebd1KdZ7v1rN3GiirRXGKWK099ovBM0FDJCvkopYNQ2aN94Z7k0UnUKamE3OjU8DFYFFokbSI2J9V9gVlM8ALWThDPnPu3EL7HPD2VDaZTggzcCCmbvc70qqPcC9mt60ogcrTiA3HEjwTK8ymKeuJMc4q6dVz200XnYUtLR9GYjPXvFOVr6W1zUK1WbPToaWJJuKnxBLnd0ftDEbMmj4loHYyhZyMjM91zQS4p7z8eKa9h0JrbacekcirexG0z4n3xz0QOWSvFj3jLhWXUIU21iIAwJtI3RbWa90I7rzAIqI3UElUJG7tLtUXzw4KQNETvXzqWaujEMenYlNIzLGxgB3AuJEQ09}--\eqref{8ThswELzXU3X7Ebd1KdZ7v1rN3GiirRXGKWK099ovBM0FDJCvkopYNQ2aN94Z7k0UnUKamE3OjU8DFYFFokbSI2J9V9gVlM8ALWThDPnPu3EL7HPD2VDaZTggzcCCmbvc70qqPcC9mt60ogcrTiA3HEjwTK8ymKeuJMc4q6dVz200XnYUtLR9GYjPXvFOVr6W1zUK1WbPToaWJJuKnxBLnd0ftDEbMmj4loHYyhZyMjM91zQS4p7z8eKa9h0JrbacekcirexG0z4n3xz0QOWSvFj3jLhWXUIU21iIAwJtI3RbWa90I7rzAIqI3UElUJG7tLtUXzw4KQNETvXzqWaujEMenYlNIzLGxgB3AuJEQ11} and $f \in L^p(B_1)$. Suppose that, with an integer $d \geq m$, we have    \begin{equation}       \zxvczxbcvdfghasdfrtsdafasdfasdfdsfgsdgh f - Q \zxvczxbcvdfghasdfrtsdafasdfasdfdsfgsdgh_{L^p(B_r)}        \leq \MM r^{d-m+\alpha+n/p}     \comma r\in(0,1]    ,     \label{8ThswELzXU3X7Ebd1KdZ7v1rN3GiirRXGKWK099ovBM0FDJCvkopYNQ2aN94Z7k0UnUKamE3OjU8DFYFFokbSI2J9V9gVlM8ALWThDPnPu3EL7HPD2VDaZTggzcCCmbvc70qqPcC9mt60ogcrTiA3HEjwTK8ymKeuJMc4q6dVz200XnYUtLR9GYjPXvFOVr6W1zUK1WbPToaWJJuKnxBLnd0ftDEbMmj4loHYyhZyMjM91zQS4p7z8eKa9h0JrbacekcirexG0z4n3xz0QOWSvFj3jLhWXUIU21iIAwJtI3RbWa90I7rzAIqI3UElUJG7tLtUXzw4KQNETvXzqWaujEMenYlNIzLGxgB3AuJEQ12}   \end{equation} for some $Q\in\dPP_{d-m}$ and~$\MM >0$. Then if     \begin{equation}       c_0        = \sup_{r \leq 1}\frac{\zxvczxbcvdfghasdfrtsdafasdfasdfdsfgsdgh u \zxvczxbcvdfghasdfrtsdafasdfasdfdsfgsdgh_{L^p(B_r)}}{r^{d-1+\eta+n/p}}       < \infty     ,     \label{8ThswELzXU3X7Ebd1KdZ7v1rN3GiirRXGKWK099ovBM0FDJCvkopYNQ2aN94Z7k0UnUKamE3OjU8DFYFFokbSI2J9V9gVlM8ALWThDPnPu3EL7HPD2VDaZTggzcCCmbvc70qqPcC9mt60ogcrTiA3HEjwTK8ymKeuJMc4q6dVz200XnYUtLR9GYjPXvFOVr6W1zUK1WbPToaWJJuKnxBLnd0ftDEbMmj4loHYyhZyMjM91zQS4p7z8eKa9h0JrbacekcirexG0z4n3xz0QOWSvFj3jLhWXUIU21iIAwJtI3RbWa90I7rzAIqI3UElUJG7tLtUXzw4KQNETvXzqWaujEMenYlNIzLGxgB3AuJEQ14}     \end{equation} for some $\eta \in (0,1] $,  there exists $P\in\dPP_{d}$ solving     \begin{equation}       \sum_{|\nu|=m} a_{\nu}(0) \partial^{\nu}P = Q      \llabel{Lc a 71j G3e uli Ce uzv2 R40Q 50JZUB uK d U3m May 0uo S7 ulWD h7qG 2FKw2T JX z BES 2Jk Q4U Dy 4aJ2 IXs4 RNH41s py T GNh hk0 w5Z C8 B3nU Bp9p 8eLKh8 UO 4 fMq Y6w lcA GM xCHt vlOx MqAJoQ QU 1 e8a 2aX 9Y6 2r lIS6 dejK Y3KCUm 25 7 oCl VeE e8p 1z UJSv bmLd Fy7ObQ FN l J6F RdF kEm qM N0Fd NZJ0 8DYuq2 pL X JNz 4rO ZkZ X2 IjTD 1fVt z4BmFI Pi 0 GKD R2W PhO zH zTLP lbAE OT9XW0 gb T Lb3 XRQ qGG 8o 4TPE 6WRc uMqMXh s6 x Ofv 8st jDi u8 rtJt TKSK jlGkGw t8 n FDx jA9 fCm iu FqMW jeox 5Akw3w Sd 8 1vK 8c4 C0O dj CHIs eHUO hyqGx3 Kw O lDq l1Y 4NY 4I vI8ThswELzXU3X7Ebd1KdZ7v1rN3GiirRXGKWK099ovBM0FDJCvkopYNQ2aN94Z7k0UnUKamE3OjU8DFYFFokbSI2J9V9gVlM8ALWThDPnPu3EL7HPD2VDaZTggzcCCmbvc70qqPcC9mt60ogcrTiA3HEjwTK8ymKeuJMc4q6dVz200XnYUtLR9GYjPXvFOVr6W1zUK1WbPToaWJJuKnxBLnd0ftDEbMmj4loHYyhZyMjM91zQS4p7z8eKa9h0JrbacekcirexG0z4n3xz0QOWSvFj3jLhWXUIU21iIAwJtI3RbWa90I7rzAIqI3UElUJG7tLtUXzw4KQNETvXzqWaujEMenYlNIzLGxgB3AuJEQ15}     \end{equation} in $\mathbb R^n$ such that     \begin{equation}       \zxvczxbcvdfghasdfrtsdafasdfasdfdsfgsdgh P \zxvczxbcvdfghasdfrtsdafasdfasdfdsfgsdgh_{L^p(B_r)}       \les (\MM                 + c_0              + \zxvczxbcvdfghasdfrtsdafasdfasdfdsfgsdgh Q \zxvczxbcvdfghasdfrtsdafasdfasdfdsfgsdgh_{L^p(B_1)}              ) r^{d+n/p}    \comma r>0     \label{8ThswELzXU3X7Ebd1KdZ7v1rN3GiirRXGKWK099ovBM0FDJCvkopYNQ2aN94Z7k0UnUKamE3OjU8DFYFFokbSI2J9V9gVlM8ALWThDPnPu3EL7HPD2VDaZTggzcCCmbvc70qqPcC9mt60ogcrTiA3HEjwTK8ymKeuJMc4q6dVz200XnYUtLR9GYjPXvFOVr6W1zUK1WbPToaWJJuKnxBLnd0ftDEbMmj4loHYyhZyMjM91zQS4p7z8eKa9h0JrbacekcirexG0z4n3xz0QOWSvFj3jLhWXUIU21iIAwJtI3RbWa90I7rzAIqI3UElUJG7tLtUXzw4KQNETvXzqWaujEMenYlNIzLGxgB3AuJEQ16}     \end{equation} and      \begin{equation}        \sum_{|\nu|\leq m}          r^{|\nu|} \zxvczxbcvdfghasdfrtsdafasdfasdfdsfgsdgh \partial^{\nu} (u-P) \zxvczxbcvdfghasdfrtsdafasdfasdfdsfgsdgh_{L^q(B_r)}        \les  \big(\MM                 + c_0               + \zxvczxbcvdfghasdfrtsdafasdfasdfdsfgsdgh Q \zxvczxbcvdfghasdfrtsdafasdfasdfdsfgsdgh_{L^p(B_1)}              \big) r^{d+\alpha+n/q}              \comma              r\in(0,1/2]              ,      \label{8ThswELzXU3X7Ebd1KdZ7v1rN3GiirRXGKWK099ovBM0FDJCvkopYNQ2aN94Z7k0UnUKamE3OjU8DFYFFokbSI2J9V9gVlM8ALWThDPnPu3EL7HPD2VDaZTggzcCCmbvc70qqPcC9mt60ogcrTiA3HEjwTK8ymKeuJMc4q6dVz200XnYUtLR9GYjPXvFOVr6W1zUK1WbPToaWJJuKnxBLnd0ftDEbMmj4loHYyhZyMjM91zQS4p7z8eKa9h0JrbacekcirexG0z4n3xz0QOWSvFj3jLhWXUIU21iIAwJtI3RbWa90I7rzAIqI3UElUJG7tLtUXzw4KQNETvXzqWaujEMenYlNIzLGxgB3AuJEQ17}      \end{equation} for all $q\in [1,np/(n-mp))$. Moreover,           \begin{equation}      \zxvczxbcvdfghasdfrtsdafasdfasdfdsfgsdgh u \zxvczxbcvdfghasdfrtsdafasdfasdfdsfgsdgh_{L^p(B_r)}      \les  \big(\MM                        +c_0                + \zxvczxbcvdfghasdfrtsdafasdfasdfdsfgsdgh Q \zxvczxbcvdfghasdfrtsdafasdfasdfdsfgsdgh_{L^p(B_1)}                \big) r^{d + n/p}
             \comma              r\in(0,1]              .              \colb      \label{8ThswELzXU3X7Ebd1KdZ7v1rN3GiirRXGKWK099ovBM0FDJCvkopYNQ2aN94Z7k0UnUKamE3OjU8DFYFFokbSI2J9V9gVlM8ALWThDPnPu3EL7HPD2VDaZTggzcCCmbvc70qqPcC9mt60ogcrTiA3HEjwTK8ymKeuJMc4q6dVz200XnYUtLR9GYjPXvFOVr6W1zUK1WbPToaWJJuKnxBLnd0ftDEbMmj4loHYyhZyMjM91zQS4p7z8eKa9h0JrbacekcirexG0z4n3xz0QOWSvFj3jLhWXUIU21iIAwJtI3RbWa90I7rzAIqI3UElUJG7tLtUXzw4KQNETvXzqWaujEMenYlNIzLGxgB3AuJEQ18}      \end{equation} The implicit constants depend only on $p$, $q$, and~$d$. \end{Theorem}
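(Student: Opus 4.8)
Write $L_0:=\sum_{|\nu|=m}a_\nu(0)\partial^\nu$ for the principal part of $L$ with coefficients frozen at the origin and $C_*:=\MM+c_0+\|Q\|_{L^p(B_1)}$. The idea is to construct $P$ by an algebraic step, to represent $u$ locally via the fundamental solution of $L_0$ in the spirit of Bers~\cite{B}, to reduce matters to an $L^q$-estimate for $L_0$-potentials of functions vanishing to a fractional order, and to bootstrap on the vanishing order of~$u$.

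\emph{Algebraic step.} Ellipticity makes the real symbol $\xi\mapsto\sum_{|\nu|=m}a_\nu(0)\xi^\nu$ nonzero on $\RR^n\setminus\{0\}$, and it is classical (see~\cite{B}) that then $L_0\colon\dPP_d\to\dPP_{d-m}$ is onto. Let $P_\sharp\in\dPP_d$ be the solution of $L_0P_\sharp=Q$ of least $L^2(B_1)$-norm; finite dimensionality gives $\|P_\sharp\|_{L^p(B_1)}\le C\|Q\|_{L^p(B_1)}$, and homogeneity then yields $\|P_\sharp\|_{L^p(B_r)}=r^{d+n/p}\|P_\sharp\|_{L^p(B_1)}$ for all $r>0$. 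The polynomial $P$ of the statement will be $P_\sharp$ plus a correction $\Pi\in\ker(L_0|_{\dPP_d})$ produced in the bootstrap, so that $L_0P=Q$; once $\|\Pi\|_{L^p(B_1)}\le CC_*$ is established this gives the bound on $\|P\|_{L^p(B_r)}$, whence the bound on $\|u\|_{L^p(B_r)}$ follows from $\|u\|_{L^p(B_r)}\le\|u-P\|_{L^p(B_r)}+\|P\|_{L^p(B_r)}$ for $r\le1/2$ and trivially for $r\in[1/2,1]$.

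\emph{Representation and the potential estimate.} Fix $\zeta\in C_c^\infty(B_{3/4})$ with $\zeta\equiv1$ on $B_{1/2}$, and let $\Phi$ denote the fundamental solution of $L_0$, which by ellipticity is homogeneous of degree $m-n$ (up to a logarithmic factor in the exceptional dimensions) and for which the pointwise bounds of Bers~\cite{B} on $\Phi$ and its derivatives are available. On $B_{1/2}$ one has $u=\Phi*(\zeta L_0u)+\mathfrak h$ with $L_0u=f-(L-L_0)u$ and $\mathfrak h$ $L_0$-harmonic on $B_{1/2}$, hence real-analytic (by hypoellipticity of $L_0$) with Taylor coefficients at $0$ bounded by $C\|u\|_{W^{m,p}(B_{3/4})}\le CC_*$ and with every homogeneous component again $L_0$-harmonic. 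The heart of the argument is the Bers-type estimate: if $g\in L^p(B_1)$ satisfies $\|g\|_{L^p(B_\rho)}\le A\rho^{\sigma+n/p}$ for $\rho\le1$, with $\sigma\notin\ZZ$ and $\sigma+m\notin\ZZ$, then there is an $L_0$-harmonic polynomial $R$ of degree $\lfloor\sigma+m\rfloor$ with $\|R\|_{L^\infty(B_1)}\le CA$ such that
\begin{equation*}
\sum_{|\nu|\le m}\rho^{|\nu|}\bigl\|\partial^\nu\bigl(\Phi*(\zeta g)-R\bigr)\bigr\|_{L^q(B_\rho)}\le CA\,\rho^{\sigma+m+n/q},\qquad\rho\in(0,1/2],
\end{equation*}
for all $q\in[1,np/(n-mp))$. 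To prove it one splits the convolution over $\{|y|\le\tfrac12|x|\}$ and $\{|y|\ge\tfrac12|x|\}$: the near contribution is controlled using the scale-$|x|$ bound on $g$, the interior Calder\'on--Zygmund estimate and the Sobolev embedding $W^{m,p}\hookrightarrow L^q$, which holds for $q\le np/(n-mp)$ precisely because $mp<n$ — this is where the range of $q$ in the conclusion originates and is exactly the feature absent in the case $p>n/m$ of~\cite{H1}; the far contribution is handled by Taylor-expanding $\Phi(x-y)$ in $x$, the leading terms assembling into $R$ thanks to the homogeneity of $\Phi$ and the remainder being $O(|x|^{\sigma+m})$ by Bers' bounds, the expansion truncating cleanly because $\sigma,\sigma+m\notin\ZZ$.

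\emph{Bootstrap, and the main obstacle.} Applying the lemma with $g=f-Q$ (so $\sigma=d-m+\alpha$), combining it with $\Phi*(\zeta Q)=P_\sharp+(L_0\text{-harmonic})$ and with the analyticity of $\mathfrak h$, and using that $u$ vanishes to order $\tau_0:=d-1+\eta>d-1$ to cancel the components of degree $<d$ of all polynomial contributions, one represents $u$ modulo a homogeneous degree-$d$ polynomial and an $L^q$-remainder of order $\rho^{\tau_0+\alpha+n/q}$, plus the term $\Phi*(\zeta(L-L_0)u)$. That term is controlled via the interior $W^{m,p}$-estimate rescaled to each $B_\rho$, which shows $\partial^\nu u$ vanishes, in the exponent $q_\nu=np/(n-(m-|\nu|)p)$, to order $\tau_0-|\nu|$; hence $(L-L_0)u$ — whose top-order part gains the factor $|x|^\alpha$ from the H\"older continuity of the leading coefficients and whose lower-order parts gain a full power from the extra derivative — vanishes to order $\tau_0-m+\alpha$, so its $\Phi$-potential contributes, up to a polynomial of degree $<d$, an $L^q$-error of order $\rho^{\tau_0+\alpha+n/q}$. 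Thus $u=P^{(1)}+(\text{something vanishing to order }\tau_0+\alpha)$ with $P^{(1)}\in\dPP_d$ and $L_0P^{(1)}=Q$; replacing $u$ by $u-P^{(1)}$ one repeats the argument — the right-hand side still vanishes to order $d-m+\alpha$, the input order is now $\tau_0+\alpha$, and the next correction lies in $\ker(L_0|_{\dPP_d})$ — and after finitely many passes ($\lceil(1+\alpha-\eta)/\alpha\rceil$ of them) the vanishing order passes $d+\alpha$, the accumulated degree-$d$ polynomial has stabilized to $P=P_\sharp+\Pi$ with $\|\Pi\|_{L^p(B_1)}\le CC_*$, and a final application of the lemma delivers the asserted bound on $\sum_{|\nu|\le m}r^{|\nu|}\|\partial^\nu(u-P)\|_{L^q(B_r)}$ for $r\in(0,1/2]$. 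I expect the potential estimate to be the main obstacle — isolating the right polynomial $R$ while keeping the $L^q$-remainder bounded uniformly for $q<np/(n-mp)$ — together with the bookkeeping in the bootstrap that guarantees each pass neither disturbs the principal relation $L_0P=Q$ nor contributes above degree $d$, so that the finite iteration closes exactly at the exponent $d+\alpha$; replacing $\eta$ by a slightly smaller value when necessary secures the nonintegrality hypotheses needed for the lemma throughout.
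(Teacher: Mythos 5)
Your plan takes essentially the same route as the paper's: represent $u$ via the fundamental solution $\Gamma$ of $L(0)$, establish an $L^q$ potential estimate valid precisely for $q<np/(n-mp)$, bootstrap the vanishing order by $\alpha$-increments, and extract the leading degree-$d$ polynomial. The visible differences are organizational. You build $P_\sharp\in\dPP_d$ with $L(0)P_\sharp=Q$ upfront by algebraic surjectivity and then treat $\Pi\in\ker(L(0)|_{\dPP_d})$ as a correction, whereas the paper produces $P=P_v+P_w$ as the degree-$d$ Taylor polynomials of $v$ (with $L(0)v=Q$) and of the potential $w=\Gamma*(L(0)u-Q)$; the latter yields the norm bound on $P$ directly from interior elliptic regularity and Lemma~\ref{L10}, whereas you must control $\Pi$ separately. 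Your near-field estimate goes through Sobolev embedding, the paper's Lemma~\ref{L03} through Young's inequality with $1/\tilde p+1/p=1+1/q$; same range of $q$. The more substantive organizational difference is the bootstrap: you accumulate a degree-$d$ correction $P^{(k)}$ at every pass, whereas the paper's Lemma~\ref{L04} works with degree-$(d-1)$ Taylor polynomials and shows they vanish identically, because the fractional vanishing of $u$ to order $>d-1$ forbids a nonzero polynomial of degree $\le d-1$ — so nothing accumulates until the very last pass, where the paper switches to degree-$d$ Taylor polynomials and extracts the genuine $P$ in one step. Your framing is workable, but be aware of the bookkeeping it creates: the intermediate degree-$d$ corrections $P^{(k)}$ for $k\ge2$ are not forced to vanish by the fractional vanishing (only their degree-$\le d-1$ components are), so you must track the partial sums, verify they stay homogeneous of degree $d$ with $L(0)\bigl(\sum_k P^{(k)}\bigr)=Q$, and establish the uniform norm bound on the accumulated $\Pi$. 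The paper sidesteps all of this by deferring the introduction of any degree-$d$ Taylor coefficient to the final step. With those points tightened, your proposal is sound.
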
  \colb \par Theorem~\ref{T01} provides control on the leading polynomial and the error terms, given control on the source term $f$ and the assumption that $u$ vanishes of order slightly less than $d$ at 0. \par It is illustrative to consider the case when $f\equiv Q\equiv 0$, in which case we are looking at a solution of $L u=0$. Then the assumption \eqref{8ThswELzXU3X7Ebd1KdZ7v1rN3GiirRXGKWK099ovBM0FDJCvkopYNQ2aN94Z7k0UnUKamE3OjU8DFYFFokbSI2J9V9gVlM8ALWThDPnPu3EL7HPD2VDaZTggzcCCmbvc70qqPcC9mt60ogcrTiA3HEjwTK8ymKeuJMc4q6dVz200XnYUtLR9GYjPXvFOVr6W1zUK1WbPToaWJJuKnxBLnd0ftDEbMmj4loHYyhZyMjM91zQS4p7z8eKa9h0JrbacekcirexG0z4n3xz0QOWSvFj3jLhWXUIU21iIAwJtI3RbWa90I7rzAIqI3UElUJG7tLtUXzw4KQNETvXzqWaujEMenYlNIzLGxgB3AuJEQ14} requires that $u$ vanishes of a fractional order strictly greater than $d-1$ in the $L^{p}$ sense and \eqref{8ThswELzXU3X7Ebd1KdZ7v1rN3GiirRXGKWK099ovBM0FDJCvkopYNQ2aN94Z7k0UnUKamE3OjU8DFYFFokbSI2J9V9gVlM8ALWThDPnPu3EL7HPD2VDaZTggzcCCmbvc70qqPcC9mt60ogcrTiA3HEjwTK8ymKeuJMc4q6dVz200XnYUtLR9GYjPXvFOVr6W1zUK1WbPToaWJJuKnxBLnd0ftDEbMmj4loHYyhZyMjM91zQS4p7z8eKa9h0JrbacekcirexG0z4n3xz0QOWSvFj3jLhWXUIU21iIAwJtI3RbWa90I7rzAIqI3UElUJG7tLtUXzw4KQNETvXzqWaujEMenYlNIzLGxgB3AuJEQ18} asserts that $u$ vanishes of order $d$ (in the $L^{p}$ sense). \par \startnewsection{Proofs of the main results}{sec03} The proofs of Theorems~\ref{T02} and~\ref{T01} rely on the following interior existence theorem and a bootstrapping argument in Lemma~\ref{L04} below. \par \subsection{Interior $W^{m,p}$ existence lemma} The following statement asserts the existence of a solution in a small neighborhood which vanishes of high degree provided the same holds for the source term. \par \cole \begin{Lemma} \label{L01} Assume that $L = \sum_{|\nu|\leq m} a_{\nu}(x) \partial^{\nu}$, defined in $B_1$, satisfies the conditions~\eqref{8ThswELzXU3X7Ebd1KdZ7v1rN3GiirRXGKWK099ovBM0FDJCvkopYNQ2aN94Z7k0UnUKamE3OjU8DFYFFokbSI2J9V9gVlM8ALWThDPnPu3EL7HPD2VDaZTggzcCCmbvc70qqPcC9mt60ogcrTiA3HEjwTK8ymKeuJMc4q6dVz200XnYUtLR9GYjPXvFOVr6W1zUK1WbPToaWJJuKnxBLnd0ftDEbMmj4loHYyhZyMjM91zQS4p7z8eKa9h0JrbacekcirexG0z4n3xz0QOWSvFj3jLhWXUIU21iIAwJtI3RbWa90I7rzAIqI3UElUJG7tLtUXzw4KQNETvXzqWaujEMenYlNIzLGxgB3AuJEQ09}--\eqref{8ThswELzXU3X7Ebd1KdZ7v1rN3GiirRXGKWK099ovBM0FDJCvkopYNQ2aN94Z7k0UnUKamE3OjU8DFYFFokbSI2J9V9gVlM8ALWThDPnPu3EL7HPD2VDaZTggzcCCmbvc70qqPcC9mt60ogcrTiA3HEjwTK8ymKeuJMc4q6dVz200XnYUtLR9GYjPXvFOVr6W1zUK1WbPToaWJJuKnxBLnd0ftDEbMmj4loHYyhZyMjM91zQS4p7z8eKa9h0JrbacekcirexG0z4n3xz0QOWSvFj3jLhWXUIU21iIAwJtI3RbWa90I7rzAIqI3UElUJG7tLtUXzw4KQNETvXzqWaujEMenYlNIzLGxgB3AuJEQ11}. Suppose that $f \in L^p(B_1)$, where \eqref{8ThswELzXU3X7Ebd1KdZ7v1rN3GiirRXGKWK099ovBM0FDJCvkopYNQ2aN94Z7k0UnUKamE3OjU8DFYFFokbSI2J9V9gVlM8ALWThDPnPu3EL7HPD2VDaZTggzcCCmbvc70qqPcC9mt60ogcrTiA3HEjwTK8ymKeuJMc4q6dVz200XnYUtLR9GYjPXvFOVr6W1zUK1WbPToaWJJuKnxBLnd0ftDEbMmj4loHYyhZyMjM91zQS4p7z8eKa9h0JrbacekcirexG0z4n3xz0QOWSvFj3jLhWXUIU21iIAwJtI3RbWa90I7rzAIqI3UElUJG7tLtUXzw4KQNETvXzqWaujEMenYlNIzLGxgB3AuJEQ13} holds, is such that  $\zxvczxbcvdfghasdfrtsdafasdfasdfdsfgsdgh f \zxvczxbcvdfghasdfrtsdafasdfasdfdsfgsdgh_{L^p(B_r)} \leq \MM r^{d-m+\gamma+n/p}$, for $ r\in(0,1]$, where $\gamma \in (0,1)$ and~$d \geq m$. Then there exist $R > 0$, depending on $\alpha$,  and $u \in W^{m,p}(B_R)$ solving  $L u = f$  in $B_R$ such that     \begin{equation}       \sum_{|\nu|\leq m} r^{|\nu|} \zxvczxbcvdfghasdfrtsdafasdfasdfdsfgsdgh \partial^\nu u \zxvczxbcvdfghasdfrtsdafasdfasdfdsfgsdgh_{L^p(B_r)}       \les \MM r^{d+\gamma +n/p}       \comma r \in (0,R]       .     \label{8ThswELzXU3X7Ebd1KdZ7v1rN3GiirRXGKWK099ovBM0FDJCvkopYNQ2aN94Z7k0UnUKamE3OjU8DFYFFokbSI2J9V9gVlM8ALWThDPnPu3EL7HPD2VDaZTggzcCCmbvc70qqPcC9mt60ogcrTiA3HEjwTK8ymKeuJMc4q6dVz200XnYUtLR9GYjPXvFOVr6W1zUK1WbPToaWJJuKnxBLnd0ftDEbMmj4loHYyhZyMjM91zQS4p7z8eKa9h0JrbacekcirexG0z4n3xz0QOWSvFj3jLhWXUIU21iIAwJtI3RbWa90I7rzAIqI3UElUJG7tLtUXzw4KQNETvXzqWaujEMenYlNIzLGxgB3AuJEQ26}     \end{equation} \end{Lemma}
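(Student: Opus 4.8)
The plan is to freeze the top-order coefficients at the origin, solve the resulting constant-coefficient equation by a solution vanishing to the prescribed order, and then recover the full operator $L$ by a contraction argument on a sufficiently small ball $B_R$. Set $L_0 = \sum_{|\nu|=m} a_\nu(0)\partial^\nu$; evaluating the ellipticity hypothesis at $x=0$ shows $L_0$ is a homogeneous constant-coefficient elliptic operator of order $m$, and since $m<n$ it has a fundamental solution $\Gamma$ that is homogeneous of degree $m-n$ and smooth away from the origin, so $|\partial^\beta\Gamma(z)| \les |z|^{m-n-|\beta|}$ for $z\ne 0$ and every $\beta$ (cf.~\cite{B}). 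I would then build a solution operator $T_0$ for $L_0$ as follows: writing $g$ for the right-hand side and $g_k = g\chi_{A_k}$ for its restriction to the dyadic annuli $A_k = \{2^{-k-1}\le |x|<2^{-k}\}$, set $T_0 g = \sum_{k\ge 0}(\Gamma*g_k - P_k)$, where $P_k\in\PP_d$ is the degree-$d$ Taylor polynomial of $\Gamma*g_k$ at the origin, which makes sense because $g_k$ is supported away from $0$, so $\Gamma*g_k$ is smooth there. Since $g_k$ vanishes near $0$, the polynomial $L_0 P_k$, which is the degree-$(d-m)$ Taylor polynomial of $g_k$ at $0$, is zero; hence $L_0 T_0 g = g$, while the subtraction forces $T_0 g$ to vanish to high order at the origin.

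The core estimate I would prove is: if $\norm{g}_{L^p(B_r)} \le \MM r^{d-m+\gamma+n/p}$ for $r\in(0,1]$, then $\sum_{|\nu|\le m} r^{|\nu|}\norm{\partial^\nu T_0 g}_{L^p(B_r)} \les \MM r^{d+\gamma+n/p}$ for $r\in(0,1]$, with constant depending only on $n$, $m$, $p$, $K$ (in particular independent of any ambient radius). Fixing $r\sim 2^{-j}$, I split the sum over $k$ into the near range $k\ge j$, where the data is supported essentially inside $B_r$, and the far range $k<j$, where it is supported in $\{|x|>r\}$. In the near range, the Calder\'on--Zygmund estimate when $|\nu|=m$, the Hardy--Littlewood--Sobolev estimate when $|\nu|<m$ — available precisely because $p<n/m$ makes $\tfrac1p - \tfrac{m-|\nu|}{n}>0$ for all such $\nu$ — and the off-support decay of $\partial^\nu\Gamma$ together give $\norm{\partial^\nu(\Gamma*g_k)}_{L^p(B_r)} \les 2^{-k(m-|\nu|)}\norm{g_k}_{L^p} \les \MM 2^{-k(d-|\nu|+\gamma+n/p)}$, and the polynomial $P_k$, whose coefficients satisfy $|\partial^\beta(\Gamma*g_k)(0)| \les \MM 2^{-k(d+\gamma-|\beta|)}$, contributes the same order. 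In the far range, $\Gamma*g_k$ is smooth on $B_r$ and subtracting its degree-$d$ Taylor polynomial leaves $\partial^\nu(\Gamma*g_k - P_k) = O(|x|^{d-|\nu|+1})$ on $B_r$, with constant $\les \max_{|\beta|=d+1}\sup_{B_r}|\partial^\beta(\Gamma*g_k)| \les 2^{-k(m-n-d-1)}\norm{g_k}_{L^1}$, hence $\norm{\partial^\nu(\Gamma*g_k-P_k)}_{L^p(B_r)} \les \MM r^{d-|\nu|+1+n/p}2^{k(1-\gamma)}$. Multiplying through by $r^{|\nu|}$ and summing the geometric series in $k$ — which converge to their largest term because $d-|\nu|+\gamma+n/p>0$ in the near range and $1-\gamma>0$ in the far range — yields the claimed bound.

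For the perturbation step, fix $R\in(0,1]$ and let $X_R$ be the Banach space of functions on $B_R$ with $\norm{u}_{X_R} = \sup_{0<r\le R} r^{-(d+\gamma+n/p)}\sum_{|\nu|\le m} r^{|\nu|}\norm{\partial^\nu u}_{L^p(B_r)}$, and $Y_R$ the analogous space with norm $\sup_{0<r\le R} r^{-(d-m+\gamma+n/p)}\norm{\cdot}_{L^p(B_r)}$; by the previous step, $T_0$ maps $Y_R$ into $X_R$ boundedly with constant independent of $R$. Define $\Phi(u) = T_0\bigl(f - (L-L_0)u\bigr)$, the argument extended by zero outside $B_R$. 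The map $u\mapsto (L-L_0)u$ sends $X_R$ into $Y_R$ with norm $\les R^\alpha$: the top-order part $\sum_{|\nu|=m}(a_\nu(x)-a_\nu(0))\partial^\nu u$ is controlled using $|a_\nu(x)-a_\nu(0)|\le K|x|^\alpha\le Kr^\alpha$ on $B_r$, while each lower-order term $a_\nu\partial^\nu u$ with $|\nu|\le m-1$ gains a factor $r^{m-|\nu|}\le R$ when the missing derivatives are traded for powers of $r$ against the weight. Thus $\Phi$ is a contraction on $X_R$ once $R$ is small (depending on $\alpha$, and on $n$, $m$, $p$, $K$), and $\norm{\Phi(0)}_{X_R} = \norm{T_0 f}_{X_R} \les \norm{f}_{Y_R}<\infty$. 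Its fixed point $u\in X_R$ satisfies $L_0 u = f - (L-L_0)u$, that is $Lu = f$ in $B_R$, lies in $W^{m,p}(B_R)$ (take $r=R$ in the $X_R$-norm), and obeys $\norm{u}_{X_R} \les \norm{f}_{Y_R}$, which upon comparison with the hypothesis $\norm{f}_{L^p(B_r)}\le \MM r^{d-m+\gamma+n/p}$ is exactly the asserted estimate.

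The step I expect to be the main obstacle is the construction and estimation of $T_0$, and specifically the requirement that the solution vanish to the full fractional order $d+\gamma$ at the origin. This is what forces the Taylor-polynomial subtraction on the far dyadic pieces; its legitimacy rests on those pieces being supported away from $0$, so that the equation is untouched and the subtracted polynomials are themselves controlled, and one must still check that the subtraction leaves the near-piece estimates intact and that every sum over dyadic scales converges — all of which uses $p<n/m$, $d\ge m$, and $\gamma\in(0,1)$ in an essential way. The perturbation step is then routine, the threshold for $R$ being dictated by the H\"older exponent $\alpha$ of the leading coefficients.
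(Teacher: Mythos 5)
Your proof takes essentially the same route as the paper's: freeze the top-order coefficients at the origin, build a solution of the constant-coefficient model by convolving with the fundamental solution $\Gamma$ and subtracting a degree-$d$ Taylor polynomial to enforce the required vanishing order, and then pass to the full operator by a contraction argument whose smallness comes from the H\"older continuity of the leading coefficients on a small ball. Your dyadic decomposition of the data into annular pieces $g_k$ and the shell-by-shell subtraction of Taylor polynomials $P_k$ is a repackaging of the paper's single split of the convolution into $\{|y|\le 2r\}$ and $\{2r\le|y|<1\}$ (both rest on the same $\int_{B_r}|f(y)|\,|y|^{-b}\,dy$ estimates recorded in Lemma~\ref{L02}), and your weighted Banach space $X_R$, with the contraction constant proportional to $R^{\alpha}$, plays the combined role of the paper's convex set $S$ together with its final rescaling step $x\mapsto Rx$; the direct dyadic estimates on $\partial^{\nu}T_0g$ likewise replace the paper's two-step ``bound $\|u\|_{L^p(B_r)}$ and then invoke interior elliptic regularity.''

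One intermediate display in your near-range estimate is imprecise: the claim $\|\partial^{\nu}(\Gamma*g_k)\|_{L^p(B_r)}\les 2^{-k(m-|\nu|)}\|g_k\|_{L^p}$ fails when $m-|\nu|>n/p'$, because then the off-support tail contributes a term on the order of $r^{m-n-|\nu|+n/p}\,2^{-kn/p'}\|g_k\|_{L^p}$, which dominates $2^{-k(m-|\nu|)}\|g_k\|_{L^p}$ since $r\ge 2^{-k}$ in that range of $k$ and the exponent $m-n-|\nu|+n/p$ is positive there; the Hardy--Littlewood--Sobolev route you cite instead gives the (also $r$-dependent) bound $r^{m-|\nu|}\|g_k\|_{L^p}$. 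Neither replacement damages the argument: after multiplying by $r^{|\nu|}$ and summing the geometric series over $k\ge j$ the total is still $\les\MM r^{d+\gamma+n/p}$ using $d-m+\gamma+n/p>0$. But the intermediate estimate should carry the honest $r$-dependence rather than the $2^{-k}$-only form. Apart from that, the sketch is sound.
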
 \colb \par In particular, if the source term vanishes of order at least $d-m+\gamma$, then there exists a local solution $u$ which vanishes of order~$d+\gamma$. \par Before the proof, we need  to estimate $\zxvczxbcvdfghasdfrtsdafasdfasdfdsfgsdgf (|f(y)|/|y|^{b})\,dy$ over $B_r$ and~$B_1\backslash B_r$. Denote by $p'$ the H\"older conjugate of~$p$. \par \cole \begin{Lemma} \label{L02} Assume that $f\in L^p(B_1)$, where $p\in [1,\infty]$, satisfies   \begin{equation}    \zxvczxbcvdfghasdfrtsdafasdfasdfdsfgsdgh f\zxvczxbcvdfghasdfrtsdafasdfasdfdsfgsdgh_{L^{p}(B_r)}    \leq M r^{a}    \comma r\in(0,1]    ,    \llabel{AJoQ QU 1 e8a 2aX 9Y6 2r lIS6 dejK Y3KCUm 25 7 oCl VeE e8p 1z UJSv bmLd Fy7ObQ FN l J6F RdF kEm qM N0Fd NZJ0 8DYuq2 pL X JNz 4rO ZkZ X2 IjTD 1fVt z4BmFI Pi 0 GKD R2W PhO zH zTLP lbAE OT9XW0 gb T Lb3 XRQ qGG 8o 4TPE 6WRc uMqMXh s6 x Ofv 8st jDi u8 rtJt TKSK jlGkGw t8 n FDx jA9 fCm iu FqMW jeox 5Akw3w Sd 8 1vK 8c4 C0O dj CHIs eHUO hyqGx3 Kw O lDq l1Y 4NY 4I vI7X DE4c FeXdFV bC F HaJ sb4 OC0 hu Mj65 J4fa vgGo7q Y5 X tLy izY DvH TR zd9x SRVg 0Pl6Z8 9X z fLh GlH IYB x9 OELo 5loZ x4wag4 cn F aCE KfA 0uz fw HMUV M9Qy eARFe3 Py 6 kQG GFx rPf 8ThswELzXU3X7Ebd1KdZ7v1rN3GiirRXGKWK099ovBM0FDJCvkopYNQ2aN94Z7k0UnUKamE3OjU8DFYFFokbSI2J9V9gVlM8ALWThDPnPu3EL7HPD2VDaZTggzcCCmbvc70qqPcC9mt60ogcrTiA3HEjwTK8ymKeuJMc4q6dVz200XnYUtLR9GYjPXvFOVr6W1zUK1WbPToaWJJuKnxBLnd0ftDEbMmj4loHYyhZyMjM91zQS4p7z8eKa9h0JrbacekcirexG0z4n3xz0QOWSvFj3jLhWXUIU21iIAwJtI3RbWa90I7rzAIqI3UElUJG7tLtUXzw4KQNETvXzqWaujEMenYlNIzLGxgB3AuJEQ27}   \end{equation} where $a\in \mathbb{R}$.\\ (i) If  $b< a+\fractext{n}{p'}$, we have   \begin{equation}    \zxvczxbcvdfghasdfrtsdafasdfasdfdsfgsdgf_{B_r}    \frac{|f(y)|\,dy}{|y|^{b}}    \les    M r^{a-b+n/p'}    \comma r\in (0,1]    ,    \llabel{AE OT9XW0 gb T Lb3 XRQ qGG 8o 4TPE 6WRc uMqMXh s6 x Ofv 8st jDi u8 rtJt TKSK jlGkGw t8 n FDx jA9 fCm iu FqMW jeox 5Akw3w Sd 8 1vK 8c4 C0O dj CHIs eHUO hyqGx3 Kw O lDq l1Y 4NY 4I vI7X DE4c FeXdFV bC F HaJ sb4 OC0 hu Mj65 J4fa vgGo7q Y5 X tLy izY DvH TR zd9x SRVg 0Pl6Z8 9X z fLh GlH IYB x9 OELo 5loZ x4wag4 cn F aCE KfA 0uz fw HMUV M9Qy eARFe3 Py 6 kQG GFx rPf 6T ZBQR la1a 6Aeker Xg k blz nSm mhY jc z3io WYjz h33sxR JM k Dos EAA hUO Oz aQfK Z0cn 5kqYPn W7 1 vCT 69a EC9 LD EQ5S BK4J fVFLAo Qp N dzZ HAl JaL Mn vRqH 7pBB qOr7fv oa e BSA 8TE8ThswELzXU3X7Ebd1KdZ7v1rN3GiirRXGKWK099ovBM0FDJCvkopYNQ2aN94Z7k0UnUKamE3OjU8DFYFFokbSI2J9V9gVlM8ALWThDPnPu3EL7HPD2VDaZTggzcCCmbvc70qqPcC9mt60ogcrTiA3HEjwTK8ymKeuJMc4q6dVz200XnYUtLR9GYjPXvFOVr6W1zUK1WbPToaWJJuKnxBLnd0ftDEbMmj4loHYyhZyMjM91zQS4p7z8eKa9h0JrbacekcirexG0z4n3xz0QOWSvFj3jLhWXUIU21iIAwJtI3RbWa90I7rzAIqI3UElUJG7tLtUXzw4KQNETvXzqWaujEMenYlNIzLGxgB3AuJEQ29}   \end{equation} where the implicit constant depends on $a$, $b$, and~$p$.\\ (ii) If   \begin{equation}       b > a       + \frac{n}{p'}    ,    \label{8ThswELzXU3X7Ebd1KdZ7v1rN3GiirRXGKWK099ovBM0FDJCvkopYNQ2aN94Z7k0UnUKamE3OjU8DFYFFokbSI2J9V9gVlM8ALWThDPnPu3EL7HPD2VDaZTggzcCCmbvc70qqPcC9mt60ogcrTiA3HEjwTK8ymKeuJMc4q6dVz200XnYUtLR9GYjPXvFOVr6W1zUK1WbPToaWJJuKnxBLnd0ftDEbMmj4loHYyhZyMjM91zQS4p7z8eKa9h0JrbacekcirexG0z4n3xz0QOWSvFj3jLhWXUIU21iIAwJtI3RbWa90I7rzAIqI3UElUJG7tLtUXzw4KQNETvXzqWaujEMenYlNIzLGxgB3AuJEQ30}   \end{equation} we have   \begin{equation}    \zxvczxbcvdfghasdfrtsdafasdfasdfdsfgsdgf_{B_1\backslash B_r}    \frac{|f(y)|\,dy}{|y|^{b}}    \les    M r^{a-b+n/p'}    \comma r\in (0,1]    ,    \llabel{7X DE4c FeXdFV bC F HaJ sb4 OC0 hu Mj65 J4fa vgGo7q Y5 X tLy izY DvH TR zd9x SRVg 0Pl6Z8 9X z fLh GlH IYB x9 OELo 5loZ x4wag4 cn F aCE KfA 0uz fw HMUV M9Qy eARFe3 Py 6 kQG GFx rPf 6T ZBQR la1a 6Aeker Xg k blz nSm mhY jc z3io WYjz h33sxR JM k Dos EAA hUO Oz aQfK Z0cn 5kqYPn W7 1 vCT 69a EC9 LD EQ5S BK4J fVFLAo Qp N dzZ HAl JaL Mn vRqH 7pBB qOr7fv oa e BSA 8TE btx y3 jwK3 v244 dlfwRL Dc g X14 vTp Wd8 zy YWjw eQmF yD5y5l DN l ZbA Jac cld kx Yn3V QYIV v6fwmH z1 9 w3y D4Y ezR M9 BduE L7D9 2wTHHc Do g ZxZ WRW Jxi pv fz48 ZVB7 FZtgK0 Y1 w oC8ThswELzXU3X7Ebd1KdZ7v1rN3GiirRXGKWK099ovBM0FDJCvkopYNQ2aN94Z7k0UnUKamE3OjU8DFYFFokbSI2J9V9gVlM8ALWThDPnPu3EL7HPD2VDaZTggzcCCmbvc70qqPcC9mt60ogcrTiA3HEjwTK8ymKeuJMc4q6dVz200XnYUtLR9GYjPXvFOVr6W1zUK1WbPToaWJJuKnxBLnd0ftDEbMmj4loHYyhZyMjM91zQS4p7z8eKa9h0JrbacekcirexG0z4n3xz0QOWSvFj3jLhWXUIU21iIAwJtI3RbWa90I7rzAIqI3UElUJG7tLtUXzw4KQNETvXzqWaujEMenYlNIzLGxgB3AuJEQ31}   \end{equation} where the implicit constant depends on $a$, $b$, and~$p$. \end{Lemma}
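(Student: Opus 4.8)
The plan is to prove both parts by one and the same device: a dyadic decomposition of the region of integration into spherical shells on which $|y|$ is comparable to a single scale, Hölder's inequality with exponents $p,p'$ on each shell, and then summation of a geometric series whose ratio is controlled by the sign of $a-b+n/p'$. The two hypotheses $b<a+n/p'$ and $b>a+n/p'$ are precisely the conditions making the relevant series converge.

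For (i) I would write $B_r=\bigcup_{j\ge0}A_j$ with $A_j=B_{2^{-j}r}\setminus B_{2^{-j-1}r}$, so that $|y|\sim 2^{-j}r$ on $A_j$ and $|A_j|\les(2^{-j}r)^{n}$. Hölder's inequality then gives, for each $j$,
\[
\int_{A_j}\frac{|f(y)|}{|y|^{b}}\,dy
\les (2^{-j}r)^{-b}\,\|f\|_{L^{p}(A_j)}\,|A_j|^{1/p'}
\les M\,(2^{-j}r)^{a-b+n/p'},
\]
where I used $\|f\|_{L^{p}(A_j)}\le\|f\|_{L^{p}(B_{2^{-j}r})}\le M(2^{-j}r)^{a}$. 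Summing over $j\ge0$, the series $\sum_{j\ge0}2^{-j(a-b+n/p')}$ converges exactly because $a-b+n/p'>0$, and this produces the asserted bound with a constant depending only on $a,b,p$.

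For (ii) the decomposition instead runs outward: $B_1\setminus B_r=\bigcup_{j=0}^{J}A_j$ with $A_j=B_{\min(2^{j+1}r,1)}\setminus B_{2^{j}r}$, where $J$ is the least integer with $2^{J+1}r\ge 1$, so that $2^{J}r\in[1/2,1)$. On each shell $|y|\sim 2^{j}r$, $|A_j|\les(2^{j}r)^{n}$, and $\|f\|_{L^{p}(A_j)}\les M(2^{j}r)^{a}$: for $j<J$ this is immediate, while for the terminal shell $A_J=B_1\setminus B_{2^J r}$ it follows from the hypothesis applied at radius $1$ together with $2^{J}r\ge 1/2$, at the cost of a factor depending only on $a$. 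The same per-shell estimate gives $\int_{A_j}|f(y)|\,|y|^{-b}\,dy\les M(2^{j}r)^{a-b+n/p'}$, and now the condition $a-b+n/p'<0$ makes $\sum_{j\ge0}2^{j(a-b+n/p')}$ summable, with the $j=0$ term dominating; summing yields $\int_{B_1\setminus B_r}|f(y)|\,|y|^{-b}\,dy\les M r^{a-b+n/p'}$.

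I do not expect a real obstacle, since the argument is elementary; the only points that call for a little care are arranging the outward decomposition in (ii) so that it terminates cleanly at scale $\sim 1$ while keeping the terminal shell within the general per-shell estimate, and keeping track of which direction of the strict inequality is responsible for summability of the geometric series (the borderline case $b=a+n/p'$ would give a logarithmic divergence and is correctly excluded). The endpoint exponents $p=1$ and $p=\infty$ are absorbed by the usual conventions $1/p'=0$ and $1/p'=1$ in Hölder's inequality.
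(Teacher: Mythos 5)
Your proposal is correct and follows essentially the same dyadic-decomposition-plus-H\"older argument as the paper's proof of this lemma, with the geometric series controlled by the sign of $a-b+n/p'$. The only cosmetic differences are that in (ii) you anchor the shells at $r$ going outward rather than at $1$ going inward as the paper does, and that you bound $|y|^{-b}$ pointwise on each shell and invoke $|A_j|^{1/p'}$ instead of computing $\zxvczxbcvdfghasdfrtsdafasdfasdfdsfgsdgh|y|^{-b}\zxvczxbcvdfghasdfrtsdafasdfasdfdsfgsdgh_{L^{p'}}$ directly, which neatly avoids the small $bp'=n$ case split the paper mentions.
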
 \colb \par \begin{proof}[Proof of Lemma~\ref{L02}] (i) Dividing $B_r$ into dyadic shells, we have   \begin{align}    \begin{split}    \zxvczxbcvdfghasdfrtsdafasdfasdfdsfgsdgf_{B_r}    \frac{|f(y)|\,dy}{|y|^{b}}    &    =    \sum_{m=0}^{\infty}    \zxvczxbcvdfghasdfrtsdafasdfasdfdsfgsdgf_{B_{2^{-m} r}\backslash B_{2^{-m-1}r}}    \frac{|f(y)|\,dy}{|y|^{b}}    \les    \sum_{m=0}^{\infty}    \zxvczxbcvdfghasdfrtsdafasdfasdfdsfgsdgh f\zxvczxbcvdfghasdfrtsdafasdfasdfdsfgsdgh_{L^p(B_{2^{-m}r})}    \zxvczxbcvdfghasdfrtsdafasdfasdfdsfgsdgh |y|^{-b}\zxvczxbcvdfghasdfrtsdafasdfasdfdsfgsdgh_{L^{p'}(B_{2^{-m} r}\backslash B_{2^{-m-1}r})}    \\&    \les    \sum_{m=0}^{\infty}    M (2^{-m}r)^{a-b+n/p'}    \les    M r^{a-b+n/p'}    ,    \end{split}    \label{8ThswELzXU3X7Ebd1KdZ7v1rN3GiirRXGKWK099ovBM0FDJCvkopYNQ2aN94Z7k0UnUKamE3OjU8DFYFFokbSI2J9V9gVlM8ALWThDPnPu3EL7HPD2VDaZTggzcCCmbvc70qqPcC9mt60ogcrTiA3HEjwTK8ymKeuJMc4q6dVz200XnYUtLR9GYjPXvFOVr6W1zUK1WbPToaWJJuKnxBLnd0ftDEbMmj4loHYyhZyMjM91zQS4p7z8eKa9h0JrbacekcirexG0z4n3xz0QOWSvFj3jLhWXUIU21iIAwJtI3RbWa90I7rzAIqI3UElUJG7tLtUXzw4KQNETvXzqWaujEMenYlNIzLGxgB3AuJEQ32}   \end{align} under the condition $b< a+\fractext{n}{p'}$. To obtain the second inequality in \eqref{8ThswELzXU3X7Ebd1KdZ7v1rN3GiirRXGKWK099ovBM0FDJCvkopYNQ2aN94Z7k0UnUKamE3OjU8DFYFFokbSI2J9V9gVlM8ALWThDPnPu3EL7HPD2VDaZTggzcCCmbvc70qqPcC9mt60ogcrTiA3HEjwTK8ymKeuJMc4q6dVz200XnYUtLR9GYjPXvFOVr6W1zUK1WbPToaWJJuKnxBLnd0ftDEbMmj4loHYyhZyMjM91zQS4p7z8eKa9h0JrbacekcirexG0z4n3xz0QOWSvFj3jLhWXUIU21iIAwJtI3RbWa90I7rzAIqI3UElUJG7tLtUXzw4KQNETvXzqWaujEMenYlNIzLGxgB3AuJEQ32}, we need to separate the cases $  b p' = n$ and $bp'\neq n$; however, it is easy to check that in both cases we obtain the same result. \par (ii) Choose $m_0  \in \mathbb{N}_0$ such that $2^{-m_0}<r\leq 2^{-m_0+1}$. Then we have   \begin{align}    \begin{split}    \zxvczxbcvdfghasdfrtsdafasdfasdfdsfgsdgf_{B_1\backslash B_r}    \frac{|f(y)|\,dy}{|y|^{b}}    &    \leq    \sum_{m=0}^{m_0}    \zxvczxbcvdfghasdfrtsdafasdfasdfdsfgsdgf_{B_{2^{-m}}\backslash B_{2^{-m-1}}}    \frac{|f(y)|\,dy}{|y|^{b}}    \les    \sum_{m=0}^{m_0}    \zxvczxbcvdfghasdfrtsdafasdfasdfdsfgsdgh f\zxvczxbcvdfghasdfrtsdafasdfasdfdsfgsdgh_{L^p(B_{2^{-m}})}    \zxvczxbcvdfghasdfrtsdafasdfasdfdsfgsdgh |y|^{-b}\zxvczxbcvdfghasdfrtsdafasdfasdfdsfgsdgh_{L^{p'}(B_{2^{-m} }\backslash B_{2^{-m-1}})}    \\&    \les    \sum_{m=0}^{m_0}    (2^{-m})^{a-b+n/p'}    =    \sum_{m=0}^{m_0}    (2^{m})^{-a+b-n/p'}    \les    (2^{m_0})^{-a+b-n/p'}    \les    r^{a-b+n/p'}    ,    \end{split}    \llabel{6T ZBQR la1a 6Aeker Xg k blz nSm mhY jc z3io WYjz h33sxR JM k Dos EAA hUO Oz aQfK Z0cn 5kqYPn W7 1 vCT 69a EC9 LD EQ5S BK4J fVFLAo Qp N dzZ HAl JaL Mn vRqH 7pBB qOr7fv oa e BSA 8TE btx y3 jwK3 v244 dlfwRL Dc g X14 vTp Wd8 zy YWjw eQmF yD5y5l DN l ZbA Jac cld kx Yn3V QYIV v6fwmH z1 9 w3y D4Y ezR M9 BduE L7D9 2wTHHc Do g ZxZ WRW Jxi pv fz48 ZVB7 FZtgK0 Y1 w oCo hLA i70 NO Ta06 u2sY GlmspV l2 x y0X B37 x43 k5 kaoZ deyE sDglRF Xi 9 6b6 w9B dId Ko gSUM NLLb CRzeQL UZ m i9O 2qv VzD hz v1r6 spSl jwNhG6 s6 i SdX hob hbp 2u sEdl 95LP AtrBBi bP8ThswELzXU3X7Ebd1KdZ7v1rN3GiirRXGKWK099ovBM0FDJCvkopYNQ2aN94Z7k0UnUKamE3OjU8DFYFFokbSI2J9V9gVlM8ALWThDPnPu3EL7HPD2VDaZTggzcCCmbvc70qqPcC9mt60ogcrTiA3HEjwTK8ymKeuJMc4q6dVz200XnYUtLR9GYjPXvFOVr6W1zUK1WbPToaWJJuKnxBLnd0ftDEbMmj4loHYyhZyMjM91zQS4p7z8eKa9h0JrbacekcirexG0z4n3xz0QOWSvFj3jLhWXUIU21iIAwJtI3RbWa90I7rzAIqI3UElUJG7tLtUXzw4KQNETvXzqWaujEMenYlNIzLGxgB3AuJEQ33}   \end{align} again checking separately the case~$bp'=n$. \end{proof} \par Throughout this section, we use the notation   \begin{equation}   L(0) = \sum_{|\nu|=m} a_{\nu}(0) \partial^{\nu}
  \label{8ThswELzXU3X7Ebd1KdZ7v1rN3GiirRXGKWK099ovBM0FDJCvkopYNQ2aN94Z7k0UnUKamE3OjU8DFYFFokbSI2J9V9gVlM8ALWThDPnPu3EL7HPD2VDaZTggzcCCmbvc70qqPcC9mt60ogcrTiA3HEjwTK8ymKeuJMc4q6dVz200XnYUtLR9GYjPXvFOVr6W1zUK1WbPToaWJJuKnxBLnd0ftDEbMmj4loHYyhZyMjM91zQS4p7z8eKa9h0JrbacekcirexG0z4n3xz0QOWSvFj3jLhWXUIU21iIAwJtI3RbWa90I7rzAIqI3UElUJG7tLtUXzw4KQNETvXzqWaujEMenYlNIzLGxgB3AuJEQL}   ,   \end{equation} while $\Gamma$ denotes the fundamental solution of $L(0)$; in particular, $L(0) \Gamma = \delta_0$.  Since $m<n$, it satisfies the estimate     \begin{equation}     \label{8ThswELzXU3X7Ebd1KdZ7v1rN3GiirRXGKWK099ovBM0FDJCvkopYNQ2aN94Z7k0UnUKamE3OjU8DFYFFokbSI2J9V9gVlM8ALWThDPnPu3EL7HPD2VDaZTggzcCCmbvc70qqPcC9mt60ogcrTiA3HEjwTK8ymKeuJMc4q6dVz200XnYUtLR9GYjPXvFOVr6W1zUK1WbPToaWJJuKnxBLnd0ftDEbMmj4loHYyhZyMjM91zQS4p7z8eKa9h0JrbacekcirexG0z4n3xz0QOWSvFj3jLhWXUIU21iIAwJtI3RbWa90I7rzAIqI3UElUJG7tLtUXzw4KQNETvXzqWaujEMenYlNIzLGxgB3AuJEQG}       |\partial_x^{\beta} \Gamma(x)|       \les \frac{1}{|x|^{n+|\beta|-m}}       \comma \beta\in \mathbb{N}_0^{d}       ,     \end{equation} where the constant depends on $|\beta|$; see~\cite{B}. \par \begin{proof}[Proof of Lemma~\ref{L01}] We start with the case  when the coefficients are constant and the lower order terms are not present, i.e., $a_{\nu} \equiv a_{\nu}(0)$ for $|\nu| = m$ and $a_{\nu} \equiv 0$ for~$|\nu| < m$.  The convolution     \begin{equation}     w(x) = \zxvczxbcvdfghasdfrtsdafasdfasdfdsfgsdgf_{|y|<1} \Gamma (x-y) f(y) \,dy     \llabel{ btx y3 jwK3 v244 dlfwRL Dc g X14 vTp Wd8 zy YWjw eQmF yD5y5l DN l ZbA Jac cld kx Yn3V QYIV v6fwmH z1 9 w3y D4Y ezR M9 BduE L7D9 2wTHHc Do g ZxZ WRW Jxi pv fz48 ZVB7 FZtgK0 Y1 w oCo hLA i70 NO Ta06 u2sY GlmspV l2 x y0X B37 x43 k5 kaoZ deyE sDglRF Xi 9 6b6 w9B dId Ko gSUM NLLb CRzeQL UZ m i9O 2qv VzD hz v1r6 spSl jwNhG6 s6 i SdX hob hbp 2u sEdl 95LP AtrBBi bP C wSh pFC CUa yz xYS5 78ro f3UwDP sC I pES HB1 qFP SW 5tt0 I7oz jXun6c z4 c QLB J4M NmI 6F 08S2 Il8C 0JQYiU lI 1 YkK oiu bVt fG uOeg Sllv b4HGn3 bS Z LlX efa eN6 v1 B6m3 Ek3J SXUI8ThswELzXU3X7Ebd1KdZ7v1rN3GiirRXGKWK099ovBM0FDJCvkopYNQ2aN94Z7k0UnUKamE3OjU8DFYFFokbSI2J9V9gVlM8ALWThDPnPu3EL7HPD2VDaZTggzcCCmbvc70qqPcC9mt60ogcrTiA3HEjwTK8ymKeuJMc4q6dVz200XnYUtLR9GYjPXvFOVr6W1zUK1WbPToaWJJuKnxBLnd0ftDEbMmj4loHYyhZyMjM91zQS4p7z8eKa9h0JrbacekcirexG0z4n3xz0QOWSvFj3jLhWXUIU21iIAwJtI3RbWa90I7rzAIqI3UElUJG7tLtUXzw4KQNETvXzqWaujEMenYlNIzLGxgB3AuJEQ35}     \end{equation}  satisfies $L(0) w = f$ in~$B_1$. Also, define     \begin{equation}     P_w(x)       =  \sum_{|\beta|\leq d}      \frac{x^{\beta}}{\beta!}      \zxvczxbcvdfghasdfrtsdafasdfasdfdsfgsdgf_{|y|<1}      \partial^{\beta}_x \Gamma(-y)      f(y) \,dy     .     \llabel{o hLA i70 NO Ta06 u2sY GlmspV l2 x y0X B37 x43 k5 kaoZ deyE sDglRF Xi 9 6b6 w9B dId Ko gSUM NLLb CRzeQL UZ m i9O 2qv VzD hz v1r6 spSl jwNhG6 s6 i SdX hob hbp 2u sEdl 95LP AtrBBi bP C wSh pFC CUa yz xYS5 78ro f3UwDP sC I pES HB1 qFP SW 5tt0 I7oz jXun6c z4 c QLB J4M NmI 6F 08S2 Il8C 0JQYiU lI 1 YkK oiu bVt fG uOeg Sllv b4HGn3 bS Z LlX efa eN6 v1 B6m3 Ek3J SXUIjX 8P d NKI UFN JvP Ha Vr4T eARP dXEV7B xM 0 A7w 7je p8M 4Q ahOi hEVo Pxbi1V uG e tOt HbP tsO 5r 363R ez9n A5EJ55 pc L lQQ Hg6 X1J EW K8Cf 9kZm 14A5li rN 7 kKZ rY0 K10 It eJd3 kMGw8ThswELzXU3X7Ebd1KdZ7v1rN3GiirRXGKWK099ovBM0FDJCvkopYNQ2aN94Z7k0UnUKamE3OjU8DFYFFokbSI2J9V9gVlM8ALWThDPnPu3EL7HPD2VDaZTggzcCCmbvc70qqPcC9mt60ogcrTiA3HEjwTK8ymKeuJMc4q6dVz200XnYUtLR9GYjPXvFOVr6W1zUK1WbPToaWJJuKnxBLnd0ftDEbMmj4loHYyhZyMjM91zQS4p7z8eKa9h0JrbacekcirexG0z4n3xz0QOWSvFj3jLhWXUIU21iIAwJtI3RbWa90I7rzAIqI3UElUJG7tLtUXzw4KQNETvXzqWaujEMenYlNIzLGxgB3AuJEQ36}     \end{equation}  To obtain the necessary integrability, observe that   \begin{align}    \begin{split}    | \partial_{x}^{\beta}\Gamma(-y) f(y)    |    \les     \frac{|f(y)|}{|y|^{n+|\beta|-m}}     ,    \end{split}    \label{8ThswELzXU3X7Ebd1KdZ7v1rN3GiirRXGKWK099ovBM0FDJCvkopYNQ2aN94Z7k0UnUKamE3OjU8DFYFFokbSI2J9V9gVlM8ALWThDPnPu3EL7HPD2VDaZTggzcCCmbvc70qqPcC9mt60ogcrTiA3HEjwTK8ymKeuJMc4q6dVz200XnYUtLR9GYjPXvFOVr6W1zUK1WbPToaWJJuKnxBLnd0ftDEbMmj4loHYyhZyMjM91zQS4p7z8eKa9h0JrbacekcirexG0z4n3xz0QOWSvFj3jLhWXUIU21iIAwJtI3RbWa90I7rzAIqI3UElUJG7tLtUXzw4KQNETvXzqWaujEMenYlNIzLGxgB3AuJEQ37}   \end{align} by \eqref{8ThswELzXU3X7Ebd1KdZ7v1rN3GiirRXGKWK099ovBM0FDJCvkopYNQ2aN94Z7k0UnUKamE3OjU8DFYFFokbSI2J9V9gVlM8ALWThDPnPu3EL7HPD2VDaZTggzcCCmbvc70qqPcC9mt60ogcrTiA3HEjwTK8ymKeuJMc4q6dVz200XnYUtLR9GYjPXvFOVr6W1zUK1WbPToaWJJuKnxBLnd0ftDEbMmj4loHYyhZyMjM91zQS4p7z8eKa9h0JrbacekcirexG0z4n3xz0QOWSvFj3jLhWXUIU21iIAwJtI3RbWa90I7rzAIqI3UElUJG7tLtUXzw4KQNETvXzqWaujEMenYlNIzLGxgB3AuJEQG}, and apply Lemma~\ref{L02}~(i); note that the condition $b< a+\fractext{n}{p'}$ becomes $|\beta|<d+\gamma$, which indeed holds.  Thus,  $P_w$ is a polynomial of degree less than or equal to $d$ such that $L(0)P_w = 0$.  Now, we prove that the function     \begin{equation}       u(x)        = w(x) - P_w(x)        = \zxvczxbcvdfghasdfrtsdafasdfasdfdsfgsdgf_{|y|<1} \biggl( \Gamma(x-y) -        \sum_{0\leq |\beta|\leq d} \partial^{\beta}_x \Gamma(-y) \frac{x^{\beta}}{\beta!}                      \biggr) f(y) \,dy       ,     \llabel{ C wSh pFC CUa yz xYS5 78ro f3UwDP sC I pES HB1 qFP SW 5tt0 I7oz jXun6c z4 c QLB J4M NmI 6F 08S2 Il8C 0JQYiU lI 1 YkK oiu bVt fG uOeg Sllv b4HGn3 bS Z LlX efa eN6 v1 B6m3 Ek3J SXUIjX 8P d NKI UFN JvP Ha Vr4T eARP dXEV7B xM 0 A7w 7je p8M 4Q ahOi hEVo Pxbi1V uG e tOt HbP tsO 5r 363R ez9n A5EJ55 pc L lQQ Hg6 X1J EW K8Cf 9kZm 14A5li rN 7 kKZ rY0 K10 It eJd3 kMGw opVnfY EG 2 orG fj0 TTA Xt ecJK eTM0 x1N9f0 lR p QkP M37 3r0 iA 6EFs 1F6f 4mjOB5 zu 5 GGT Ncl Bmk b5 jOOK 4yny My04oz 6m 6 Akz NnP JXh Bn PHRu N5Ly qSguz5 Nn W 2lU Yx3 fX4 hu LieH8ThswELzXU3X7Ebd1KdZ7v1rN3GiirRXGKWK099ovBM0FDJCvkopYNQ2aN94Z7k0UnUKamE3OjU8DFYFFokbSI2J9V9gVlM8ALWThDPnPu3EL7HPD2VDaZTggzcCCmbvc70qqPcC9mt60ogcrTiA3HEjwTK8ymKeuJMc4q6dVz200XnYUtLR9GYjPXvFOVr6W1zUK1WbPToaWJJuKnxBLnd0ftDEbMmj4loHYyhZyMjM91zQS4p7z8eKa9h0JrbacekcirexG0z4n3xz0QOWSvFj3jLhWXUIU21iIAwJtI3RbWa90I7rzAIqI3UElUJG7tLtUXzw4KQNETvXzqWaujEMenYlNIzLGxgB3AuJEQ38}     \end{equation} which satisfies $L(0) u = f$,  verifies~\eqref{8ThswELzXU3X7Ebd1KdZ7v1rN3GiirRXGKWK099ovBM0FDJCvkopYNQ2aN94Z7k0UnUKamE3OjU8DFYFFokbSI2J9V9gVlM8ALWThDPnPu3EL7HPD2VDaZTggzcCCmbvc70qqPcC9mt60ogcrTiA3HEjwTK8ymKeuJMc4q6dVz200XnYUtLR9GYjPXvFOVr6W1zUK1WbPToaWJJuKnxBLnd0ftDEbMmj4loHYyhZyMjM91zQS4p7z8eKa9h0JrbacekcirexG0z4n3xz0QOWSvFj3jLhWXUIU21iIAwJtI3RbWa90I7rzAIqI3UElUJG7tLtUXzw4KQNETvXzqWaujEMenYlNIzLGxgB3AuJEQ26}. For any $r \in(0,1/2]$, we have     \begin{align}     \begin{split}       \zxvczxbcvdfghasdfrtsdafasdfasdfdsfgsdgh u \zxvczxbcvdfghasdfrtsdafasdfasdfdsfgsdgh_{L^p(B_r)}        &       \leq      \biggl\zxvczxbcvdfghasdfrtsdafasdfasdfdsfgsdgh \zxvczxbcvdfghasdfrtsdafasdfasdfdsfgsdgf_{|y|\leq 2r} \Gamma(x-y) f(y) \,dy \biggr\zxvczxbcvdfghasdfrtsdafasdfasdfdsfgsdgh_{L^p(B_r)}      +      \biggl\zxvczxbcvdfghasdfrtsdafasdfasdfdsfgsdgh \zxvczxbcvdfghasdfrtsdafasdfasdfdsfgsdgf_{|y|\leq 2r} \sum_{0\leq |\beta|\leq d} \partial^{\beta}_x \Gamma(-y) \frac{x^{\beta}}{\beta!}    f(y) \,dy \biggr\zxvczxbcvdfghasdfrtsdafasdfasdfdsfgsdgh_{L^p(B_r)}      \\&\indeq      +      \biggl\zxvczxbcvdfghasdfrtsdafasdfasdfdsfgsdgh \zxvczxbcvdfghasdfrtsdafasdfasdfdsfgsdgf_{2r\leq|y|<1}              \biggl(                 \Gamma(x-y) - \sum_{0\leq |\beta|\leq d} \partial^{\beta}_x \Gamma(-y) \frac{x^{\beta}}{\beta!}                 \biggr) f(y) \,dy       \biggr\zxvczxbcvdfghasdfrtsdafasdfasdfdsfgsdgh_{L^p(B_r)}     \\&     = I_1 + I_2 + I_3     .     \end{split}     \label{8ThswELzXU3X7Ebd1KdZ7v1rN3GiirRXGKWK099ovBM0FDJCvkopYNQ2aN94Z7k0UnUKamE3OjU8DFYFFokbSI2J9V9gVlM8ALWThDPnPu3EL7HPD2VDaZTggzcCCmbvc70qqPcC9mt60ogcrTiA3HEjwTK8ymKeuJMc4q6dVz200XnYUtLR9GYjPXvFOVr6W1zUK1WbPToaWJJuKnxBLnd0ftDEbMmj4loHYyhZyMjM91zQS4p7z8eKa9h0JrbacekcirexG0z4n3xz0QOWSvFj3jLhWXUIU21iIAwJtI3RbWa90I7rzAIqI3UElUJG7tLtUXzw4KQNETvXzqWaujEMenYlNIzLGxgB3AuJEQ39}     \end{align} For the first term, we use Young's inequality to bound     \begin{align}     \begin{split}       I_1        &=        \biggl\zxvczxbcvdfghasdfrtsdafasdfasdfdsfgsdgh \zxvczxbcvdfghasdfrtsdafasdfasdfdsfgsdgf \Gamma(x-y) f(y)\chi_{B_{2r}}(y)  \,dy \biggr\zxvczxbcvdfghasdfrtsdafasdfasdfdsfgsdgh_{L^p(B_r)}       \leq \zxvczxbcvdfghasdfrtsdafasdfasdfdsfgsdgh \Gamma \zxvczxbcvdfghasdfrtsdafasdfasdfdsfgsdgh_{L^1(\mathbb R^n)} \zxvczxbcvdfghasdfrtsdafasdfasdfdsfgsdgh f \zxvczxbcvdfghasdfrtsdafasdfasdfdsfgsdgh_{L^p(B_r)}       \leq \zxvczxbcvdfghasdfrtsdafasdfasdfdsfgsdgh \Gamma \zxvczxbcvdfghasdfrtsdafasdfasdfdsfgsdgh_{L^1(B_{3r})} \zxvczxbcvdfghasdfrtsdafasdfasdfdsfgsdgh f \zxvczxbcvdfghasdfrtsdafasdfasdfdsfgsdgh_{L^p(B_{2r})}       \\&       \les       \MM r^{d-m+\gamma+n/p}\zxvczxbcvdfghasdfrtsdafasdfasdfdsfgsdgf_0^{3r} \frac{s^{n-1}}{s^{n-m}} \,ds       \les \MM r^{d+\gamma+n/p}       ,     \end{split}     \llabel{jX 8P d NKI UFN JvP Ha Vr4T eARP dXEV7B xM 0 A7w 7je p8M 4Q ahOi hEVo Pxbi1V uG e tOt HbP tsO 5r 363R ez9n A5EJ55 pc L lQQ Hg6 X1J EW K8Cf 9kZm 14A5li rN 7 kKZ rY0 K10 It eJd3 kMGw opVnfY EG 2 orG fj0 TTA Xt ecJK eTM0 x1N9f0 lR p QkP M37 3r0 iA 6EFs 1F6f 4mjOB5 zu 5 GGT Ncl Bmk b5 jOOK 4yny My04oz 6m 6 Akz NnP JXh Bn PHRu N5Ly qSguz5 Nn W 2lU Yx3 fX4 hu LieH L30w g93Xwc gj 1 I9d O9b EPC R0 vc6A 005Q VFy1ly K7 o VRV pbJ zZn xY dcld XgQa DXY3gz x3 6 8OR JFK 9Uh XT e3xY bVHG oYqdHg Vy f 5kK Qzm mK4 9x xiAp jVkw gzJOdE 4v g hAv 9bV IHe wc8ThswELzXU3X7Ebd1KdZ7v1rN3GiirRXGKWK099ovBM0FDJCvkopYNQ2aN94Z7k0UnUKamE3OjU8DFYFFokbSI2J9V9gVlM8ALWThDPnPu3EL7HPD2VDaZTggzcCCmbvc70qqPcC9mt60ogcrTiA3HEjwTK8ymKeuJMc4q6dVz200XnYUtLR9GYjPXvFOVr6W1zUK1WbPToaWJJuKnxBLnd0ftDEbMmj4loHYyhZyMjM91zQS4p7z8eKa9h0JrbacekcirexG0z4n3xz0QOWSvFj3jLhWXUIU21iIAwJtI3RbWa90I7rzAIqI3UElUJG7tLtUXzw4KQNETvXzqWaujEMenYlNIzLGxgB3AuJEQ40}     \end{align} where we used \eqref{8ThswELzXU3X7Ebd1KdZ7v1rN3GiirRXGKWK099ovBM0FDJCvkopYNQ2aN94Z7k0UnUKamE3OjU8DFYFFokbSI2J9V9gVlM8ALWThDPnPu3EL7HPD2VDaZTggzcCCmbvc70qqPcC9mt60ogcrTiA3HEjwTK8ymKeuJMc4q6dVz200XnYUtLR9GYjPXvFOVr6W1zUK1WbPToaWJJuKnxBLnd0ftDEbMmj4loHYyhZyMjM91zQS4p7z8eKa9h0JrbacekcirexG0z4n3xz0QOWSvFj3jLhWXUIU21iIAwJtI3RbWa90I7rzAIqI3UElUJG7tLtUXzw4KQNETvXzqWaujEMenYlNIzLGxgB3AuJEQG} in the fourth step; note that the $L^{p}(B_r)$ norm is taken in the $x$ variable. To estimate $I_2$, we write    \begin{align}    \begin{split}    &\biggl| \zxvczxbcvdfghasdfrtsdafasdfasdfdsfgsdgf_{|y|\leq2r} \partial^{\beta}_x \Gamma(-y) \frac{x^{\beta}}{\beta!}    f(y) \,dy \biggr|    \les     |x|^{|\beta|}        \zxvczxbcvdfghasdfrtsdafasdfasdfdsfgsdgf_{|y|\leq 2r} \frac{|f(y)|}{|y|^{n+|\beta|-m}} \,dy      \les       r^{|\beta|}      \zxvczxbcvdfghasdfrtsdafasdfasdfdsfgsdgf_{|y|\leq2r} \frac{|f(y)|}{|y|^{n+|\beta|-m}} \,dy     ,    \end{split}    \llabel{ opVnfY EG 2 orG fj0 TTA Xt ecJK eTM0 x1N9f0 lR p QkP M37 3r0 iA 6EFs 1F6f 4mjOB5 zu 5 GGT Ncl Bmk b5 jOOK 4yny My04oz 6m 6 Akz NnP JXh Bn PHRu N5Ly qSguz5 Nn W 2lU Yx3 fX4 hu LieH L30w g93Xwc gj 1 I9d O9b EPC R0 vc6A 005Q VFy1ly K7 o VRV pbJ zZn xY dcld XgQa DXY3gz x3 6 8OR JFK 9Uh XT e3xY bVHG oYqdHg Vy f 5kK Qzm mK4 9x xiAp jVkw gzJOdE 4v g hAv 9bV IHe wc Vqcb SUcF 1pHzol Nj T l1B urc Sam IP zkUS 8wwS a7wVWR 4D L VGf 1RF r59 9H tyGq hDT0 TDlooa mg j 9am png aWe nG XU2T zXLh IYOW5v 2d A rCG sLk s53 pW AuAy DQlF 6spKyd HT 9 Z1X n2s U8ThswELzXU3X7Ebd1KdZ7v1rN3GiirRXGKWK099ovBM0FDJCvkopYNQ2aN94Z7k0UnUKamE3OjU8DFYFFokbSI2J9V9gVlM8ALWThDPnPu3EL7HPD2VDaZTggzcCCmbvc70qqPcC9mt60ogcrTiA3HEjwTK8ymKeuJMc4q6dVz200XnYUtLR9GYjPXvFOVr6W1zUK1WbPToaWJJuKnxBLnd0ftDEbMmj4loHYyhZyMjM91zQS4p7z8eKa9h0JrbacekcirexG0z4n3xz0QOWSvFj3jLhWXUIU21iIAwJtI3RbWa90I7rzAIqI3UElUJG7tLtUXzw4KQNETvXzqWaujEMenYlNIzLGxgB3AuJEQ41}   \end{align} for any $\beta$ such that $|\beta|\leq d$. Applying Lemma~\ref{L02}~(i) with $a = d - m + \gamma + n/p$ and $b = n + |\beta| -m$, we have      \begin{equation}       \biggl| \zxvczxbcvdfghasdfrtsdafasdfasdfdsfgsdgf_{|y|\leq2r} \partial^{\beta}_x \Gamma(-y) \frac{x^{\beta}}{\beta!}    f(y) \,dy \biggr|       \les \MM r^{|\beta|} r^{d + \gamma - |\beta|}       \les \MM r^{d+\gamma}       .     \label{8ThswELzXU3X7Ebd1KdZ7v1rN3GiirRXGKWK099ovBM0FDJCvkopYNQ2aN94Z7k0UnUKamE3OjU8DFYFFokbSI2J9V9gVlM8ALWThDPnPu3EL7HPD2VDaZTggzcCCmbvc70qqPcC9mt60ogcrTiA3HEjwTK8ymKeuJMc4q6dVz200XnYUtLR9GYjPXvFOVr6W1zUK1WbPToaWJJuKnxBLnd0ftDEbMmj4loHYyhZyMjM91zQS4p7z8eKa9h0JrbacekcirexG0z4n3xz0QOWSvFj3jLhWXUIU21iIAwJtI3RbWa90I7rzAIqI3UElUJG7tLtUXzw4KQNETvXzqWaujEMenYlNIzLGxgB3AuJEQ42}     \end{equation} Note that the condition $b<a+n/p'$ becomes~$\gamma>0$. Therefore, it follows that $I_2 \les M r^{d+\gamma + n/p}$ as desired. To estimate $I_3$, we first claim that      \begin{equation}       I(x)       \les \MM r^{d+\gamma}       \comma        |x| \leq r         ,     \llabel{ L30w g93Xwc gj 1 I9d O9b EPC R0 vc6A 005Q VFy1ly K7 o VRV pbJ zZn xY dcld XgQa DXY3gz x3 6 8OR JFK 9Uh XT e3xY bVHG oYqdHg Vy f 5kK Qzm mK4 9x xiAp jVkw gzJOdE 4v g hAv 9bV IHe wc Vqcb SUcF 1pHzol Nj T l1B urc Sam IP zkUS 8wwS a7wVWR 4D L VGf 1RF r59 9H tyGq hDT0 TDlooa mg j 9am png aWe nG XU2T zXLh IYOW5v 2d A rCG sLk s53 pW AuAy DQlF 6spKyd HT 9 Z1X n2s U1g 0D Llao YuLP PB6YKo D1 M 0fi qHU l4A Ia joiV Q6af VT6wvY Md 0 pCY BZp 7RX Hd xTb0 sjJ0 Beqpkc 8b N OgZ 0Tr 0wq h1 C2Hn YQXM 8nJ0Pf uG J Be2 vuq Duk LV AJwv 2tYc JOM1uK h7 p cgo 8ThswELzXU3X7Ebd1KdZ7v1rN3GiirRXGKWK099ovBM0FDJCvkopYNQ2aN94Z7k0UnUKamE3OjU8DFYFFokbSI2J9V9gVlM8ALWThDPnPu3EL7HPD2VDaZTggzcCCmbvc70qqPcC9mt60ogcrTiA3HEjwTK8ymKeuJMc4q6dVz200XnYUtLR9GYjPXvFOVr6W1zUK1WbPToaWJJuKnxBLnd0ftDEbMmj4loHYyhZyMjM91zQS4p7z8eKa9h0JrbacekcirexG0z4n3xz0QOWSvFj3jLhWXUIU21iIAwJtI3RbWa90I7rzAIqI3UElUJG7tLtUXzw4KQNETvXzqWaujEMenYlNIzLGxgB3AuJEQ43}     \end{equation} where      \begin{equation}       I(x)        = \biggl| \zxvczxbcvdfghasdfrtsdafasdfasdfdsfgsdgf_{2r\leq|y|<1}              \biggl( \Gamma(x-y)                      - \sum_{|\beta| \leq d} \partial_x^{\beta} \Gamma (-y) \frac{x^{\beta}}{\beta !}             \biggr)              f(y) \,dy           \biggr|       .     \label{8ThswELzXU3X7Ebd1KdZ7v1rN3GiirRXGKWK099ovBM0FDJCvkopYNQ2aN94Z7k0UnUKamE3OjU8DFYFFokbSI2J9V9gVlM8ALWThDPnPu3EL7HPD2VDaZTggzcCCmbvc70qqPcC9mt60ogcrTiA3HEjwTK8ymKeuJMc4q6dVz200XnYUtLR9GYjPXvFOVr6W1zUK1WbPToaWJJuKnxBLnd0ftDEbMmj4loHYyhZyMjM91zQS4p7z8eKa9h0JrbacekcirexG0z4n3xz0QOWSvFj3jLhWXUIU21iIAwJtI3RbWa90I7rzAIqI3UElUJG7tLtUXzw4KQNETvXzqWaujEMenYlNIzLGxgB3AuJEQ44}     \end{equation} By Taylor's theorem      \begin{equation}       \biggl| \Gamma(x-y)                      - \sum_{|\beta| \leq d} \partial_x^{\beta} \Gamma (-y) \frac{x^{\beta}}{\beta !}       \biggr|        \les \frac{|x|^{d+1}}{|y-\theta x |^{n+d+1-m}}       \les \frac{|x|^{d+1}}{|y|^{n+d+1-m}}       ,     \label{8ThswELzXU3X7Ebd1KdZ7v1rN3GiirRXGKWK099ovBM0FDJCvkopYNQ2aN94Z7k0UnUKamE3OjU8DFYFFokbSI2J9V9gVlM8ALWThDPnPu3EL7HPD2VDaZTggzcCCmbvc70qqPcC9mt60ogcrTiA3HEjwTK8ymKeuJMc4q6dVz200XnYUtLR9GYjPXvFOVr6W1zUK1WbPToaWJJuKnxBLnd0ftDEbMmj4loHYyhZyMjM91zQS4p7z8eKa9h0JrbacekcirexG0z4n3xz0QOWSvFj3jLhWXUIU21iIAwJtI3RbWa90I7rzAIqI3UElUJG7tLtUXzw4KQNETvXzqWaujEMenYlNIzLGxgB3AuJEQ45}     \end{equation}
for some $\theta\in(0,1)$, where we used \eqref{8ThswELzXU3X7Ebd1KdZ7v1rN3GiirRXGKWK099ovBM0FDJCvkopYNQ2aN94Z7k0UnUKamE3OjU8DFYFFokbSI2J9V9gVlM8ALWThDPnPu3EL7HPD2VDaZTggzcCCmbvc70qqPcC9mt60ogcrTiA3HEjwTK8ymKeuJMc4q6dVz200XnYUtLR9GYjPXvFOVr6W1zUK1WbPToaWJJuKnxBLnd0ftDEbMmj4loHYyhZyMjM91zQS4p7z8eKa9h0JrbacekcirexG0z4n3xz0QOWSvFj3jLhWXUIU21iIAwJtI3RbWa90I7rzAIqI3UElUJG7tLtUXzw4KQNETvXzqWaujEMenYlNIzLGxgB3AuJEQ37} in the first inequality and $|x|\leq r\leq |y|/2$ in the second. Hence, we obtain   \begin{equation*}    I(x)    \les    |x|^{d+1}    \zxvczxbcvdfghasdfrtsdafasdfasdfdsfgsdgf_{2r\leq |y|<1}    \frac{     |f(y)|        }{     |y|^{n+d+1-m}    }\,dy   .   \end{equation*} We apply Lemma~\ref{L02}~(ii) to get     \begin{equation}       I(x)       \les \MM r^{d+1} r^{\gamma - 1}       \les \MM r^{d+\gamma}       ;     \label{8ThswELzXU3X7Ebd1KdZ7v1rN3GiirRXGKWK099ovBM0FDJCvkopYNQ2aN94Z7k0UnUKamE3OjU8DFYFFokbSI2J9V9gVlM8ALWThDPnPu3EL7HPD2VDaZTggzcCCmbvc70qqPcC9mt60ogcrTiA3HEjwTK8ymKeuJMc4q6dVz200XnYUtLR9GYjPXvFOVr6W1zUK1WbPToaWJJuKnxBLnd0ftDEbMmj4loHYyhZyMjM91zQS4p7z8eKa9h0JrbacekcirexG0z4n3xz0QOWSvFj3jLhWXUIU21iIAwJtI3RbWa90I7rzAIqI3UElUJG7tLtUXzw4KQNETvXzqWaujEMenYlNIzLGxgB3AuJEQ46}     \end{equation} note that the condition \eqref{8ThswELzXU3X7Ebd1KdZ7v1rN3GiirRXGKWK099ovBM0FDJCvkopYNQ2aN94Z7k0UnUKamE3OjU8DFYFFokbSI2J9V9gVlM8ALWThDPnPu3EL7HPD2VDaZTggzcCCmbvc70qqPcC9mt60ogcrTiA3HEjwTK8ymKeuJMc4q6dVz200XnYUtLR9GYjPXvFOVr6W1zUK1WbPToaWJJuKnxBLnd0ftDEbMmj4loHYyhZyMjM91zQS4p7z8eKa9h0JrbacekcirexG0z4n3xz0QOWSvFj3jLhWXUIU21iIAwJtI3RbWa90I7rzAIqI3UElUJG7tLtUXzw4KQNETvXzqWaujEMenYlNIzLGxgB3AuJEQ30} becomes $\gamma<1$, which indeed holds. Then, by \eqref{8ThswELzXU3X7Ebd1KdZ7v1rN3GiirRXGKWK099ovBM0FDJCvkopYNQ2aN94Z7k0UnUKamE3OjU8DFYFFokbSI2J9V9gVlM8ALWThDPnPu3EL7HPD2VDaZTggzcCCmbvc70qqPcC9mt60ogcrTiA3HEjwTK8ymKeuJMc4q6dVz200XnYUtLR9GYjPXvFOVr6W1zUK1WbPToaWJJuKnxBLnd0ftDEbMmj4loHYyhZyMjM91zQS4p7z8eKa9h0JrbacekcirexG0z4n3xz0QOWSvFj3jLhWXUIU21iIAwJtI3RbWa90I7rzAIqI3UElUJG7tLtUXzw4KQNETvXzqWaujEMenYlNIzLGxgB3AuJEQ46}, $I_3 \les \MM r^{d+\gamma+n/p}$ as desired. Therefore,     \begin{equation}       \zxvczxbcvdfghasdfrtsdafasdfasdfdsfgsdgh u \zxvczxbcvdfghasdfrtsdafasdfasdfdsfgsdgh_{L^p(B_r)}        \les \MM r^{d+\gamma+n/p}       \comma r\leq\frac{1}{2}      .     \llabel{ Vqcb SUcF 1pHzol Nj T l1B urc Sam IP zkUS 8wwS a7wVWR 4D L VGf 1RF r59 9H tyGq hDT0 TDlooa mg j 9am png aWe nG XU2T zXLh IYOW5v 2d A rCG sLk s53 pW AuAy DQlF 6spKyd HT 9 Z1X n2s U1g 0D Llao YuLP PB6YKo D1 M 0fi qHU l4A Ia joiV Q6af VT6wvY Md 0 pCY BZp 7RX Hd xTb0 sjJ0 Beqpkc 8b N OgZ 0Tr 0wq h1 C2Hn YQXM 8nJ0Pf uG J Be2 vuq Duk LV AJwv 2tYc JOM1uK h7 p cgo iiK t0b 3e URec DVM7 ivRMh1 T6 p AWl upj kEj UL R3xN VAu5 kEbnrV HE 1 OrJ 2bx dUP yD vyVi x6sC BpGDSx jB C n9P Fiu xkF vw 0QPo fRjy 2OFItV eD B tDz lc9 xVy A0 de9Y 5h8c 7dYCFk Fl v8ThswELzXU3X7Ebd1KdZ7v1rN3GiirRXGKWK099ovBM0FDJCvkopYNQ2aN94Z7k0UnUKamE3OjU8DFYFFokbSI2J9V9gVlM8ALWThDPnPu3EL7HPD2VDaZTggzcCCmbvc70qqPcC9mt60ogcrTiA3HEjwTK8ymKeuJMc4q6dVz200XnYUtLR9GYjPXvFOVr6W1zUK1WbPToaWJJuKnxBLnd0ftDEbMmj4loHYyhZyMjM91zQS4p7z8eKa9h0JrbacekcirexG0z4n3xz0QOWSvFj3jLhWXUIU21iIAwJtI3RbWa90I7rzAIqI3UElUJG7tLtUXzw4KQNETvXzqWaujEMenYlNIzLGxgB3AuJEQ47}     \end{equation} The inequality \eqref{8ThswELzXU3X7Ebd1KdZ7v1rN3GiirRXGKWK099ovBM0FDJCvkopYNQ2aN94Z7k0UnUKamE3OjU8DFYFFokbSI2J9V9gVlM8ALWThDPnPu3EL7HPD2VDaZTggzcCCmbvc70qqPcC9mt60ogcrTiA3HEjwTK8ymKeuJMc4q6dVz200XnYUtLR9GYjPXvFOVr6W1zUK1WbPToaWJJuKnxBLnd0ftDEbMmj4loHYyhZyMjM91zQS4p7z8eKa9h0JrbacekcirexG0z4n3xz0QOWSvFj3jLhWXUIU21iIAwJtI3RbWa90I7rzAIqI3UElUJG7tLtUXzw4KQNETvXzqWaujEMenYlNIzLGxgB3AuJEQ26} then holds by applying the elliptic regularity to $Lu=f$, leading to       $r^{|\nu|}\zxvczxbcvdfghasdfrtsdafasdfasdfdsfgsdgh \partial^{\nu} u \zxvczxbcvdfghasdfrtsdafasdfasdfdsfgsdgh_{L^p(B_r)}        \les  \zxvczxbcvdfghasdfrtsdafasdfasdfdsfgsdgh u \zxvczxbcvdfghasdfrtsdafasdfasdfdsfgsdgh_{L^p(B_{2r})} + r^{m}\zxvczxbcvdfghasdfrtsdafasdfasdfdsfgsdgh f \zxvczxbcvdfghasdfrtsdafasdfasdfdsfgsdgh_{L^p(B_{2r})}       \les \MM r^{d+\gamma+n/p}$ for any $r\leq 1/4$ and $|\nu| \leq m$. \par For the variable coefficient case, we follow the approach in  \cite[pp.~489--490]{B} and \cite[pp.~464--466]{H1}. Assume first  that the coefficients of $L$ are uniformly close to the case considered first, i.e., we have     \begin{equation}       \sum_{|\nu|=m} |a_{\nu}(x) - a_{\nu}(0)|        + \sum_{|\nu|<m} |a_{\nu}(x)|        \leq \eps       ,     \label{8ThswELzXU3X7Ebd1KdZ7v1rN3GiirRXGKWK099ovBM0FDJCvkopYNQ2aN94Z7k0UnUKamE3OjU8DFYFFokbSI2J9V9gVlM8ALWThDPnPu3EL7HPD2VDaZTggzcCCmbvc70qqPcC9mt60ogcrTiA3HEjwTK8ymKeuJMc4q6dVz200XnYUtLR9GYjPXvFOVr6W1zUK1WbPToaWJJuKnxBLnd0ftDEbMmj4loHYyhZyMjM91zQS4p7z8eKa9h0JrbacekcirexG0z4n3xz0QOWSvFj3jLhWXUIU21iIAwJtI3RbWa90I7rzAIqI3UElUJG7tLtUXzw4KQNETvXzqWaujEMenYlNIzLGxgB3AuJEQ48}     \end{equation} for some small positive $\eps$ depending on $p$, $d$, and $\gamma$ only.  We rewrite $L u = f$ as     \begin{equation}       L(0)u = f+(L(0)-L)u       ,     \label{8ThswELzXU3X7Ebd1KdZ7v1rN3GiirRXGKWK099ovBM0FDJCvkopYNQ2aN94Z7k0UnUKamE3OjU8DFYFFokbSI2J9V9gVlM8ALWThDPnPu3EL7HPD2VDaZTggzcCCmbvc70qqPcC9mt60ogcrTiA3HEjwTK8ymKeuJMc4q6dVz200XnYUtLR9GYjPXvFOVr6W1zUK1WbPToaWJJuKnxBLnd0ftDEbMmj4loHYyhZyMjM91zQS4p7z8eKa9h0JrbacekcirexG0z4n3xz0QOWSvFj3jLhWXUIU21iIAwJtI3RbWa90I7rzAIqI3UElUJG7tLtUXzw4KQNETvXzqWaujEMenYlNIzLGxgB3AuJEQ49}     \end{equation} where $L(0)$ is introduced in \eqref{8ThswELzXU3X7Ebd1KdZ7v1rN3GiirRXGKWK099ovBM0FDJCvkopYNQ2aN94Z7k0UnUKamE3OjU8DFYFFokbSI2J9V9gVlM8ALWThDPnPu3EL7HPD2VDaZTggzcCCmbvc70qqPcC9mt60ogcrTiA3HEjwTK8ymKeuJMc4q6dVz200XnYUtLR9GYjPXvFOVr6W1zUK1WbPToaWJJuKnxBLnd0ftDEbMmj4loHYyhZyMjM91zQS4p7z8eKa9h0JrbacekcirexG0z4n3xz0QOWSvFj3jLhWXUIU21iIAwJtI3RbWa90I7rzAIqI3UElUJG7tLtUXzw4KQNETvXzqWaujEMenYlNIzLGxgB3AuJEQL}. Consider the convex set   \begin{equation}    S = \biggl\{ u \in W^{m,p} (B_1):              \zxvczxbcvdfghasdfrtsdafasdfasdfdsfgsdgh u \zxvczxbcvdfghasdfrtsdafasdfasdfdsfgsdgh_{W^{m,p}(B_1)} \leq M_1,            \sum_{|\nu|\leq m} r^{|\nu|} \zxvczxbcvdfghasdfrtsdafasdfasdfdsfgsdgh \partial^{\nu} u 	   \zxvczxbcvdfghasdfrtsdafasdfasdfdsfgsdgh_{L^p(B_r)} \leq M_2 r^{d+\gamma+n/p}, \forall r<1        \biggr\}    ,       \llabel{1g 0D Llao YuLP PB6YKo D1 M 0fi qHU l4A Ia joiV Q6af VT6wvY Md 0 pCY BZp 7RX Hd xTb0 sjJ0 Beqpkc 8b N OgZ 0Tr 0wq h1 C2Hn YQXM 8nJ0Pf uG J Be2 vuq Duk LV AJwv 2tYc JOM1uK h7 p cgo iiK t0b 3e URec DVM7 ivRMh1 T6 p AWl upj kEj UL R3xN VAu5 kEbnrV HE 1 OrJ 2bx dUP yD vyVi x6sC BpGDSx jB C n9P Fiu xkF vw 0QPo fRjy 2OFItV eD B tDz lc9 xVy A0 de9Y 5h8c 7dYCFk Fl v WPD SuN VI6 MZ 72u9 MBtK 9BGLNs Yp l X2y b5U HgH AD bW8X Rzkv UJZShW QH G oKX yVA rsH TQ 1Vbd dK2M IxmTf6 wE T 9cX Fbu uVx Cb SBBp 0v2J MQ5Z8z 3p M EGp TU6 KCc YN 2BlW dp2t mliPDH8ThswELzXU3X7Ebd1KdZ7v1rN3GiirRXGKWK099ovBM0FDJCvkopYNQ2aN94Z7k0UnUKamE3OjU8DFYFFokbSI2J9V9gVlM8ALWThDPnPu3EL7HPD2VDaZTggzcCCmbvc70qqPcC9mt60ogcrTiA3HEjwTK8ymKeuJMc4q6dVz200XnYUtLR9GYjPXvFOVr6W1zUK1WbPToaWJJuKnxBLnd0ftDEbMmj4loHYyhZyMjM91zQS4p7z8eKa9h0JrbacekcirexG0z4n3xz0QOWSvFj3jLhWXUIU21iIAwJtI3RbWa90I7rzAIqI3UElUJG7tLtUXzw4KQNETvXzqWaujEMenYlNIzLGxgB3AuJEQ28}   \end{equation} and define $T\colon S \to W^{m,p}(B_1)$ as     \begin{equation}       T(u)        = \zxvczxbcvdfghasdfrtsdafasdfasdfdsfgsdgf_{|y|<1} \Gamma (x-y) \big( f(y) + (L(0)-L) u(y) \big) \,dy      .     \llabel{iiK t0b 3e URec DVM7 ivRMh1 T6 p AWl upj kEj UL R3xN VAu5 kEbnrV HE 1 OrJ 2bx dUP yD vyVi x6sC BpGDSx jB C n9P Fiu xkF vw 0QPo fRjy 2OFItV eD B tDz lc9 xVy A0 de9Y 5h8c 7dYCFk Fl v WPD SuN VI6 MZ 72u9 MBtK 9BGLNs Yp l X2y b5U HgH AD bW8X Rzkv UJZShW QH G oKX yVA rsH TQ 1Vbd dK2M IxmTf6 wE T 9cX Fbu uVx Cb SBBp 0v2J MQ5Z8z 3p M EGp TU6 KCc YN 2BlW dp2t mliPDH JQ W jIR Rgq i5l AP gikl c8ru HnvYFM AI r Ih7 Ths 9tE hA AYgS swZZ fws19P 5w e JvM imb sFH Th CnSZ HORm yt98w3 U3 z ant zAy Twq 0C jgDI Etkb h98V4u o5 2 jjA Zz1 kLo C8 oHGv Z5Ru G8ThswELzXU3X7Ebd1KdZ7v1rN3GiirRXGKWK099ovBM0FDJCvkopYNQ2aN94Z7k0UnUKamE3OjU8DFYFFokbSI2J9V9gVlM8ALWThDPnPu3EL7HPD2VDaZTggzcCCmbvc70qqPcC9mt60ogcrTiA3HEjwTK8ymKeuJMc4q6dVz200XnYUtLR9GYjPXvFOVr6W1zUK1WbPToaWJJuKnxBLnd0ftDEbMmj4loHYyhZyMjM91zQS4p7z8eKa9h0JrbacekcirexG0z4n3xz0QOWSvFj3jLhWXUIU21iIAwJtI3RbWa90I7rzAIqI3UElUJG7tLtUXzw4KQNETvXzqWaujEMenYlNIzLGxgB3AuJEQ50}     \end{equation} We claim $T(S)\subseteq S$ and that $T$ has a fixed point is $S$; the fixed point solves \eqref{8ThswELzXU3X7Ebd1KdZ7v1rN3GiirRXGKWK099ovBM0FDJCvkopYNQ2aN94Z7k0UnUKamE3OjU8DFYFFokbSI2J9V9gVlM8ALWThDPnPu3EL7HPD2VDaZTggzcCCmbvc70qqPcC9mt60ogcrTiA3HEjwTK8ymKeuJMc4q6dVz200XnYUtLR9GYjPXvFOVr6W1zUK1WbPToaWJJuKnxBLnd0ftDEbMmj4loHYyhZyMjM91zQS4p7z8eKa9h0JrbacekcirexG0z4n3xz0QOWSvFj3jLhWXUIU21iIAwJtI3RbWa90I7rzAIqI3UElUJG7tLtUXzw4KQNETvXzqWaujEMenYlNIzLGxgB3AuJEQ49} and satisfies the condition~\eqref{8ThswELzXU3X7Ebd1KdZ7v1rN3GiirRXGKWK099ovBM0FDJCvkopYNQ2aN94Z7k0UnUKamE3OjU8DFYFFokbSI2J9V9gVlM8ALWThDPnPu3EL7HPD2VDaZTggzcCCmbvc70qqPcC9mt60ogcrTiA3HEjwTK8ymKeuJMc4q6dVz200XnYUtLR9GYjPXvFOVr6W1zUK1WbPToaWJJuKnxBLnd0ftDEbMmj4loHYyhZyMjM91zQS4p7z8eKa9h0JrbacekcirexG0z4n3xz0QOWSvFj3jLhWXUIU21iIAwJtI3RbWa90I7rzAIqI3UElUJG7tLtUXzw4KQNETvXzqWaujEMenYlNIzLGxgB3AuJEQ26}.  \par We first claim $T(S)\subseteq S$ for appropriate $M_1$ and~$M_2$. For any $u\in S$, there exists $T(u)$ in $W^{m,p}$ such that     \begin{equation}       L(0) (T(u)) = f+(L(0)-L)u       ,     \llabel{ WPD SuN VI6 MZ 72u9 MBtK 9BGLNs Yp l X2y b5U HgH AD bW8X Rzkv UJZShW QH G oKX yVA rsH TQ 1Vbd dK2M IxmTf6 wE T 9cX Fbu uVx Cb SBBp 0v2J MQ5Z8z 3p M EGp TU6 KCc YN 2BlW dp2t mliPDH JQ W jIR Rgq i5l AP gikl c8ru HnvYFM AI r Ih7 Ths 9tE hA AYgS swZZ fws19P 5w e JvM imb sFH Th CnSZ HORm yt98w3 U3 z ant zAy Twq 0C jgDI Etkb h98V4u o5 2 jjA Zz1 kLo C8 oHGv Z5Ru Gwv3kK 4W B 50T oMt q7Q WG 9mtb SIlc 87ruZf Kw Z Ph3 1ZA Osq 8l jVQJ LTXC gyQn0v KE S iSq Bpa wtH xc IJe4 SiE1 izzxim ke P Y3s 7SX 5DA SG XHqC r38V YP3Hxv OI R ZtM fqN oLF oU 7vNd t8ThswELzXU3X7Ebd1KdZ7v1rN3GiirRXGKWK099ovBM0FDJCvkopYNQ2aN94Z7k0UnUKamE3OjU8DFYFFokbSI2J9V9gVlM8ALWThDPnPu3EL7HPD2VDaZTggzcCCmbvc70qqPcC9mt60ogcrTiA3HEjwTK8ymKeuJMc4q6dVz200XnYUtLR9GYjPXvFOVr6W1zUK1WbPToaWJJuKnxBLnd0ftDEbMmj4loHYyhZyMjM91zQS4p7z8eKa9h0JrbacekcirexG0z4n3xz0QOWSvFj3jLhWXUIU21iIAwJtI3RbWa90I7rzAIqI3UElUJG7tLtUXzw4KQNETvXzqWaujEMenYlNIzLGxgB3AuJEQ51}     \end{equation} by the previous case. Moreover,     \begin{equation}       \zxvczxbcvdfghasdfrtsdafasdfasdfdsfgsdgh T(u) \zxvczxbcvdfghasdfrtsdafasdfasdfdsfgsdgh_{W^{m,p}(B_1)}        \les \zxvczxbcvdfghasdfrtsdafasdfasdfdsfgsdgh f \zxvczxbcvdfghasdfrtsdafasdfasdfdsfgsdgh_{L^p(B_1)}             + \zxvczxbcvdfghasdfrtsdafasdfasdfdsfgsdgh (L(0)-L)u \zxvczxbcvdfghasdfrtsdafasdfasdfdsfgsdgh_{L^p(B_1)}       \les \zxvczxbcvdfghasdfrtsdafasdfasdfdsfgsdgh f \zxvczxbcvdfghasdfrtsdafasdfasdfdsfgsdgh_{L^p(B_1)}             + \eps \zxvczxbcvdfghasdfrtsdafasdfasdfdsfgsdgh u \zxvczxbcvdfghasdfrtsdafasdfasdfdsfgsdgh_{W^{m,p}(B_1)}        ,     \label{8ThswELzXU3X7Ebd1KdZ7v1rN3GiirRXGKWK099ovBM0FDJCvkopYNQ2aN94Z7k0UnUKamE3OjU8DFYFFokbSI2J9V9gVlM8ALWThDPnPu3EL7HPD2VDaZTggzcCCmbvc70qqPcC9mt60ogcrTiA3HEjwTK8ymKeuJMc4q6dVz200XnYUtLR9GYjPXvFOVr6W1zUK1WbPToaWJJuKnxBLnd0ftDEbMmj4loHYyhZyMjM91zQS4p7z8eKa9h0JrbacekcirexG0z4n3xz0QOWSvFj3jLhWXUIU21iIAwJtI3RbWa90I7rzAIqI3UElUJG7tLtUXzw4KQNETvXzqWaujEMenYlNIzLGxgB3AuJEQ52}     \end{equation} where we used \eqref{8ThswELzXU3X7Ebd1KdZ7v1rN3GiirRXGKWK099ovBM0FDJCvkopYNQ2aN94Z7k0UnUKamE3OjU8DFYFFokbSI2J9V9gVlM8ALWThDPnPu3EL7HPD2VDaZTggzcCCmbvc70qqPcC9mt60ogcrTiA3HEjwTK8ymKeuJMc4q6dVz200XnYUtLR9GYjPXvFOVr6W1zUK1WbPToaWJJuKnxBLnd0ftDEbMmj4loHYyhZyMjM91zQS4p7z8eKa9h0JrbacekcirexG0z4n3xz0QOWSvFj3jLhWXUIU21iIAwJtI3RbWa90I7rzAIqI3UElUJG7tLtUXzw4KQNETvXzqWaujEMenYlNIzLGxgB3AuJEQ48} in the last inequality. Letting $C_0$ be the implicit constant in \eqref{8ThswELzXU3X7Ebd1KdZ7v1rN3GiirRXGKWK099ovBM0FDJCvkopYNQ2aN94Z7k0UnUKamE3OjU8DFYFFokbSI2J9V9gVlM8ALWThDPnPu3EL7HPD2VDaZTggzcCCmbvc70qqPcC9mt60ogcrTiA3HEjwTK8ymKeuJMc4q6dVz200XnYUtLR9GYjPXvFOVr6W1zUK1WbPToaWJJuKnxBLnd0ftDEbMmj4loHYyhZyMjM91zQS4p7z8eKa9h0JrbacekcirexG0z4n3xz0QOWSvFj3jLhWXUIU21iIAwJtI3RbWa90I7rzAIqI3UElUJG7tLtUXzw4KQNETvXzqWaujEMenYlNIzLGxgB3AuJEQ52}, we choose $F$, $M_1$, and $\eps$ such that $C_0F = M_1/2$ and $C_0\eps \leq 1/2$. By \eqref{8ThswELzXU3X7Ebd1KdZ7v1rN3GiirRXGKWK099ovBM0FDJCvkopYNQ2aN94Z7k0UnUKamE3OjU8DFYFFokbSI2J9V9gVlM8ALWThDPnPu3EL7HPD2VDaZTggzcCCmbvc70qqPcC9mt60ogcrTiA3HEjwTK8ymKeuJMc4q6dVz200XnYUtLR9GYjPXvFOVr6W1zUK1WbPToaWJJuKnxBLnd0ftDEbMmj4loHYyhZyMjM91zQS4p7z8eKa9h0JrbacekcirexG0z4n3xz0QOWSvFj3jLhWXUIU21iIAwJtI3RbWa90I7rzAIqI3UElUJG7tLtUXzw4KQNETvXzqWaujEMenYlNIzLGxgB3AuJEQ52}, we have $\zxvczxbcvdfghasdfrtsdafasdfasdfdsfgsdgh T(u) \zxvczxbcvdfghasdfrtsdafasdfasdfdsfgsdgh_{W^{m,p}(B_1)} \leq M_1$ for $\zxvczxbcvdfghasdfrtsdafasdfasdfdsfgsdgh u \zxvczxbcvdfghasdfrtsdafasdfasdfdsfgsdgh_{L^p(B_1)} \leq M_1$ and $\zxvczxbcvdfghasdfrtsdafasdfasdfdsfgsdgh f \zxvczxbcvdfghasdfrtsdafasdfasdfdsfgsdgh_{L^p(B_1)} \leq F$. Also,  choosing $M_2 = 2C_0\MM $, for any $r\leq 1$ we may write     \begin{equation}       \sum_{|\nu|\leq m} r^{|\nu|} \zxvczxbcvdfghasdfrtsdafasdfasdfdsfgsdgh \partial^{\nu} T(u) \zxvczxbcvdfghasdfrtsdafasdfasdfdsfgsdgh_{L^p(B_r)}        \leq C_0(\MM + \eps M_2) r^{d+\gamma +n/p}       \leq N r^{d+\gamma + n/p}     \llabel{ JQ W jIR Rgq i5l AP gikl c8ru HnvYFM AI r Ih7 Ths 9tE hA AYgS swZZ fws19P 5w e JvM imb sFH Th CnSZ HORm yt98w3 U3 z ant zAy Twq 0C jgDI Etkb h98V4u o5 2 jjA Zz1 kLo C8 oHGv Z5Ru Gwv3kK 4W B 50T oMt q7Q WG 9mtb SIlc 87ruZf Kw Z Ph3 1ZA Osq 8l jVQJ LTXC gyQn0v KE S iSq Bpa wtH xc IJe4 SiE1 izzxim ke P Y3s 7SX 5DA SG XHqC r38V YP3Hxv OI R ZtM fqN oLF oU 7vNd txzw UkX32t 94 n Fdq qTR QOv Yq Ebig jrSZ kTN7Xw tP F gNs O7M 1mb DA btVB 3LGC pgE9hV FK Y LcS GmF 863 7a ZDiz 4CuJ bLnpE7 yl 8 5jg Many Thanks, POL OG EPOe Mru1 v25XLJ Fz h wgE lnu8ThswELzXU3X7Ebd1KdZ7v1rN3GiirRXGKWK099ovBM0FDJCvkopYNQ2aN94Z7k0UnUKamE3OjU8DFYFFokbSI2J9V9gVlM8ALWThDPnPu3EL7HPD2VDaZTggzcCCmbvc70qqPcC9mt60ogcrTiA3HEjwTK8ymKeuJMc4q6dVz200XnYUtLR9GYjPXvFOVr6W1zUK1WbPToaWJJuKnxBLnd0ftDEbMmj4loHYyhZyMjM91zQS4p7z8eKa9h0JrbacekcirexG0z4n3xz0QOWSvFj3jLhWXUIU21iIAwJtI3RbWa90I7rzAIqI3UElUJG7tLtUXzw4KQNETvXzqWaujEMenYlNIzLGxgB3AuJEQ53}     \end{equation} if $\sum_{|\nu|\leq m} r^{|\nu|} \zxvczxbcvdfghasdfrtsdafasdfasdfdsfgsdgh \partial^{\nu} u \zxvczxbcvdfghasdfrtsdafasdfasdfdsfgsdgh_{L^p(B_r)} \leq M_2 r^{d+\gamma+n/p}$ and $\zxvczxbcvdfghasdfrtsdafasdfasdfdsfgsdgh f \zxvczxbcvdfghasdfrtsdafasdfasdfdsfgsdgh_{L^p(B_r)} \leq \MM r^{d-m+\gamma+n/p}$. Hence, $T(S)\subseteq S$ for $M_1$, $M_2$, and~$\eps$ chosen above.  \par We now claim that $T$ is a contraction.  For any $u$ and $v$ in $S$, we have     \begin{equation}       T(u) - T(v)         = \zxvczxbcvdfghasdfrtsdafasdfasdfdsfgsdgf_{|y|<1} \Gamma(x-y) (L(0)-L)(u-v)(y) \,dy       .     \llabel{wv3kK 4W B 50T oMt q7Q WG 9mtb SIlc 87ruZf Kw Z Ph3 1ZA Osq 8l jVQJ LTXC gyQn0v KE S iSq Bpa wtH xc IJe4 SiE1 izzxim ke P Y3s 7SX 5DA SG XHqC r38V YP3Hxv OI R ZtM fqN oLF oU 7vNd txzw UkX32t 94 n Fdq qTR QOv Yq Ebig jrSZ kTN7Xw tP F gNs O7M 1mb DA btVB 3LGC pgE9hV FK Y LcS GmF 863 7a ZDiz 4CuJ bLnpE7 yl 8 5jg Many Thanks, POL OG EPOe Mru1 v25XLJ Fz h wgE lnu Ymq rX 1YKV Kvgm MK7gI4 6h 5 kZB OoJ tfC 5g VvA1 kNJr 2o7om1 XN p Uwt CWX fFT SW DjsI wuxO JxLU1S xA 5 ObG 3IO UdL qJ cCAr gzKM 08DvX2 mu i 13T t71 Iwq oF UI0E Ef5S V2vxcy SY I QG8ThswELzXU3X7Ebd1KdZ7v1rN3GiirRXGKWK099ovBM0FDJCvkopYNQ2aN94Z7k0UnUKamE3OjU8DFYFFokbSI2J9V9gVlM8ALWThDPnPu3EL7HPD2VDaZTggzcCCmbvc70qqPcC9mt60ogcrTiA3HEjwTK8ymKeuJMc4q6dVz200XnYUtLR9GYjPXvFOVr6W1zUK1WbPToaWJJuKnxBLnd0ftDEbMmj4loHYyhZyMjM91zQS4p7z8eKa9h0JrbacekcirexG0z4n3xz0QOWSvFj3jLhWXUIU21iIAwJtI3RbWa90I7rzAIqI3UElUJG7tLtUXzw4KQNETvXzqWaujEMenYlNIzLGxgB3AuJEQ54}     \end{equation} Therefore, by the global $W^{m,p}$ estimates and expression of $T(u)-T(v)$, we obtain     \begin{equation}       \zxvczxbcvdfghasdfrtsdafasdfasdfdsfgsdgh T(u) - T(v) \zxvczxbcvdfghasdfrtsdafasdfasdfdsfgsdgh_{W^{m,p} (B_1)}        \leq C_0 \eps \zxvczxbcvdfghasdfrtsdafasdfasdfdsfgsdgh u - v \zxvczxbcvdfghasdfrtsdafasdfasdfdsfgsdgh_{W^{m,p}(B_1)}       \leq \frac{1}{2} \zxvczxbcvdfghasdfrtsdafasdfasdfdsfgsdgh u - v \zxvczxbcvdfghasdfrtsdafasdfasdfdsfgsdgh_{W^{m,p}(B_1)}               .          \llabel{xzw UkX32t 94 n Fdq qTR QOv Yq Ebig jrSZ kTN7Xw tP F gNs O7M 1mb DA btVB 3LGC pgE9hV FK Y LcS GmF 863 7a ZDiz 4CuJ bLnpE7 yl 8 5jg Many Thanks, POL OG EPOe Mru1 v25XLJ Fz h wgE lnu Ymq rX 1YKV Kvgm MK7gI4 6h 5 kZB OoJ tfC 5g VvA1 kNJr 2o7om1 XN p Uwt CWX fFT SW DjsI wuxO JxLU1S xA 5 ObG 3IO UdL qJ cCAr gzKM 08DvX2 mu i 13T t71 Iwq oF UI0E Ef5S V2vxcy SY I QGr qrB HID TJ v1OB 1CzD IDdW4E 4j J mv6 Ktx oBO s9 ADWB q218 BJJzRy UQ i 2Gp weE T8L aO 4ho9 5g4v WQmoiq jS w MA9 Cvn Gqx l1 LrYu MjGb oUpuvY Q2 C dBl AB9 7ew jc 5RJE SFGs ORedoM 0b8ThswELzXU3X7Ebd1KdZ7v1rN3GiirRXGKWK099ovBM0FDJCvkopYNQ2aN94Z7k0UnUKamE3OjU8DFYFFokbSI2J9V9gVlM8ALWThDPnPu3EL7HPD2VDaZTggzcCCmbvc70qqPcC9mt60ogcrTiA3HEjwTK8ymKeuJMc4q6dVz200XnYUtLR9GYjPXvFOVr6W1zUK1WbPToaWJJuKnxBLnd0ftDEbMmj4loHYyhZyMjM91zQS4p7z8eKa9h0JrbacekcirexG0z4n3xz0QOWSvFj3jLhWXUIU21iIAwJtI3RbWa90I7rzAIqI3UElUJG7tLtUXzw4KQNETvXzqWaujEMenYlNIzLGxgB3AuJEQ55}     \end{equation} Thus, $T$ is a contraction from $S$ to~$S$. Let $u$ be a fixed point of~$T$. Then $u$ solves \eqref{8ThswELzXU3X7Ebd1KdZ7v1rN3GiirRXGKWK099ovBM0FDJCvkopYNQ2aN94Z7k0UnUKamE3OjU8DFYFFokbSI2J9V9gVlM8ALWThDPnPu3EL7HPD2VDaZTggzcCCmbvc70qqPcC9mt60ogcrTiA3HEjwTK8ymKeuJMc4q6dVz200XnYUtLR9GYjPXvFOVr6W1zUK1WbPToaWJJuKnxBLnd0ftDEbMmj4loHYyhZyMjM91zQS4p7z8eKa9h0JrbacekcirexG0z4n3xz0QOWSvFj3jLhWXUIU21iIAwJtI3RbWa90I7rzAIqI3UElUJG7tLtUXzw4KQNETvXzqWaujEMenYlNIzLGxgB3AuJEQ49} and satisfies \eqref{8ThswELzXU3X7Ebd1KdZ7v1rN3GiirRXGKWK099ovBM0FDJCvkopYNQ2aN94Z7k0UnUKamE3OjU8DFYFFokbSI2J9V9gVlM8ALWThDPnPu3EL7HPD2VDaZTggzcCCmbvc70qqPcC9mt60ogcrTiA3HEjwTK8ymKeuJMc4q6dVz200XnYUtLR9GYjPXvFOVr6W1zUK1WbPToaWJJuKnxBLnd0ftDEbMmj4loHYyhZyMjM91zQS4p7z8eKa9h0JrbacekcirexG0z4n3xz0QOWSvFj3jLhWXUIU21iIAwJtI3RbWa90I7rzAIqI3UElUJG7tLtUXzw4KQNETvXzqWaujEMenYlNIzLGxgB3AuJEQ26}, as desired. \par For the general case, when \eqref{8ThswELzXU3X7Ebd1KdZ7v1rN3GiirRXGKWK099ovBM0FDJCvkopYNQ2aN94Z7k0UnUKamE3OjU8DFYFFokbSI2J9V9gVlM8ALWThDPnPu3EL7HPD2VDaZTggzcCCmbvc70qqPcC9mt60ogcrTiA3HEjwTK8ymKeuJMc4q6dVz200XnYUtLR9GYjPXvFOVr6W1zUK1WbPToaWJJuKnxBLnd0ftDEbMmj4loHYyhZyMjM91zQS4p7z8eKa9h0JrbacekcirexG0z4n3xz0QOWSvFj3jLhWXUIU21iIAwJtI3RbWa90I7rzAIqI3UElUJG7tLtUXzw4KQNETvXzqWaujEMenYlNIzLGxgB3AuJEQ48} is not required,  we consider the transformation~$x \to Rx$. For $R$ sufficiently small (depending on $K$ and~$\alpha$, considered fixed), the condition~\eqref{8ThswELzXU3X7Ebd1KdZ7v1rN3GiirRXGKWK099ovBM0FDJCvkopYNQ2aN94Z7k0UnUKamE3OjU8DFYFFokbSI2J9V9gVlM8ALWThDPnPu3EL7HPD2VDaZTggzcCCmbvc70qqPcC9mt60ogcrTiA3HEjwTK8ymKeuJMc4q6dVz200XnYUtLR9GYjPXvFOVr6W1zUK1WbPToaWJJuKnxBLnd0ftDEbMmj4loHYyhZyMjM91zQS4p7z8eKa9h0JrbacekcirexG0z4n3xz0QOWSvFj3jLhWXUIU21iIAwJtI3RbWa90I7rzAIqI3UElUJG7tLtUXzw4KQNETvXzqWaujEMenYlNIzLGxgB3AuJEQ48} is satisfied so the result of the previous case can be applied.  \end{proof} \par Note that in the proof of Lemma~\ref{L01}, we have also obtained the following statement. \par \cole \begin{Lemma} \label{L10} Assume that $w(x) = \zxvczxbcvdfghasdfrtsdafasdfasdfdsfgsdgf_{|y|<1} \Gamma(x-y) f(y) \, dy$, where \begin{equation} \zxvczxbcvdfghasdfrtsdafasdfasdfdsfgsdgh f \zxvczxbcvdfghasdfrtsdafasdfasdfdsfgsdgh_{L^p(B_r)} \leq M r^{d-m+\gamma+n/p} ,    \llabel{ Ymq rX 1YKV Kvgm MK7gI4 6h 5 kZB OoJ tfC 5g VvA1 kNJr 2o7om1 XN p Uwt CWX fFT SW DjsI wuxO JxLU1S xA 5 ObG 3IO UdL qJ cCAr gzKM 08DvX2 mu i 13T t71 Iwq oF UI0E Ef5S V2vxcy SY I QGr qrB HID TJ v1OB 1CzD IDdW4E 4j J mv6 Ktx oBO s9 ADWB q218 BJJzRy UQ i 2Gp weE T8L aO 4ho9 5g4v WQmoiq jS w MA9 Cvn Gqx l1 LrYu MjGb oUpuvY Q2 C dBl AB9 7ew jc 5RJE SFGs ORedoM 0b B k25 VEK B8V A9 ytAE Oyof G8QIj2 7a I 3jy Rmz yET Kx pgUq 4Bvb cD1b1g KB y oE3 azg elV Nu 8iZ1 w1tq twKx8C LN 2 8yn jdo jUW vN H9qy HaXZ GhjUgm uL I 87i Y7Q 9MQ Wa iFFS Gzt8 4mSQ8ThswELzXU3X7Ebd1KdZ7v1rN3GiirRXGKWK099ovBM0FDJCvkopYNQ2aN94Z7k0UnUKamE3OjU8DFYFFokbSI2J9V9gVlM8ALWThDPnPu3EL7HPD2VDaZTggzcCCmbvc70qqPcC9mt60ogcrTiA3HEjwTK8ymKeuJMc4q6dVz200XnYUtLR9GYjPXvFOVr6W1zUK1WbPToaWJJuKnxBLnd0ftDEbMmj4loHYyhZyMjM91zQS4p7z8eKa9h0JrbacekcirexG0z4n3xz0QOWSvFj3jLhWXUIU21iIAwJtI3RbWa90I7rzAIqI3UElUJG7tLtUXzw4KQNETvXzqWaujEMenYlNIzLGxgB3AuJEQ87} \end{equation}  for some $\gamma \in (0,1)$ and for $p$ satisfying \eqref{8ThswELzXU3X7Ebd1KdZ7v1rN3GiirRXGKWK099ovBM0FDJCvkopYNQ2aN94Z7k0UnUKamE3OjU8DFYFFokbSI2J9V9gVlM8ALWThDPnPu3EL7HPD2VDaZTggzcCCmbvc70qqPcC9mt60ogcrTiA3HEjwTK8ymKeuJMc4q6dVz200XnYUtLR9GYjPXvFOVr6W1zUK1WbPToaWJJuKnxBLnd0ftDEbMmj4loHYyhZyMjM91zQS4p7z8eKa9h0JrbacekcirexG0z4n3xz0QOWSvFj3jLhWXUIU21iIAwJtI3RbWa90I7rzAIqI3UElUJG7tLtUXzw4KQNETvXzqWaujEMenYlNIzLGxgB3AuJEQ13}, and let $P_w(x)$ be its Taylor polynomial of degree~$d$, i.e., \begin{equation} P_w(x) = \sum_{|\beta| \leq d} \frac{x^{\beta}}{\beta!} \zxvczxbcvdfghasdfrtsdafasdfasdfdsfgsdgf_{|y|<1} \partial_x^{\beta} \Gamma(-y) f(y) \, dy .    \llabel{r qrB HID TJ v1OB 1CzD IDdW4E 4j J mv6 Ktx oBO s9 ADWB q218 BJJzRy UQ i 2Gp weE T8L aO 4ho9 5g4v WQmoiq jS w MA9 Cvn Gqx l1 LrYu MjGb oUpuvY Q2 C dBl AB9 7ew jc 5RJE SFGs ORedoM 0b B k25 VEK B8V A9 ytAE Oyof G8QIj2 7a I 3jy Rmz yET Kx pgUq 4Bvb cD1b1g KB y oE3 azg elV Nu 8iZ1 w1tq twKx8C LN 2 8yn jdo jUW vN H9qy HaXZ GhjUgm uL I 87i Y7Q 9MQ Wa iFFS Gzt8 4mSQq2 5O N ltT gbl 8YD QS AzXq pJEK 7bGL1U Jn 0 f59 vPr wdt d6 sDLj Loo1 8tQXf5 5u p mTa dJD sEL pH 2vqY uTAm YzDg95 1P K FP6 pEi zIJ Qd 8Ngn HTND 6z6ExR XV 0 ouU jWT kAK AB eAC9 Rfja8ThswELzXU3X7Ebd1KdZ7v1rN3GiirRXGKWK099ovBM0FDJCvkopYNQ2aN94Z7k0UnUKamE3OjU8DFYFFokbSI2J9V9gVlM8ALWThDPnPu3EL7HPD2VDaZTggzcCCmbvc70qqPcC9mt60ogcrTiA3HEjwTK8ymKeuJMc4q6dVz200XnYUtLR9GYjPXvFOVr6W1zUK1WbPToaWJJuKnxBLnd0ftDEbMmj4loHYyhZyMjM91zQS4p7z8eKa9h0JrbacekcirexG0z4n3xz0QOWSvFj3jLhWXUIU21iIAwJtI3RbWa90I7rzAIqI3UElUJG7tLtUXzw4KQNETvXzqWaujEMenYlNIzLGxgB3AuJEQ103} \end{equation} Then  \begin{equation} \zxvczxbcvdfghasdfrtsdafasdfasdfdsfgsdgh w - P_w \zxvczxbcvdfghasdfrtsdafasdfasdfdsfgsdgh_{L^p(B_r)} \leq Mr^{d+\gamma + n/p} \comma r \in (0, 1/2] .    \llabel{ B k25 VEK B8V A9 ytAE Oyof G8QIj2 7a I 3jy Rmz yET Kx pgUq 4Bvb cD1b1g KB y oE3 azg elV Nu 8iZ1 w1tq twKx8C LN 2 8yn jdo jUW vN H9qy HaXZ GhjUgm uL I 87i Y7Q 9MQ Wa iFFS Gzt8 4mSQq2 5O N ltT gbl 8YD QS AzXq pJEK 7bGL1U Jn 0 f59 vPr wdt d6 sDLj Loo1 8tQXf5 5u p mTa dJD sEL pH 2vqY uTAm YzDg95 1P K FP6 pEi zIJ Qd 8Ngn HTND 6z6ExR XV 0 ouU jWT kAK AB eAC9 Rfja c43Ajk Xn H dgS y3v 5cB et s3VX qfpP BqiGf9 0a w g4d W9U kvR iJ y46G bH3U cJ86hW Va C Mje dsU cqD SZ 1DlP 2mfB hzu5dv u1 i 6eW 2YN LhM 3f WOdz KS6Q ov14wx YY d 8sa S38 hIl cP tS4l8ThswELzXU3X7Ebd1KdZ7v1rN3GiirRXGKWK099ovBM0FDJCvkopYNQ2aN94Z7k0UnUKamE3OjU8DFYFFokbSI2J9V9gVlM8ALWThDPnPu3EL7HPD2VDaZTggzcCCmbvc70qqPcC9mt60ogcrTiA3HEjwTK8ymKeuJMc4q6dVz200XnYUtLR9GYjPXvFOVr6W1zUK1WbPToaWJJuKnxBLnd0ftDEbMmj4loHYyhZyMjM91zQS4p7z8eKa9h0JrbacekcirexG0z4n3xz0QOWSvFj3jLhWXUIU21iIAwJtI3RbWa90I7rzAIqI3UElUJG7tLtUXzw4KQNETvXzqWaujEMenYlNIzLGxgB3AuJEQ104} \end{equation} \end{Lemma}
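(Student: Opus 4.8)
The plan is to replay the constant-coefficient portion of the proof of Lemma~\ref{L01}: there, under exactly the present hypothesis on $f$, the function $w-P_w$ was already shown to satisfy the bound claimed here, and nothing in that argument used the equation $Lu=f$ or the variable-coefficient perturbation scheme. So I would simply isolate the estimates on the three pieces $I_1,I_2,I_3$ obtained there. To set up, note that $w$ is the convolution of $\Gamma$ with $f\chi_{B_1}$, that $P_w$ is precisely its degree-$d$ Taylor polynomial at $0$ (the integrals defining its coefficients converge by $|\partial_x^{\beta}\Gamma(-y)|\les|y|^{m-n-|\beta|}$ together with Lemma~\ref{L02}~(i), whose admissibility condition here reads $|\beta|<d+\gamma$), and that $L(0)P_w=0$.

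The key step is the decomposition, for fixed $r\in(0,1/2]$,
\[
(w-P_w)(x)=\int_{|y|<1}\Bigl(\Gamma(x-y)-\sum_{|\beta|\le d}\partial_x^{\beta}\Gamma(-y)\tfrac{x^{\beta}}{\beta!}\Bigr)f(y)\,dy,
\]
split according to whether $|y|\le 2r$ or $2r\le|y|<1$: the near part of $\Gamma(x-y)$ gives $I_1$, the near part of the polynomial correction gives $I_2$, and the far part (kernel minus its degree-$d$ Taylor polynomial) gives $I_3$, so $\|w-P_w\|_{L^p(B_r)}\le I_1+I_2+I_3$. For $I_1$ I would use Young's inequality with $\|\Gamma\|_{L^1(B_{3r})}\les\int_0^{3r}s^{m-1}\,ds\les r^m$ and $\|f\|_{L^p(B_{2r})}\les M r^{d-m+\gamma+n/p}$, giving $I_1\les M r^{d+\gamma+n/p}$. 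For $I_2$ I would use $|\partial_x^{\beta}\Gamma(-y)|\les|y|^{m-n-|\beta|}$ and Lemma~\ref{L02}~(i) with $a=d-m+\gamma+n/p$ and $b=n+|\beta|-m$; since $|\beta|\le d$ and $\gamma>0$, the condition $b<a+n/p'$ (that is, $|\beta|<d+\gamma$) holds, yielding the pointwise bound $\les M r^{d+\gamma}$ on $B_r$ and hence $I_2\les M r^{d+\gamma+n/p}$. For $I_3$ I would use the Taylor remainder estimate $|\Gamma(x-y)-\sum_{|\beta|\le d}\partial_x^{\beta}\Gamma(-y)\tfrac{x^{\beta}}{\beta!}|\les|x|^{d+1}|y|^{m-n-d-1}$, valid for $|x|\le r\le|y|/2$, followed by Lemma~\ref{L02}~(ii), whose hypothesis $b>a+n/p'$ now reads $\gamma<1$ and therefore holds; this gives $I(x)\les M r^{d+1}r^{\gamma-1}=M r^{d+\gamma}$ for $|x|\le r$, hence $I_3\les M r^{d+\gamma+n/p}$. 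Summing, $\|w-P_w\|_{L^p(B_r)}\les M r^{d+\gamma+n/p}$ for $r\in(0,1/2]$, with implicit constant depending only on $p$, $d$, and $\gamma$.

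I do not expect a genuine obstacle: the lemma is recorded separately only so that the convolution $w$ and its Taylor polynomial $P_w$ can be invoked later in the bootstrap decomposition without carrying along the variable-coefficient and higher-derivative content of Lemma~\ref{L01}. The one point deserving a second look is that the two admissibility requirements for Lemma~\ref{L02} — namely $\gamma>0$ in part~(i) and $\gamma<1$ in part~(ii) — are precisely the two strict inequalities packaged in $\gamma\in(0,1)$, so the range of $\gamma$ in the hypothesis is exactly what the argument consumes and cannot be relaxed.
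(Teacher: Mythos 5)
Your argument is exactly the paper's: the paper records Lemma~\ref{L10} as a byproduct of the constant-coefficient part of the proof of Lemma~\ref{L01}, where the decomposition into $I_1,I_2,I_3$ and the three estimates (Young's inequality, Lemma~\ref{L02}~(i) for the polynomial part, Taylor remainder plus Lemma~\ref{L02}~(ii) for the far field) are carried out verbatim as you describe. Your replay is faithful, including the role of the two endpoint conditions $\gamma>0$ and $\gamma<1$, so there is nothing further to add.
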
 \colb \par \subsection{Extended range for the interior existence lemma} The interior $W^{m,p}$ estimate may be extended to the $L^q$ norm for $1 \leq q < np/(n-mp)$ as follows. \par \cole \begin{Lemma} \label{L03} Assume that $L$ in \eqref{8ThswELzXU3X7Ebd1KdZ7v1rN3GiirRXGKWK099ovBM0FDJCvkopYNQ2aN94Z7k0UnUKamE3OjU8DFYFFokbSI2J9V9gVlM8ALWThDPnPu3EL7HPD2VDaZTggzcCCmbvc70qqPcC9mt60ogcrTiA3HEjwTK8ymKeuJMc4q6dVz200XnYUtLR9GYjPXvFOVr6W1zUK1WbPToaWJJuKnxBLnd0ftDEbMmj4loHYyhZyMjM91zQS4p7z8eKa9h0JrbacekcirexG0z4n3xz0QOWSvFj3jLhWXUIU21iIAwJtI3RbWa90I7rzAIqI3UElUJG7tLtUXzw4KQNETvXzqWaujEMenYlNIzLGxgB3AuJEQ08} satisfies the conditions~\eqref{8ThswELzXU3X7Ebd1KdZ7v1rN3GiirRXGKWK099ovBM0FDJCvkopYNQ2aN94Z7k0UnUKamE3OjU8DFYFFokbSI2J9V9gVlM8ALWThDPnPu3EL7HPD2VDaZTggzcCCmbvc70qqPcC9mt60ogcrTiA3HEjwTK8ymKeuJMc4q6dVz200XnYUtLR9GYjPXvFOVr6W1zUK1WbPToaWJJuKnxBLnd0ftDEbMmj4loHYyhZyMjM91zQS4p7z8eKa9h0JrbacekcirexG0z4n3xz0QOWSvFj3jLhWXUIU21iIAwJtI3RbWa90I7rzAIqI3UElUJG7tLtUXzw4KQNETvXzqWaujEMenYlNIzLGxgB3AuJEQ09}--\eqref{8ThswELzXU3X7Ebd1KdZ7v1rN3GiirRXGKWK099ovBM0FDJCvkopYNQ2aN94Z7k0UnUKamE3OjU8DFYFFokbSI2J9V9gVlM8ALWThDPnPu3EL7HPD2VDaZTggzcCCmbvc70qqPcC9mt60ogcrTiA3HEjwTK8ymKeuJMc4q6dVz200XnYUtLR9GYjPXvFOVr6W1zUK1WbPToaWJJuKnxBLnd0ftDEbMmj4loHYyhZyMjM91zQS4p7z8eKa9h0JrbacekcirexG0z4n3xz0QOWSvFj3jLhWXUIU21iIAwJtI3RbWa90I7rzAIqI3UElUJG7tLtUXzw4KQNETvXzqWaujEMenYlNIzLGxgB3AuJEQ11}.  Suppose that $f \in L^p(B_1)$, where $p$ satisfies \eqref{8ThswELzXU3X7Ebd1KdZ7v1rN3GiirRXGKWK099ovBM0FDJCvkopYNQ2aN94Z7k0UnUKamE3OjU8DFYFFokbSI2J9V9gVlM8ALWThDPnPu3EL7HPD2VDaZTggzcCCmbvc70qqPcC9mt60ogcrTiA3HEjwTK8ymKeuJMc4q6dVz200XnYUtLR9GYjPXvFOVr6W1zUK1WbPToaWJJuKnxBLnd0ftDEbMmj4loHYyhZyMjM91zQS4p7z8eKa9h0JrbacekcirexG0z4n3xz0QOWSvFj3jLhWXUIU21iIAwJtI3RbWa90I7rzAIqI3UElUJG7tLtUXzw4KQNETvXzqWaujEMenYlNIzLGxgB3AuJEQ13}, and that there exist $\gamma \in (0,1)$ and $d \geq m$ such that $\zxvczxbcvdfghasdfrtsdafasdfasdfdsfgsdgh f \zxvczxbcvdfghasdfrtsdafasdfasdfdsfgsdgh_{L^p(B_r)} \leq \MM r^{d-m+\gamma+n/p}$, for all~$r\leq 1$. Then there exists a positive constant $R$, depending on $\alpha$, and a solution $u \in W^{m,q}(B_R)$ of     \begin{equation}
      Lu = f       \inin{B_R}     \label{8ThswELzXU3X7Ebd1KdZ7v1rN3GiirRXGKWK099ovBM0FDJCvkopYNQ2aN94Z7k0UnUKamE3OjU8DFYFFokbSI2J9V9gVlM8ALWThDPnPu3EL7HPD2VDaZTggzcCCmbvc70qqPcC9mt60ogcrTiA3HEjwTK8ymKeuJMc4q6dVz200XnYUtLR9GYjPXvFOVr6W1zUK1WbPToaWJJuKnxBLnd0ftDEbMmj4loHYyhZyMjM91zQS4p7z8eKa9h0JrbacekcirexG0z4n3xz0QOWSvFj3jLhWXUIU21iIAwJtI3RbWa90I7rzAIqI3UElUJG7tLtUXzw4KQNETvXzqWaujEMenYlNIzLGxgB3AuJEQ56}     \end{equation} such that      \begin{equation}       \sum_{|\nu|\leq m} r^{|\nu|} \zxvczxbcvdfghasdfrtsdafasdfasdfdsfgsdgh \partial^{\nu} u \zxvczxbcvdfghasdfrtsdafasdfasdfdsfgsdgh_{L^q(B_r)}       \les \MM r^{d+\gamma +n/q}       \comma r \leq R       ,     \label{8ThswELzXU3X7Ebd1KdZ7v1rN3GiirRXGKWK099ovBM0FDJCvkopYNQ2aN94Z7k0UnUKamE3OjU8DFYFFokbSI2J9V9gVlM8ALWThDPnPu3EL7HPD2VDaZTggzcCCmbvc70qqPcC9mt60ogcrTiA3HEjwTK8ymKeuJMc4q6dVz200XnYUtLR9GYjPXvFOVr6W1zUK1WbPToaWJJuKnxBLnd0ftDEbMmj4loHYyhZyMjM91zQS4p7z8eKa9h0JrbacekcirexG0z4n3xz0QOWSvFj3jLhWXUIU21iIAwJtI3RbWa90I7rzAIqI3UElUJG7tLtUXzw4KQNETvXzqWaujEMenYlNIzLGxgB3AuJEQ57}     \end{equation} for     \begin{equation}     1\leq q < \frac{np}{n-mp}     ,     \label{8ThswELzXU3X7Ebd1KdZ7v1rN3GiirRXGKWK099ovBM0FDJCvkopYNQ2aN94Z7k0UnUKamE3OjU8DFYFFokbSI2J9V9gVlM8ALWThDPnPu3EL7HPD2VDaZTggzcCCmbvc70qqPcC9mt60ogcrTiA3HEjwTK8ymKeuJMc4q6dVz200XnYUtLR9GYjPXvFOVr6W1zUK1WbPToaWJJuKnxBLnd0ftDEbMmj4loHYyhZyMjM91zQS4p7z8eKa9h0JrbacekcirexG0z4n3xz0QOWSvFj3jLhWXUIU21iIAwJtI3RbWa90I7rzAIqI3UElUJG7tLtUXzw4KQNETvXzqWaujEMenYlNIzLGxgB3AuJEQ58}     \end{equation} where the constant in \eqref{8ThswELzXU3X7Ebd1KdZ7v1rN3GiirRXGKWK099ovBM0FDJCvkopYNQ2aN94Z7k0UnUKamE3OjU8DFYFFokbSI2J9V9gVlM8ALWThDPnPu3EL7HPD2VDaZTggzcCCmbvc70qqPcC9mt60ogcrTiA3HEjwTK8ymKeuJMc4q6dVz200XnYUtLR9GYjPXvFOVr6W1zUK1WbPToaWJJuKnxBLnd0ftDEbMmj4loHYyhZyMjM91zQS4p7z8eKa9h0JrbacekcirexG0z4n3xz0QOWSvFj3jLhWXUIU21iIAwJtI3RbWa90I7rzAIqI3UElUJG7tLtUXzw4KQNETvXzqWaujEMenYlNIzLGxgB3AuJEQ57} depends on $p$, $q$, $d$, and~$\alpha$. \end{Lemma}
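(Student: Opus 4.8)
The plan is to bootstrap the conclusion of Lemma~\ref{L01}, which is an $L^p$‑scaled estimate, to the $L^q$‑scaled estimate claimed here, treating the ranges $1\le q\le p$ and $p<q<np/(n-mp)$ separately. The range $q\le p$ is immediate from Lemma~\ref{L01}: for any ball $B_r$ with $r\le R$, H\"older's inequality gives $\|\partial^\nu u\|_{L^q(B_r)}\lesssim r^{n/q-n/p}\|\partial^\nu u\|_{L^p(B_r)}$, so summing over $|\nu|\le m$ and inserting $\sum_{|\nu|\le m}r^{|\nu|}\|\partial^\nu u\|_{L^p(B_r)}\lesssim M r^{d+\gamma+n/p}$ produces $\sum_{|\nu|\le m}r^{|\nu|}\|\partial^\nu u\|_{L^q(B_r)}\lesssim M r^{d+\gamma+n/q}$. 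Thus the content of the lemma is the range $p<q<np/(n-mp)$, where H\"older on balls no longer suffices.

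For that range, the plan is to re-examine the proof of Lemma~\ref{L01}, first in the model case $L=L(0)$ (constant leading coefficients, no lower-order terms), where the constructed solution is $u=w-P_w$ with $w(x)=\int_{|y|<1}\Gamma(x-y)f(y)\,dy$ and $P_w$ its Taylor polynomial of degree $d$, as recorded in Lemma~\ref{L10}. Fixing $r\in(0,R]$, I would split $f=f\chi_{B_{2r}}+f\chi_{B_1\setminus B_{2r}}$ as in that proof, and likewise for each $\partial^\nu u$, $|\nu|\le m$, after observing that $\partial^\nu(w-P_w)(x)=\int_{|y|<1}\bigl(\partial^\nu_x\Gamma(x-y)-\sum_{|\mu|\le d-|\nu|}\partial^\mu_x(\partial^\nu\Gamma)(-y)x^\mu/\mu!\bigr)f(y)\,dy$, i.e.\ the same structure with $\Gamma$ replaced by $\partial^\nu\Gamma$ (homogeneous of degree $m-|\nu|-n$) and the Taylor degree lowered to $d-|\nu|$. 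The far-field piece and the polynomial-subtraction piece are handled by the Taylor-remainder bound for $\partial^\nu\Gamma$ together with Lemma~\ref{L02}(i)--(ii) exactly as in the proof of Lemma~\ref{L01}; a short computation (the side conditions reduce to $0<\gamma<1$ and $|\nu|+|\mu|<d+\gamma$, both satisfied) shows these two pieces are bounded \emph{pointwise} on $B_r$ by $\lesssim M r^{d+\gamma-|\nu|}$, hence in $L^q(B_r)$ by $\lesssim M r^{d+\gamma-|\nu|+n/q}$ for \emph{every} $q$. The only genuinely new point is the near-field convolution $\partial^\nu\Gamma*(f\chi_{B_{2r}})$: where the proof of Lemma~\ref{L01} uses Young's inequality ($L^1*L^p\to L^p$), here, for $|\nu|<m$, $\partial^\nu\Gamma$ is a Riesz-potential kernel of positive order $m-|\nu|$, so the Hardy--Littlewood--Sobolev inequality gives $\|\partial^\nu\Gamma*(f\chi_{B_{2r}})\|_{L^s(\mathbb{R}^n)}\lesssim\|f\|_{L^p(B_{2r})}$ with $1/s=1/p-(m-|\nu|)/n$, and then H\"older on $B_r$ (for $q\le s$) together with $\|f\|_{L^p(B_{2r})}\lesssim M r^{d-m+\gamma+n/p}$ yields the claimed scaled bound.

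The main obstacle is the top-order derivatives $|\nu|=m$ — and, more generally, pushing $q$ all the way up to $np/(n-mp)$ for $|\nu|\ge1$ — since there $\partial^\nu\Gamma$ is a Calder\'on--Zygmund kernel rather than a fractional-integration kernel, while $f$ is only assumed in $L^p$. My plan for this is to first run the previous step to obtain $\|u\|_{L^q(B_r)}\lesssim M r^{d+\gamma+n/q}$ for all $q<np/(n-mp)$, and then bootstrap through the equation itself: rescaling $B_r$ to $B_1$ turns $Lu=f$ into $\widetilde L_r u_r=f_r$ on $B_1$, where $\widetilde L_r$ is uniformly elliptic with leading coefficients within $O(R^\alpha)$ of those of $L(0)$ and lower-order coefficients of size $O(R)$, so that the interior $W^{m,q}$ elliptic estimates hold uniformly in $r\le R$; applying them converts the $L^q$-control of $u$ on $B_{2r}$ into $W^{m,q}$-control on $B_r$ with the correct scaling, which is precisely the sought derivative estimate.

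Finally, to pass from $L(0)$ to the variable-coefficient operator $L$, I would repeat the fixed-point scheme from the proof of Lemma~\ref{L01} verbatim, but with the convex set $S$ now defined using the $W^{m,q}$ norm and the $L^q$-scaled bound instead of their $L^p$ analogues: under the smallness hypothesis $\sum_{|\nu|=m}|a_\nu(x)-a_\nu(0)|+\sum_{|\nu|<m}|a_\nu(x)|\le\varepsilon$ one checks that $T(u)=\int_{|y|<1}\Gamma(x-y)\bigl(f(y)+(L(0)-L)u(y)\bigr)\,dy$ maps $S$ into $S$ and is a contraction in $W^{m,q}(B_1)$, invoking the $L^q\to L^q$ bounds established above and choosing the constants $M_1,M_2,\varepsilon$ exactly as in the proof of Lemma~\ref{L01}; the general case (without the smallness hypothesis) then follows by the dilation $x\mapsto Rx$ for $R$ small depending on $K$ and $\alpha$, which accounts for the radius $R$ appearing in the statement.
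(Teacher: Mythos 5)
Your estimation of $\|u\|_{L^q(B_r)}$ follows essentially the same route as the paper: the paper bounds the near-field term $I_1$ by Young's convolution inequality $\|\Gamma\|_{L^{\tilde p}(B_{3r})}\|f\|_{L^p(B_{2r})}$ with $1/\tilde p + 1/p = 1+1/q$, which is the same mechanism (and the same admissible range $q<np/(n-mp)$) as your Hardy--Littlewood--Sobolev step, while the far-field and polynomial-subtraction pieces $I_2,I_3$ are bounded pointwise in both your sketch and the paper. The separate treatment of $q\le p$ by H\"older is redundant but harmless, and your observation that the HLS step already covers every derivative of order $|\nu|<m$ in the stated range is correct.

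The gap is in the step that is genuinely needed: the top-order derivatives $|\nu|=m$ for $q>p$. Your proposed fix is to rescale and invoke the interior $W^{m,q}$ elliptic estimate, which after rescaling reads $\|u\|_{W^{m,q}(B_r)} \lesssim \|u\|_{L^q(B_{2r})} + \|f\|_{L^q(B_{2r})}$; the second term on the right requires $f\in L^q$, which you do not have when $q>p$ ($f$ is only in $L^p$). This is not a removable technicality: $f\mapsto \partial^\nu\Gamma*f$ for $|\nu|=m$ is a Calder\'on--Zygmund operator and does not map $L^p$ into $L^q$ for $q>p$, and the pointwise decay hypothesis $\|f\|_{L^p(B_r)}\lesssim M r^{d-m+\gamma+n/p}$ does not compensate. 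You are right that the top-order term is the ``main obstacle,'' but the bootstrap you propose does not close it. (For context: the paper's own proof of this lemma only re-derives $I_1,I_2,I_3$ --- i.e.\ the $L^q$ bound on $u$ itself --- and refers the remainder to the argument for Lemma~\ref{L01}, which retrieves the derivative bound from interior $W^{m,p}$ regularity; the same obstruction would appear there if one tried to upgrade $p$ to $q$ at order $m$. So the difficulty is not specific to your argument, but your explicit bootstrap step is where your write-up would fail.)
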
 \colb \par \begin{proof}[Proof of Lemma~\ref{L03}] The proof is similar to that of Lemma~\ref{L01}, except for the estimates of $I_1$, $I_2$, and~$I_3$, defined in~\eqref{8ThswELzXU3X7Ebd1KdZ7v1rN3GiirRXGKWK099ovBM0FDJCvkopYNQ2aN94Z7k0UnUKamE3OjU8DFYFFokbSI2J9V9gVlM8ALWThDPnPu3EL7HPD2VDaZTggzcCCmbvc70qqPcC9mt60ogcrTiA3HEjwTK8ymKeuJMc4q6dVz200XnYUtLR9GYjPXvFOVr6W1zUK1WbPToaWJJuKnxBLnd0ftDEbMmj4loHYyhZyMjM91zQS4p7z8eKa9h0JrbacekcirexG0z4n3xz0QOWSvFj3jLhWXUIU21iIAwJtI3RbWa90I7rzAIqI3UElUJG7tLtUXzw4KQNETvXzqWaujEMenYlNIzLGxgB3AuJEQ39}, with $L^{p}(B_{r})$ replaced by~$L^{q}(B_{r})$. For $I_1$, we have, assuming $r\leq 1/2$,     \begin{equation}       I_1        = \biggl \zxvczxbcvdfghasdfrtsdafasdfasdfdsfgsdgh \zxvczxbcvdfghasdfrtsdafasdfasdfdsfgsdgf_{|y|<2r} \Gamma (x-y) f(y) \,dy \biggr \zxvczxbcvdfghasdfrtsdafasdfasdfdsfgsdgh_{L^q(B_r)}       \les \zxvczxbcvdfghasdfrtsdafasdfasdfdsfgsdgh \Gamma \zxvczxbcvdfghasdfrtsdafasdfasdfdsfgsdgh_{L^{\tilde p} (B_{3r})}             \zxvczxbcvdfghasdfrtsdafasdfasdfdsfgsdgh f \zxvczxbcvdfghasdfrtsdafasdfasdfdsfgsdgh_{L^p(B_{2r})}       \les \biggl( \zxvczxbcvdfghasdfrtsdafasdfasdfdsfgsdgf_0^{3r} s^{(m-n)\tilde p+n-1} \,ds \biggr)^{1/\tilde p}              \zxvczxbcvdfghasdfrtsdafasdfasdfdsfgsdgh f \zxvczxbcvdfghasdfrtsdafasdfasdfdsfgsdgh_{L^p(B_{2r})}       ,     \llabel{q2 5O N ltT gbl 8YD QS AzXq pJEK 7bGL1U Jn 0 f59 vPr wdt d6 sDLj Loo1 8tQXf5 5u p mTa dJD sEL pH 2vqY uTAm YzDg95 1P K FP6 pEi zIJ Qd 8Ngn HTND 6z6ExR XV 0 ouU jWT kAK AB eAC9 Rfja c43Ajk Xn H dgS y3v 5cB et s3VX qfpP BqiGf9 0a w g4d W9U kvR iJ y46G bH3U cJ86hW Va C Mje dsU cqD SZ 1DlP 2mfB hzu5dv u1 i 6eW 2YN LhM 3f WOdz KS6Q ov14wx YY d 8sa S38 hIl cP tS4l 9B7h FC3JXJ Gp s tll 7a7 WNr VM wunm nmDc 5duVpZ xT C l8F I01 jhn 5B l4Jz aEV7 CKMThL ji 1 gyZ uXc Iv4 03 3NqZ LITG Ux3ClP CB K O3v RUi mJq l5 blI9 GrWy irWHof lH 7 3ZT eZX kop eq8ThswELzXU3X7Ebd1KdZ7v1rN3GiirRXGKWK099ovBM0FDJCvkopYNQ2aN94Z7k0UnUKamE3OjU8DFYFFokbSI2J9V9gVlM8ALWThDPnPu3EL7HPD2VDaZTggzcCCmbvc70qqPcC9mt60ogcrTiA3HEjwTK8ymKeuJMc4q6dVz200XnYUtLR9GYjPXvFOVr6W1zUK1WbPToaWJJuKnxBLnd0ftDEbMmj4loHYyhZyMjM91zQS4p7z8eKa9h0JrbacekcirexG0z4n3xz0QOWSvFj3jLhWXUIU21iIAwJtI3RbWa90I7rzAIqI3UElUJG7tLtUXzw4KQNETvXzqWaujEMenYlNIzLGxgB3AuJEQ59}     \end{equation}  where $1/\tilde p+1/p = 1+1/q$. Since $m-n+n/\tilde p > 0$ by \eqref{8ThswELzXU3X7Ebd1KdZ7v1rN3GiirRXGKWK099ovBM0FDJCvkopYNQ2aN94Z7k0UnUKamE3OjU8DFYFFokbSI2J9V9gVlM8ALWThDPnPu3EL7HPD2VDaZTggzcCCmbvc70qqPcC9mt60ogcrTiA3HEjwTK8ymKeuJMc4q6dVz200XnYUtLR9GYjPXvFOVr6W1zUK1WbPToaWJJuKnxBLnd0ftDEbMmj4loHYyhZyMjM91zQS4p7z8eKa9h0JrbacekcirexG0z4n3xz0QOWSvFj3jLhWXUIU21iIAwJtI3RbWa90I7rzAIqI3UElUJG7tLtUXzw4KQNETvXzqWaujEMenYlNIzLGxgB3AuJEQ58}, we obtain     \begin{equation}       I_1       \les \MM r^{m-n+n/\tilde p} r^{d-m+\gamma+n/p}       \les \MM r^{d+\gamma+n/q}       ,     \llabel{ c43Ajk Xn H dgS y3v 5cB et s3VX qfpP BqiGf9 0a w g4d W9U kvR iJ y46G bH3U cJ86hW Va C Mje dsU cqD SZ 1DlP 2mfB hzu5dv u1 i 6eW 2YN LhM 3f WOdz KS6Q ov14wx YY d 8sa S38 hIl cP tS4l 9B7h FC3JXJ Gp s tll 7a7 WNr VM wunm nmDc 5duVpZ xT C l8F I01 jhn 5B l4Jz aEV7 CKMThL ji 1 gyZ uXc Iv4 03 3NqZ LITG Ux3ClP CB K O3v RUi mJq l5 blI9 GrWy irWHof lH 7 3ZT eZX kop eq 8XL1 RQ3a Uj6Ess nj 2 0MA 3As rSV ft 3F9w zB1q DQVOnH Cm m P3d WSb jst oj 3oGj advz qcMB6Y 6k D 9sZ 0bd Mjt UT hULG TWU9 Nmr3E4 CN b zUO vTh hqL 1p xAxT ezrH dVMgLY TT r Sfx LUX C8ThswELzXU3X7Ebd1KdZ7v1rN3GiirRXGKWK099ovBM0FDJCvkopYNQ2aN94Z7k0UnUKamE3OjU8DFYFFokbSI2J9V9gVlM8ALWThDPnPu3EL7HPD2VDaZTggzcCCmbvc70qqPcC9mt60ogcrTiA3HEjwTK8ymKeuJMc4q6dVz200XnYUtLR9GYjPXvFOVr6W1zUK1WbPToaWJJuKnxBLnd0ftDEbMmj4loHYyhZyMjM91zQS4p7z8eKa9h0JrbacekcirexG0z4n3xz0QOWSvFj3jLhWXUIU21iIAwJtI3RbWa90I7rzAIqI3UElUJG7tLtUXzw4KQNETvXzqWaujEMenYlNIzLGxgB3AuJEQ60}     \end{equation} where we used $1/\tilde p+1/p=1+1/q$ in the last inequality.  Since $\bigl| \zxvczxbcvdfghasdfrtsdafasdfasdfdsfgsdgf_{|y|<2r} \sum_{k=0}^d  \partial^k_x \Gamma(-y) f(y) \,dy \bigr| \les \MM r^{d+\gamma}$ for all $r\leq1/2$, as shown \eqref{8ThswELzXU3X7Ebd1KdZ7v1rN3GiirRXGKWK099ovBM0FDJCvkopYNQ2aN94Z7k0UnUKamE3OjU8DFYFFokbSI2J9V9gVlM8ALWThDPnPu3EL7HPD2VDaZTggzcCCmbvc70qqPcC9mt60ogcrTiA3HEjwTK8ymKeuJMc4q6dVz200XnYUtLR9GYjPXvFOVr6W1zUK1WbPToaWJJuKnxBLnd0ftDEbMmj4loHYyhZyMjM91zQS4p7z8eKa9h0JrbacekcirexG0z4n3xz0QOWSvFj3jLhWXUIU21iIAwJtI3RbWa90I7rzAIqI3UElUJG7tLtUXzw4KQNETvXzqWaujEMenYlNIzLGxgB3AuJEQ42},  we have     \begin{equation}       I_2        = \biggl( \zxvczxbcvdfghasdfrtsdafasdfasdfdsfgsdgf_{|x|<r}                    \biggl( \zxvczxbcvdfghasdfrtsdafasdfasdfdsfgsdgf_{|y|<2r} \sum_{k=0}^d                           \partial^k_x \Gamma(-y) f(y) \,dy                    \biggr)^q \,dx           \biggr)^{1/q}       \les \biggl(  \zxvczxbcvdfghasdfrtsdafasdfasdfdsfgsdgf_{|x|<r}                         ( \MM r^{d+\gamma})^q \,dx               \biggr)^{1/q}       \les \MM r^{d+\gamma+n/q}       .    \llabel{ 9B7h FC3JXJ Gp s tll 7a7 WNr VM wunm nmDc 5duVpZ xT C l8F I01 jhn 5B l4Jz aEV7 CKMThL ji 1 gyZ uXc Iv4 03 3NqZ LITG Ux3ClP CB K O3v RUi mJq l5 blI9 GrWy irWHof lH 7 3ZT eZX kop eq 8XL1 RQ3a Uj6Ess nj 2 0MA 3As rSV ft 3F9w zB1q DQVOnH Cm m P3d WSb jst oj 3oGj advz qcMB6Y 6k D 9sZ 0bd Mjt UT hULG TWU9 Nmr3E4 CN b zUO vTh hqL 1p xAxT ezrH dVMgLY TT r Sfx LUX CMr WA bE69 K6XH i5re1f x4 G DKk iB7 f2D Xz Xez2 k2Yc Yc4QjU yM Y R1o DeY NWf 74 hByF dsWk 4cUbCR DX a q4e DWd 7qb Ot 7GOu oklg jJ00J9 Il O Jxn tzF VBC Ft pABp VLEE 2y5Qcg b3 5 DU4 8ThswELzXU3X7Ebd1KdZ7v1rN3GiirRXGKWK099ovBM0FDJCvkopYNQ2aN94Z7k0UnUKamE3OjU8DFYFFokbSI2J9V9gVlM8ALWThDPnPu3EL7HPD2VDaZTggzcCCmbvc70qqPcC9mt60ogcrTiA3HEjwTK8ymKeuJMc4q6dVz200XnYUtLR9GYjPXvFOVr6W1zUK1WbPToaWJJuKnxBLnd0ftDEbMmj4loHYyhZyMjM91zQS4p7z8eKa9h0JrbacekcirexG0z4n3xz0QOWSvFj3jLhWXUIU21iIAwJtI3RbWa90I7rzAIqI3UElUJG7tLtUXzw4KQNETvXzqWaujEMenYlNIzLGxgB3AuJEQ61}     \end{equation} Similarly, with $I$ defined in \eqref{8ThswELzXU3X7Ebd1KdZ7v1rN3GiirRXGKWK099ovBM0FDJCvkopYNQ2aN94Z7k0UnUKamE3OjU8DFYFFokbSI2J9V9gVlM8ALWThDPnPu3EL7HPD2VDaZTggzcCCmbvc70qqPcC9mt60ogcrTiA3HEjwTK8ymKeuJMc4q6dVz200XnYUtLR9GYjPXvFOVr6W1zUK1WbPToaWJJuKnxBLnd0ftDEbMmj4loHYyhZyMjM91zQS4p7z8eKa9h0JrbacekcirexG0z4n3xz0QOWSvFj3jLhWXUIU21iIAwJtI3RbWa90I7rzAIqI3UElUJG7tLtUXzw4KQNETvXzqWaujEMenYlNIzLGxgB3AuJEQ44},     \begin{equation}       I_3        = \biggl( \zxvczxbcvdfghasdfrtsdafasdfasdfdsfgsdgf_{|x|<r} I(x)^q \,dx \biggr)^{1/q}       \les \MM r^{d+\gamma+n/q}    \llabel{ 8XL1 RQ3a Uj6Ess nj 2 0MA 3As rSV ft 3F9w zB1q DQVOnH Cm m P3d WSb jst oj 3oGj advz qcMB6Y 6k D 9sZ 0bd Mjt UT hULG TWU9 Nmr3E4 CN b zUO vTh hqL 1p xAxT ezrH dVMgLY TT r Sfx LUX CMr WA bE69 K6XH i5re1f x4 G DKk iB7 f2D Xz Xez2 k2Yc Yc4QjU yM Y R1o DeY NWf 74 hByF dsWk 4cUbCR DX a q4e DWd 7qb Ot 7GOu oklg jJ00J9 Il O Jxn tzF VBC Ft pABp VLEE 2y5Qcg b3 5 DU4 igj 4dz zW soNF wvqj bNFma0 am F Kiv Aap pzM zr VqYf OulM HafaBk 6J r eOQ BaT EsJ BB tHXj n2EU CNleWp cv W JIg gWX Ksn B3 wvmo WK49 Nl492o gR 6 fvc 8ff jJm sW Jr0j zI9p CBsIUV of D8ThswELzXU3X7Ebd1KdZ7v1rN3GiirRXGKWK099ovBM0FDJCvkopYNQ2aN94Z7k0UnUKamE3OjU8DFYFFokbSI2J9V9gVlM8ALWThDPnPu3EL7HPD2VDaZTggzcCCmbvc70qqPcC9mt60ogcrTiA3HEjwTK8ymKeuJMc4q6dVz200XnYUtLR9GYjPXvFOVr6W1zUK1WbPToaWJJuKnxBLnd0ftDEbMmj4loHYyhZyMjM91zQS4p7z8eKa9h0JrbacekcirexG0z4n3xz0QOWSvFj3jLhWXUIU21iIAwJtI3RbWa90I7rzAIqI3UElUJG7tLtUXzw4KQNETvXzqWaujEMenYlNIzLGxgB3AuJEQ62}     \end{equation} since $I(x) \les \MM r^{d+\gamma}$, as proven in~\eqref{8ThswELzXU3X7Ebd1KdZ7v1rN3GiirRXGKWK099ovBM0FDJCvkopYNQ2aN94Z7k0UnUKamE3OjU8DFYFFokbSI2J9V9gVlM8ALWThDPnPu3EL7HPD2VDaZTggzcCCmbvc70qqPcC9mt60ogcrTiA3HEjwTK8ymKeuJMc4q6dVz200XnYUtLR9GYjPXvFOVr6W1zUK1WbPToaWJJuKnxBLnd0ftDEbMmj4loHYyhZyMjM91zQS4p7z8eKa9h0JrbacekcirexG0z4n3xz0QOWSvFj3jLhWXUIU21iIAwJtI3RbWa90I7rzAIqI3UElUJG7tLtUXzw4KQNETvXzqWaujEMenYlNIzLGxgB3AuJEQ45}. \end{proof} \par \subsection{The bootstrapping argument} Before proving Theorem~\ref{T01}, we demonstrate the following bootstrapping argument, which works under the assumptions of Theorem~\ref{T01}. \par \cole \begin{Lemma} \label{L04} Under the assumptions of Theorem~\ref{T01},  suppose that there exists $k\in\mathbb{N}_0$  such that $(k+1) \alpha + \eta < 1$. If $u$ vanishes of $L^{p}$-order $d-1+\eta+k\alpha$, i.e.,     \begin{equation}           c_k        = \sup_{r \leq 1}           \frac{\zxvczxbcvdfghasdfrtsdafasdfasdfdsfgsdgh u \zxvczxbcvdfghasdfrtsdafasdfasdfdsfgsdgh_{L^p(B_r)}}                 {r^{d-1+\eta+k\alpha+n/p}}       <\infty       ,     \label{8ThswELzXU3X7Ebd1KdZ7v1rN3GiirRXGKWK099ovBM0FDJCvkopYNQ2aN94Z7k0UnUKamE3OjU8DFYFFokbSI2J9V9gVlM8ALWThDPnPu3EL7HPD2VDaZTggzcCCmbvc70qqPcC9mt60ogcrTiA3HEjwTK8ymKeuJMc4q6dVz200XnYUtLR9GYjPXvFOVr6W1zUK1WbPToaWJJuKnxBLnd0ftDEbMmj4loHYyhZyMjM91zQS4p7z8eKa9h0JrbacekcirexG0z4n3xz0QOWSvFj3jLhWXUIU21iIAwJtI3RbWa90I7rzAIqI3UElUJG7tLtUXzw4KQNETvXzqWaujEMenYlNIzLGxgB3AuJEQ65}     \end{equation} then $u$ vanishes of $L^{p}$-order $d-1+\eta+(k+1)\alpha$, and moreover     \begin{equation}      c_{k+1}=      \sup_{r\leq1} \frac{\zxvczxbcvdfghasdfrtsdafasdfasdfdsfgsdgh u \zxvczxbcvdfghasdfrtsdafasdfasdfdsfgsdgh_{L^p(B_r)}}{r^{d-1+\eta+(k+1)\alpha+n/p}}      \les c_k + M   + \zxvczxbcvdfghasdfrtsdafasdfasdfdsfgsdgh Q \zxvczxbcvdfghasdfrtsdafasdfasdfdsfgsdgh_{L^p(B_1)}      < \infty     \label{8ThswELzXU3X7Ebd1KdZ7v1rN3GiirRXGKWK099ovBM0FDJCvkopYNQ2aN94Z7k0UnUKamE3OjU8DFYFFokbSI2J9V9gVlM8ALWThDPnPu3EL7HPD2VDaZTggzcCCmbvc70qqPcC9mt60ogcrTiA3HEjwTK8ymKeuJMc4q6dVz200XnYUtLR9GYjPXvFOVr6W1zUK1WbPToaWJJuKnxBLnd0ftDEbMmj4loHYyhZyMjM91zQS4p7z8eKa9h0JrbacekcirexG0z4n3xz0QOWSvFj3jLhWXUIU21iIAwJtI3RbWa90I7rzAIqI3UElUJG7tLtUXzw4KQNETvXzqWaujEMenYlNIzLGxgB3AuJEQ64}     \end{equation} holds, where $M$ and $Q\in\dPP_{d-m}$ are defined in Theorem~\ref{T01}, and the constant depends on $d$, $p$, and~$q$. \end{Lemma}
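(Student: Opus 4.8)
The plan is to freeze the coefficients of $L$ at the origin, subtract the leading polynomial determined by $Q$, and decompose what remains into a piece handled by the interior existence lemma and a piece annihilated by the constant‑coefficient operator $L(0)$, for which a Liouville‑type Taylor argument applies. Throughout put $A=c_k+M+\|Q\|_{L^p(B_1)}$ and $\gamma'=\eta+(k+1)\alpha$; by hypothesis $\gamma'\in(0,1)$, and moreover $\eta+k\alpha<\gamma'<1$, which is exactly what makes the induction close. Every quantity below is to be bounded by a constant multiple of $A$, the constant depending only on $d$ and $p$ (and, tacitly, on $n$, $m$, $K$, $\alpha$).

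\emph{Step 1: the source after freezing.} Rewrite $Lu=f$ as $L(0)u=g$, $g=f+(L(0)-L)u$, with $L(0)=\sum_{|\nu|=m}a_\nu(0)\partial^\nu$. The scaled interior $W^{m,p}$ estimate for $Lu=f$ gives $\|\partial^\nu u\|_{L^p(B_r)}\les r^{-|\nu|}\bigl(\|u\|_{L^p(B_{2r})}+r^{m}\|f\|_{L^p(B_{2r})}\bigr)$ for $|\nu|\le m$ and $r\le1/2$. Using the hypothesis $c_k<\infty$, homogeneity of $Q$, and $\|f-Q\|_{L^p(B_r)}\le Mr^{d-m+\alpha+n/p}$, one gets $\|f\|_{L^p(B_r)}\les A\,r^{d-m+n/p}$ for $r\le1$, hence $\|\partial^\nu u\|_{L^p(B_r)}\les A\,r^{-|\nu|+d-1+\eta+k\alpha+n/p}$ (here $d-1+\eta+k\alpha<d$ is used). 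Substituting into $(L(0)-L)u=\sum_{|\nu|=m}(a_\nu(0)-a_\nu)\partial^\nu u-\sum_{|\nu|<m}a_\nu\partial^\nu u$ and using the H\"older bound $|a_\nu(0)-a_\nu(x)|\les|x|^{\alpha}$ on the top‑order coefficients, which supplies the decisive factor $r^{\alpha}$, together with boundedness of the lower‑order coefficients (which costs at most $r^{1-m}$, strictly better since $\alpha<1$), one arrives at $\|(L(0)-L)u\|_{L^p(B_r)}\les A\,r^{(d-1)-m+\gamma'+n/p}$ for $r\le1/2$. Since also $\|f-Q\|_{L^p(B_r)}\le Mr^{d-m+\alpha+n/p}\le Mr^{(d-1)-m+\gamma'+n/p}$ for $r\le1$ (because $\eta+k\alpha\le1$), this yields $\|g-Q\|_{L^p(B_r)}\les A\,r^{(d-1)-m+\gamma'+n/p}$ for $r\le1/2$.

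\emph{Step 2: peeling the polynomial and invoking Lemma~\ref{L01}.} As $L(0)$ is a constant‑coefficient elliptic operator, there is a homogeneous $P\in\dPP_{d}$ with $L(0)P=Q$ (solvability of this polynomial equation for the elliptic operator $L(0)$); fixing the $P$ of least $L^p(B_1)$ norm, homogeneity gives $\|P\|_{L^p(B_r)}=\|P\|_{L^p(B_1)}\,r^{d+n/p}\les\|Q\|_{L^p(B_1)}\,r^{d+n/p}$. Put $v=u-P$, so that $L(0)v=g-Q$ in $B_{1/2}$, $\|v\|_{L^p(B_r)}\les A\,r^{d-1+\eta+k\alpha+n/p}$ for $r\le1$ (the $P$‑term is of higher order), and $g-Q$ obeys the Step~1 bound. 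Applying Lemma~\ref{L01} to $L(0)$ on $B_{1/2}$ (by scaling, so that the source bound, which we have for $r\le1/2$, holds on the unit ball; the condition $d\ge m$ there plays no role, the proof being valid for any nonnegative integer degree) with parameters $(d-1,\gamma')$ in place of $(d,\gamma)$ produces some $R_0>0$ depending only on $\alpha$ and a $\tilde v\in W^{m,p}(B_{R_0})$ solving $L(0)\tilde v=g-Q$ with $\|\tilde v\|_{L^p(B_r)}\les A\,r^{(d-1)+\gamma'+n/p}$ for $r\le R_0$; thus $\tilde v$ vanishes of $L^p$‑order $d-1+\gamma'$.

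\emph{Step 3: the $L(0)$‑harmonic remainder and conclusion.} Set $h=v-\tilde v$, so that $L(0)h=0$ in $B_{R_0}$, whence $h\in C^{\infty}(B_{R_0})$, while $\|h\|_{L^p(B_r)}\le\|v\|_{L^p(B_r)}+\|\tilde v\|_{L^p(B_r)}\les A\,r^{d-1+\eta+k\alpha+n/p}$ for $r\le R_0$. Since $d-1+\eta+k\alpha>d-1$, the degree‑$(d-1)$ Taylor polynomial of $h$ at $0$ vanishes, so $h=N+O(|x|^{d+1})$ with $N$ homogeneous of degree $d$, the coefficients of $N$ and the $O$‑constant being controlled by $\|h\|_{C^{d+1}(B_{R_0/2})}\les\|h\|_{L^p(B_{R_0})}\les A$ via interior estimates for $L(0)$. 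Hence $\|h\|_{L^p(B_r)}\les A\,r^{d+n/p}\le A\,r^{d-1+\gamma'+n/p}$ for $r\le R_0/2$ (using $\gamma'<1$), and adding the three contributions, $\|u\|_{L^p(B_r)}\le\|v\|_{L^p(B_r)}+\|P\|_{L^p(B_r)}\les A\,r^{d-1+\gamma'+n/p}$ for $r\le R_0/2$. For $R_0/2<r\le1$ one uses $\|u\|_{L^p(B_r)}\le\|u\|_{L^p(B_1)}\le c_k$ and $r^{d-1+\gamma'+n/p}\ges1$ (with a constant depending on $R_0$, hence on $\alpha$). This gives $c_{k+1}\les A=c_k+M+\|Q\|_{L^p(B_1)}<\infty$, which is the asserted estimate in Lemma~\ref{L04}. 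The delicate part is Step~1: one must extract precisely the extra power $r^{\alpha}$ from the H\"older continuity of the leading coefficients by combining it with the scaled interior $W^{m,p}$ estimate, while checking that every competing contribution — the error $f-Q$, the lower‑order coefficients, and the polynomial $P$ — vanishes of strictly higher order, so that the bottleneck exponents are exactly $(d-1)-m+\gamma'$ in the source and $d-1+\gamma'$ in the solution; a secondary point is keeping all constants linear in $A$, in particular in the solvability bound $\|P\|_{L^p(B_1)}\les\|Q\|_{L^p(B_1)}$ and in the interior estimate for the $L(0)$‑harmonic remainder $h$.
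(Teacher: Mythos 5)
Your proof is correct, and the core machinery is the same as in the paper: freeze the coefficients at the origin, combine the interior $W^{m,p}$ estimate with the H\"older continuity of the leading coefficients to show that $L(0)u-Q$ gains the factor $r^{\alpha}$, use the constant-coefficient convolution construction (Lemma~\ref{L01}) on that source, and then exploit the rigidity of polynomials of degree $\le d-1$ against vanishing of $L^p$-order $>d-1$. What differs is the decomposition and how that rigidity is invoked. The paper writes $u=v+w$ with $w$ the convolution against $L(0)u-Q$ (so $L(0)v=Q$), subtracts the degree-$(d-1)$ Taylor polynomials $P_v$, $P_w$, and proves $P_v+P_w\equiv 0$ by a contradiction comparing the lower bound $\|P_v+P_w\|_{L^p(B_r)}\ges r^{d-1+n/p}$ with the upper bound inherited from $u$. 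You instead subtract a homogeneous $P\in\dPP_d$ with $L(0)P=Q$ at the very start, so that $u-P=\tilde v+h$ with $\tilde v$ supplied by Lemma~\ref{L01} and the remainder $h$ solving $L(0)h=0$; rigidity is then applied to the smooth $L(0)$-harmonic function $h$, whose degree-$(d-1)$ Taylor polynomial must vanish because $h$ already vanishes of $L^p$-order $d-1+\eta+k\alpha>d-1$. Your variant makes the rigidity step cleaner (it is applied to a concrete harmonic function rather than to an \emph{a priori} unknown polynomial) but uses two ingredients not invoked in the paper's proof of this particular lemma: (i) surjectivity of $L(0):\dPP_d\to\dPP_{d-m}$ together with the bound $\|P\|_{L^p(B_1)}\les\|Q\|_{L^p(B_1)}$, which is a finite-dimensional linear-algebra fact but does require a word about uniformity in the ellipticity constant $K$; and (ii) Lemma~\ref{L01} with $d-1$ in place of $d$, which you correctly note is legitimate even if $d-1<m$, since the proof of that lemma only uses $\gamma\in(0,1)$ and $|\beta|<d+\gamma$, never the hypothesis $d\ge m$. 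One minor imprecision: in the display $\|u\|_{L^p(B_r)}\le\|v\|_{L^p(B_r)}+\|P\|_{L^p(B_r)}$ near the end you should make explicit that $\|v\|_{L^p(B_r)}$ is being re-estimated via $v=\tilde v+h$, since the earlier bound on $\|v\|_{L^p(B_r)}$ only gives order $d-1+\eta+k\alpha$, not $d-1+\gamma'$. The extension from $r\le R_0/2$ to $r\le1$ is fine; the resulting constant depends on $R_0$, hence on $\alpha$ and $K$, consistent with the paper's conventions.
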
 \colb \par Recall that all constants are allowed to depend  on $m$, $n$, $\alpha$, and $K$ (which are considered fixed). \par \begin{proof}[Proof of Lemma~\ref{L04}] Assuming that \eqref{8ThswELzXU3X7Ebd1KdZ7v1rN3GiirRXGKWK099ovBM0FDJCvkopYNQ2aN94Z7k0UnUKamE3OjU8DFYFFokbSI2J9V9gVlM8ALWThDPnPu3EL7HPD2VDaZTggzcCCmbvc70qqPcC9mt60ogcrTiA3HEjwTK8ymKeuJMc4q6dVz200XnYUtLR9GYjPXvFOVr6W1zUK1WbPToaWJJuKnxBLnd0ftDEbMmj4loHYyhZyMjM91zQS4p7z8eKa9h0JrbacekcirexG0z4n3xz0QOWSvFj3jLhWXUIU21iIAwJtI3RbWa90I7rzAIqI3UElUJG7tLtUXzw4KQNETvXzqWaujEMenYlNIzLGxgB3AuJEQ65} holds, we first show that     \begin{equation}       c_{k+1}        = \sup_{r \leq 1}           \frac{\zxvczxbcvdfghasdfrtsdafasdfasdfdsfgsdgh u \zxvczxbcvdfghasdfrtsdafasdfasdfdsfgsdgh_{L^p(B_r)}}                 {r^{d-1+\eta+(k+1)\alpha+n/p}}       < \infty       .     \llabel{Mr WA bE69 K6XH i5re1f x4 G DKk iB7 f2D Xz Xez2 k2Yc Yc4QjU yM Y R1o DeY NWf 74 hByF dsWk 4cUbCR DX a q4e DWd 7qb Ot 7GOu oklg jJ00J9 Il O Jxn tzF VBC Ft pABp VLEE 2y5Qcg b3 5 DU4 igj 4dz zW soNF wvqj bNFma0 am F Kiv Aap pzM zr VqYf OulM HafaBk 6J r eOQ BaT EsJ BB tHXj n2EU CNleWp cv W JIg gWX Ksn B3 wvmo WK49 Nl492o gR 6 fvc 8ff jJm sW Jr0j zI9p CBsIUV of D kKH Ub7 vxp uQ UXA6 hMUr yvxEpc Tq l Tkz z0q HbX pO 8jFu h6nw zVPPzp A8 9 61V 78c O2W aw 0yGn CHVq BVjTUH lk p 6dG HOd voE E8 cw7Q DL1o 1qg5TX qo V 720 hhQ TyF tp TJDg 9E8D nsp1Qi8ThswELzXU3X7Ebd1KdZ7v1rN3GiirRXGKWK099ovBM0FDJCvkopYNQ2aN94Z7k0UnUKamE3OjU8DFYFFokbSI2J9V9gVlM8ALWThDPnPu3EL7HPD2VDaZTggzcCCmbvc70qqPcC9mt60ogcrTiA3HEjwTK8ymKeuJMc4q6dVz200XnYUtLR9GYjPXvFOVr6W1zUK1WbPToaWJJuKnxBLnd0ftDEbMmj4loHYyhZyMjM91zQS4p7z8eKa9h0JrbacekcirexG0z4n3xz0QOWSvFj3jLhWXUIU21iIAwJtI3RbWa90I7rzAIqI3UElUJG7tLtUXzw4KQNETvXzqWaujEMenYlNIzLGxgB3AuJEQ66}     \end{equation} Applying the classical elliptic interior estimate  to \eqref{8ThswELzXU3X7Ebd1KdZ7v1rN3GiirRXGKWK099ovBM0FDJCvkopYNQ2aN94Z7k0UnUKamE3OjU8DFYFFokbSI2J9V9gVlM8ALWThDPnPu3EL7HPD2VDaZTggzcCCmbvc70qqPcC9mt60ogcrTiA3HEjwTK8ymKeuJMc4q6dVz200XnYUtLR9GYjPXvFOVr6W1zUK1WbPToaWJJuKnxBLnd0ftDEbMmj4loHYyhZyMjM91zQS4p7z8eKa9h0JrbacekcirexG0z4n3xz0QOWSvFj3jLhWXUIU21iIAwJtI3RbWa90I7rzAIqI3UElUJG7tLtUXzw4KQNETvXzqWaujEMenYlNIzLGxgB3AuJEQ56}, we have     \begin{align}     \begin{split}       \sum_{|\nu|\leq m} r^{|\nu|} \zxvczxbcvdfghasdfrtsdafasdfasdfdsfgsdgh \partial^\nu u \zxvczxbcvdfghasdfrtsdafasdfasdfdsfgsdgh_{L^p(B_r)}       &\les \zxvczxbcvdfghasdfrtsdafasdfasdfdsfgsdgh u \zxvczxbcvdfghasdfrtsdafasdfasdfdsfgsdgh_{L^p(B_{2r})}              + r^m \zxvczxbcvdfghasdfrtsdafasdfasdfdsfgsdgh f \zxvczxbcvdfghasdfrtsdafasdfasdfdsfgsdgh_{L^p(B_{2r})}       \les \zxvczxbcvdfghasdfrtsdafasdfasdfdsfgsdgh u \zxvczxbcvdfghasdfrtsdafasdfasdfdsfgsdgh_{L^p(B_{2r})}              + r^m \zxvczxbcvdfghasdfrtsdafasdfasdfdsfgsdgh f - Q \zxvczxbcvdfghasdfrtsdafasdfasdfdsfgsdgh_{L^p(B_{2r})}             + r^m \zxvczxbcvdfghasdfrtsdafasdfasdfdsfgsdgh Q \zxvczxbcvdfghasdfrtsdafasdfasdfdsfgsdgh_{L^p(B_{2r})}       \\&       \les \big(c_k + \MM   +  \zxvczxbcvdfghasdfrtsdafasdfasdfdsfgsdgh Q \zxvczxbcvdfghasdfrtsdafasdfasdfdsfgsdgh_{L^p(B_1)} \big)r^{d-1+\eta+k\alpha+n/p}       \comma       r \in (0, 1/2 ]       ,     \end{split}     \label{8ThswELzXU3X7Ebd1KdZ7v1rN3GiirRXGKWK099ovBM0FDJCvkopYNQ2aN94Z7k0UnUKamE3OjU8DFYFFokbSI2J9V9gVlM8ALWThDPnPu3EL7HPD2VDaZTggzcCCmbvc70qqPcC9mt60ogcrTiA3HEjwTK8ymKeuJMc4q6dVz200XnYUtLR9GYjPXvFOVr6W1zUK1WbPToaWJJuKnxBLnd0ftDEbMmj4loHYyhZyMjM91zQS4p7z8eKa9h0JrbacekcirexG0z4n3xz0QOWSvFj3jLhWXUIU21iIAwJtI3RbWa90I7rzAIqI3UElUJG7tLtUXzw4KQNETvXzqWaujEMenYlNIzLGxgB3AuJEQ68}     \end{align} where we used \eqref{8ThswELzXU3X7Ebd1KdZ7v1rN3GiirRXGKWK099ovBM0FDJCvkopYNQ2aN94Z7k0UnUKamE3OjU8DFYFFokbSI2J9V9gVlM8ALWThDPnPu3EL7HPD2VDaZTggzcCCmbvc70qqPcC9mt60ogcrTiA3HEjwTK8ymKeuJMc4q6dVz200XnYUtLR9GYjPXvFOVr6W1zUK1WbPToaWJJuKnxBLnd0ftDEbMmj4loHYyhZyMjM91zQS4p7z8eKa9h0JrbacekcirexG0z4n3xz0QOWSvFj3jLhWXUIU21iIAwJtI3RbWa90I7rzAIqI3UElUJG7tLtUXzw4KQNETvXzqWaujEMenYlNIzLGxgB3AuJEQ12}, \eqref{8ThswELzXU3X7Ebd1KdZ7v1rN3GiirRXGKWK099ovBM0FDJCvkopYNQ2aN94Z7k0UnUKamE3OjU8DFYFFokbSI2J9V9gVlM8ALWThDPnPu3EL7HPD2VDaZTggzcCCmbvc70qqPcC9mt60ogcrTiA3HEjwTK8ymKeuJMc4q6dVz200XnYUtLR9GYjPXvFOVr6W1zUK1WbPToaWJJuKnxBLnd0ftDEbMmj4loHYyhZyMjM91zQS4p7z8eKa9h0JrbacekcirexG0z4n3xz0QOWSvFj3jLhWXUIU21iIAwJtI3RbWa90I7rzAIqI3UElUJG7tLtUXzw4KQNETvXzqWaujEMenYlNIzLGxgB3AuJEQ65}, and that $Q$ is a homogeneous polynomial of degree $d-m$ in the last inequality. By $Lu=f$, we have     \begin{equation}       L(0) u -Q       = \sum_{|\nu|=m} \big( a_{\nu}(0) - a_{\nu} \big) \partial^{\nu} u       - \sum_{|\nu|<m} a_{\nu} \partial^{\nu} u       + (f-Q)       .     \label{8ThswELzXU3X7Ebd1KdZ7v1rN3GiirRXGKWK099ovBM0FDJCvkopYNQ2aN94Z7k0UnUKamE3OjU8DFYFFokbSI2J9V9gVlM8ALWThDPnPu3EL7HPD2VDaZTggzcCCmbvc70qqPcC9mt60ogcrTiA3HEjwTK8ymKeuJMc4q6dVz200XnYUtLR9GYjPXvFOVr6W1zUK1WbPToaWJJuKnxBLnd0ftDEbMmj4loHYyhZyMjM91zQS4p7z8eKa9h0JrbacekcirexG0z4n3xz0QOWSvFj3jLhWXUIU21iIAwJtI3RbWa90I7rzAIqI3UElUJG7tLtUXzw4KQNETvXzqWaujEMenYlNIzLGxgB3AuJEQ69}     \end{equation} Then taking the $L^p$-norm of \eqref{8ThswELzXU3X7Ebd1KdZ7v1rN3GiirRXGKWK099ovBM0FDJCvkopYNQ2aN94Z7k0UnUKamE3OjU8DFYFFokbSI2J9V9gVlM8ALWThDPnPu3EL7HPD2VDaZTggzcCCmbvc70qqPcC9mt60ogcrTiA3HEjwTK8ymKeuJMc4q6dVz200XnYUtLR9GYjPXvFOVr6W1zUK1WbPToaWJJuKnxBLnd0ftDEbMmj4loHYyhZyMjM91zQS4p7z8eKa9h0JrbacekcirexG0z4n3xz0QOWSvFj3jLhWXUIU21iIAwJtI3RbWa90I7rzAIqI3UElUJG7tLtUXzw4KQNETvXzqWaujEMenYlNIzLGxgB3AuJEQ69}, we obtain     \begin{align}     \begin{split}       \zxvczxbcvdfghasdfrtsdafasdfasdfdsfgsdgh L(0) u - Q \zxvczxbcvdfghasdfrtsdafasdfasdfdsfgsdgh_{L^p(B_r)}       &\les \sum_{|\nu|=m} \zxvczxbcvdfghasdfrtsdafasdfasdfdsfgsdgh a_{\nu}(0) - a_{\nu}(x)\zxvczxbcvdfghasdfrtsdafasdfasdfdsfgsdgh_{L^{\infty}(B_r)}  \zxvczxbcvdfghasdfrtsdafasdfasdfdsfgsdgh \partial^{\nu} u \zxvczxbcvdfghasdfrtsdafasdfasdfdsfgsdgh_{L^p(B_r)}       \\&\indeq              + \sum_{|\nu|<m} |a_{\nu}(x)| \zxvczxbcvdfghasdfrtsdafasdfasdfdsfgsdgh \partial^{\nu} u \zxvczxbcvdfghasdfrtsdafasdfasdfdsfgsdgh_{L^p(B_r)}              + \zxvczxbcvdfghasdfrtsdafasdfasdfdsfgsdgh f-Q \zxvczxbcvdfghasdfrtsdafasdfasdfdsfgsdgh_{L^p(B_{r})}       \\&       \les \big( c_k + \MM   + \zxvczxbcvdfghasdfrtsdafasdfasdfdsfgsdgh Q \zxvczxbcvdfghasdfrtsdafasdfasdfdsfgsdgh_{L^p(B_1)} \big)               r^{d-1+\eta+(k+1)\alpha-m+n/p}       \comma       r \in (0, 1/2 ]       ,     \end{split}     \label{8ThswELzXU3X7Ebd1KdZ7v1rN3GiirRXGKWK099ovBM0FDJCvkopYNQ2aN94Z7k0UnUKamE3OjU8DFYFFokbSI2J9V9gVlM8ALWThDPnPu3EL7HPD2VDaZTggzcCCmbvc70qqPcC9mt60ogcrTiA3HEjwTK8ymKeuJMc4q6dVz200XnYUtLR9GYjPXvFOVr6W1zUK1WbPToaWJJuKnxBLnd0ftDEbMmj4loHYyhZyMjM91zQS4p7z8eKa9h0JrbacekcirexG0z4n3xz0QOWSvFj3jLhWXUIU21iIAwJtI3RbWa90I7rzAIqI3UElUJG7tLtUXzw4KQNETvXzqWaujEMenYlNIzLGxgB3AuJEQ71}     \end{align} where we used \eqref{8ThswELzXU3X7Ebd1KdZ7v1rN3GiirRXGKWK099ovBM0FDJCvkopYNQ2aN94Z7k0UnUKamE3OjU8DFYFFokbSI2J9V9gVlM8ALWThDPnPu3EL7HPD2VDaZTggzcCCmbvc70qqPcC9mt60ogcrTiA3HEjwTK8ymKeuJMc4q6dVz200XnYUtLR9GYjPXvFOVr6W1zUK1WbPToaWJJuKnxBLnd0ftDEbMmj4loHYyhZyMjM91zQS4p7z8eKa9h0JrbacekcirexG0z4n3xz0QOWSvFj3jLhWXUIU21iIAwJtI3RbWa90I7rzAIqI3UElUJG7tLtUXzw4KQNETvXzqWaujEMenYlNIzLGxgB3AuJEQ10}, \eqref{8ThswELzXU3X7Ebd1KdZ7v1rN3GiirRXGKWK099ovBM0FDJCvkopYNQ2aN94Z7k0UnUKamE3OjU8DFYFFokbSI2J9V9gVlM8ALWThDPnPu3EL7HPD2VDaZTggzcCCmbvc70qqPcC9mt60ogcrTiA3HEjwTK8ymKeuJMc4q6dVz200XnYUtLR9GYjPXvFOVr6W1zUK1WbPToaWJJuKnxBLnd0ftDEbMmj4loHYyhZyMjM91zQS4p7z8eKa9h0JrbacekcirexG0z4n3xz0QOWSvFj3jLhWXUIU21iIAwJtI3RbWa90I7rzAIqI3UElUJG7tLtUXzw4KQNETvXzqWaujEMenYlNIzLGxgB3AuJEQ11}, and \eqref{8ThswELzXU3X7Ebd1KdZ7v1rN3GiirRXGKWK099ovBM0FDJCvkopYNQ2aN94Z7k0UnUKamE3OjU8DFYFFokbSI2J9V9gVlM8ALWThDPnPu3EL7HPD2VDaZTggzcCCmbvc70qqPcC9mt60ogcrTiA3HEjwTK8ymKeuJMc4q6dVz200XnYUtLR9GYjPXvFOVr6W1zUK1WbPToaWJJuKnxBLnd0ftDEbMmj4loHYyhZyMjM91zQS4p7z8eKa9h0JrbacekcirexG0z4n3xz0QOWSvFj3jLhWXUIU21iIAwJtI3RbWa90I7rzAIqI3UElUJG7tLtUXzw4KQNETvXzqWaujEMenYlNIzLGxgB3AuJEQ68} in the last inequality.  Similarly to Lemma~\ref{L02}, we write $u = v + w$, where     \begin{equation}       w(x)       = \zxvczxbcvdfghasdfrtsdafasdfasdfdsfgsdgf_{|y|\leq 1/2} \Gamma(x-y) \big( L(0) u(y) - Q(y) \big) \, dy       ,    \label{8ThswELzXU3X7Ebd1KdZ7v1rN3GiirRXGKWK099ovBM0FDJCvkopYNQ2aN94Z7k0UnUKamE3OjU8DFYFFokbSI2J9V9gVlM8ALWThDPnPu3EL7HPD2VDaZTggzcCCmbvc70qqPcC9mt60ogcrTiA3HEjwTK8ymKeuJMc4q6dVz200XnYUtLR9GYjPXvFOVr6W1zUK1WbPToaWJJuKnxBLnd0ftDEbMmj4loHYyhZyMjM91zQS4p7z8eKa9h0JrbacekcirexG0z4n3xz0QOWSvFj3jLhWXUIU21iIAwJtI3RbWa90I7rzAIqI3UElUJG7tLtUXzw4KQNETvXzqWaujEMenYlNIzLGxgB3AuJEQ72}     \end{equation} and approximate  $v$ and $w$ by $P_v$ and $P_w$, respectively (with $P_w$ defined in \eqref{8ThswELzXU3X7Ebd1KdZ7v1rN3GiirRXGKWK099ovBM0FDJCvkopYNQ2aN94Z7k0UnUKamE3OjU8DFYFFokbSI2J9V9gVlM8ALWThDPnPu3EL7HPD2VDaZTggzcCCmbvc70qqPcC9mt60ogcrTiA3HEjwTK8ymKeuJMc4q6dVz200XnYUtLR9GYjPXvFOVr6W1zUK1WbPToaWJJuKnxBLnd0ftDEbMmj4loHYyhZyMjM91zQS4p7z8eKa9h0JrbacekcirexG0z4n3xz0QOWSvFj3jLhWXUIU21iIAwJtI3RbWa90I7rzAIqI3UElUJG7tLtUXzw4KQNETvXzqWaujEMenYlNIzLGxgB3AuJEQ75} below and $P_v$ after \eqref{8ThswELzXU3X7Ebd1KdZ7v1rN3GiirRXGKWK099ovBM0FDJCvkopYNQ2aN94Z7k0UnUKamE3OjU8DFYFFokbSI2J9V9gVlM8ALWThDPnPu3EL7HPD2VDaZTggzcCCmbvc70qqPcC9mt60ogcrTiA3HEjwTK8ymKeuJMc4q6dVz200XnYUtLR9GYjPXvFOVr6W1zUK1WbPToaWJJuKnxBLnd0ftDEbMmj4loHYyhZyMjM91zQS4p7z8eKa9h0JrbacekcirexG0z4n3xz0QOWSvFj3jLhWXUIU21iIAwJtI3RbWa90I7rzAIqI3UElUJG7tLtUXzw4KQNETvXzqWaujEMenYlNIzLGxgB3AuJEQ70}). Note that $w$ solves the equation      \begin{equation}       L(0) w = L(0) u - Q       .     \llabel{igj 4dz zW soNF wvqj bNFma0 am F Kiv Aap pzM zr VqYf OulM HafaBk 6J r eOQ BaT EsJ BB tHXj n2EU CNleWp cv W JIg gWX Ksn B3 wvmo WK49 Nl492o gR 6 fvc 8ff jJm sW Jr0j zI9p CBsIUV of D kKH Ub7 vxp uQ UXA6 hMUr yvxEpc Tq l Tkz z0q HbX pO 8jFu h6nw zVPPzp A8 9 61V 78c O2W aw 0yGn CHVq BVjTUH lk p 6dG HOd voE E8 cw7Q DL1o 1qg5TX qo V 720 hhQ TyF tp TJDg 9E8D nsp1Qi X9 8 ZVQ N3s duZ qc n9IX ozWh Fd16IB 0K 9 JeB Hvi 364 kQ lFMM JOn0 OUBrnv pY y jUB Ofs Pzx l4 zcMn JHdq OjSi6N Mn 8 bR6 kPe klT Fd VlwD SrhT 8Qr0sC hN h 88j 8ZA vvW VD 03wt ETKK N8ThswELzXU3X7Ebd1KdZ7v1rN3GiirRXGKWK099ovBM0FDJCvkopYNQ2aN94Z7k0UnUKamE3OjU8DFYFFokbSI2J9V9gVlM8ALWThDPnPu3EL7HPD2VDaZTggzcCCmbvc70qqPcC9mt60ogcrTiA3HEjwTK8ymKeuJMc4q6dVz200XnYUtLR9GYjPXvFOVr6W1zUK1WbPToaWJJuKnxBLnd0ftDEbMmj4loHYyhZyMjM91zQS4p7z8eKa9h0JrbacekcirexG0z4n3xz0QOWSvFj3jLhWXUIU21iIAwJtI3RbWa90I7rzAIqI3UElUJG7tLtUXzw4KQNETvXzqWaujEMenYlNIzLGxgB3AuJEQ73}     \end{equation} Since $\eta + (k+1)\alpha \in (0,1)$, we apply Lemma~\ref{L10} to \eqref{8ThswELzXU3X7Ebd1KdZ7v1rN3GiirRXGKWK099ovBM0FDJCvkopYNQ2aN94Z7k0UnUKamE3OjU8DFYFFokbSI2J9V9gVlM8ALWThDPnPu3EL7HPD2VDaZTggzcCCmbvc70qqPcC9mt60ogcrTiA3HEjwTK8ymKeuJMc4q6dVz200XnYUtLR9GYjPXvFOVr6W1zUK1WbPToaWJJuKnxBLnd0ftDEbMmj4loHYyhZyMjM91zQS4p7z8eKa9h0JrbacekcirexG0z4n3xz0QOWSvFj3jLhWXUIU21iIAwJtI3RbWa90I7rzAIqI3UElUJG7tLtUXzw4KQNETvXzqWaujEMenYlNIzLGxgB3AuJEQ72} and use \eqref{8ThswELzXU3X7Ebd1KdZ7v1rN3GiirRXGKWK099ovBM0FDJCvkopYNQ2aN94Z7k0UnUKamE3OjU8DFYFFokbSI2J9V9gVlM8ALWThDPnPu3EL7HPD2VDaZTggzcCCmbvc70qqPcC9mt60ogcrTiA3HEjwTK8ymKeuJMc4q6dVz200XnYUtLR9GYjPXvFOVr6W1zUK1WbPToaWJJuKnxBLnd0ftDEbMmj4loHYyhZyMjM91zQS4p7z8eKa9h0JrbacekcirexG0z4n3xz0QOWSvFj3jLhWXUIU21iIAwJtI3RbWa90I7rzAIqI3UElUJG7tLtUXzw4KQNETvXzqWaujEMenYlNIzLGxgB3AuJEQ71} to get     \begin{equation}       \zxvczxbcvdfghasdfrtsdafasdfasdfdsfgsdgh w - P_w \zxvczxbcvdfghasdfrtsdafasdfasdfdsfgsdgh_{L^q(B_r)}       \les \big( c_k + \MM   + \zxvczxbcvdfghasdfrtsdafasdfasdfdsfgsdgh Q \zxvczxbcvdfghasdfrtsdafasdfasdfdsfgsdgh_{L^p(B_1)} \big) r^{d-1+\eta+(k+1)\alpha+n/q}       \comma       r \in (0, 1/2 ] \colb       ,     \llabel{ kKH Ub7 vxp uQ UXA6 hMUr yvxEpc Tq l Tkz z0q HbX pO 8jFu h6nw zVPPzp A8 9 61V 78c O2W aw 0yGn CHVq BVjTUH lk p 6dG HOd voE E8 cw7Q DL1o 1qg5TX qo V 720 hhQ TyF tp TJDg 9E8D nsp1Qi X9 8 ZVQ N3s duZ qc n9IX ozWh Fd16IB 0K 9 JeB Hvi 364 kQ lFMM JOn0 OUBrnv pY y jUB Ofs Pzx l4 zcMn JHdq OjSi6N Mn 8 bR6 kPe klT Fd VlwD SrhT 8Qr0sC hN h 88j 8ZA vvW VD 03wt ETKK NUdr7W EK 1 jKS IHF Kh2 sr 1RRV Ra8J mBtkWI 1u k uZT F2B 4p8 E7 Y3p0 DX20 JM3XzQ tZ 3 bMC vM4 DEA wB Fp8q YKpL So1a5s dR P fTg 5R6 7v1 T4 eCJ1 qg14 CTK7u7 ag j Q0A tZ1 Nh6 hk Sys5 C8ThswELzXU3X7Ebd1KdZ7v1rN3GiirRXGKWK099ovBM0FDJCvkopYNQ2aN94Z7k0UnUKamE3OjU8DFYFFokbSI2J9V9gVlM8ALWThDPnPu3EL7HPD2VDaZTggzcCCmbvc70qqPcC9mt60ogcrTiA3HEjwTK8ymKeuJMc4q6dVz200XnYUtLR9GYjPXvFOVr6W1zUK1WbPToaWJJuKnxBLnd0ftDEbMmj4loHYyhZyMjM91zQS4p7z8eKa9h0JrbacekcirexG0z4n3xz0QOWSvFj3jLhWXUIU21iIAwJtI3RbWa90I7rzAIqI3UElUJG7tLtUXzw4KQNETvXzqWaujEMenYlNIzLGxgB3AuJEQ74}     \end{equation} where      \begin{equation}       P_w(x)        = \sum_{|\beta| \leq d-1} \frac{x^{\beta}}{\beta!}\zxvczxbcvdfghasdfrtsdafasdfasdfdsfgsdgf_{|y|\leq1/2} \partial^{\beta}_x \Gamma(-y)           \big( L(0) u(y) - Q(y) \big) \,dy        .     \label{8ThswELzXU3X7Ebd1KdZ7v1rN3GiirRXGKWK099ovBM0FDJCvkopYNQ2aN94Z7k0UnUKamE3OjU8DFYFFokbSI2J9V9gVlM8ALWThDPnPu3EL7HPD2VDaZTggzcCCmbvc70qqPcC9mt60ogcrTiA3HEjwTK8ymKeuJMc4q6dVz200XnYUtLR9GYjPXvFOVr6W1zUK1WbPToaWJJuKnxBLnd0ftDEbMmj4loHYyhZyMjM91zQS4p7z8eKa9h0JrbacekcirexG0z4n3xz0QOWSvFj3jLhWXUIU21iIAwJtI3RbWa90I7rzAIqI3UElUJG7tLtUXzw4KQNETvXzqWaujEMenYlNIzLGxgB3AuJEQ75}     \end{equation} Moreover, by \eqref{8ThswELzXU3X7Ebd1KdZ7v1rN3GiirRXGKWK099ovBM0FDJCvkopYNQ2aN94Z7k0UnUKamE3OjU8DFYFFokbSI2J9V9gVlM8ALWThDPnPu3EL7HPD2VDaZTggzcCCmbvc70qqPcC9mt60ogcrTiA3HEjwTK8ymKeuJMc4q6dVz200XnYUtLR9GYjPXvFOVr6W1zUK1WbPToaWJJuKnxBLnd0ftDEbMmj4loHYyhZyMjM91zQS4p7z8eKa9h0JrbacekcirexG0z4n3xz0QOWSvFj3jLhWXUIU21iIAwJtI3RbWa90I7rzAIqI3UElUJG7tLtUXzw4KQNETvXzqWaujEMenYlNIzLGxgB3AuJEQ72},     \begin{equation}       \zxvczxbcvdfghasdfrtsdafasdfasdfdsfgsdgh w \zxvczxbcvdfghasdfrtsdafasdfasdfdsfgsdgh_{L^p(B_{1/2})}       \les \zxvczxbcvdfghasdfrtsdafasdfasdfdsfgsdgh u \zxvczxbcvdfghasdfrtsdafasdfasdfdsfgsdgh_{W^{m,p}(B_{1/2})}              + \zxvczxbcvdfghasdfrtsdafasdfasdfdsfgsdgh Q \zxvczxbcvdfghasdfrtsdafasdfasdfdsfgsdgh_{L^p(B_1)}       .     \llabel{ X9 8 ZVQ N3s duZ qc n9IX ozWh Fd16IB 0K 9 JeB Hvi 364 kQ lFMM JOn0 OUBrnv pY y jUB Ofs Pzx l4 zcMn JHdq OjSi6N Mn 8 bR6 kPe klT Fd VlwD SrhT 8Qr0sC hN h 88j 8ZA vvW VD 03wt ETKK NUdr7W EK 1 jKS IHF Kh2 sr 1RRV Ra8J mBtkWI 1u k uZT F2B 4p8 E7 Y3p0 DX20 JM3XzQ tZ 3 bMC vM4 DEA wB Fp8q YKpL So1a5s dR P fTg 5R6 7v1 T4 eCJ1 qg14 CTK7u7 ag j Q0A tZ1 Nh6 hk Sys5 CWon IOqgCL 3u 7 feR BHz odS Jp 7JH8 u6Rw sYE0mc P4 r LaW Atl yRw kH F3ei UyhI iA19ZB u8 m ywf 42n uyX 0e ljCt 3Lkd 1eUQEZ oO Z rA2 Oqf oQ5 Ca hrBy KzFg DOseim 0j Y BmX csL Ayc cC J8ThswELzXU3X7Ebd1KdZ7v1rN3GiirRXGKWK099ovBM0FDJCvkopYNQ2aN94Z7k0UnUKamE3OjU8DFYFFokbSI2J9V9gVlM8ALWThDPnPu3EL7HPD2VDaZTggzcCCmbvc70qqPcC9mt60ogcrTiA3HEjwTK8ymKeuJMc4q6dVz200XnYUtLR9GYjPXvFOVr6W1zUK1WbPToaWJJuKnxBLnd0ftDEbMmj4loHYyhZyMjM91zQS4p7z8eKa9h0JrbacekcirexG0z4n3xz0QOWSvFj3jLhWXUIU21iIAwJtI3RbWa90I7rzAIqI3UElUJG7tLtUXzw4KQNETvXzqWaujEMenYlNIzLGxgB3AuJEQ76}     \end{equation} Thus, the function $v = u-w$ satisfies $L(0)v = Q$, and  we have     \begin{equation}       \zxvczxbcvdfghasdfrtsdafasdfasdfdsfgsdgh v \zxvczxbcvdfghasdfrtsdafasdfasdfdsfgsdgh_{L^p(B_{1/2})}        \les \zxvczxbcvdfghasdfrtsdafasdfasdfdsfgsdgh u \zxvczxbcvdfghasdfrtsdafasdfasdfdsfgsdgh_{L^p(B_{1/2})}              + \zxvczxbcvdfghasdfrtsdafasdfasdfdsfgsdgh w \zxvczxbcvdfghasdfrtsdafasdfasdfdsfgsdgh_{L^p(B_{1/2})}       \les \zxvczxbcvdfghasdfrtsdafasdfasdfdsfgsdgh u \zxvczxbcvdfghasdfrtsdafasdfasdfdsfgsdgh_{W^{m,p}(B_{1/2})}             + \zxvczxbcvdfghasdfrtsdafasdfasdfdsfgsdgh Q \zxvczxbcvdfghasdfrtsdafasdfasdfdsfgsdgh_{L^p(B_1)}       .     \llabel{Udr7W EK 1 jKS IHF Kh2 sr 1RRV Ra8J mBtkWI 1u k uZT F2B 4p8 E7 Y3p0 DX20 JM3XzQ tZ 3 bMC vM4 DEA wB Fp8q YKpL So1a5s dR P fTg 5R6 7v1 T4 eCJ1 qg14 CTK7u7 ag j Q0A tZ1 Nh6 hk Sys5 CWon IOqgCL 3u 7 feR BHz odS Jp 7JH8 u6Rw sYE0mc P4 r LaW Atl yRw kH F3ei UyhI iA19ZB u8 m ywf 42n uyX 0e ljCt 3Lkd 1eUQEZ oO Z rA2 Oqf oQ5 Ca hrBy KzFg DOseim 0j Y BmX csL Ayc cC JBTZ PEjy zPb5hZ KW O xT6 dyt u82 Ia htpD m75Y DktQvd Nj W jIQ H1B Ace SZ KVVP 136v L8XhMm 1O H Kn2 gUy kFU wN 8JML Bqmn vGuwGR oW U oNZ Y2P nmS 5g QMcR YHxL yHuDo8 ba w aqM NYt onW8ThswELzXU3X7Ebd1KdZ7v1rN3GiirRXGKWK099ovBM0FDJCvkopYNQ2aN94Z7k0UnUKamE3OjU8DFYFFokbSI2J9V9gVlM8ALWThDPnPu3EL7HPD2VDaZTggzcCCmbvc70qqPcC9mt60ogcrTiA3HEjwTK8ymKeuJMc4q6dVz200XnYUtLR9GYjPXvFOVr6W1zUK1WbPToaWJJuKnxBLnd0ftDEbMmj4loHYyhZyMjM91zQS4p7z8eKa9h0JrbacekcirexG0z4n3xz0QOWSvFj3jLhWXUIU21iIAwJtI3RbWa90I7rzAIqI3UElUJG7tLtUXzw4KQNETvXzqWaujEMenYlNIzLGxgB3AuJEQ77}     \end{equation} Note that from \eqref{8ThswELzXU3X7Ebd1KdZ7v1rN3GiirRXGKWK099ovBM0FDJCvkopYNQ2aN94Z7k0UnUKamE3OjU8DFYFFokbSI2J9V9gVlM8ALWThDPnPu3EL7HPD2VDaZTggzcCCmbvc70qqPcC9mt60ogcrTiA3HEjwTK8ymKeuJMc4q6dVz200XnYUtLR9GYjPXvFOVr6W1zUK1WbPToaWJJuKnxBLnd0ftDEbMmj4loHYyhZyMjM91zQS4p7z8eKa9h0JrbacekcirexG0z4n3xz0QOWSvFj3jLhWXUIU21iIAwJtI3RbWa90I7rzAIqI3UElUJG7tLtUXzw4KQNETvXzqWaujEMenYlNIzLGxgB3AuJEQ68}, we have $\zxvczxbcvdfghasdfrtsdafasdfasdfdsfgsdgh \partial^{\nu} u \zxvczxbcvdfghasdfrtsdafasdfasdfdsfgsdgh_{L^p(B_{1/2})} \les c_k + M + \zxvczxbcvdfghasdfrtsdafasdfasdfdsfgsdgh Q \zxvczxbcvdfghasdfrtsdafasdfasdfdsfgsdgh_{L^p(B_1)}$ for all $|\nu|\leq m$, which implies  $ \zxvczxbcvdfghasdfrtsdafasdfasdfdsfgsdgh u \zxvczxbcvdfghasdfrtsdafasdfasdfdsfgsdgh_{W^{m,p}(B_{1/2})} \les  c_k + \MM   + \zxvczxbcvdfghasdfrtsdafasdfasdfdsfgsdgh Q \zxvczxbcvdfghasdfrtsdafasdfasdfdsfgsdgh_{L^p(B_1)} . $ Therefore,   \begin{equation}
   \zxvczxbcvdfghasdfrtsdafasdfasdfdsfgsdgh v \zxvczxbcvdfghasdfrtsdafasdfasdfdsfgsdgh_{L^p(B_{1/2})}     \les  c_k + \MM   + \zxvczxbcvdfghasdfrtsdafasdfasdfdsfgsdgh Q \zxvczxbcvdfghasdfrtsdafasdfasdfdsfgsdgh_{L^p(B_1)}    .    \label{8ThswELzXU3X7Ebd1KdZ7v1rN3GiirRXGKWK099ovBM0FDJCvkopYNQ2aN94Z7k0UnUKamE3OjU8DFYFFokbSI2J9V9gVlM8ALWThDPnPu3EL7HPD2VDaZTggzcCCmbvc70qqPcC9mt60ogcrTiA3HEjwTK8ymKeuJMc4q6dVz200XnYUtLR9GYjPXvFOVr6W1zUK1WbPToaWJJuKnxBLnd0ftDEbMmj4loHYyhZyMjM91zQS4p7z8eKa9h0JrbacekcirexG0z4n3xz0QOWSvFj3jLhWXUIU21iIAwJtI3RbWa90I7rzAIqI3UElUJG7tLtUXzw4KQNETvXzqWaujEMenYlNIzLGxgB3AuJEQ70}   \end{equation} Let $P_v$ be the Taylor polynomial of $v$ with the degree $d-1$,  then $L(0) P_v=Q$, and for $|x|=r\leq 1/4$, we may bound     \begin{align}     \begin{split}       | v(x) - P_v(x) |       &\les \zxvczxbcvdfghasdfrtsdafasdfasdfdsfgsdgh \partial^{d} v \zxvczxbcvdfghasdfrtsdafasdfasdfdsfgsdgh_{L^{\infty} (B_r)} r^d        \les \zxvczxbcvdfghasdfrtsdafasdfasdfdsfgsdgh \partial^{d} v \zxvczxbcvdfghasdfrtsdafasdfasdfdsfgsdgh_{L^{\infty} (B_{1/4})} r^d       \\&       \les \big( \zxvczxbcvdfghasdfrtsdafasdfasdfdsfgsdgh Q \zxvczxbcvdfghasdfrtsdafasdfasdfdsfgsdgh_{L^p(B_{1/2})}                       + \zxvczxbcvdfghasdfrtsdafasdfasdfdsfgsdgh v \zxvczxbcvdfghasdfrtsdafasdfasdfdsfgsdgh_{L^p(B_{1/2})}              \big)               r^d       \les \big( c_k + \MM   + \zxvczxbcvdfghasdfrtsdafasdfasdfdsfgsdgh Q \zxvczxbcvdfghasdfrtsdafasdfasdfdsfgsdgh_{L^p(B_1)} \big)               r^d       ,     \llabel{Won IOqgCL 3u 7 feR BHz odS Jp 7JH8 u6Rw sYE0mc P4 r LaW Atl yRw kH F3ei UyhI iA19ZB u8 m ywf 42n uyX 0e ljCt 3Lkd 1eUQEZ oO Z rA2 Oqf oQ5 Ca hrBy KzFg DOseim 0j Y BmX csL Ayc cC JBTZ PEjy zPb5hZ KW O xT6 dyt u82 Ia htpD m75Y DktQvd Nj W jIQ H1B Ace SZ KVVP 136v L8XhMm 1O H Kn2 gUy kFU wN 8JML Bqmn vGuwGR oW U oNZ Y2P nmS 5g QMcR YHxL yHuDo8 ba w aqM NYt onW u2 YIOz eB6R wHuGcn fi o 47U PM5 tOj sz QBNq 7mco fCNjou 83 e mcY 81s vsI 2Y DS3S yloB Nx5FBV Bc 9 6HZ EOX UO3 W1 fIF5 jtEM W6KW7D 63 t H0F CVT Zup Pl A9aI oN2s f1Bw31 gg L FoD O08ThswELzXU3X7Ebd1KdZ7v1rN3GiirRXGKWK099ovBM0FDJCvkopYNQ2aN94Z7k0UnUKamE3OjU8DFYFFokbSI2J9V9gVlM8ALWThDPnPu3EL7HPD2VDaZTggzcCCmbvc70qqPcC9mt60ogcrTiA3HEjwTK8ymKeuJMc4q6dVz200XnYUtLR9GYjPXvFOVr6W1zUK1WbPToaWJJuKnxBLnd0ftDEbMmj4loHYyhZyMjM91zQS4p7z8eKa9h0JrbacekcirexG0z4n3xz0QOWSvFj3jLhWXUIU21iIAwJtI3RbWa90I7rzAIqI3UElUJG7tLtUXzw4KQNETvXzqWaujEMenYlNIzLGxgB3AuJEQ78}     \end{split}     \end{align} where we used the Taylor theorem in the first step and the elliptic regularity for $L(0)v=Q$ in the second. Therefore,     \begin{equation}       \zxvczxbcvdfghasdfrtsdafasdfasdfdsfgsdgh v - P_v \zxvczxbcvdfghasdfrtsdafasdfasdfdsfgsdgh_{L^p(B_r)}        \les \big( c_k + \MM   + \zxvczxbcvdfghasdfrtsdafasdfasdfdsfgsdgh Q \zxvczxbcvdfghasdfrtsdafasdfasdfdsfgsdgh_{L^p(B_1)} \big)               r^{d+n/p}       \comma        r \in (0, 1/4 ]       .      \llabel{BTZ PEjy zPb5hZ KW O xT6 dyt u82 Ia htpD m75Y DktQvd Nj W jIQ H1B Ace SZ KVVP 136v L8XhMm 1O H Kn2 gUy kFU wN 8JML Bqmn vGuwGR oW U oNZ Y2P nmS 5g QMcR YHxL yHuDo8 ba w aqM NYt onW u2 YIOz eB6R wHuGcn fi o 47U PM5 tOj sz QBNq 7mco fCNjou 83 e mcY 81s vsI 2Y DS3S yloB Nx5FBV Bc 9 6HZ EOX UO3 W1 fIF5 jtEM W6KW7D 63 t H0F CVT Zup Pl A9aI oN2s f1Bw31 gg L FoD O0M x18 oo heEd KgZB Cqdqpa sa H Fhx BrE aRg Au I5dq mWWB MuHfv9 0y S PtG hFF dYJ JL f3Ap k5Ck Szr0Kb Vd i sQk uSA JEn DT YkjP AEMu a0VCtC Ff z 9R6 Vht 8Ua cB e7op AnGa 7AbLWj Hc s n8ThswELzXU3X7Ebd1KdZ7v1rN3GiirRXGKWK099ovBM0FDJCvkopYNQ2aN94Z7k0UnUKamE3OjU8DFYFFokbSI2J9V9gVlM8ALWThDPnPu3EL7HPD2VDaZTggzcCCmbvc70qqPcC9mt60ogcrTiA3HEjwTK8ymKeuJMc4q6dVz200XnYUtLR9GYjPXvFOVr6W1zUK1WbPToaWJJuKnxBLnd0ftDEbMmj4loHYyhZyMjM91zQS4p7z8eKa9h0JrbacekcirexG0z4n3xz0QOWSvFj3jLhWXUIU21iIAwJtI3RbWa90I7rzAIqI3UElUJG7tLtUXzw4KQNETvXzqWaujEMenYlNIzLGxgB3AuJEQ79}     \end{equation} Letting $P = P_v+P_w$, we have $u-P = (v-P_v) + (w - P_w)$. Thus, for all $r \leq 1/4$, we may bound     \begin{align}     \begin{split}       \zxvczxbcvdfghasdfrtsdafasdfasdfdsfgsdgh u - P \zxvczxbcvdfghasdfrtsdafasdfasdfdsfgsdgh_{L^p(B_r)}       &\les \big( c_k + \MM   + \zxvczxbcvdfghasdfrtsdafasdfasdfdsfgsdgh Q \zxvczxbcvdfghasdfrtsdafasdfasdfdsfgsdgh_{L^p(B_1)} \big)               r^{d+\min((k+1)\alpha+\eta-1,0)+n/p}       \\&       = \big( c_k + \MM   + \zxvczxbcvdfghasdfrtsdafasdfasdfdsfgsdgh Q \zxvczxbcvdfghasdfrtsdafasdfasdfdsfgsdgh_{L^p(B_1)} \big)               r^{d+(k+1)\alpha+\eta-1+n/p}              ,     \end{split}     \label{8ThswELzXU3X7Ebd1KdZ7v1rN3GiirRXGKWK099ovBM0FDJCvkopYNQ2aN94Z7k0UnUKamE3OjU8DFYFFokbSI2J9V9gVlM8ALWThDPnPu3EL7HPD2VDaZTggzcCCmbvc70qqPcC9mt60ogcrTiA3HEjwTK8ymKeuJMc4q6dVz200XnYUtLR9GYjPXvFOVr6W1zUK1WbPToaWJJuKnxBLnd0ftDEbMmj4loHYyhZyMjM91zQS4p7z8eKa9h0JrbacekcirexG0z4n3xz0QOWSvFj3jLhWXUIU21iIAwJtI3RbWa90I7rzAIqI3UElUJG7tLtUXzw4KQNETvXzqWaujEMenYlNIzLGxgB3AuJEQ80}     \end{align}  where we used $(k+1)\alpha + \eta < 1$ in the last equality.   We now claim that~$P \equiv 0$. If, contrary to the assertion, we have $P \not\equiv 0$, then using \eqref{8ThswELzXU3X7Ebd1KdZ7v1rN3GiirRXGKWK099ovBM0FDJCvkopYNQ2aN94Z7k0UnUKamE3OjU8DFYFFokbSI2J9V9gVlM8ALWThDPnPu3EL7HPD2VDaZTggzcCCmbvc70qqPcC9mt60ogcrTiA3HEjwTK8ymKeuJMc4q6dVz200XnYUtLR9GYjPXvFOVr6W1zUK1WbPToaWJJuKnxBLnd0ftDEbMmj4loHYyhZyMjM91zQS4p7z8eKa9h0JrbacekcirexG0z4n3xz0QOWSvFj3jLhWXUIU21iIAwJtI3RbWa90I7rzAIqI3UElUJG7tLtUXzw4KQNETvXzqWaujEMenYlNIzLGxgB3AuJEQ80} we obtain     \begin{equation}       \zxvczxbcvdfghasdfrtsdafasdfasdfdsfgsdgh P \zxvczxbcvdfghasdfrtsdafasdfasdfdsfgsdgh_{L^p(B_r)}        \leq \zxvczxbcvdfghasdfrtsdafasdfasdfdsfgsdgh u - P \zxvczxbcvdfghasdfrtsdafasdfasdfdsfgsdgh_{L^p(B_r)}               + \zxvczxbcvdfghasdfrtsdafasdfasdfdsfgsdgh u \zxvczxbcvdfghasdfrtsdafasdfasdfdsfgsdgh_{L^p(B_r)}        \les r^{d-1+\eta +(k+1)\alpha+ n/p}              + r^{d-1+\eta+k\alpha+n/p}       \les r^{d-1+\eta+k\alpha+n/p}       .     \label{8ThswELzXU3X7Ebd1KdZ7v1rN3GiirRXGKWK099ovBM0FDJCvkopYNQ2aN94Z7k0UnUKamE3OjU8DFYFFokbSI2J9V9gVlM8ALWThDPnPu3EL7HPD2VDaZTggzcCCmbvc70qqPcC9mt60ogcrTiA3HEjwTK8ymKeuJMc4q6dVz200XnYUtLR9GYjPXvFOVr6W1zUK1WbPToaWJJuKnxBLnd0ftDEbMmj4loHYyhZyMjM91zQS4p7z8eKa9h0JrbacekcirexG0z4n3xz0QOWSvFj3jLhWXUIU21iIAwJtI3RbWa90I7rzAIqI3UElUJG7tLtUXzw4KQNETvXzqWaujEMenYlNIzLGxgB3AuJEQ81}     \end{equation}     On the other hand, since $P$ is a nontrivial polynomial of degree less than or equal to $d-1$, we have $       \zxvczxbcvdfghasdfrtsdafasdfasdfdsfgsdgh P \zxvczxbcvdfghasdfrtsdafasdfasdfdsfgsdgh_{L^p(B_r)}       \ges r^{d-1+n/p} $, for all~$r>0$. Combining with \eqref{8ThswELzXU3X7Ebd1KdZ7v1rN3GiirRXGKWK099ovBM0FDJCvkopYNQ2aN94Z7k0UnUKamE3OjU8DFYFFokbSI2J9V9gVlM8ALWThDPnPu3EL7HPD2VDaZTggzcCCmbvc70qqPcC9mt60ogcrTiA3HEjwTK8ymKeuJMc4q6dVz200XnYUtLR9GYjPXvFOVr6W1zUK1WbPToaWJJuKnxBLnd0ftDEbMmj4loHYyhZyMjM91zQS4p7z8eKa9h0JrbacekcirexG0z4n3xz0QOWSvFj3jLhWXUIU21iIAwJtI3RbWa90I7rzAIqI3UElUJG7tLtUXzw4KQNETvXzqWaujEMenYlNIzLGxgB3AuJEQ81}, we get a contradiction when letting $r \to 0$ since~$\alpha,\eta>0$. This proves that~$P \equiv 0$.  Substituting $P\equiv0$ in \eqref{8ThswELzXU3X7Ebd1KdZ7v1rN3GiirRXGKWK099ovBM0FDJCvkopYNQ2aN94Z7k0UnUKamE3OjU8DFYFFokbSI2J9V9gVlM8ALWThDPnPu3EL7HPD2VDaZTggzcCCmbvc70qqPcC9mt60ogcrTiA3HEjwTK8ymKeuJMc4q6dVz200XnYUtLR9GYjPXvFOVr6W1zUK1WbPToaWJJuKnxBLnd0ftDEbMmj4loHYyhZyMjM91zQS4p7z8eKa9h0JrbacekcirexG0z4n3xz0QOWSvFj3jLhWXUIU21iIAwJtI3RbWa90I7rzAIqI3UElUJG7tLtUXzw4KQNETvXzqWaujEMenYlNIzLGxgB3AuJEQ80}, we get     \begin{equation}       \zxvczxbcvdfghasdfrtsdafasdfasdfdsfgsdgh u \zxvczxbcvdfghasdfrtsdafasdfasdfdsfgsdgh_{L^p(B_r)}       \les \big( c_k + \MM   + \zxvczxbcvdfghasdfrtsdafasdfasdfdsfgsdgh Q \zxvczxbcvdfghasdfrtsdafasdfasdfdsfgsdgh_{L^p(B_1)} \big)              r^{d+\min ( (k+1)\alpha+ \eta-1,0) + n/p}        \comma         r \in (0, 1/4 ]        .        \colb     \llabel{ u2 YIOz eB6R wHuGcn fi o 47U PM5 tOj sz QBNq 7mco fCNjou 83 e mcY 81s vsI 2Y DS3S yloB Nx5FBV Bc 9 6HZ EOX UO3 W1 fIF5 jtEM W6KW7D 63 t H0F CVT Zup Pl A9aI oN2s f1Bw31 gg L FoD O0M x18 oo heEd KgZB Cqdqpa sa H Fhx BrE aRg Au I5dq mWWB MuHfv9 0y S PtG hFF dYJ JL f3Ap k5Ck Szr0Kb Vd i sQk uSA JEn DT YkjP AEMu a0VCtC Ff z 9R6 Vht 8Ua cB e7op AnGa 7AbLWj Hc s nAR GMb n7a 9n paMf lftM 7jvb20 0T W xUC 4lt e92 9j oZrA IuIa o1Zqdr oC L 55L T4Q 8kN yv sIzP x4i5 9lKTq2 JB B sZb QCE Ctw ar VBMT H1QR 6v5srW hR r D4r wf8 ik7 KH Egee rFVT ErONml Q8ThswELzXU3X7Ebd1KdZ7v1rN3GiirRXGKWK099ovBM0FDJCvkopYNQ2aN94Z7k0UnUKamE3OjU8DFYFFokbSI2J9V9gVlM8ALWThDPnPu3EL7HPD2VDaZTggzcCCmbvc70qqPcC9mt60ogcrTiA3HEjwTK8ymKeuJMc4q6dVz200XnYUtLR9GYjPXvFOVr6W1zUK1WbPToaWJJuKnxBLnd0ftDEbMmj4loHYyhZyMjM91zQS4p7z8eKa9h0JrbacekcirexG0z4n3xz0QOWSvFj3jLhWXUIU21iIAwJtI3RbWa90I7rzAIqI3UElUJG7tLtUXzw4KQNETvXzqWaujEMenYlNIzLGxgB3AuJEQ83}     \end{equation} Since $(k+1)\alpha + \eta < 1$, we have proven     \begin{equation}       \zxvczxbcvdfghasdfrtsdafasdfasdfdsfgsdgh u \zxvczxbcvdfghasdfrtsdafasdfasdfdsfgsdgh_{L^p(B_r)}       \les \big( c_k + \MM   + \zxvczxbcvdfghasdfrtsdafasdfasdfdsfgsdgh Q \zxvczxbcvdfghasdfrtsdafasdfasdfdsfgsdgh_{L^p(B_1)} \big)              r^{d+ \eta-1+ n/p + (k+1)\alpha}       \comma       r \in (0, 1/4]       ,     \llabel{M x18 oo heEd KgZB Cqdqpa sa H Fhx BrE aRg Au I5dq mWWB MuHfv9 0y S PtG hFF dYJ JL f3Ap k5Ck Szr0Kb Vd i sQk uSA JEn DT YkjP AEMu a0VCtC Ff z 9R6 Vht 8Ua cB e7op AnGa 7AbLWj Hc s nAR GMb n7a 9n paMf lftM 7jvb20 0T W xUC 4lt e92 9j oZrA IuIa o1Zqdr oC L 55L T4Q 8kN yv sIzP x4i5 9lKTq2 JB B sZb QCE Ctw ar VBMT H1QR 6v5srW hR r D4r wf8 ik7 KH Egee rFVT ErONml Q5 L R8v XNZ LB3 9U DzRH ZbH9 fTBhRw kA 2 n3p g4I grH xd fEFu z6RE tDqPdw N7 H TVt cE1 8hW 6y n4Gn nCE3 MEQ51i Ps G Z2G Lbt CSt hu zvPF eE28 MM23ug TC d j7z 7Av TLa 1A GLiJ 5JwW CiD8ThswELzXU3X7Ebd1KdZ7v1rN3GiirRXGKWK099ovBM0FDJCvkopYNQ2aN94Z7k0UnUKamE3OjU8DFYFFokbSI2J9V9gVlM8ALWThDPnPu3EL7HPD2VDaZTggzcCCmbvc70qqPcC9mt60ogcrTiA3HEjwTK8ymKeuJMc4q6dVz200XnYUtLR9GYjPXvFOVr6W1zUK1WbPToaWJJuKnxBLnd0ftDEbMmj4loHYyhZyMjM91zQS4p7z8eKa9h0JrbacekcirexG0z4n3xz0QOWSvFj3jLhWXUIU21iIAwJtI3RbWa90I7rzAIqI3UElUJG7tLtUXzw4KQNETvXzqWaujEMenYlNIzLGxgB3AuJEQ85}     \end{equation} so \eqref{8ThswELzXU3X7Ebd1KdZ7v1rN3GiirRXGKWK099ovBM0FDJCvkopYNQ2aN94Z7k0UnUKamE3OjU8DFYFFokbSI2J9V9gVlM8ALWThDPnPu3EL7HPD2VDaZTggzcCCmbvc70qqPcC9mt60ogcrTiA3HEjwTK8ymKeuJMc4q6dVz200XnYUtLR9GYjPXvFOVr6W1zUK1WbPToaWJJuKnxBLnd0ftDEbMmj4loHYyhZyMjM91zQS4p7z8eKa9h0JrbacekcirexG0z4n3xz0QOWSvFj3jLhWXUIU21iIAwJtI3RbWa90I7rzAIqI3UElUJG7tLtUXzw4KQNETvXzqWaujEMenYlNIzLGxgB3AuJEQ64} holds. \end{proof} \par \subsection{The Schauder estimates for elliptic equations} \begin{proof}[Proof of Theorem~\ref{T01}] We first reduce $\eta$ slightly so that $\eta/k_0\notin \mathbb{Q}$. Let $k_0\in\mathbb{N}_0$ be such that $k_0\alpha + \eta < 1 \leq (k_0+1)\alpha+\eta$.  Since $\eta/k_0\notin\mathbb{Q}$, the second equality is strict. We first show that      \begin{equation}       \zxvczxbcvdfghasdfrtsdafasdfasdfdsfgsdgh u \zxvczxbcvdfghasdfrtsdafasdfasdfdsfgsdgh_{L^p(B_r)}       \les (c_0 + M   + \zxvczxbcvdfghasdfrtsdafasdfasdfdsfgsdgh Q \zxvczxbcvdfghasdfrtsdafasdfasdfdsfgsdgh_{L^p(B_1)})       r^{d+n/p}       .    \llabel{AR GMb n7a 9n paMf lftM 7jvb20 0T W xUC 4lt e92 9j oZrA IuIa o1Zqdr oC L 55L T4Q 8kN yv sIzP x4i5 9lKTq2 JB B sZb QCE Ctw ar VBMT H1QR 6v5srW hR r D4r wf8 ik7 KH Egee rFVT ErONml Q5 L R8v XNZ LB3 9U DzRH ZbH9 fTBhRw kA 2 n3p g4I grH xd fEFu z6RE tDqPdw N7 H TVt cE1 8hW 6y n4Gn nCE3 MEQ51i Ps G Z2G Lbt CSt hu zvPF eE28 MM23ug TC d j7z 7Av TLa 1A GLiJ 5JwW CiDPyM qa 8 tAK QZ9 cfP 42 kuUz V3h6 GsGFoW m9 h cfj 51d GtW yZ zC5D aVt2 Wi5IIs gD B 0cX LM1 FtE xE RIZI Z0Rt QUtWcU Cm F mSj xvW pZc gl dopk 0D7a EouRku Id O ZdW FOR uqb PY 6HkW OVi8ThswELzXU3X7Ebd1KdZ7v1rN3GiirRXGKWK099ovBM0FDJCvkopYNQ2aN94Z7k0UnUKamE3OjU8DFYFFokbSI2J9V9gVlM8ALWThDPnPu3EL7HPD2VDaZTggzcCCmbvc70qqPcC9mt60ogcrTiA3HEjwTK8ymKeuJMc4q6dVz200XnYUtLR9GYjPXvFOVr6W1zUK1WbPToaWJJuKnxBLnd0ftDEbMmj4loHYyhZyMjM91zQS4p7z8eKa9h0JrbacekcirexG0z4n3xz0QOWSvFj3jLhWXUIU21iIAwJtI3RbWa90I7rzAIqI3UElUJG7tLtUXzw4KQNETvXzqWaujEMenYlNIzLGxgB3AuJEQ86}     \end{equation}   By \eqref{8ThswELzXU3X7Ebd1KdZ7v1rN3GiirRXGKWK099ovBM0FDJCvkopYNQ2aN94Z7k0UnUKamE3OjU8DFYFFokbSI2J9V9gVlM8ALWThDPnPu3EL7HPD2VDaZTggzcCCmbvc70qqPcC9mt60ogcrTiA3HEjwTK8ymKeuJMc4q6dVz200XnYUtLR9GYjPXvFOVr6W1zUK1WbPToaWJJuKnxBLnd0ftDEbMmj4loHYyhZyMjM91zQS4p7z8eKa9h0JrbacekcirexG0z4n3xz0QOWSvFj3jLhWXUIU21iIAwJtI3RbWa90I7rzAIqI3UElUJG7tLtUXzw4KQNETvXzqWaujEMenYlNIzLGxgB3AuJEQ14}, we have $       c_0        = \sup_{r\leq1}          \fractext{\zxvczxbcvdfghasdfrtsdafasdfasdfdsfgsdgh u \zxvczxbcvdfghasdfrtsdafasdfasdfdsfgsdgh_{L^p(B_r)}}{r^{d-1+\eta+n/p}}       < \infty $. Applying Lemma~\ref{L04} $k_0$ times, we obtain     \begin{equation}       c_{k_0}       = \sup_{r\leq1}          \frac{\zxvczxbcvdfghasdfrtsdafasdfasdfdsfgsdgh u \zxvczxbcvdfghasdfrtsdafasdfasdfdsfgsdgh_{L^p(B_r)}}{r^{d-1+\eta+k_0\alpha+n/p}}       \les c_0 + M   + \zxvczxbcvdfghasdfrtsdafasdfasdfdsfgsdgh Q \zxvczxbcvdfghasdfrtsdafasdfasdfdsfgsdgh_{L^p(B_1)}       .     \llabel{5 L R8v XNZ LB3 9U DzRH ZbH9 fTBhRw kA 2 n3p g4I grH xd fEFu z6RE tDqPdw N7 H TVt cE1 8hW 6y n4Gn nCE3 MEQ51i Ps G Z2G Lbt CSt hu zvPF eE28 MM23ug TC d j7z 7Av TLa 1A GLiJ 5JwW CiDPyM qa 8 tAK QZ9 cfP 42 kuUz V3h6 GsGFoW m9 h cfj 51d GtW yZ zC5D aVt2 Wi5IIs gD B 0cX LM1 FtE xE RIZI Z0Rt QUtWcU Cm F mSj xvW pZc gl dopk 0D7a EouRku Id O ZdW FOR uqb PY 6HkW OVi7 FuVMLW nx p SaN omk rC5 uI ZK9C jpJy UIeO6k gb 7 tr2 SCY x5F 11 S6Xq OImr s7vv0u vA g rb9 hGP Fnk RM j92H gczJ 660kHb BB l QSI OY7 FcX 0c uyDl LjbU 3F6vZk Gb a KaM ufj uxp n4 Mi48ThswELzXU3X7Ebd1KdZ7v1rN3GiirRXGKWK099ovBM0FDJCvkopYNQ2aN94Z7k0UnUKamE3OjU8DFYFFokbSI2J9V9gVlM8ALWThDPnPu3EL7HPD2VDaZTggzcCCmbvc70qqPcC9mt60ogcrTiA3HEjwTK8ymKeuJMc4q6dVz200XnYUtLR9GYjPXvFOVr6W1zUK1WbPToaWJJuKnxBLnd0ftDEbMmj4loHYyhZyMjM91zQS4p7z8eKa9h0JrbacekcirexG0z4n3xz0QOWSvFj3jLhWXUIU21iIAwJtI3RbWa90I7rzAIqI3UElUJG7tLtUXzw4KQNETvXzqWaujEMenYlNIzLGxgB3AuJEQ88}     \end{equation} As in \eqref{8ThswELzXU3X7Ebd1KdZ7v1rN3GiirRXGKWK099ovBM0FDJCvkopYNQ2aN94Z7k0UnUKamE3OjU8DFYFFokbSI2J9V9gVlM8ALWThDPnPu3EL7HPD2VDaZTggzcCCmbvc70qqPcC9mt60ogcrTiA3HEjwTK8ymKeuJMc4q6dVz200XnYUtLR9GYjPXvFOVr6W1zUK1WbPToaWJJuKnxBLnd0ftDEbMmj4loHYyhZyMjM91zQS4p7z8eKa9h0JrbacekcirexG0z4n3xz0QOWSvFj3jLhWXUIU21iIAwJtI3RbWa90I7rzAIqI3UElUJG7tLtUXzw4KQNETvXzqWaujEMenYlNIzLGxgB3AuJEQ68}, we have     \begin{align}     \begin{split}       \sum_{|\nu|\leq m} r^{|\nu|} \zxvczxbcvdfghasdfrtsdafasdfasdfdsfgsdgh \partial^{\nu} u \zxvczxbcvdfghasdfrtsdafasdfasdfdsfgsdgh_{L^p(B_r)}       &\les \big(c_0 + \MM   +  \zxvczxbcvdfghasdfrtsdafasdfasdfdsfgsdgh Q \zxvczxbcvdfghasdfrtsdafasdfasdfdsfgsdgh_{L^p(B_1)} \big)r^{d-1+\eta+k_0\alpha+n/p}       \comma       r \leq \frac{1}{2}       ,     \end{split}     \label{8ThswELzXU3X7Ebd1KdZ7v1rN3GiirRXGKWK099ovBM0FDJCvkopYNQ2aN94Z7k0UnUKamE3OjU8DFYFFokbSI2J9V9gVlM8ALWThDPnPu3EL7HPD2VDaZTggzcCCmbvc70qqPcC9mt60ogcrTiA3HEjwTK8ymKeuJMc4q6dVz200XnYUtLR9GYjPXvFOVr6W1zUK1WbPToaWJJuKnxBLnd0ftDEbMmj4loHYyhZyMjM91zQS4p7z8eKa9h0JrbacekcirexG0z4n3xz0QOWSvFj3jLhWXUIU21iIAwJtI3RbWa90I7rzAIqI3UElUJG7tLtUXzw4KQNETvXzqWaujEMenYlNIzLGxgB3AuJEQ89}     \end{align} while as in \eqref{8ThswELzXU3X7Ebd1KdZ7v1rN3GiirRXGKWK099ovBM0FDJCvkopYNQ2aN94Z7k0UnUKamE3OjU8DFYFFokbSI2J9V9gVlM8ALWThDPnPu3EL7HPD2VDaZTggzcCCmbvc70qqPcC9mt60ogcrTiA3HEjwTK8ymKeuJMc4q6dVz200XnYUtLR9GYjPXvFOVr6W1zUK1WbPToaWJJuKnxBLnd0ftDEbMmj4loHYyhZyMjM91zQS4p7z8eKa9h0JrbacekcirexG0z4n3xz0QOWSvFj3jLhWXUIU21iIAwJtI3RbWa90I7rzAIqI3UElUJG7tLtUXzw4KQNETvXzqWaujEMenYlNIzLGxgB3AuJEQ71} we may bound     \begin{align}     \begin{split}       \zxvczxbcvdfghasdfrtsdafasdfasdfdsfgsdgh L(0) u - Q \zxvczxbcvdfghasdfrtsdafasdfasdfdsfgsdgh_{L^p(B_r)}       &       \les \big( c_0 + \MM   + \zxvczxbcvdfghasdfrtsdafasdfasdfdsfgsdgh Q \zxvczxbcvdfghasdfrtsdafasdfasdfdsfgsdgh_{L^p(B_1)} \big)               r^{d-1+\eta+(k_0+1)\alpha-m+n/p}       \comma       r \leq \frac{1}{2}       .     \end{split}     \label{8ThswELzXU3X7Ebd1KdZ7v1rN3GiirRXGKWK099ovBM0FDJCvkopYNQ2aN94Z7k0UnUKamE3OjU8DFYFFokbSI2J9V9gVlM8ALWThDPnPu3EL7HPD2VDaZTggzcCCmbvc70qqPcC9mt60ogcrTiA3HEjwTK8ymKeuJMc4q6dVz200XnYUtLR9GYjPXvFOVr6W1zUK1WbPToaWJJuKnxBLnd0ftDEbMmj4loHYyhZyMjM91zQS4p7z8eKa9h0JrbacekcirexG0z4n3xz0QOWSvFj3jLhWXUIU21iIAwJtI3RbWa90I7rzAIqI3UElUJG7tLtUXzw4KQNETvXzqWaujEMenYlNIzLGxgB3AuJEQ90}     \end{align} Letting   \begin{equation}    w(x) = \zxvczxbcvdfghasdfrtsdafasdfasdfdsfgsdgf_{|y|\leq 1/2} \Gamma(x-y) \big( L(0) u(y) - Q(y)\big) \,dy    ,    \label{8ThswELzXU3X7Ebd1KdZ7v1rN3GiirRXGKWK099ovBM0FDJCvkopYNQ2aN94Z7k0UnUKamE3OjU8DFYFFokbSI2J9V9gVlM8ALWThDPnPu3EL7HPD2VDaZTggzcCCmbvc70qqPcC9mt60ogcrTiA3HEjwTK8ymKeuJMc4q6dVz200XnYUtLR9GYjPXvFOVr6W1zUK1WbPToaWJJuKnxBLnd0ftDEbMmj4loHYyhZyMjM91zQS4p7z8eKa9h0JrbacekcirexG0z4n3xz0QOWSvFj3jLhWXUIU21iIAwJtI3RbWa90I7rzAIqI3UElUJG7tLtUXzw4KQNETvXzqWaujEMenYlNIzLGxgB3AuJEQ222}   \end{equation} it is clear that     \begin{equation}       L(0)w       = L(0)u-Q       .     \llabel{PyM qa 8 tAK QZ9 cfP 42 kuUz V3h6 GsGFoW m9 h cfj 51d GtW yZ zC5D aVt2 Wi5IIs gD B 0cX LM1 FtE xE RIZI Z0Rt QUtWcU Cm F mSj xvW pZc gl dopk 0D7a EouRku Id O ZdW FOR uqb PY 6HkW OVi7 FuVMLW nx p SaN omk rC5 uI ZK9C jpJy UIeO6k gb 7 tr2 SCY x5F 11 S6Xq OImr s7vv0u vA g rb9 hGP Fnk RM j92H gczJ 660kHb BB l QSI OY7 FcX 0c uyDl LjbU 3F6vZk Gb a KaM ufj uxp n4 Mi45 7MoL NW3eIm cj 6 OOS e59 afA hg lt9S BOiF cYQipj 5u N 19N KZ5 Czc 23 1wxG x1ut gJB4ue Mx x 5lr s8g VbZ s1 NEfI 02Rb pkfEOZ E4 e seo 9te NRU Ai nujf eJYa Ehns0Y 6X R UF1 PCf 5eE A8ThswELzXU3X7Ebd1KdZ7v1rN3GiirRXGKWK099ovBM0FDJCvkopYNQ2aN94Z7k0UnUKamE3OjU8DFYFFokbSI2J9V9gVlM8ALWThDPnPu3EL7HPD2VDaZTggzcCCmbvc70qqPcC9mt60ogcrTiA3HEjwTK8ymKeuJMc4q6dVz200XnYUtLR9GYjPXvFOVr6W1zUK1WbPToaWJJuKnxBLnd0ftDEbMmj4loHYyhZyMjM91zQS4p7z8eKa9h0JrbacekcirexG0z4n3xz0QOWSvFj3jLhWXUIU21iIAwJtI3RbWa90I7rzAIqI3UElUJG7tLtUXzw4KQNETvXzqWaujEMenYlNIzLGxgB3AuJEQ91}     \end{equation} Since $\eta + (k_0+1)\alpha - 1 \in (0,1)$, we apply Lemma~\ref{L10} to \eqref{8ThswELzXU3X7Ebd1KdZ7v1rN3GiirRXGKWK099ovBM0FDJCvkopYNQ2aN94Z7k0UnUKamE3OjU8DFYFFokbSI2J9V9gVlM8ALWThDPnPu3EL7HPD2VDaZTggzcCCmbvc70qqPcC9mt60ogcrTiA3HEjwTK8ymKeuJMc4q6dVz200XnYUtLR9GYjPXvFOVr6W1zUK1WbPToaWJJuKnxBLnd0ftDEbMmj4loHYyhZyMjM91zQS4p7z8eKa9h0JrbacekcirexG0z4n3xz0QOWSvFj3jLhWXUIU21iIAwJtI3RbWa90I7rzAIqI3UElUJG7tLtUXzw4KQNETvXzqWaujEMenYlNIzLGxgB3AuJEQ222} and use \eqref{8ThswELzXU3X7Ebd1KdZ7v1rN3GiirRXGKWK099ovBM0FDJCvkopYNQ2aN94Z7k0UnUKamE3OjU8DFYFFokbSI2J9V9gVlM8ALWThDPnPu3EL7HPD2VDaZTggzcCCmbvc70qqPcC9mt60ogcrTiA3HEjwTK8ymKeuJMc4q6dVz200XnYUtLR9GYjPXvFOVr6W1zUK1WbPToaWJJuKnxBLnd0ftDEbMmj4loHYyhZyMjM91zQS4p7z8eKa9h0JrbacekcirexG0z4n3xz0QOWSvFj3jLhWXUIU21iIAwJtI3RbWa90I7rzAIqI3UElUJG7tLtUXzw4KQNETvXzqWaujEMenYlNIzLGxgB3AuJEQ90} to get     \begin{equation}       \zxvczxbcvdfghasdfrtsdafasdfasdfdsfgsdgh w - P_w \zxvczxbcvdfghasdfrtsdafasdfasdfdsfgsdgh_{L^p(B_r)}       \les \big( c_0 + \MM   + \zxvczxbcvdfghasdfrtsdafasdfasdfdsfgsdgh Q \zxvczxbcvdfghasdfrtsdafasdfasdfdsfgsdgh_{L^p(B_1)} \big) r^{d-1+\eta+(k_0+1)\alpha+n/p}       ,     \label{8ThswELzXU3X7Ebd1KdZ7v1rN3GiirRXGKWK099ovBM0FDJCvkopYNQ2aN94Z7k0UnUKamE3OjU8DFYFFokbSI2J9V9gVlM8ALWThDPnPu3EL7HPD2VDaZTggzcCCmbvc70qqPcC9mt60ogcrTiA3HEjwTK8ymKeuJMc4q6dVz200XnYUtLR9GYjPXvFOVr6W1zUK1WbPToaWJJuKnxBLnd0ftDEbMmj4loHYyhZyMjM91zQS4p7z8eKa9h0JrbacekcirexG0z4n3xz0QOWSvFj3jLhWXUIU21iIAwJtI3RbWa90I7rzAIqI3UElUJG7tLtUXzw4KQNETvXzqWaujEMenYlNIzLGxgB3AuJEQ92}     \end{equation} where      \begin{equation}       P_w(x)        = \sum_{|\beta| \leq d} \frac{x^{\beta}}{\beta!}\zxvczxbcvdfghasdfrtsdafasdfasdfdsfgsdgf_{|y|\leq1/2} \partial^{\beta}_x \Gamma(-y)           \big( L(0) u(y) - Q(y) \big) \,dy        .     \label{8ThswELzXU3X7Ebd1KdZ7v1rN3GiirRXGKWK099ovBM0FDJCvkopYNQ2aN94Z7k0UnUKamE3OjU8DFYFFokbSI2J9V9gVlM8ALWThDPnPu3EL7HPD2VDaZTggzcCCmbvc70qqPcC9mt60ogcrTiA3HEjwTK8ymKeuJMc4q6dVz200XnYUtLR9GYjPXvFOVr6W1zUK1WbPToaWJJuKnxBLnd0ftDEbMmj4loHYyhZyMjM91zQS4p7z8eKa9h0JrbacekcirexG0z4n3xz0QOWSvFj3jLhWXUIU21iIAwJtI3RbWa90I7rzAIqI3UElUJG7tLtUXzw4KQNETvXzqWaujEMenYlNIzLGxgB3AuJEQ93}     \end{equation} Moreover, by \eqref{8ThswELzXU3X7Ebd1KdZ7v1rN3GiirRXGKWK099ovBM0FDJCvkopYNQ2aN94Z7k0UnUKamE3OjU8DFYFFokbSI2J9V9gVlM8ALWThDPnPu3EL7HPD2VDaZTggzcCCmbvc70qqPcC9mt60ogcrTiA3HEjwTK8ymKeuJMc4q6dVz200XnYUtLR9GYjPXvFOVr6W1zUK1WbPToaWJJuKnxBLnd0ftDEbMmj4loHYyhZyMjM91zQS4p7z8eKa9h0JrbacekcirexG0z4n3xz0QOWSvFj3jLhWXUIU21iIAwJtI3RbWa90I7rzAIqI3UElUJG7tLtUXzw4KQNETvXzqWaujEMenYlNIzLGxgB3AuJEQ222},     \begin{equation}       \zxvczxbcvdfghasdfrtsdafasdfasdfdsfgsdgh w \zxvczxbcvdfghasdfrtsdafasdfasdfdsfgsdgh_{L^p(B_{1/2})}       \les \zxvczxbcvdfghasdfrtsdafasdfasdfdsfgsdgh u \zxvczxbcvdfghasdfrtsdafasdfasdfdsfgsdgh_{W^{m,p}(B_{1/2})}              + \zxvczxbcvdfghasdfrtsdafasdfasdfdsfgsdgh Q \zxvczxbcvdfghasdfrtsdafasdfasdfdsfgsdgh_{L^p(B_1)}       .     \llabel{7 FuVMLW nx p SaN omk rC5 uI ZK9C jpJy UIeO6k gb 7 tr2 SCY x5F 11 S6Xq OImr s7vv0u vA g rb9 hGP Fnk RM j92H gczJ 660kHb BB l QSI OY7 FcX 0c uyDl LjbU 3F6vZk Gb a KaM ufj uxp n4 Mi45 7MoL NW3eIm cj 6 OOS e59 afA hg lt9S BOiF cYQipj 5u N 19N KZ5 Czc 23 1wxG x1ut gJB4ue Mx x 5lr s8g VbZ s1 NEfI 02Rb pkfEOZ E4 e seo 9te NRU Ai nujf eJYa Ehns0Y 6X R UF1 PCf 5eE AL 9DL6 a2vm BAU5Au DD t yQN 5YL LWw PW GjMt 4hu4 FIoLCZ Lx e BVY 5lZ DCD 5Y yBwO IJeH VQsKob Yd q fCX 1to mCb Ej 5m1p Nx9p nLn5A3 g7 U v77 7YU gBR lN rTyj shaq BZXeAF tj y FlW jfc 8ThswELzXU3X7Ebd1KdZ7v1rN3GiirRXGKWK099ovBM0FDJCvkopYNQ2aN94Z7k0UnUKamE3OjU8DFYFFokbSI2J9V9gVlM8ALWThDPnPu3EL7HPD2VDaZTggzcCCmbvc70qqPcC9mt60ogcrTiA3HEjwTK8ymKeuJMc4q6dVz200XnYUtLR9GYjPXvFOVr6W1zUK1WbPToaWJJuKnxBLnd0ftDEbMmj4loHYyhZyMjM91zQS4p7z8eKa9h0JrbacekcirexG0z4n3xz0QOWSvFj3jLhWXUIU21iIAwJtI3RbWa90I7rzAIqI3UElUJG7tLtUXzw4KQNETvXzqWaujEMenYlNIzLGxgB3AuJEQ94}     \end{equation} Thus the function $v = u-w$ satisfies $L(0)v = Q$ and      \begin{equation}       \zxvczxbcvdfghasdfrtsdafasdfasdfdsfgsdgh v \zxvczxbcvdfghasdfrtsdafasdfasdfdsfgsdgh_{L^p(B_{1/2})}        \les \zxvczxbcvdfghasdfrtsdafasdfasdfdsfgsdgh u \zxvczxbcvdfghasdfrtsdafasdfasdfdsfgsdgh_{L^p(B_{1/2})}              + \zxvczxbcvdfghasdfrtsdafasdfasdfdsfgsdgh w \zxvczxbcvdfghasdfrtsdafasdfasdfdsfgsdgh_{L^p(B_{1/2})}       \les \zxvczxbcvdfghasdfrtsdafasdfasdfdsfgsdgh u \zxvczxbcvdfghasdfrtsdafasdfasdfdsfgsdgh_{W^{m,p}(B_{1/2})}             + \zxvczxbcvdfghasdfrtsdafasdfasdfdsfgsdgh Q \zxvczxbcvdfghasdfrtsdafasdfasdfdsfgsdgh_{L^p(B_1)}       \les  c_0 + \MM   + \zxvczxbcvdfghasdfrtsdafasdfasdfdsfgsdgh Q \zxvczxbcvdfghasdfrtsdafasdfasdfdsfgsdgh_{L^p(B_1)}       ,     \llabel{5 7MoL NW3eIm cj 6 OOS e59 afA hg lt9S BOiF cYQipj 5u N 19N KZ5 Czc 23 1wxG x1ut gJB4ue Mx x 5lr s8g VbZ s1 NEfI 02Rb pkfEOZ E4 e seo 9te NRU Ai nujf eJYa Ehns0Y 6X R UF1 PCf 5eE AL 9DL6 a2vm BAU5Au DD t yQN 5YL LWw PW GjMt 4hu4 FIoLCZ Lx e BVY 5lZ DCD 5Y yBwO IJeH VQsKob Yd q fCX 1to mCb Ej 5m1p Nx9p nLn5A3 g7 U v77 7YU gBR lN rTyj shaq BZXeAF tj y FlW jfc 57t 2f abx5 Ns4d clCMJc Tl q kfq uFD iSd DP eX6m YLQz JzUmH0 43 M lgF edN mXQ Pj Aoba 07MY wBaC4C nj I 4dw KCZ PO9 wx 3en8 AoqX 7JjN8K lq j Q5c bMS dhR Fs tQ8Q r2ve 2HT0uO 5W j TAi8ThswELzXU3X7Ebd1KdZ7v1rN3GiirRXGKWK099ovBM0FDJCvkopYNQ2aN94Z7k0UnUKamE3OjU8DFYFFokbSI2J9V9gVlM8ALWThDPnPu3EL7HPD2VDaZTggzcCCmbvc70qqPcC9mt60ogcrTiA3HEjwTK8ymKeuJMc4q6dVz200XnYUtLR9GYjPXvFOVr6W1zUK1WbPToaWJJuKnxBLnd0ftDEbMmj4loHYyhZyMjM91zQS4p7z8eKa9h0JrbacekcirexG0z4n3xz0QOWSvFj3jLhWXUIU21iIAwJtI3RbWa90I7rzAIqI3UElUJG7tLtUXzw4KQNETvXzqWaujEMenYlNIzLGxgB3AuJEQ95}     \end{equation} where we used \eqref{8ThswELzXU3X7Ebd1KdZ7v1rN3GiirRXGKWK099ovBM0FDJCvkopYNQ2aN94Z7k0UnUKamE3OjU8DFYFFokbSI2J9V9gVlM8ALWThDPnPu3EL7HPD2VDaZTggzcCCmbvc70qqPcC9mt60ogcrTiA3HEjwTK8ymKeuJMc4q6dVz200XnYUtLR9GYjPXvFOVr6W1zUK1WbPToaWJJuKnxBLnd0ftDEbMmj4loHYyhZyMjM91zQS4p7z8eKa9h0JrbacekcirexG0z4n3xz0QOWSvFj3jLhWXUIU21iIAwJtI3RbWa90I7rzAIqI3UElUJG7tLtUXzw4KQNETvXzqWaujEMenYlNIzLGxgB3AuJEQ89} in the last inequality. Let $P_v$ be the Taylor polynomial of $v$ of degree~$d$. Then  $L(0) P_v=Q$ and for $|x|=r\leq 1/4$, we have     \begin{align}     \begin{split}       | v(x) - P_v(x) |       &\les \zxvczxbcvdfghasdfrtsdafasdfasdfdsfgsdgh \partial^{d+1} v \zxvczxbcvdfghasdfrtsdafasdfasdfdsfgsdgh_{L^{\infty} (B_r)} r^{d+1}       \les \zxvczxbcvdfghasdfrtsdafasdfasdfdsfgsdgh \partial^{d+1} v \zxvczxbcvdfghasdfrtsdafasdfasdfdsfgsdgh_{L^{\infty} (B_{1/4})} r^{d+1}       \\&       \les \big( \zxvczxbcvdfghasdfrtsdafasdfasdfdsfgsdgh Q \zxvczxbcvdfghasdfrtsdafasdfasdfdsfgsdgh_{L^p(B_{1/2})}                       + \zxvczxbcvdfghasdfrtsdafasdfasdfdsfgsdgh v \zxvczxbcvdfghasdfrtsdafasdfasdfdsfgsdgh_{L^p(B_{1/2})}
             \big)               r^{d+1}       \les \big( c_0 + \MM   + \zxvczxbcvdfghasdfrtsdafasdfasdfdsfgsdgh Q \zxvczxbcvdfghasdfrtsdafasdfasdfdsfgsdgh_{L^p(B_1)} \big)               r^{d+1}       ,     \end{split}     \llabel{L 9DL6 a2vm BAU5Au DD t yQN 5YL LWw PW GjMt 4hu4 FIoLCZ Lx e BVY 5lZ DCD 5Y yBwO IJeH VQsKob Yd q fCX 1to mCb Ej 5m1p Nx9p nLn5A3 g7 U v77 7YU gBR lN rTyj shaq BZXeAF tj y FlW jfc 57t 2f abx5 Ns4d clCMJc Tl q kfq uFD iSd DP eX6m YLQz JzUmH0 43 M lgF edN mXQ Pj Aoba 07MY wBaC4C nj I 4dw KCZ PO9 wx 3en8 AoqX 7JjN8K lq j Q5c bMS dhR Fs tQ8Q r2ve 2HT0uO 5W j TAi iIW n1C Wr U1BH BMvJ 3ywmAd qN D LY8 lbx XMx 0D Dvco 3RL9 Qz5eqy wV Y qEN nO8 MH0 PY zeVN i3yb 2msNYY Wz G 2DC PoG 1Vb Bx e9oZ GcTU 3AZuEK bk p 6rN eTX 0DS Mc zd91 nbSV DKEkVa zI 8ThswELzXU3X7Ebd1KdZ7v1rN3GiirRXGKWK099ovBM0FDJCvkopYNQ2aN94Z7k0UnUKamE3OjU8DFYFFokbSI2J9V9gVlM8ALWThDPnPu3EL7HPD2VDaZTggzcCCmbvc70qqPcC9mt60ogcrTiA3HEjwTK8ymKeuJMc4q6dVz200XnYUtLR9GYjPXvFOVr6W1zUK1WbPToaWJJuKnxBLnd0ftDEbMmj4loHYyhZyMjM91zQS4p7z8eKa9h0JrbacekcirexG0z4n3xz0QOWSvFj3jLhWXUIU21iIAwJtI3RbWa90I7rzAIqI3UElUJG7tLtUXzw4KQNETvXzqWaujEMenYlNIzLGxgB3AuJEQ96}     \end{align} where we used the Taylor theorem in the first step. Therefore,       \begin{equation}       \zxvczxbcvdfghasdfrtsdafasdfasdfdsfgsdgh v - P_v \zxvczxbcvdfghasdfrtsdafasdfasdfdsfgsdgh_{L^p(B_r)}        \les \big( c_0 + \MM   + \zxvczxbcvdfghasdfrtsdafasdfasdfdsfgsdgh Q \zxvczxbcvdfghasdfrtsdafasdfasdfdsfgsdgh_{L^p(B_1)} \big)               r^{d+1+n/p}       .     \label{8ThswELzXU3X7Ebd1KdZ7v1rN3GiirRXGKWK099ovBM0FDJCvkopYNQ2aN94Z7k0UnUKamE3OjU8DFYFFokbSI2J9V9gVlM8ALWThDPnPu3EL7HPD2VDaZTggzcCCmbvc70qqPcC9mt60ogcrTiA3HEjwTK8ymKeuJMc4q6dVz200XnYUtLR9GYjPXvFOVr6W1zUK1WbPToaWJJuKnxBLnd0ftDEbMmj4loHYyhZyMjM91zQS4p7z8eKa9h0JrbacekcirexG0z4n3xz0QOWSvFj3jLhWXUIU21iIAwJtI3RbWa90I7rzAIqI3UElUJG7tLtUXzw4KQNETvXzqWaujEMenYlNIzLGxgB3AuJEQ97}     \end{equation} Letting $\tilde P = P_v+P_w$, we have $u-\tilde P = (v-P_v) + (w - P_w)$. Thus, by \eqref{8ThswELzXU3X7Ebd1KdZ7v1rN3GiirRXGKWK099ovBM0FDJCvkopYNQ2aN94Z7k0UnUKamE3OjU8DFYFFokbSI2J9V9gVlM8ALWThDPnPu3EL7HPD2VDaZTggzcCCmbvc70qqPcC9mt60ogcrTiA3HEjwTK8ymKeuJMc4q6dVz200XnYUtLR9GYjPXvFOVr6W1zUK1WbPToaWJJuKnxBLnd0ftDEbMmj4loHYyhZyMjM91zQS4p7z8eKa9h0JrbacekcirexG0z4n3xz0QOWSvFj3jLhWXUIU21iIAwJtI3RbWa90I7rzAIqI3UElUJG7tLtUXzw4KQNETvXzqWaujEMenYlNIzLGxgB3AuJEQ92} and \eqref{8ThswELzXU3X7Ebd1KdZ7v1rN3GiirRXGKWK099ovBM0FDJCvkopYNQ2aN94Z7k0UnUKamE3OjU8DFYFFokbSI2J9V9gVlM8ALWThDPnPu3EL7HPD2VDaZTggzcCCmbvc70qqPcC9mt60ogcrTiA3HEjwTK8ymKeuJMc4q6dVz200XnYUtLR9GYjPXvFOVr6W1zUK1WbPToaWJJuKnxBLnd0ftDEbMmj4loHYyhZyMjM91zQS4p7z8eKa9h0JrbacekcirexG0z4n3xz0QOWSvFj3jLhWXUIU21iIAwJtI3RbWa90I7rzAIqI3UElUJG7tLtUXzw4KQNETvXzqWaujEMenYlNIzLGxgB3AuJEQ97},     \begin{align}     \begin{split}       \zxvczxbcvdfghasdfrtsdafasdfasdfdsfgsdgh u - \tilde P \zxvczxbcvdfghasdfrtsdafasdfasdfdsfgsdgh_{L^p(B_r)}       &\les \big( c_0 + \MM   + \zxvczxbcvdfghasdfrtsdafasdfasdfdsfgsdgh Q \zxvczxbcvdfghasdfrtsdafasdfasdfdsfgsdgh_{L^p(B_1)} \big)              r^{d+\min ( (k_0+1)\alpha+ \eta-1,1) + n/p}       \\& \les \big( c_0 + \MM   + \zxvczxbcvdfghasdfrtsdafasdfasdfdsfgsdgh Q \zxvczxbcvdfghasdfrtsdafasdfasdfdsfgsdgh_{L^p(B_1)} \big)              r^{d + (k_0+1)\alpha+\eta-1+n/p}        ,     \end{split}     \label{8ThswELzXU3X7Ebd1KdZ7v1rN3GiirRXGKWK099ovBM0FDJCvkopYNQ2aN94Z7k0UnUKamE3OjU8DFYFFokbSI2J9V9gVlM8ALWThDPnPu3EL7HPD2VDaZTggzcCCmbvc70qqPcC9mt60ogcrTiA3HEjwTK8ymKeuJMc4q6dVz200XnYUtLR9GYjPXvFOVr6W1zUK1WbPToaWJJuKnxBLnd0ftDEbMmj4loHYyhZyMjM91zQS4p7z8eKa9h0JrbacekcirexG0z4n3xz0QOWSvFj3jLhWXUIU21iIAwJtI3RbWa90I7rzAIqI3UElUJG7tLtUXzw4KQNETvXzqWaujEMenYlNIzLGxgB3AuJEQ99}     \end{align} where we used $(k_0+1)\alpha + \eta -1 < 1$. Since the degree of $\tilde P$ is less than or equal to $d$, we may combine \eqref{8ThswELzXU3X7Ebd1KdZ7v1rN3GiirRXGKWK099ovBM0FDJCvkopYNQ2aN94Z7k0UnUKamE3OjU8DFYFFokbSI2J9V9gVlM8ALWThDPnPu3EL7HPD2VDaZTggzcCCmbvc70qqPcC9mt60ogcrTiA3HEjwTK8ymKeuJMc4q6dVz200XnYUtLR9GYjPXvFOVr6W1zUK1WbPToaWJJuKnxBLnd0ftDEbMmj4loHYyhZyMjM91zQS4p7z8eKa9h0JrbacekcirexG0z4n3xz0QOWSvFj3jLhWXUIU21iIAwJtI3RbWa90I7rzAIqI3UElUJG7tLtUXzw4KQNETvXzqWaujEMenYlNIzLGxgB3AuJEQ14} with \eqref{8ThswELzXU3X7Ebd1KdZ7v1rN3GiirRXGKWK099ovBM0FDJCvkopYNQ2aN94Z7k0UnUKamE3OjU8DFYFFokbSI2J9V9gVlM8ALWThDPnPu3EL7HPD2VDaZTggzcCCmbvc70qqPcC9mt60ogcrTiA3HEjwTK8ymKeuJMc4q6dVz200XnYUtLR9GYjPXvFOVr6W1zUK1WbPToaWJJuKnxBLnd0ftDEbMmj4loHYyhZyMjM91zQS4p7z8eKa9h0JrbacekcirexG0z4n3xz0QOWSvFj3jLhWXUIU21iIAwJtI3RbWa90I7rzAIqI3UElUJG7tLtUXzw4KQNETvXzqWaujEMenYlNIzLGxgB3AuJEQ99}  and write     \begin{equation}     \zxvczxbcvdfghasdfrtsdafasdfasdfdsfgsdgh \tilde P \zxvczxbcvdfghasdfrtsdafasdfasdfdsfgsdgh_{L^p(B_r)}     \leq \zxvczxbcvdfghasdfrtsdafasdfasdfdsfgsdgh u - \tilde P \zxvczxbcvdfghasdfrtsdafasdfasdfdsfgsdgh_{L^p(B_r)}             + \zxvczxbcvdfghasdfrtsdafasdfasdfdsfgsdgh u \zxvczxbcvdfghasdfrtsdafasdfasdfdsfgsdgh_{L^p(B_r)}     \les r^{d+(k_0+1)\alpha + \eta -1 + n/p}     ,    \llabel{57t 2f abx5 Ns4d clCMJc Tl q kfq uFD iSd DP eX6m YLQz JzUmH0 43 M lgF edN mXQ Pj Aoba 07MY wBaC4C nj I 4dw KCZ PO9 wx 3en8 AoqX 7JjN8K lq j Q5c bMS dhR Fs tQ8Q r2ve 2HT0uO 5W j TAi iIW n1C Wr U1BH BMvJ 3ywmAd qN D LY8 lbx XMx 0D Dvco 3RL9 Qz5eqy wV Y qEN nO8 MH0 PY zeVN i3yb 2msNYY Wz G 2DC PoG 1Vb Bx e9oZ GcTU 3AZuEK bk p 6rN eTX 0DS Mc zd91 nbSV DKEkVa zI q NKU Qap NBP 5B 32Ey prwP FLvuPi wR P l1G TdQ BZE Aw 3d90 v8P5 CPAnX4 Yo 2 q7s yr5 BW8 Hc T7tM ioha BW9U4q rb u mEQ 6Xz MKR 2B REFX k3ZO MVMYSw 9S F 5ek q0m yNK Gn H0qi vlRA 18CbE8ThswELzXU3X7Ebd1KdZ7v1rN3GiirRXGKWK099ovBM0FDJCvkopYNQ2aN94Z7k0UnUKamE3OjU8DFYFFokbSI2J9V9gVlM8ALWThDPnPu3EL7HPD2VDaZTggzcCCmbvc70qqPcC9mt60ogcrTiA3HEjwTK8ymKeuJMc4q6dVz200XnYUtLR9GYjPXvFOVr6W1zUK1WbPToaWJJuKnxBLnd0ftDEbMmj4loHYyhZyMjM91zQS4p7z8eKa9h0JrbacekcirexG0z4n3xz0QOWSvFj3jLhWXUIU21iIAwJtI3RbWa90I7rzAIqI3UElUJG7tLtUXzw4KQNETvXzqWaujEMenYlNIzLGxgB3AuJEQ100}     \end{equation} implying~$\tilde P \equiv 0$. Therefore, from \eqref{8ThswELzXU3X7Ebd1KdZ7v1rN3GiirRXGKWK099ovBM0FDJCvkopYNQ2aN94Z7k0UnUKamE3OjU8DFYFFokbSI2J9V9gVlM8ALWThDPnPu3EL7HPD2VDaZTggzcCCmbvc70qqPcC9mt60ogcrTiA3HEjwTK8ymKeuJMc4q6dVz200XnYUtLR9GYjPXvFOVr6W1zUK1WbPToaWJJuKnxBLnd0ftDEbMmj4loHYyhZyMjM91zQS4p7z8eKa9h0JrbacekcirexG0z4n3xz0QOWSvFj3jLhWXUIU21iIAwJtI3RbWa90I7rzAIqI3UElUJG7tLtUXzw4KQNETvXzqWaujEMenYlNIzLGxgB3AuJEQ99},   \begin{equation}       \zxvczxbcvdfghasdfrtsdafasdfasdfdsfgsdgh u \zxvczxbcvdfghasdfrtsdafasdfasdfdsfgsdgh_{L^p(B_r)}       \les \big( c_0 + \MM   + \zxvczxbcvdfghasdfrtsdafasdfasdfdsfgsdgh Q \zxvczxbcvdfghasdfrtsdafasdfasdfdsfgsdgh_{L^p(B_1)} \big)              r^{d + (k_0+1)\alpha+\eta-1+n/p}       \les \big( c_0 + \MM   + \zxvczxbcvdfghasdfrtsdafasdfasdfdsfgsdgh Q \zxvczxbcvdfghasdfrtsdafasdfasdfdsfgsdgh_{L^p(B_1)} \big)              r^{d+n/p}              .     \label{8ThswELzXU3X7Ebd1KdZ7v1rN3GiirRXGKWK099ovBM0FDJCvkopYNQ2aN94Z7k0UnUKamE3OjU8DFYFFokbSI2J9V9gVlM8ALWThDPnPu3EL7HPD2VDaZTggzcCCmbvc70qqPcC9mt60ogcrTiA3HEjwTK8ymKeuJMc4q6dVz200XnYUtLR9GYjPXvFOVr6W1zUK1WbPToaWJJuKnxBLnd0ftDEbMmj4loHYyhZyMjM91zQS4p7z8eKa9h0JrbacekcirexG0z4n3xz0QOWSvFj3jLhWXUIU21iIAwJtI3RbWa90I7rzAIqI3UElUJG7tLtUXzw4KQNETvXzqWaujEMenYlNIzLGxgB3AuJEQ101}   \end{equation} \par We now construct the homogeneous polynomial~$P$.  Applying the classical elliptic interior estimate  to \eqref{8ThswELzXU3X7Ebd1KdZ7v1rN3GiirRXGKWK099ovBM0FDJCvkopYNQ2aN94Z7k0UnUKamE3OjU8DFYFFokbSI2J9V9gVlM8ALWThDPnPu3EL7HPD2VDaZTggzcCCmbvc70qqPcC9mt60ogcrTiA3HEjwTK8ymKeuJMc4q6dVz200XnYUtLR9GYjPXvFOVr6W1zUK1WbPToaWJJuKnxBLnd0ftDEbMmj4loHYyhZyMjM91zQS4p7z8eKa9h0JrbacekcirexG0z4n3xz0QOWSvFj3jLhWXUIU21iIAwJtI3RbWa90I7rzAIqI3UElUJG7tLtUXzw4KQNETvXzqWaujEMenYlNIzLGxgB3AuJEQ56} and using \eqref{8ThswELzXU3X7Ebd1KdZ7v1rN3GiirRXGKWK099ovBM0FDJCvkopYNQ2aN94Z7k0UnUKamE3OjU8DFYFFokbSI2J9V9gVlM8ALWThDPnPu3EL7HPD2VDaZTggzcCCmbvc70qqPcC9mt60ogcrTiA3HEjwTK8ymKeuJMc4q6dVz200XnYUtLR9GYjPXvFOVr6W1zUK1WbPToaWJJuKnxBLnd0ftDEbMmj4loHYyhZyMjM91zQS4p7z8eKa9h0JrbacekcirexG0z4n3xz0QOWSvFj3jLhWXUIU21iIAwJtI3RbWa90I7rzAIqI3UElUJG7tLtUXzw4KQNETvXzqWaujEMenYlNIzLGxgB3AuJEQ101}, we have     \begin{align}     \begin{split}       \sum_{|\nu|\leq m} r^{|\nu|} \zxvczxbcvdfghasdfrtsdafasdfasdfdsfgsdgh \partial^{\nu} u \zxvczxbcvdfghasdfrtsdafasdfasdfdsfgsdgh_{L^p(B_r)}       &\les \zxvczxbcvdfghasdfrtsdafasdfasdfdsfgsdgh u \zxvczxbcvdfghasdfrtsdafasdfasdfdsfgsdgh_{L^p(B_{2r})}              + r^m \zxvczxbcvdfghasdfrtsdafasdfasdfdsfgsdgh f \zxvczxbcvdfghasdfrtsdafasdfasdfdsfgsdgh_{L^p(B_{2r})}       \les \zxvczxbcvdfghasdfrtsdafasdfasdfdsfgsdgh u \zxvczxbcvdfghasdfrtsdafasdfasdfdsfgsdgh_{L^p(B_{2r})}             + r^m \zxvczxbcvdfghasdfrtsdafasdfasdfdsfgsdgh f - Q \zxvczxbcvdfghasdfrtsdafasdfasdfdsfgsdgh_{L^p(B_{2r})}             + r^m \zxvczxbcvdfghasdfrtsdafasdfasdfdsfgsdgh Q \zxvczxbcvdfghasdfrtsdafasdfasdfdsfgsdgh_{L^p(B_{2r})}       \\&       \les \big(c_0 + \MM   +  \zxvczxbcvdfghasdfrtsdafasdfasdfdsfgsdgh Q \zxvczxbcvdfghasdfrtsdafasdfasdfdsfgsdgh_{L^p(B_1)} \big)r^{d + n/p}       \comma       r \leq \frac{1}{2}       ,     \end{split}     \label{8ThswELzXU3X7Ebd1KdZ7v1rN3GiirRXGKWK099ovBM0FDJCvkopYNQ2aN94Z7k0UnUKamE3OjU8DFYFFokbSI2J9V9gVlM8ALWThDPnPu3EL7HPD2VDaZTggzcCCmbvc70qqPcC9mt60ogcrTiA3HEjwTK8ymKeuJMc4q6dVz200XnYUtLR9GYjPXvFOVr6W1zUK1WbPToaWJJuKnxBLnd0ftDEbMmj4loHYyhZyMjM91zQS4p7z8eKa9h0JrbacekcirexG0z4n3xz0QOWSvFj3jLhWXUIU21iIAwJtI3RbWa90I7rzAIqI3UElUJG7tLtUXzw4KQNETvXzqWaujEMenYlNIzLGxgB3AuJEQ102}     \end{align} where we used \eqref{8ThswELzXU3X7Ebd1KdZ7v1rN3GiirRXGKWK099ovBM0FDJCvkopYNQ2aN94Z7k0UnUKamE3OjU8DFYFFokbSI2J9V9gVlM8ALWThDPnPu3EL7HPD2VDaZTggzcCCmbvc70qqPcC9mt60ogcrTiA3HEjwTK8ymKeuJMc4q6dVz200XnYUtLR9GYjPXvFOVr6W1zUK1WbPToaWJJuKnxBLnd0ftDEbMmj4loHYyhZyMjM91zQS4p7z8eKa9h0JrbacekcirexG0z4n3xz0QOWSvFj3jLhWXUIU21iIAwJtI3RbWa90I7rzAIqI3UElUJG7tLtUXzw4KQNETvXzqWaujEMenYlNIzLGxgB3AuJEQ12}, \eqref{8ThswELzXU3X7Ebd1KdZ7v1rN3GiirRXGKWK099ovBM0FDJCvkopYNQ2aN94Z7k0UnUKamE3OjU8DFYFFokbSI2J9V9gVlM8ALWThDPnPu3EL7HPD2VDaZTggzcCCmbvc70qqPcC9mt60ogcrTiA3HEjwTK8ymKeuJMc4q6dVz200XnYUtLR9GYjPXvFOVr6W1zUK1WbPToaWJJuKnxBLnd0ftDEbMmj4loHYyhZyMjM91zQS4p7z8eKa9h0JrbacekcirexG0z4n3xz0QOWSvFj3jLhWXUIU21iIAwJtI3RbWa90I7rzAIqI3UElUJG7tLtUXzw4KQNETvXzqWaujEMenYlNIzLGxgB3AuJEQ101}, and that $Q$ is a homogeneous of degree $d-m$ in the last inequality. Taking the $L^p$-norm of \eqref{8ThswELzXU3X7Ebd1KdZ7v1rN3GiirRXGKWK099ovBM0FDJCvkopYNQ2aN94Z7k0UnUKamE3OjU8DFYFFokbSI2J9V9gVlM8ALWThDPnPu3EL7HPD2VDaZTggzcCCmbvc70qqPcC9mt60ogcrTiA3HEjwTK8ymKeuJMc4q6dVz200XnYUtLR9GYjPXvFOVr6W1zUK1WbPToaWJJuKnxBLnd0ftDEbMmj4loHYyhZyMjM91zQS4p7z8eKa9h0JrbacekcirexG0z4n3xz0QOWSvFj3jLhWXUIU21iIAwJtI3RbWa90I7rzAIqI3UElUJG7tLtUXzw4KQNETvXzqWaujEMenYlNIzLGxgB3AuJEQ69} and combining with \eqref{8ThswELzXU3X7Ebd1KdZ7v1rN3GiirRXGKWK099ovBM0FDJCvkopYNQ2aN94Z7k0UnUKamE3OjU8DFYFFokbSI2J9V9gVlM8ALWThDPnPu3EL7HPD2VDaZTggzcCCmbvc70qqPcC9mt60ogcrTiA3HEjwTK8ymKeuJMc4q6dVz200XnYUtLR9GYjPXvFOVr6W1zUK1WbPToaWJJuKnxBLnd0ftDEbMmj4loHYyhZyMjM91zQS4p7z8eKa9h0JrbacekcirexG0z4n3xz0QOWSvFj3jLhWXUIU21iIAwJtI3RbWa90I7rzAIqI3UElUJG7tLtUXzw4KQNETvXzqWaujEMenYlNIzLGxgB3AuJEQ102}, we obtain     \begin{align}     \begin{split}       \zxvczxbcvdfghasdfrtsdafasdfasdfdsfgsdgh L(0) u - Q \zxvczxbcvdfghasdfrtsdafasdfasdfdsfgsdgh_{L^p(B_r)}       &\les \sum_{|\nu|=m} |a_{\nu}(0) - a_{\nu}(x)| \zxvczxbcvdfghasdfrtsdafasdfasdfdsfgsdgh \partial^{\nu} u \zxvczxbcvdfghasdfrtsdafasdfasdfdsfgsdgh_{L^p(B_r)}              + \sum_{|\nu|<m} |a_{\nu}(x)| \zxvczxbcvdfghasdfrtsdafasdfasdfdsfgsdgh \partial^{\nu} u \zxvczxbcvdfghasdfrtsdafasdfasdfdsfgsdgh_{L^p(B_r)}              + \zxvczxbcvdfghasdfrtsdafasdfasdfdsfgsdgh f-Q \zxvczxbcvdfghasdfrtsdafasdfasdfdsfgsdgh_{L^p(B_{2r})}       \\&       \les \big( c_0 + \MM   + \zxvczxbcvdfghasdfrtsdafasdfasdfdsfgsdgh Q \zxvczxbcvdfghasdfrtsdafasdfasdfdsfgsdgh_{L^p(B_1)} \big)               r^{d+\alpha+n/p}       \comma       r \leq \frac{1}{2}       ,     \end{split}     \label{8ThswELzXU3X7Ebd1KdZ7v1rN3GiirRXGKWK099ovBM0FDJCvkopYNQ2aN94Z7k0UnUKamE3OjU8DFYFFokbSI2J9V9gVlM8ALWThDPnPu3EL7HPD2VDaZTggzcCCmbvc70qqPcC9mt60ogcrTiA3HEjwTK8ymKeuJMc4q6dVz200XnYUtLR9GYjPXvFOVr6W1zUK1WbPToaWJJuKnxBLnd0ftDEbMmj4loHYyhZyMjM91zQS4p7z8eKa9h0JrbacekcirexG0z4n3xz0QOWSvFj3jLhWXUIU21iIAwJtI3RbWa90I7rzAIqI3UElUJG7tLtUXzw4KQNETvXzqWaujEMenYlNIzLGxgB3AuJEQ105}     \end{align} where we used \eqref{8ThswELzXU3X7Ebd1KdZ7v1rN3GiirRXGKWK099ovBM0FDJCvkopYNQ2aN94Z7k0UnUKamE3OjU8DFYFFokbSI2J9V9gVlM8ALWThDPnPu3EL7HPD2VDaZTggzcCCmbvc70qqPcC9mt60ogcrTiA3HEjwTK8ymKeuJMc4q6dVz200XnYUtLR9GYjPXvFOVr6W1zUK1WbPToaWJJuKnxBLnd0ftDEbMmj4loHYyhZyMjM91zQS4p7z8eKa9h0JrbacekcirexG0z4n3xz0QOWSvFj3jLhWXUIU21iIAwJtI3RbWa90I7rzAIqI3UElUJG7tLtUXzw4KQNETvXzqWaujEMenYlNIzLGxgB3AuJEQ10} and \eqref{8ThswELzXU3X7Ebd1KdZ7v1rN3GiirRXGKWK099ovBM0FDJCvkopYNQ2aN94Z7k0UnUKamE3OjU8DFYFFokbSI2J9V9gVlM8ALWThDPnPu3EL7HPD2VDaZTggzcCCmbvc70qqPcC9mt60ogcrTiA3HEjwTK8ymKeuJMc4q6dVz200XnYUtLR9GYjPXvFOVr6W1zUK1WbPToaWJJuKnxBLnd0ftDEbMmj4loHYyhZyMjM91zQS4p7z8eKa9h0JrbacekcirexG0z4n3xz0QOWSvFj3jLhWXUIU21iIAwJtI3RbWa90I7rzAIqI3UElUJG7tLtUXzw4KQNETvXzqWaujEMenYlNIzLGxgB3AuJEQ11} in the last inequality.  Letting   \begin{equation}    w(x) = \zxvczxbcvdfghasdfrtsdafasdfasdfdsfgsdgf_{|y|\leq1/2} \Gamma(x-y) \big( L(0) u(y) - Q(y)\big) \,dy     ,       \label{8ThswELzXU3X7Ebd1KdZ7v1rN3GiirRXGKWK099ovBM0FDJCvkopYNQ2aN94Z7k0UnUKamE3OjU8DFYFFokbSI2J9V9gVlM8ALWThDPnPu3EL7HPD2VDaZTggzcCCmbvc70qqPcC9mt60ogcrTiA3HEjwTK8ymKeuJMc4q6dVz200XnYUtLR9GYjPXvFOVr6W1zUK1WbPToaWJJuKnxBLnd0ftDEbMmj4loHYyhZyMjM91zQS4p7z8eKa9h0JrbacekcirexG0z4n3xz0QOWSvFj3jLhWXUIU21iIAwJtI3RbWa90I7rzAIqI3UElUJG7tLtUXzw4KQNETvXzqWaujEMenYlNIzLGxgB3AuJEQ63}   \end{equation} it is clear that     \begin{equation}       L(0)w       = L(0)u-Q       .     \label{8ThswELzXU3X7Ebd1KdZ7v1rN3GiirRXGKWK099ovBM0FDJCvkopYNQ2aN94Z7k0UnUKamE3OjU8DFYFFokbSI2J9V9gVlM8ALWThDPnPu3EL7HPD2VDaZTggzcCCmbvc70qqPcC9mt60ogcrTiA3HEjwTK8ymKeuJMc4q6dVz200XnYUtLR9GYjPXvFOVr6W1zUK1WbPToaWJJuKnxBLnd0ftDEbMmj4loHYyhZyMjM91zQS4p7z8eKa9h0JrbacekcirexG0z4n3xz0QOWSvFj3jLhWXUIU21iIAwJtI3RbWa90I7rzAIqI3UElUJG7tLtUXzw4KQNETvXzqWaujEMenYlNIzLGxgB3AuJEQ106}     \end{equation} Since $\alpha \in (0,1)$, we may apply Lemma~\ref{L10} for \eqref{8ThswELzXU3X7Ebd1KdZ7v1rN3GiirRXGKWK099ovBM0FDJCvkopYNQ2aN94Z7k0UnUKamE3OjU8DFYFFokbSI2J9V9gVlM8ALWThDPnPu3EL7HPD2VDaZTggzcCCmbvc70qqPcC9mt60ogcrTiA3HEjwTK8ymKeuJMc4q6dVz200XnYUtLR9GYjPXvFOVr6W1zUK1WbPToaWJJuKnxBLnd0ftDEbMmj4loHYyhZyMjM91zQS4p7z8eKa9h0JrbacekcirexG0z4n3xz0QOWSvFj3jLhWXUIU21iIAwJtI3RbWa90I7rzAIqI3UElUJG7tLtUXzw4KQNETvXzqWaujEMenYlNIzLGxgB3AuJEQ106} and use \eqref{8ThswELzXU3X7Ebd1KdZ7v1rN3GiirRXGKWK099ovBM0FDJCvkopYNQ2aN94Z7k0UnUKamE3OjU8DFYFFokbSI2J9V9gVlM8ALWThDPnPu3EL7HPD2VDaZTggzcCCmbvc70qqPcC9mt60ogcrTiA3HEjwTK8ymKeuJMc4q6dVz200XnYUtLR9GYjPXvFOVr6W1zUK1WbPToaWJJuKnxBLnd0ftDEbMmj4loHYyhZyMjM91zQS4p7z8eKa9h0JrbacekcirexG0z4n3xz0QOWSvFj3jLhWXUIU21iIAwJtI3RbWa90I7rzAIqI3UElUJG7tLtUXzw4KQNETvXzqWaujEMenYlNIzLGxgB3AuJEQ105} to get     \begin{equation}       \zxvczxbcvdfghasdfrtsdafasdfasdfdsfgsdgh w - P_w \zxvczxbcvdfghasdfrtsdafasdfasdfdsfgsdgh_{L^p(B_r)}       \les \big( c_0 + \MM   + \zxvczxbcvdfghasdfrtsdafasdfasdfdsfgsdgh Q \zxvczxbcvdfghasdfrtsdafasdfasdfdsfgsdgh_{L^p(B_1)} \big) r^{d+\alpha+n/p}       ,     \llabel{ iIW n1C Wr U1BH BMvJ 3ywmAd qN D LY8 lbx XMx 0D Dvco 3RL9 Qz5eqy wV Y qEN nO8 MH0 PY zeVN i3yb 2msNYY Wz G 2DC PoG 1Vb Bx e9oZ GcTU 3AZuEK bk p 6rN eTX 0DS Mc zd91 nbSV DKEkVa zI q NKU Qap NBP 5B 32Ey prwP FLvuPi wR P l1G TdQ BZE Aw 3d90 v8P5 CPAnX4 Yo 2 q7s yr5 BW8 Hc T7tM ioha BW9U4q rb u mEQ 6Xz MKR 2B REFX k3ZO MVMYSw 9S F 5ek q0m yNK Gn H0qi vlRA 18CbEz id O iuy ZZ6 kRo oJ kLQ0 Ewmz sKlld6 Kr K JmR xls 12K G2 bv8v LxfJ wrIcU6 Hx p q6p Fy7 Oim mo dXYt Kt0V VH22OC Aj f deT BAP vPl oK QzLE OQlq dpzxJ6 JI z Ujn TqY sQ4 BD QPW6 784x 8ThswELzXU3X7Ebd1KdZ7v1rN3GiirRXGKWK099ovBM0FDJCvkopYNQ2aN94Z7k0UnUKamE3OjU8DFYFFokbSI2J9V9gVlM8ALWThDPnPu3EL7HPD2VDaZTggzcCCmbvc70qqPcC9mt60ogcrTiA3HEjwTK8ymKeuJMc4q6dVz200XnYUtLR9GYjPXvFOVr6W1zUK1WbPToaWJJuKnxBLnd0ftDEbMmj4loHYyhZyMjM91zQS4p7z8eKa9h0JrbacekcirexG0z4n3xz0QOWSvFj3jLhWXUIU21iIAwJtI3RbWa90I7rzAIqI3UElUJG7tLtUXzw4KQNETvXzqWaujEMenYlNIzLGxgB3AuJEQ107}     \end{equation} where $P_w$ is defined in~\eqref{8ThswELzXU3X7Ebd1KdZ7v1rN3GiirRXGKWK099ovBM0FDJCvkopYNQ2aN94Z7k0UnUKamE3OjU8DFYFFokbSI2J9V9gVlM8ALWThDPnPu3EL7HPD2VDaZTggzcCCmbvc70qqPcC9mt60ogcrTiA3HEjwTK8ymKeuJMc4q6dVz200XnYUtLR9GYjPXvFOVr6W1zUK1WbPToaWJJuKnxBLnd0ftDEbMmj4loHYyhZyMjM91zQS4p7z8eKa9h0JrbacekcirexG0z4n3xz0QOWSvFj3jLhWXUIU21iIAwJtI3RbWa90I7rzAIqI3UElUJG7tLtUXzw4KQNETvXzqWaujEMenYlNIzLGxgB3AuJEQ93}. Moreover, by \eqref{8ThswELzXU3X7Ebd1KdZ7v1rN3GiirRXGKWK099ovBM0FDJCvkopYNQ2aN94Z7k0UnUKamE3OjU8DFYFFokbSI2J9V9gVlM8ALWThDPnPu3EL7HPD2VDaZTggzcCCmbvc70qqPcC9mt60ogcrTiA3HEjwTK8ymKeuJMc4q6dVz200XnYUtLR9GYjPXvFOVr6W1zUK1WbPToaWJJuKnxBLnd0ftDEbMmj4loHYyhZyMjM91zQS4p7z8eKa9h0JrbacekcirexG0z4n3xz0QOWSvFj3jLhWXUIU21iIAwJtI3RbWa90I7rzAIqI3UElUJG7tLtUXzw4KQNETvXzqWaujEMenYlNIzLGxgB3AuJEQ63},     \begin{equation}       \zxvczxbcvdfghasdfrtsdafasdfasdfdsfgsdgh w \zxvczxbcvdfghasdfrtsdafasdfasdfdsfgsdgh_{L^p(B_{1/2})}       \les \zxvczxbcvdfghasdfrtsdafasdfasdfdsfgsdgh u \zxvczxbcvdfghasdfrtsdafasdfasdfdsfgsdgh_{W^{m,p}(B_{1/2})}              + \zxvczxbcvdfghasdfrtsdafasdfasdfdsfgsdgh Q \zxvczxbcvdfghasdfrtsdafasdfasdfdsfgsdgh_{L^p(B_1)}       .     \llabel{q NKU Qap NBP 5B 32Ey prwP FLvuPi wR P l1G TdQ BZE Aw 3d90 v8P5 CPAnX4 Yo 2 q7s yr5 BW8 Hc T7tM ioha BW9U4q rb u mEQ 6Xz MKR 2B REFX k3ZO MVMYSw 9S F 5ek q0m yNK Gn H0qi vlRA 18CbEz id O iuy ZZ6 kRo oJ kLQ0 Ewmz sKlld6 Kr K JmR xls 12K G2 bv8v LxfJ wrIcU6 Hx p q6p Fy7 Oim mo dXYt Kt0V VH22OC Aj f deT BAP vPl oK QzLE OQlq dpzxJ6 JI z Ujn TqY sQ4 BD QPW6 784x NUfsk0 aM 7 8qz MuL 9Mr Ac uVVK Y55n M7WqnB 2R C pGZ vHh WUN g9 3F2e RT8U umC62V H3 Z dJX LMS cca 1m xoOO 6oOL OVzfpO BO X 5Ev KuL z5s EW 8a9y otqk cKbDJN Us l pYM JpJ jOW Uy 2U4Y 8ThswELzXU3X7Ebd1KdZ7v1rN3GiirRXGKWK099ovBM0FDJCvkopYNQ2aN94Z7k0UnUKamE3OjU8DFYFFokbSI2J9V9gVlM8ALWThDPnPu3EL7HPD2VDaZTggzcCCmbvc70qqPcC9mt60ogcrTiA3HEjwTK8ymKeuJMc4q6dVz200XnYUtLR9GYjPXvFOVr6W1zUK1WbPToaWJJuKnxBLnd0ftDEbMmj4loHYyhZyMjM91zQS4p7z8eKa9h0JrbacekcirexG0z4n3xz0QOWSvFj3jLhWXUIU21iIAwJtI3RbWa90I7rzAIqI3UElUJG7tLtUXzw4KQNETvXzqWaujEMenYlNIzLGxgB3AuJEQ109}     \end{equation} Thus  $v = u-w$ satisfies $L(0)v = Q$ and      \begin{equation}       \zxvczxbcvdfghasdfrtsdafasdfasdfdsfgsdgh v \zxvczxbcvdfghasdfrtsdafasdfasdfdsfgsdgh_{L^p(B_{1/2})}        \les \zxvczxbcvdfghasdfrtsdafasdfasdfdsfgsdgh u \zxvczxbcvdfghasdfrtsdafasdfasdfdsfgsdgh_{L^p(B_{1/2})}              + \zxvczxbcvdfghasdfrtsdafasdfasdfdsfgsdgh w \zxvczxbcvdfghasdfrtsdafasdfasdfdsfgsdgh_{L^p(B_{1/2})}       \les \zxvczxbcvdfghasdfrtsdafasdfasdfdsfgsdgh u \zxvczxbcvdfghasdfrtsdafasdfasdfdsfgsdgh_{W^{m,p}(B_{1/2})}             + \zxvczxbcvdfghasdfrtsdafasdfasdfdsfgsdgh Q \zxvczxbcvdfghasdfrtsdafasdfasdfdsfgsdgh_{L^p(B_1)}       \les  c_0 + \MM   + \zxvczxbcvdfghasdfrtsdafasdfasdfdsfgsdgh Q \zxvczxbcvdfghasdfrtsdafasdfasdfdsfgsdgh_{L^p(B_1)}       ,     \llabel{z id O iuy ZZ6 kRo oJ kLQ0 Ewmz sKlld6 Kr K JmR xls 12K G2 bv8v LxfJ wrIcU6 Hx p q6p Fy7 Oim mo dXYt Kt0V VH22OC Aj f deT BAP vPl oK QzLE OQlq dpzxJ6 JI z Ujn TqY sQ4 BD QPW6 784x NUfsk0 aM 7 8qz MuL 9Mr Ac uVVK Y55n M7WqnB 2R C pGZ vHh WUN g9 3F2e RT8U umC62V H3 Z dJX LMS cca 1m xoOO 6oOL OVzfpO BO X 5Ev KuL z5s EW 8a9y otqk cKbDJN Us l pYM JpJ jOW Uy 2U4Y VKH6 kVC1Vx 1u v ykO yDs zo5 bz d36q WH1k J7Jtkg V1 J xqr Fnq mcU yZ JTp9 oFIc FAk0IT A9 3 SrL axO 9oU Z3 jG6f BRL1 iZ7ZE6 zj 8 G3M Hu8 6Ay jt 3flY cmTk jiTSYv CF t JLq cJP tN7 E3 8ThswELzXU3X7Ebd1KdZ7v1rN3GiirRXGKWK099ovBM0FDJCvkopYNQ2aN94Z7k0UnUKamE3OjU8DFYFFokbSI2J9V9gVlM8ALWThDPnPu3EL7HPD2VDaZTggzcCCmbvc70qqPcC9mt60ogcrTiA3HEjwTK8ymKeuJMc4q6dVz200XnYUtLR9GYjPXvFOVr6W1zUK1WbPToaWJJuKnxBLnd0ftDEbMmj4loHYyhZyMjM91zQS4p7z8eKa9h0JrbacekcirexG0z4n3xz0QOWSvFj3jLhWXUIU21iIAwJtI3RbWa90I7rzAIqI3UElUJG7tLtUXzw4KQNETvXzqWaujEMenYlNIzLGxgB3AuJEQ110}     \end{equation} where we used \eqref{8ThswELzXU3X7Ebd1KdZ7v1rN3GiirRXGKWK099ovBM0FDJCvkopYNQ2aN94Z7k0UnUKamE3OjU8DFYFFokbSI2J9V9gVlM8ALWThDPnPu3EL7HPD2VDaZTggzcCCmbvc70qqPcC9mt60ogcrTiA3HEjwTK8ymKeuJMc4q6dVz200XnYUtLR9GYjPXvFOVr6W1zUK1WbPToaWJJuKnxBLnd0ftDEbMmj4loHYyhZyMjM91zQS4p7z8eKa9h0JrbacekcirexG0z4n3xz0QOWSvFj3jLhWXUIU21iIAwJtI3RbWa90I7rzAIqI3UElUJG7tLtUXzw4KQNETvXzqWaujEMenYlNIzLGxgB3AuJEQ102} in the last inequality. Letting $P_v$ be the Taylor polynomial of $v$ with the degree $d$, we have $L(0) P_v=Q$ and for $|x|=r\leq1/4$, and we may bound     \begin{align}     \begin{split}       | v(x) - P_v(x) |       &\les \zxvczxbcvdfghasdfrtsdafasdfasdfdsfgsdgh \partial^{d+1} v \zxvczxbcvdfghasdfrtsdafasdfasdfdsfgsdgh_{L^{\infty} (B_r)} r^{d+1}        \les \zxvczxbcvdfghasdfrtsdafasdfasdfdsfgsdgh \partial^{d+1} v \zxvczxbcvdfghasdfrtsdafasdfasdfdsfgsdgh_{L^{\infty} (B_{1/4})} r^{d+1}       \\&       \les \big( \zxvczxbcvdfghasdfrtsdafasdfasdfdsfgsdgh Q \zxvczxbcvdfghasdfrtsdafasdfasdfdsfgsdgh_{L^p(B_{1/2})}                       + \zxvczxbcvdfghasdfrtsdafasdfasdfdsfgsdgh v \zxvczxbcvdfghasdfrtsdafasdfasdfdsfgsdgh_{L^p(B_{1/2})}              \big)               r^{d+1}       \les \big( c_0 + \MM   + \zxvczxbcvdfghasdfrtsdafasdfasdfdsfgsdgh Q \zxvczxbcvdfghasdfrtsdafasdfasdfdsfgsdgh_{L^p(B_1)} \big)               r^{d+1}       ,     \end{split}     \llabel{NUfsk0 aM 7 8qz MuL 9Mr Ac uVVK Y55n M7WqnB 2R C pGZ vHh WUN g9 3F2e RT8U umC62V H3 Z dJX LMS cca 1m xoOO 6oOL OVzfpO BO X 5Ev KuL z5s EW 8a9y otqk cKbDJN Us l pYM JpJ jOW Uy 2U4Y VKH6 kVC1Vx 1u v ykO yDs zo5 bz d36q WH1k J7Jtkg V1 J xqr Fnq mcU yZ JTp9 oFIc FAk0IT A9 3 SrL axO 9oU Z3 jG6f BRL1 iZ7ZE6 zj 8 G3M Hu8 6Ay jt 3flY cmTk jiTSYv CF t JLq cJP tN7 E3 POqG OKe0 3K3WV0 ep W XDQ C97 YSb AD ZUNp 81GF fCPbj3 iq E t0E NXy pLv fo Iz6z oFoF 9lkIun Xj Y yYL 52U bRB jx kQUS U9mm XtzIHO Cz 1 KH4 9ez 6Pz qW F223 C0Iz 3CsvuT R9 s VtQ CcM 1e8ThswELzXU3X7Ebd1KdZ7v1rN3GiirRXGKWK099ovBM0FDJCvkopYNQ2aN94Z7k0UnUKamE3OjU8DFYFFokbSI2J9V9gVlM8ALWThDPnPu3EL7HPD2VDaZTggzcCCmbvc70qqPcC9mt60ogcrTiA3HEjwTK8ymKeuJMc4q6dVz200XnYUtLR9GYjPXvFOVr6W1zUK1WbPToaWJJuKnxBLnd0ftDEbMmj4loHYyhZyMjM91zQS4p7z8eKa9h0JrbacekcirexG0z4n3xz0QOWSvFj3jLhWXUIU21iIAwJtI3RbWa90I7rzAIqI3UElUJG7tLtUXzw4KQNETvXzqWaujEMenYlNIzLGxgB3AuJEQ111}     \end{align} and thus     \begin{equation}       \zxvczxbcvdfghasdfrtsdafasdfasdfdsfgsdgh v - P_v \zxvczxbcvdfghasdfrtsdafasdfasdfdsfgsdgh_{L^q(B_r)}        \les \big( c_0 + \MM   + \zxvczxbcvdfghasdfrtsdafasdfasdfdsfgsdgh Q \zxvczxbcvdfghasdfrtsdafasdfasdfdsfgsdgh_{L^p(B_1)} \big)               r^{d+1+n/q}       .     \llabel{VKH6 kVC1Vx 1u v ykO yDs zo5 bz d36q WH1k J7Jtkg V1 J xqr Fnq mcU yZ JTp9 oFIc FAk0IT A9 3 SrL axO 9oU Z3 jG6f BRL1 iZ7ZE6 zj 8 G3M Hu8 6Ay jt 3flY cmTk jiTSYv CF t JLq cJP tN7 E3 POqG OKe0 3K3WV0 ep W XDQ C97 YSb AD ZUNp 81GF fCPbj3 iq E t0E NXy pLv fo Iz6z oFoF 9lkIun Xj Y yYL 52U bRB jx kQUS U9mm XtzIHO Cz 1 KH4 9ez 6Pz qW F223 C0Iz 3CsvuT R9 s VtQ CcM 1eo pD Py2l EEzL U0USJt Jb 9 zgy Gyf iQ4 fo Cx26 k4jL E0ula6 aS I rZQ HER 5HV CE BL55 WCtB 2LCmve TD z Vcp 7UR gI7 Qu FbFw 9VTx JwGrzs VW M 9sM JeJ Nd2 VG GFsi WuqC 3YxXoJ GK w Io7 18ThswELzXU3X7Ebd1KdZ7v1rN3GiirRXGKWK099ovBM0FDJCvkopYNQ2aN94Z7k0UnUKamE3OjU8DFYFFokbSI2J9V9gVlM8ALWThDPnPu3EL7HPD2VDaZTggzcCCmbvc70qqPcC9mt60ogcrTiA3HEjwTK8ymKeuJMc4q6dVz200XnYUtLR9GYjPXvFOVr6W1zUK1WbPToaWJJuKnxBLnd0ftDEbMmj4loHYyhZyMjM91zQS4p7z8eKa9h0JrbacekcirexG0z4n3xz0QOWSvFj3jLhWXUIU21iIAwJtI3RbWa90I7rzAIqI3UElUJG7tLtUXzw4KQNETvXzqWaujEMenYlNIzLGxgB3AuJEQ112}     \end{equation} Letting $P = P_v+P_w$, we have $u-P = (v-P_v) + (w - P_w)$, and we obtain
    \begin{equation}       \zxvczxbcvdfghasdfrtsdafasdfasdfdsfgsdgh u - P \zxvczxbcvdfghasdfrtsdafasdfasdfdsfgsdgh_{L^q(B_r)}       \les \big( c_0 + \MM   + \zxvczxbcvdfghasdfrtsdafasdfasdfdsfgsdgh Q \zxvczxbcvdfghasdfrtsdafasdfasdfdsfgsdgh_{L^p(B_1)} \big)               r^{d+\alpha+n/q}       \comma       r \leq \frac{1}{4}       .     \label{8ThswELzXU3X7Ebd1KdZ7v1rN3GiirRXGKWK099ovBM0FDJCvkopYNQ2aN94Z7k0UnUKamE3OjU8DFYFFokbSI2J9V9gVlM8ALWThDPnPu3EL7HPD2VDaZTggzcCCmbvc70qqPcC9mt60ogcrTiA3HEjwTK8ymKeuJMc4q6dVz200XnYUtLR9GYjPXvFOVr6W1zUK1WbPToaWJJuKnxBLnd0ftDEbMmj4loHYyhZyMjM91zQS4p7z8eKa9h0JrbacekcirexG0z4n3xz0QOWSvFj3jLhWXUIU21iIAwJtI3RbWa90I7rzAIqI3UElUJG7tLtUXzw4KQNETvXzqWaujEMenYlNIzLGxgB3AuJEQ113}     \end{equation} Combining \eqref{8ThswELzXU3X7Ebd1KdZ7v1rN3GiirRXGKWK099ovBM0FDJCvkopYNQ2aN94Z7k0UnUKamE3OjU8DFYFFokbSI2J9V9gVlM8ALWThDPnPu3EL7HPD2VDaZTggzcCCmbvc70qqPcC9mt60ogcrTiA3HEjwTK8ymKeuJMc4q6dVz200XnYUtLR9GYjPXvFOVr6W1zUK1WbPToaWJJuKnxBLnd0ftDEbMmj4loHYyhZyMjM91zQS4p7z8eKa9h0JrbacekcirexG0z4n3xz0QOWSvFj3jLhWXUIU21iIAwJtI3RbWa90I7rzAIqI3UElUJG7tLtUXzw4KQNETvXzqWaujEMenYlNIzLGxgB3AuJEQ113} with \eqref{8ThswELzXU3X7Ebd1KdZ7v1rN3GiirRXGKWK099ovBM0FDJCvkopYNQ2aN94Z7k0UnUKamE3OjU8DFYFFokbSI2J9V9gVlM8ALWThDPnPu3EL7HPD2VDaZTggzcCCmbvc70qqPcC9mt60ogcrTiA3HEjwTK8ymKeuJMc4q6dVz200XnYUtLR9GYjPXvFOVr6W1zUK1WbPToaWJJuKnxBLnd0ftDEbMmj4loHYyhZyMjM91zQS4p7z8eKa9h0JrbacekcirexG0z4n3xz0QOWSvFj3jLhWXUIU21iIAwJtI3RbWa90I7rzAIqI3UElUJG7tLtUXzw4KQNETvXzqWaujEMenYlNIzLGxgB3AuJEQ14}, we conclude that $P$ is a homogeneous polynomial of degree~$d$. \par We now prove that $u$ and $P$ satisfy \eqref{8ThswELzXU3X7Ebd1KdZ7v1rN3GiirRXGKWK099ovBM0FDJCvkopYNQ2aN94Z7k0UnUKamE3OjU8DFYFFokbSI2J9V9gVlM8ALWThDPnPu3EL7HPD2VDaZTggzcCCmbvc70qqPcC9mt60ogcrTiA3HEjwTK8ymKeuJMc4q6dVz200XnYUtLR9GYjPXvFOVr6W1zUK1WbPToaWJJuKnxBLnd0ftDEbMmj4loHYyhZyMjM91zQS4p7z8eKa9h0JrbacekcirexG0z4n3xz0QOWSvFj3jLhWXUIU21iIAwJtI3RbWa90I7rzAIqI3UElUJG7tLtUXzw4KQNETvXzqWaujEMenYlNIzLGxgB3AuJEQ16} and~\eqref{8ThswELzXU3X7Ebd1KdZ7v1rN3GiirRXGKWK099ovBM0FDJCvkopYNQ2aN94Z7k0UnUKamE3OjU8DFYFFokbSI2J9V9gVlM8ALWThDPnPu3EL7HPD2VDaZTggzcCCmbvc70qqPcC9mt60ogcrTiA3HEjwTK8ymKeuJMc4q6dVz200XnYUtLR9GYjPXvFOVr6W1zUK1WbPToaWJJuKnxBLnd0ftDEbMmj4loHYyhZyMjM91zQS4p7z8eKa9h0JrbacekcirexG0z4n3xz0QOWSvFj3jLhWXUIU21iIAwJtI3RbWa90I7rzAIqI3UElUJG7tLtUXzw4KQNETvXzqWaujEMenYlNIzLGxgB3AuJEQ17}. Without loss of generality, we assume that      \begin{equation}       \sum_{|\nu|=m} |a_{\nu}(x) - a_{\nu}(0)|        + \sum_{|\nu|<m} |a_{\nu}(x)|        \leq \eps       ,     \label{8ThswELzXU3X7Ebd1KdZ7v1rN3GiirRXGKWK099ovBM0FDJCvkopYNQ2aN94Z7k0UnUKamE3OjU8DFYFFokbSI2J9V9gVlM8ALWThDPnPu3EL7HPD2VDaZTggzcCCmbvc70qqPcC9mt60ogcrTiA3HEjwTK8ymKeuJMc4q6dVz200XnYUtLR9GYjPXvFOVr6W1zUK1WbPToaWJJuKnxBLnd0ftDEbMmj4loHYyhZyMjM91zQS4p7z8eKa9h0JrbacekcirexG0z4n3xz0QOWSvFj3jLhWXUIU21iIAwJtI3RbWa90I7rzAIqI3UElUJG7tLtUXzw4KQNETvXzqWaujEMenYlNIzLGxgB3AuJEQ114}     \end{equation} for some small positive $\eps$ depending on $p$, $d$, and $\alpha$ only, since the general case follows immediately by a dilation $x \to Rx$ for some $R \in (0,1]$ sufficiently small. Let      \begin{equation}         \tilde\cc          = \sup_{r\leq1} \frac{\zxvczxbcvdfghasdfrtsdafasdfasdfdsfgsdgh u - P \zxvczxbcvdfghasdfrtsdafasdfasdfdsfgsdgh_{L^p(B_r)}}{r^{d+\alpha+n/p}}         ,     \label{8ThswELzXU3X7Ebd1KdZ7v1rN3GiirRXGKWK099ovBM0FDJCvkopYNQ2aN94Z7k0UnUKamE3OjU8DFYFFokbSI2J9V9gVlM8ALWThDPnPu3EL7HPD2VDaZTggzcCCmbvc70qqPcC9mt60ogcrTiA3HEjwTK8ymKeuJMc4q6dVz200XnYUtLR9GYjPXvFOVr6W1zUK1WbPToaWJJuKnxBLnd0ftDEbMmj4loHYyhZyMjM91zQS4p7z8eKa9h0JrbacekcirexG0z4n3xz0QOWSvFj3jLhWXUIU21iIAwJtI3RbWa90I7rzAIqI3UElUJG7tLtUXzw4KQNETvXzqWaujEMenYlNIzLGxgB3AuJEQ115}     \end{equation} which is finite due to~\eqref{8ThswELzXU3X7Ebd1KdZ7v1rN3GiirRXGKWK099ovBM0FDJCvkopYNQ2aN94Z7k0UnUKamE3OjU8DFYFFokbSI2J9V9gVlM8ALWThDPnPu3EL7HPD2VDaZTggzcCCmbvc70qqPcC9mt60ogcrTiA3HEjwTK8ymKeuJMc4q6dVz200XnYUtLR9GYjPXvFOVr6W1zUK1WbPToaWJJuKnxBLnd0ftDEbMmj4loHYyhZyMjM91zQS4p7z8eKa9h0JrbacekcirexG0z4n3xz0QOWSvFj3jLhWXUIU21iIAwJtI3RbWa90I7rzAIqI3UElUJG7tLtUXzw4KQNETvXzqWaujEMenYlNIzLGxgB3AuJEQ80}.  Using $L(0) P = Q$, we rewrite $Lu=f$ as      \begin{equation}       L \ppsi       = f - LP       = (f-Q) + (L(0)-L)P       .     \label{8ThswELzXU3X7Ebd1KdZ7v1rN3GiirRXGKWK099ovBM0FDJCvkopYNQ2aN94Z7k0UnUKamE3OjU8DFYFFokbSI2J9V9gVlM8ALWThDPnPu3EL7HPD2VDaZTggzcCCmbvc70qqPcC9mt60ogcrTiA3HEjwTK8ymKeuJMc4q6dVz200XnYUtLR9GYjPXvFOVr6W1zUK1WbPToaWJJuKnxBLnd0ftDEbMmj4loHYyhZyMjM91zQS4p7z8eKa9h0JrbacekcirexG0z4n3xz0QOWSvFj3jLhWXUIU21iIAwJtI3RbWa90I7rzAIqI3UElUJG7tLtUXzw4KQNETvXzqWaujEMenYlNIzLGxgB3AuJEQ116}     \end{equation} Applying the interior elliptic estimate to \eqref{8ThswELzXU3X7Ebd1KdZ7v1rN3GiirRXGKWK099ovBM0FDJCvkopYNQ2aN94Z7k0UnUKamE3OjU8DFYFFokbSI2J9V9gVlM8ALWThDPnPu3EL7HPD2VDaZTggzcCCmbvc70qqPcC9mt60ogcrTiA3HEjwTK8ymKeuJMc4q6dVz200XnYUtLR9GYjPXvFOVr6W1zUK1WbPToaWJJuKnxBLnd0ftDEbMmj4loHYyhZyMjM91zQS4p7z8eKa9h0JrbacekcirexG0z4n3xz0QOWSvFj3jLhWXUIU21iIAwJtI3RbWa90I7rzAIqI3UElUJG7tLtUXzw4KQNETvXzqWaujEMenYlNIzLGxgB3AuJEQ116} yields     \begin{align}     \begin{split}       \sum_{|\nu|\leq m} r^{|\nu|} \zxvczxbcvdfghasdfrtsdafasdfasdfdsfgsdgh \partial^{\nu} \ppsi \zxvczxbcvdfghasdfrtsdafasdfasdfdsfgsdgh_{L^p(B_r)}       &\les \zxvczxbcvdfghasdfrtsdafasdfasdfdsfgsdgh u-P \zxvczxbcvdfghasdfrtsdafasdfasdfdsfgsdgh_{L^p(B_{2r})}              + r^m \zxvczxbcvdfghasdfrtsdafasdfasdfdsfgsdgh f - Q \zxvczxbcvdfghasdfrtsdafasdfasdfdsfgsdgh_{L^p(B_{2r})}             + r^m \zxvczxbcvdfghasdfrtsdafasdfasdfdsfgsdgh ( L(0) - L ) P \zxvczxbcvdfghasdfrtsdafasdfasdfdsfgsdgh_{L^p(B_{2r})}       \\&       \les \big( \tilde\cc + \MM   + \zxvczxbcvdfghasdfrtsdafasdfasdfdsfgsdgh P \zxvczxbcvdfghasdfrtsdafasdfasdfdsfgsdgh_{L^p(B_1)} \big) r^{d+\alpha+n/p}       \comma       r \leq \frac{1}{2}       .     \end{split}     \label{8ThswELzXU3X7Ebd1KdZ7v1rN3GiirRXGKWK099ovBM0FDJCvkopYNQ2aN94Z7k0UnUKamE3OjU8DFYFFokbSI2J9V9gVlM8ALWThDPnPu3EL7HPD2VDaZTggzcCCmbvc70qqPcC9mt60ogcrTiA3HEjwTK8ymKeuJMc4q6dVz200XnYUtLR9GYjPXvFOVr6W1zUK1WbPToaWJJuKnxBLnd0ftDEbMmj4loHYyhZyMjM91zQS4p7z8eKa9h0JrbacekcirexG0z4n3xz0QOWSvFj3jLhWXUIU21iIAwJtI3RbWa90I7rzAIqI3UElUJG7tLtUXzw4KQNETvXzqWaujEMenYlNIzLGxgB3AuJEQ117}     \end{align} We now rewrite the equation \eqref{8ThswELzXU3X7Ebd1KdZ7v1rN3GiirRXGKWK099ovBM0FDJCvkopYNQ2aN94Z7k0UnUKamE3OjU8DFYFFokbSI2J9V9gVlM8ALWThDPnPu3EL7HPD2VDaZTggzcCCmbvc70qqPcC9mt60ogcrTiA3HEjwTK8ymKeuJMc4q6dVz200XnYUtLR9GYjPXvFOVr6W1zUK1WbPToaWJJuKnxBLnd0ftDEbMmj4loHYyhZyMjM91zQS4p7z8eKa9h0JrbacekcirexG0z4n3xz0QOWSvFj3jLhWXUIU21iIAwJtI3RbWa90I7rzAIqI3UElUJG7tLtUXzw4KQNETvXzqWaujEMenYlNIzLGxgB3AuJEQ116} as      \begin{equation}       L(0) \ppsi       = \sum_{|\nu|=m} (a_{\nu} (0) - a_{\nu}) \partial^{\nu} \ppsi          - \sum_{|\nu|<m} a_{\nu} \partial^{\nu} \ppsi           + (f-Q)          + (L(0) - L) P           .      \label{8ThswELzXU3X7Ebd1KdZ7v1rN3GiirRXGKWK099ovBM0FDJCvkopYNQ2aN94Z7k0UnUKamE3OjU8DFYFFokbSI2J9V9gVlM8ALWThDPnPu3EL7HPD2VDaZTggzcCCmbvc70qqPcC9mt60ogcrTiA3HEjwTK8ymKeuJMc4q6dVz200XnYUtLR9GYjPXvFOVr6W1zUK1WbPToaWJJuKnxBLnd0ftDEbMmj4loHYyhZyMjM91zQS4p7z8eKa9h0JrbacekcirexG0z4n3xz0QOWSvFj3jLhWXUIU21iIAwJtI3RbWa90I7rzAIqI3UElUJG7tLtUXzw4KQNETvXzqWaujEMenYlNIzLGxgB3AuJEQ118}     \end{equation} Denoting the right hand side of \eqref{8ThswELzXU3X7Ebd1KdZ7v1rN3GiirRXGKWK099ovBM0FDJCvkopYNQ2aN94Z7k0UnUKamE3OjU8DFYFFokbSI2J9V9gVlM8ALWThDPnPu3EL7HPD2VDaZTggzcCCmbvc70qqPcC9mt60ogcrTiA3HEjwTK8ymKeuJMc4q6dVz200XnYUtLR9GYjPXvFOVr6W1zUK1WbPToaWJJuKnxBLnd0ftDEbMmj4loHYyhZyMjM91zQS4p7z8eKa9h0JrbacekcirexG0z4n3xz0QOWSvFj3jLhWXUIU21iIAwJtI3RbWa90I7rzAIqI3UElUJG7tLtUXzw4KQNETvXzqWaujEMenYlNIzLGxgB3AuJEQ118} by $F$, we have     \begin{align}     \begin{split}       \zxvczxbcvdfghasdfrtsdafasdfasdfdsfgsdgh F \zxvczxbcvdfghasdfrtsdafasdfasdfdsfgsdgh_{L^p(B_r)}        &\les \sum_{|\nu|=m}                 \zxvczxbcvdfghasdfrtsdafasdfasdfdsfgsdgh a_{\nu}(0) - a_{\nu}(x)\zxvczxbcvdfghasdfrtsdafasdfasdfdsfgsdgh_{L^{\infty}(B_r)}                 \zxvczxbcvdfghasdfrtsdafasdfasdfdsfgsdgh \partial^{\nu} \ppsi \zxvczxbcvdfghasdfrtsdafasdfasdfdsfgsdgh_{L^p(B_r)}                + \sum_{|\nu|<m}                 \zxvczxbcvdfghasdfrtsdafasdfasdfdsfgsdgh a_{\nu}(x) \zxvczxbcvdfghasdfrtsdafasdfasdfdsfgsdgh_{L^{\infty}(B_r)}                 \zxvczxbcvdfghasdfrtsdafasdfasdfdsfgsdgh \partial^{\nu} \ppsi \zxvczxbcvdfghasdfrtsdafasdfasdfdsfgsdgh_{L^p(B_r)}       \\& \indeq                        + \zxvczxbcvdfghasdfrtsdafasdfasdfdsfgsdgh f-Q \zxvczxbcvdfghasdfrtsdafasdfasdfdsfgsdgh_{L^p(B_r)}                       +\sum_{|\nu|=m}                   \zxvczxbcvdfghasdfrtsdafasdfasdfdsfgsdgh a_{\nu}(0) - a_{\nu}(x)\zxvczxbcvdfghasdfrtsdafasdfasdfdsfgsdgh_{L^{\infty}(B_r)}                   \zxvczxbcvdfghasdfrtsdafasdfasdfdsfgsdgh  \partial^{\nu}P\zxvczxbcvdfghasdfrtsdafasdfasdfdsfgsdgh_{L^p(B_r)}       \\&\indeq                + \sum_{|\nu|<m}                  \zxvczxbcvdfghasdfrtsdafasdfasdfdsfgsdgh a_{\nu}(x) \zxvczxbcvdfghasdfrtsdafasdfasdfdsfgsdgh_{L^{\infty}(B_r)}                  \zxvczxbcvdfghasdfrtsdafasdfasdfdsfgsdgh  \partial^{\nu}P\zxvczxbcvdfghasdfrtsdafasdfasdfdsfgsdgh_{L^p(B_r)}                 .      \end{split}     \llabel{POqG OKe0 3K3WV0 ep W XDQ C97 YSb AD ZUNp 81GF fCPbj3 iq E t0E NXy pLv fo Iz6z oFoF 9lkIun Xj Y yYL 52U bRB jx kQUS U9mm XtzIHO Cz 1 KH4 9ez 6Pz qW F223 C0Iz 3CsvuT R9 s VtQ CcM 1eo pD Py2l EEzL U0USJt Jb 9 zgy Gyf iQ4 fo Cx26 k4jL E0ula6 aS I rZQ HER 5HV CE BL55 WCtB 2LCmve TD z Vcp 7UR gI7 Qu FbFw 9VTx JwGrzs VW M 9sM JeJ Nd2 VG GFsi WuqC 3YxXoJ GK w Io7 1fg sGm 0P YFBz X8eX 7pf9GJ b1 o XUs 1q0 6KP Ls MucN ytQb L0Z0Qq m1 l SPj 9MT etk L6 KfsC 6Zob Yhc2qu Xy 9 GPm ZYj 1Go ei feJ3 pRAf n6Ypy6 jN s 4Y5 nSE pqN 4m Rmam AGfY HhSaBr Ls D 8ThswELzXU3X7Ebd1KdZ7v1rN3GiirRXGKWK099ovBM0FDJCvkopYNQ2aN94Z7k0UnUKamE3OjU8DFYFFokbSI2J9V9gVlM8ALWThDPnPu3EL7HPD2VDaZTggzcCCmbvc70qqPcC9mt60ogcrTiA3HEjwTK8ymKeuJMc4q6dVz200XnYUtLR9GYjPXvFOVr6W1zUK1WbPToaWJJuKnxBLnd0ftDEbMmj4loHYyhZyMjM91zQS4p7z8eKa9h0JrbacekcirexG0z4n3xz0QOWSvFj3jLhWXUIU21iIAwJtI3RbWa90I7rzAIqI3UElUJG7tLtUXzw4KQNETvXzqWaujEMenYlNIzLGxgB3AuJEQ119}      \end{align} Now we employ \eqref{8ThswELzXU3X7Ebd1KdZ7v1rN3GiirRXGKWK099ovBM0FDJCvkopYNQ2aN94Z7k0UnUKamE3OjU8DFYFFokbSI2J9V9gVlM8ALWThDPnPu3EL7HPD2VDaZTggzcCCmbvc70qqPcC9mt60ogcrTiA3HEjwTK8ymKeuJMc4q6dVz200XnYUtLR9GYjPXvFOVr6W1zUK1WbPToaWJJuKnxBLnd0ftDEbMmj4loHYyhZyMjM91zQS4p7z8eKa9h0JrbacekcirexG0z4n3xz0QOWSvFj3jLhWXUIU21iIAwJtI3RbWa90I7rzAIqI3UElUJG7tLtUXzw4KQNETvXzqWaujEMenYlNIzLGxgB3AuJEQ114} and \eqref{8ThswELzXU3X7Ebd1KdZ7v1rN3GiirRXGKWK099ovBM0FDJCvkopYNQ2aN94Z7k0UnUKamE3OjU8DFYFFokbSI2J9V9gVlM8ALWThDPnPu3EL7HPD2VDaZTggzcCCmbvc70qqPcC9mt60ogcrTiA3HEjwTK8ymKeuJMc4q6dVz200XnYUtLR9GYjPXvFOVr6W1zUK1WbPToaWJJuKnxBLnd0ftDEbMmj4loHYyhZyMjM91zQS4p7z8eKa9h0JrbacekcirexG0z4n3xz0QOWSvFj3jLhWXUIU21iIAwJtI3RbWa90I7rzAIqI3UElUJG7tLtUXzw4KQNETvXzqWaujEMenYlNIzLGxgB3AuJEQ11} to obtain     \begin{align}     \begin{split}       \zxvczxbcvdfghasdfrtsdafasdfasdfdsfgsdgh F \zxvczxbcvdfghasdfrtsdafasdfasdfdsfgsdgh_{L^p(B_r)}        &\les \eps \sum_{|\nu| \leq m}\zxvczxbcvdfghasdfrtsdafasdfasdfdsfgsdgh \partial^{\nu} \ppsi \zxvczxbcvdfghasdfrtsdafasdfasdfdsfgsdgh_{L^p(B_r)}             + \zxvczxbcvdfghasdfrtsdafasdfasdfdsfgsdgh f-Q\zxvczxbcvdfghasdfrtsdafasdfasdfdsfgsdgh_{L^p(B_r)}             + \eps r^{\alpha} \zxvczxbcvdfghasdfrtsdafasdfasdfdsfgsdgh \partial^mP\zxvczxbcvdfghasdfrtsdafasdfasdfdsfgsdgh_{L^p(B_r)}             + \eps \sum_{|\nu| < m}\zxvczxbcvdfghasdfrtsdafasdfasdfdsfgsdgh  \partial^{\nu} P \zxvczxbcvdfghasdfrtsdafasdfasdfdsfgsdgh_{L^p(B_r)}       \\&       \les \big( \eps \tilde\cc                      + \MM                        + \zxvczxbcvdfghasdfrtsdafasdfasdfdsfgsdgh P \zxvczxbcvdfghasdfrtsdafasdfasdfdsfgsdgh_{L^p(B_1)}              \big) r^{d-m+\alpha+n/p}              ,     \end{split}    \llabel{o pD Py2l EEzL U0USJt Jb 9 zgy Gyf iQ4 fo Cx26 k4jL E0ula6 aS I rZQ HER 5HV CE BL55 WCtB 2LCmve TD z Vcp 7UR gI7 Qu FbFw 9VTx JwGrzs VW M 9sM JeJ Nd2 VG GFsi WuqC 3YxXoJ GK w Io7 1fg sGm 0P YFBz X8eX 7pf9GJ b1 o XUs 1q0 6KP Ls MucN ytQb L0Z0Qq m1 l SPj 9MT etk L6 KfsC 6Zob Yhc2qu Xy 9 GPm ZYj 1Go ei feJ3 pRAf n6Ypy6 jN s 4Y5 nSE pqN 4m Rmam AGfY HhSaBr Ls D THC SEl UyR Mh 66XU 7hNz pZVC5V nV 7 VjL 7kv WKf 7P 5hj6 t1vu gkLGdN X8 b gOX HWm 6W4 YE mxFG 4WaN EbGKsv 0p 4 OG0 Nrd uTe Za xNXq V4Bp mOdXIq 9a b PeD PbU Z4N Xt ohbY egCf xBNttE 8ThswELzXU3X7Ebd1KdZ7v1rN3GiirRXGKWK099ovBM0FDJCvkopYNQ2aN94Z7k0UnUKamE3OjU8DFYFFokbSI2J9V9gVlM8ALWThDPnPu3EL7HPD2VDaZTggzcCCmbvc70qqPcC9mt60ogcrTiA3HEjwTK8ymKeuJMc4q6dVz200XnYUtLR9GYjPXvFOVr6W1zUK1WbPToaWJJuKnxBLnd0ftDEbMmj4loHYyhZyMjM91zQS4p7z8eKa9h0JrbacekcirexG0z4n3xz0QOWSvFj3jLhWXUIU21iIAwJtI3RbWa90I7rzAIqI3UElUJG7tLtUXzw4KQNETvXzqWaujEMenYlNIzLGxgB3AuJEQ120}     \end{align} where we used \eqref{8ThswELzXU3X7Ebd1KdZ7v1rN3GiirRXGKWK099ovBM0FDJCvkopYNQ2aN94Z7k0UnUKamE3OjU8DFYFFokbSI2J9V9gVlM8ALWThDPnPu3EL7HPD2VDaZTggzcCCmbvc70qqPcC9mt60ogcrTiA3HEjwTK8ymKeuJMc4q6dVz200XnYUtLR9GYjPXvFOVr6W1zUK1WbPToaWJJuKnxBLnd0ftDEbMmj4loHYyhZyMjM91zQS4p7z8eKa9h0JrbacekcirexG0z4n3xz0QOWSvFj3jLhWXUIU21iIAwJtI3RbWa90I7rzAIqI3UElUJG7tLtUXzw4KQNETvXzqWaujEMenYlNIzLGxgB3AuJEQ117} and that $P$ is homogeneous polynomial of degree $d$ in the last inequality. Applying Lemma~\ref{L01}, there exists $v$ in $W^{m,p}(B_{1/2})$ such that $L(0)v = F$ in $B_{1/2}$ and      \begin{equation}       \zxvczxbcvdfghasdfrtsdafasdfasdfdsfgsdgh v \zxvczxbcvdfghasdfrtsdafasdfasdfdsfgsdgh_{L^p(B_r)}       \les \big( \eps \tilde\cc                      + \MM                        + \zxvczxbcvdfghasdfrtsdafasdfasdfdsfgsdgh P \zxvczxbcvdfghasdfrtsdafasdfasdfdsfgsdgh_{L^p(B_1)}              \big) r^{d+\alpha+n/p}              .     \label{8ThswELzXU3X7Ebd1KdZ7v1rN3GiirRXGKWK099ovBM0FDJCvkopYNQ2aN94Z7k0UnUKamE3OjU8DFYFFokbSI2J9V9gVlM8ALWThDPnPu3EL7HPD2VDaZTggzcCCmbvc70qqPcC9mt60ogcrTiA3HEjwTK8ymKeuJMc4q6dVz200XnYUtLR9GYjPXvFOVr6W1zUK1WbPToaWJJuKnxBLnd0ftDEbMmj4loHYyhZyMjM91zQS4p7z8eKa9h0JrbacekcirexG0z4n3xz0QOWSvFj3jLhWXUIU21iIAwJtI3RbWa90I7rzAIqI3UElUJG7tLtUXzw4KQNETvXzqWaujEMenYlNIzLGxgB3AuJEQ121}     \end{equation} By \eqref{8ThswELzXU3X7Ebd1KdZ7v1rN3GiirRXGKWK099ovBM0FDJCvkopYNQ2aN94Z7k0UnUKamE3OjU8DFYFFokbSI2J9V9gVlM8ALWThDPnPu3EL7HPD2VDaZTggzcCCmbvc70qqPcC9mt60ogcrTiA3HEjwTK8ymKeuJMc4q6dVz200XnYUtLR9GYjPXvFOVr6W1zUK1WbPToaWJJuKnxBLnd0ftDEbMmj4loHYyhZyMjM91zQS4p7z8eKa9h0JrbacekcirexG0z4n3xz0QOWSvFj3jLhWXUIU21iIAwJtI3RbWa90I7rzAIqI3UElUJG7tLtUXzw4KQNETvXzqWaujEMenYlNIzLGxgB3AuJEQ115} and \eqref{8ThswELzXU3X7Ebd1KdZ7v1rN3GiirRXGKWK099ovBM0FDJCvkopYNQ2aN94Z7k0UnUKamE3OjU8DFYFFokbSI2J9V9gVlM8ALWThDPnPu3EL7HPD2VDaZTggzcCCmbvc70qqPcC9mt60ogcrTiA3HEjwTK8ymKeuJMc4q6dVz200XnYUtLR9GYjPXvFOVr6W1zUK1WbPToaWJJuKnxBLnd0ftDEbMmj4loHYyhZyMjM91zQS4p7z8eKa9h0JrbacekcirexG0z4n3xz0QOWSvFj3jLhWXUIU21iIAwJtI3RbWa90I7rzAIqI3UElUJG7tLtUXzw4KQNETvXzqWaujEMenYlNIzLGxgB3AuJEQ121}, the Taylor series of $\ppsi - v$ begins  with a homogeneous polynomial of degree at least~$d+1$.  By the interior elliptic estimate, for any $r\leq 1/2$,     \begin{equation}       \zxvczxbcvdfghasdfrtsdafasdfasdfdsfgsdgh \ppsi - v \zxvczxbcvdfghasdfrtsdafasdfasdfdsfgsdgh_{L^p(B_r)}       \les  \zxvczxbcvdfghasdfrtsdafasdfasdfdsfgsdgh \ppsi - v \zxvczxbcvdfghasdfrtsdafasdfasdfdsfgsdgh_{L^p(B_{1/2})} r^{d+\alpha+n/p}       .     \llabel{fg sGm 0P YFBz X8eX 7pf9GJ b1 o XUs 1q0 6KP Ls MucN ytQb L0Z0Qq m1 l SPj 9MT etk L6 KfsC 6Zob Yhc2qu Xy 9 GPm ZYj 1Go ei feJ3 pRAf n6Ypy6 jN s 4Y5 nSE pqN 4m Rmam AGfY HhSaBr Ls D THC SEl UyR Mh 66XU 7hNz pZVC5V nV 7 VjL 7kv WKf 7P 5hj6 t1vu gkLGdN X8 b gOX HWm 6W4 YE mxFG 4WaN EbGKsv 0p 4 OG0 Nrd uTe Za xNXq V4Bp mOdXIq 9a b PeD PbU Z4N Xt ohbY egCf xBNttE wc D YSD 637 jJ2 ms 6Ta1 J2xZ PtKnPw AX A tJA Rc8 n5d 93 TZi7 q6Wo nEDLwW Sz e Sue YFX 8cM hm Y6is 15pX aOYBbV fS C haL kBR Ks6 UO qG4j DVab fbdtny fi D BFI 7uh B39 FJ 6mYr CUUT f28ThswELzXU3X7Ebd1KdZ7v1rN3GiirRXGKWK099ovBM0FDJCvkopYNQ2aN94Z7k0UnUKamE3OjU8DFYFFokbSI2J9V9gVlM8ALWThDPnPu3EL7HPD2VDaZTggzcCCmbvc70qqPcC9mt60ogcrTiA3HEjwTK8ymKeuJMc4q6dVz200XnYUtLR9GYjPXvFOVr6W1zUK1WbPToaWJJuKnxBLnd0ftDEbMmj4loHYyhZyMjM91zQS4p7z8eKa9h0JrbacekcirexG0z4n3xz0QOWSvFj3jLhWXUIU21iIAwJtI3RbWa90I7rzAIqI3UElUJG7tLtUXzw4KQNETvXzqWaujEMenYlNIzLGxgB3AuJEQ122}     \end{equation} Thus,     \begin{equation}       \zxvczxbcvdfghasdfrtsdafasdfasdfdsfgsdgh u-P \zxvczxbcvdfghasdfrtsdafasdfasdfdsfgsdgh_{L^p(B_r)}       \les \big( \eps \tilde\cc                       + \MM                        + \zxvczxbcvdfghasdfrtsdafasdfasdfdsfgsdgh u-P \zxvczxbcvdfghasdfrtsdafasdfasdfdsfgsdgh_{L^p(B_1)}                      + \zxvczxbcvdfghasdfrtsdafasdfasdfdsfgsdgh P \zxvczxbcvdfghasdfrtsdafasdfasdfdsfgsdgh_{L^p(B_1)}                \big) r^{d+\alpha+n/p}        \comma        r \leq \frac{1}{2}        .     \label{8ThswELzXU3X7Ebd1KdZ7v1rN3GiirRXGKWK099ovBM0FDJCvkopYNQ2aN94Z7k0UnUKamE3OjU8DFYFFokbSI2J9V9gVlM8ALWThDPnPu3EL7HPD2VDaZTggzcCCmbvc70qqPcC9mt60ogcrTiA3HEjwTK8ymKeuJMc4q6dVz200XnYUtLR9GYjPXvFOVr6W1zUK1WbPToaWJJuKnxBLnd0ftDEbMmj4loHYyhZyMjM91zQS4p7z8eKa9h0JrbacekcirexG0z4n3xz0QOWSvFj3jLhWXUIU21iIAwJtI3RbWa90I7rzAIqI3UElUJG7tLtUXzw4KQNETvXzqWaujEMenYlNIzLGxgB3AuJEQ123}     \end{equation} Note that \eqref{8ThswELzXU3X7Ebd1KdZ7v1rN3GiirRXGKWK099ovBM0FDJCvkopYNQ2aN94Z7k0UnUKamE3OjU8DFYFFokbSI2J9V9gVlM8ALWThDPnPu3EL7HPD2VDaZTggzcCCmbvc70qqPcC9mt60ogcrTiA3HEjwTK8ymKeuJMc4q6dVz200XnYUtLR9GYjPXvFOVr6W1zUK1WbPToaWJJuKnxBLnd0ftDEbMmj4loHYyhZyMjM91zQS4p7z8eKa9h0JrbacekcirexG0z4n3xz0QOWSvFj3jLhWXUIU21iIAwJtI3RbWa90I7rzAIqI3UElUJG7tLtUXzw4KQNETvXzqWaujEMenYlNIzLGxgB3AuJEQ123} also holds for $1/2\leq r \leq 1$. Taking the supremum over $(0,1]$, we get      \begin{equation}       \tilde\cc        \les \eps \tilde\cc               + \MM                + \zxvczxbcvdfghasdfrtsdafasdfasdfdsfgsdgh u-P \zxvczxbcvdfghasdfrtsdafasdfasdfdsfgsdgh_{L^p(B_1)}              + \zxvczxbcvdfghasdfrtsdafasdfasdfdsfgsdgh P \zxvczxbcvdfghasdfrtsdafasdfasdfdsfgsdgh_{L^p(B_1)}               .     \label{8ThswELzXU3X7Ebd1KdZ7v1rN3GiirRXGKWK099ovBM0FDJCvkopYNQ2aN94Z7k0UnUKamE3OjU8DFYFFokbSI2J9V9gVlM8ALWThDPnPu3EL7HPD2VDaZTggzcCCmbvc70qqPcC9mt60ogcrTiA3HEjwTK8ymKeuJMc4q6dVz200XnYUtLR9GYjPXvFOVr6W1zUK1WbPToaWJJuKnxBLnd0ftDEbMmj4loHYyhZyMjM91zQS4p7z8eKa9h0JrbacekcirexG0z4n3xz0QOWSvFj3jLhWXUIU21iIAwJtI3RbWa90I7rzAIqI3UElUJG7tLtUXzw4KQNETvXzqWaujEMenYlNIzLGxgB3AuJEQ124}     \end{equation} Denote by $C$ the implicit constant in \eqref{8ThswELzXU3X7Ebd1KdZ7v1rN3GiirRXGKWK099ovBM0FDJCvkopYNQ2aN94Z7k0UnUKamE3OjU8DFYFFokbSI2J9V9gVlM8ALWThDPnPu3EL7HPD2VDaZTggzcCCmbvc70qqPcC9mt60ogcrTiA3HEjwTK8ymKeuJMc4q6dVz200XnYUtLR9GYjPXvFOVr6W1zUK1WbPToaWJJuKnxBLnd0ftDEbMmj4loHYyhZyMjM91zQS4p7z8eKa9h0JrbacekcirexG0z4n3xz0QOWSvFj3jLhWXUIU21iIAwJtI3RbWa90I7rzAIqI3UElUJG7tLtUXzw4KQNETvXzqWaujEMenYlNIzLGxgB3AuJEQ124} and choose $\eps$ such that $C\eps=1/2$ to get     \begin{equation}       \tilde\cc        \leq C               (\MM                 + \zxvczxbcvdfghasdfrtsdafasdfasdfdsfgsdgh u-P \zxvczxbcvdfghasdfrtsdafasdfasdfdsfgsdgh_{L^p(B_1)}              + \zxvczxbcvdfghasdfrtsdafasdfasdfdsfgsdgh P \zxvczxbcvdfghasdfrtsdafasdfasdfdsfgsdgh_{L^p(B_1)} 
             ),      \llabel{THC SEl UyR Mh 66XU 7hNz pZVC5V nV 7 VjL 7kv WKf 7P 5hj6 t1vu gkLGdN X8 b gOX HWm 6W4 YE mxFG 4WaN EbGKsv 0p 4 OG0 Nrd uTe Za xNXq V4Bp mOdXIq 9a b PeD PbU Z4N Xt ohbY egCf xBNttE wc D YSD 637 jJ2 ms 6Ta1 J2xZ PtKnPw AX A tJA Rc8 n5d 93 TZi7 q6Wo nEDLwW Sz e Sue YFX 8cM hm Y6is 15pX aOYBbV fS C haL kBR Ks6 UO qG4j DVab fbdtny fi D BFI 7uh B39 FJ 6mYr CUUT f2X38J 43 K yZg 87i gFR 5R z1t3 jH9x lOg1h7 P7 W w8w jMJ qH3 l5 J5wU 8eH0 OogRCv L7 f JJg 1ug RfM XI GSuE Efbh 3hdNY3 x1 9 7jR qeP cdu sb fkuJ hEpw MvNBZV zL u qxJ 9b1 BTf Yk RJLj Oo8ThswELzXU3X7Ebd1KdZ7v1rN3GiirRXGKWK099ovBM0FDJCvkopYNQ2aN94Z7k0UnUKamE3OjU8DFYFFokbSI2J9V9gVlM8ALWThDPnPu3EL7HPD2VDaZTggzcCCmbvc70qqPcC9mt60ogcrTiA3HEjwTK8ymKeuJMc4q6dVz200XnYUtLR9GYjPXvFOVr6W1zUK1WbPToaWJJuKnxBLnd0ftDEbMmj4loHYyhZyMjM91zQS4p7z8eKa9h0JrbacekcirexG0z4n3xz0QOWSvFj3jLhWXUIU21iIAwJtI3RbWa90I7rzAIqI3UElUJG7tLtUXzw4KQNETvXzqWaujEMenYlNIzLGxgB3AuJEQ125}     \end{equation}  i.e.,     \begin{equation}       \zxvczxbcvdfghasdfrtsdafasdfasdfdsfgsdgh u-P \zxvczxbcvdfghasdfrtsdafasdfasdfdsfgsdgh_{L^p(B_r)}       \leq C              (\MM                 + \zxvczxbcvdfghasdfrtsdafasdfasdfdsfgsdgh u-P \zxvczxbcvdfghasdfrtsdafasdfasdfdsfgsdgh_{L^p(B_1)}              + \zxvczxbcvdfghasdfrtsdafasdfasdfdsfgsdgh P \zxvczxbcvdfghasdfrtsdafasdfasdfdsfgsdgh_{L^p(B_1)}               )              r^{d+\alpha +n/p}              \comma r \leq 1              .     \llabel{wc D YSD 637 jJ2 ms 6Ta1 J2xZ PtKnPw AX A tJA Rc8 n5d 93 TZi7 q6Wo nEDLwW Sz e Sue YFX 8cM hm Y6is 15pX aOYBbV fS C haL kBR Ks6 UO qG4j DVab fbdtny fi D BFI 7uh B39 FJ 6mYr CUUT f2X38J 43 K yZg 87i gFR 5R z1t3 jH9x lOg1h7 P7 W w8w jMJ qH3 l5 J5wU 8eH0 OogRCv L7 f JJg 1ug RfM XI GSuE Efbh 3hdNY3 x1 9 7jR qeP cdu sb fkuJ hEpw MvNBZV zL u qxJ 9b1 BTf Yk RJLj Oo1a EPIXvZ Aj v Xne fhK GsJ Ga wqjt U7r6 MPoydE H2 6 203 mGi JhF nT NCDB YlnP oKO6Pu XU 3 uu9 mSg 41v ma kk0E WUpS UtGBtD e6 d Kdx ZNT FuT i1 fMcM hq7P Ovf0hg Hl 8 fqv I3R K39 fn 9M8ThswELzXU3X7Ebd1KdZ7v1rN3GiirRXGKWK099ovBM0FDJCvkopYNQ2aN94Z7k0UnUKamE3OjU8DFYFFokbSI2J9V9gVlM8ALWThDPnPu3EL7HPD2VDaZTggzcCCmbvc70qqPcC9mt60ogcrTiA3HEjwTK8ymKeuJMc4q6dVz200XnYUtLR9GYjPXvFOVr6W1zUK1WbPToaWJJuKnxBLnd0ftDEbMmj4loHYyhZyMjM91zQS4p7z8eKa9h0JrbacekcirexG0z4n3xz0QOWSvFj3jLhWXUIU21iIAwJtI3RbWa90I7rzAIqI3UElUJG7tLtUXzw4KQNETvXzqWaujEMenYlNIzLGxgB3AuJEQ126}     \end{equation} Then by Sobolev's embedding we have the estimate     \begin{equation}     \zxvczxbcvdfghasdfrtsdafasdfasdfdsfgsdgh u -P \zxvczxbcvdfghasdfrtsdafasdfasdfdsfgsdgh_{L^q(B_r)}      \les              (\MM                 + \zxvczxbcvdfghasdfrtsdafasdfasdfdsfgsdgh u-P \zxvczxbcvdfghasdfrtsdafasdfasdfdsfgsdgh_{L^p(B_1)}              + \zxvczxbcvdfghasdfrtsdafasdfasdfdsfgsdgh P \zxvczxbcvdfghasdfrtsdafasdfasdfdsfgsdgh_{L^p(B_1)}               )              r^{d+\alpha+n/q}      .     \label{8ThswELzXU3X7Ebd1KdZ7v1rN3GiirRXGKWK099ovBM0FDJCvkopYNQ2aN94Z7k0UnUKamE3OjU8DFYFFokbSI2J9V9gVlM8ALWThDPnPu3EL7HPD2VDaZTggzcCCmbvc70qqPcC9mt60ogcrTiA3HEjwTK8ymKeuJMc4q6dVz200XnYUtLR9GYjPXvFOVr6W1zUK1WbPToaWJJuKnxBLnd0ftDEbMmj4loHYyhZyMjM91zQS4p7z8eKa9h0JrbacekcirexG0z4n3xz0QOWSvFj3jLhWXUIU21iIAwJtI3RbWa90I7rzAIqI3UElUJG7tLtUXzw4KQNETvXzqWaujEMenYlNIzLGxgB3AuJEQ127}     \end{equation} We now bound $P$ as     \begin{align}     \begin{split}       \zxvczxbcvdfghasdfrtsdafasdfasdfdsfgsdgh P \zxvczxbcvdfghasdfrtsdafasdfasdfdsfgsdgh_{L^p(B_r)}       &\leq \zxvczxbcvdfghasdfrtsdafasdfasdfdsfgsdgh u \zxvczxbcvdfghasdfrtsdafasdfasdfdsfgsdgh_{L^p(B_r)} + \zxvczxbcvdfghasdfrtsdafasdfasdfdsfgsdgh u - P \zxvczxbcvdfghasdfrtsdafasdfasdfdsfgsdgh_{L^p(B_r)}       \\&       \les               (\MM                 + \zxvczxbcvdfghasdfrtsdafasdfasdfdsfgsdgh u \zxvczxbcvdfghasdfrtsdafasdfasdfdsfgsdgh_{L^p(B_1)}              + \zxvczxbcvdfghasdfrtsdafasdfasdfdsfgsdgh Q \zxvczxbcvdfghasdfrtsdafasdfasdfdsfgsdgh_{L^p(B_1)}               ) r^{d+n/p}              +                (\MM                 + \zxvczxbcvdfghasdfrtsdafasdfasdfdsfgsdgh u \zxvczxbcvdfghasdfrtsdafasdfasdfdsfgsdgh_{L^p(B_1)}               + \zxvczxbcvdfghasdfrtsdafasdfasdfdsfgsdgh P \zxvczxbcvdfghasdfrtsdafasdfasdfdsfgsdgh_{L^p(B_1)}               ) r^{d+\alpha+n/p}         \\&        \les (\MM                 + \zxvczxbcvdfghasdfrtsdafasdfasdfdsfgsdgh u \zxvczxbcvdfghasdfrtsdafasdfasdfdsfgsdgh_{L^p(B_1)}              + \zxvczxbcvdfghasdfrtsdafasdfasdfdsfgsdgh Q \zxvczxbcvdfghasdfrtsdafasdfasdfdsfgsdgh_{L^p(B_1)}               )r^{d+n/p}              + \zxvczxbcvdfghasdfrtsdafasdfasdfdsfgsdgh P \zxvczxbcvdfghasdfrtsdafasdfasdfdsfgsdgh_{L^p(B_1)} |x|^{d+\alpha+n/p}              .     \llabel{X38J 43 K yZg 87i gFR 5R z1t3 jH9x lOg1h7 P7 W w8w jMJ qH3 l5 J5wU 8eH0 OogRCv L7 f JJg 1ug RfM XI GSuE Efbh 3hdNY3 x1 9 7jR qeP cdu sb fkuJ hEpw MvNBZV zL u qxJ 9b1 BTf Yk RJLj Oo1a EPIXvZ Aj v Xne fhK GsJ Ga wqjt U7r6 MPoydE H2 6 203 mGi JhF nT NCDB YlnP oKO6Pu XU 3 uu9 mSg 41v ma kk0E WUpS UtGBtD e6 d Kdx ZNT FuT i1 fMcM hq7P Ovf0hg Hl 8 fqv I3R K39 fn 9MaC Zgow 6e1iXj KC 5 lHO lpG pkK Xd Dxtz 0HxE fSMjXY L8 F vh7 dmJ kE8 QA KDo1 FqML HOZ2iL 9i I m3L Kva YiN K9 sb48 NxwY NR0nx2 t5 b WCk x2a 31k a8 fUIa RGzr 7oigRX 5s m 9PQ 7Sr 5St 8ThswELzXU3X7Ebd1KdZ7v1rN3GiirRXGKWK099ovBM0FDJCvkopYNQ2aN94Z7k0UnUKamE3OjU8DFYFFokbSI2J9V9gVlM8ALWThDPnPu3EL7HPD2VDaZTggzcCCmbvc70qqPcC9mt60ogcrTiA3HEjwTK8ymKeuJMc4q6dVz200XnYUtLR9GYjPXvFOVr6W1zUK1WbPToaWJJuKnxBLnd0ftDEbMmj4loHYyhZyMjM91zQS4p7z8eKa9h0JrbacekcirexG0z4n3xz0QOWSvFj3jLhWXUIU21iIAwJtI3RbWa90I7rzAIqI3UElUJG7tLtUXzw4KQNETvXzqWaujEMenYlNIzLGxgB3AuJEQ128}     \end{split}     \end{align} By the homogeneity of $P$, we have     \begin{align}     \begin{split}       \zxvczxbcvdfghasdfrtsdafasdfasdfdsfgsdgh P \zxvczxbcvdfghasdfrtsdafasdfasdfdsfgsdgh_{L^p(B_r)}       &\les  (\MM                 + \zxvczxbcvdfghasdfrtsdafasdfasdfdsfgsdgh u \zxvczxbcvdfghasdfrtsdafasdfasdfdsfgsdgh_{L^p(B_1)}              + \zxvczxbcvdfghasdfrtsdafasdfasdfdsfgsdgh Q \zxvczxbcvdfghasdfrtsdafasdfasdfdsfgsdgh_{L^p(B_1)}               ) r^{d+n/p}              + \zxvczxbcvdfghasdfrtsdafasdfasdfdsfgsdgh P \zxvczxbcvdfghasdfrtsdafasdfasdfdsfgsdgh_{L^p(B_r)}               r^{\alpha}               .     \end{split}     \label{8ThswELzXU3X7Ebd1KdZ7v1rN3GiirRXGKWK099ovBM0FDJCvkopYNQ2aN94Z7k0UnUKamE3OjU8DFYFFokbSI2J9V9gVlM8ALWThDPnPu3EL7HPD2VDaZTggzcCCmbvc70qqPcC9mt60ogcrTiA3HEjwTK8ymKeuJMc4q6dVz200XnYUtLR9GYjPXvFOVr6W1zUK1WbPToaWJJuKnxBLnd0ftDEbMmj4loHYyhZyMjM91zQS4p7z8eKa9h0JrbacekcirexG0z4n3xz0QOWSvFj3jLhWXUIU21iIAwJtI3RbWa90I7rzAIqI3UElUJG7tLtUXzw4KQNETvXzqWaujEMenYlNIzLGxgB3AuJEQ129}     \end{align} We denote by $C'$ the implicit constant in \eqref{8ThswELzXU3X7Ebd1KdZ7v1rN3GiirRXGKWK099ovBM0FDJCvkopYNQ2aN94Z7k0UnUKamE3OjU8DFYFFokbSI2J9V9gVlM8ALWThDPnPu3EL7HPD2VDaZTggzcCCmbvc70qqPcC9mt60ogcrTiA3HEjwTK8ymKeuJMc4q6dVz200XnYUtLR9GYjPXvFOVr6W1zUK1WbPToaWJJuKnxBLnd0ftDEbMmj4loHYyhZyMjM91zQS4p7z8eKa9h0JrbacekcirexG0z4n3xz0QOWSvFj3jLhWXUIU21iIAwJtI3RbWa90I7rzAIqI3UElUJG7tLtUXzw4KQNETvXzqWaujEMenYlNIzLGxgB3AuJEQ129} and choose $r$ sufficiently small such that $C'r^{\alpha} \leq 1/2$ to get      \begin{equation}       \zxvczxbcvdfghasdfrtsdafasdfasdfdsfgsdgh P \zxvczxbcvdfghasdfrtsdafasdfasdfdsfgsdgh_{L^p(B_r)}       \leq C' (\MM                     + \zxvczxbcvdfghasdfrtsdafasdfasdfdsfgsdgh u \zxvczxbcvdfghasdfrtsdafasdfasdfdsfgsdgh_{L^p(B_1)}                  + \zxvczxbcvdfghasdfrtsdafasdfasdfdsfgsdgh Q \zxvczxbcvdfghasdfrtsdafasdfasdfdsfgsdgh_{L^p(B_1)}                   ) r^{d+n/p}              + \frac{1}{2} \zxvczxbcvdfghasdfrtsdafasdfasdfdsfgsdgh P \zxvczxbcvdfghasdfrtsdafasdfasdfdsfgsdgh_{L^p(B_r)}              .     \llabel{1a EPIXvZ Aj v Xne fhK GsJ Ga wqjt U7r6 MPoydE H2 6 203 mGi JhF nT NCDB YlnP oKO6Pu XU 3 uu9 mSg 41v ma kk0E WUpS UtGBtD e6 d Kdx ZNT FuT i1 fMcM hq7P Ovf0hg Hl 8 fqv I3R K39 fn 9MaC Zgow 6e1iXj KC 5 lHO lpG pkK Xd Dxtz 0HxE fSMjXY L8 F vh7 dmJ kE8 QA KDo1 FqML HOZ2iL 9i I m3L Kva YiN K9 sb48 NxwY NR0nx2 t5 b WCk x2a 31k a8 fUIa RGzr 7oigRX 5s m 9PQ 7Sr 5St ZE Ymp8 VIWS hdzgDI 9v R F5J 81x 33n Ne fjBT VvGP vGsxQh Al G Fbe 1bQ i6J ap OJJa ceGq 1vvb8r F2 F 3M6 8eD lzG tX tVm5 y14v mwIXa2 OG Y hxU sXJ 0qg l5 ZGAt HPZd oDWrSb BS u NKi 6KW8ThswELzXU3X7Ebd1KdZ7v1rN3GiirRXGKWK099ovBM0FDJCvkopYNQ2aN94Z7k0UnUKamE3OjU8DFYFFokbSI2J9V9gVlM8ALWThDPnPu3EL7HPD2VDaZTggzcCCmbvc70qqPcC9mt60ogcrTiA3HEjwTK8ymKeuJMc4q6dVz200XnYUtLR9GYjPXvFOVr6W1zUK1WbPToaWJJuKnxBLnd0ftDEbMmj4loHYyhZyMjM91zQS4p7z8eKa9h0JrbacekcirexG0z4n3xz0QOWSvFj3jLhWXUIU21iIAwJtI3RbWa90I7rzAIqI3UElUJG7tLtUXzw4KQNETvXzqWaujEMenYlNIzLGxgB3AuJEQ130}     \end{equation} Thus,     \begin{equation}      \zxvczxbcvdfghasdfrtsdafasdfasdfdsfgsdgh P \zxvczxbcvdfghasdfrtsdafasdfasdfdsfgsdgh_{L^p(B_r)}     \les (\MM               + \zxvczxbcvdfghasdfrtsdafasdfasdfdsfgsdgh u \zxvczxbcvdfghasdfrtsdafasdfasdfdsfgsdgh_{L^p(B_1)}            + \zxvczxbcvdfghasdfrtsdafasdfasdfdsfgsdgh Q \zxvczxbcvdfghasdfrtsdafasdfasdfdsfgsdgh_{L^p(B_1)}            ) r^{d+n/p}            ,     \label{8ThswELzXU3X7Ebd1KdZ7v1rN3GiirRXGKWK099ovBM0FDJCvkopYNQ2aN94Z7k0UnUKamE3OjU8DFYFFokbSI2J9V9gVlM8ALWThDPnPu3EL7HPD2VDaZTggzcCCmbvc70qqPcC9mt60ogcrTiA3HEjwTK8ymKeuJMc4q6dVz200XnYUtLR9GYjPXvFOVr6W1zUK1WbPToaWJJuKnxBLnd0ftDEbMmj4loHYyhZyMjM91zQS4p7z8eKa9h0JrbacekcirexG0z4n3xz0QOWSvFj3jLhWXUIU21iIAwJtI3RbWa90I7rzAIqI3UElUJG7tLtUXzw4KQNETvXzqWaujEMenYlNIzLGxgB3AuJEQ131}     \end{equation} which is~\eqref{8ThswELzXU3X7Ebd1KdZ7v1rN3GiirRXGKWK099ovBM0FDJCvkopYNQ2aN94Z7k0UnUKamE3OjU8DFYFFokbSI2J9V9gVlM8ALWThDPnPu3EL7HPD2VDaZTggzcCCmbvc70qqPcC9mt60ogcrTiA3HEjwTK8ymKeuJMc4q6dVz200XnYUtLR9GYjPXvFOVr6W1zUK1WbPToaWJJuKnxBLnd0ftDEbMmj4loHYyhZyMjM91zQS4p7z8eKa9h0JrbacekcirexG0z4n3xz0QOWSvFj3jLhWXUIU21iIAwJtI3RbWa90I7rzAIqI3UElUJG7tLtUXzw4KQNETvXzqWaujEMenYlNIzLGxgB3AuJEQ16}.  Moreover, by \eqref{8ThswELzXU3X7Ebd1KdZ7v1rN3GiirRXGKWK099ovBM0FDJCvkopYNQ2aN94Z7k0UnUKamE3OjU8DFYFFokbSI2J9V9gVlM8ALWThDPnPu3EL7HPD2VDaZTggzcCCmbvc70qqPcC9mt60ogcrTiA3HEjwTK8ymKeuJMc4q6dVz200XnYUtLR9GYjPXvFOVr6W1zUK1WbPToaWJJuKnxBLnd0ftDEbMmj4loHYyhZyMjM91zQS4p7z8eKa9h0JrbacekcirexG0z4n3xz0QOWSvFj3jLhWXUIU21iIAwJtI3RbWa90I7rzAIqI3UElUJG7tLtUXzw4KQNETvXzqWaujEMenYlNIzLGxgB3AuJEQ127} and \eqref{8ThswELzXU3X7Ebd1KdZ7v1rN3GiirRXGKWK099ovBM0FDJCvkopYNQ2aN94Z7k0UnUKamE3OjU8DFYFFokbSI2J9V9gVlM8ALWThDPnPu3EL7HPD2VDaZTggzcCCmbvc70qqPcC9mt60ogcrTiA3HEjwTK8ymKeuJMc4q6dVz200XnYUtLR9GYjPXvFOVr6W1zUK1WbPToaWJJuKnxBLnd0ftDEbMmj4loHYyhZyMjM91zQS4p7z8eKa9h0JrbacekcirexG0z4n3xz0QOWSvFj3jLhWXUIU21iIAwJtI3RbWa90I7rzAIqI3UElUJG7tLtUXzw4KQNETvXzqWaujEMenYlNIzLGxgB3AuJEQ131}, we obtain      \begin{align}     \begin{split}       \zxvczxbcvdfghasdfrtsdafasdfasdfdsfgsdgh u - P \zxvczxbcvdfghasdfrtsdafasdfasdfdsfgsdgh_{L^q(B_r)}       &\les (M                 + \zxvczxbcvdfghasdfrtsdafasdfasdfdsfgsdgh u-P \zxvczxbcvdfghasdfrtsdafasdfasdfdsfgsdgh_{L^p(B_1)}                + \zxvczxbcvdfghasdfrtsdafasdfasdfdsfgsdgh u \zxvczxbcvdfghasdfrtsdafasdfasdfdsfgsdgh_{L^p(B_1)}                + \zxvczxbcvdfghasdfrtsdafasdfasdfdsfgsdgh Q \zxvczxbcvdfghasdfrtsdafasdfasdfdsfgsdgh_{L^p(B_1)}               ) r^{d+n/q}        \\&        \les  (M                 + \zxvczxbcvdfghasdfrtsdafasdfasdfdsfgsdgh P \zxvczxbcvdfghasdfrtsdafasdfasdfdsfgsdgh_{L^p(B_1)}                + \zxvczxbcvdfghasdfrtsdafasdfasdfdsfgsdgh u \zxvczxbcvdfghasdfrtsdafasdfasdfdsfgsdgh_{L^p(B_1)}                + \zxvczxbcvdfghasdfrtsdafasdfasdfdsfgsdgh Q \zxvczxbcvdfghasdfrtsdafasdfasdfdsfgsdgh_{L^p(B_1)}               ) r^{d+n/q}        \\&        \les (M                 + \zxvczxbcvdfghasdfrtsdafasdfasdfdsfgsdgh u \zxvczxbcvdfghasdfrtsdafasdfasdfdsfgsdgh_{L^p(B_1)}                + \zxvczxbcvdfghasdfrtsdafasdfasdfdsfgsdgh Q \zxvczxbcvdfghasdfrtsdafasdfasdfdsfgsdgh_{L^p(B_1)}               ) r^{d+n/q}               ,     \end{split}    \llabel{aC Zgow 6e1iXj KC 5 lHO lpG pkK Xd Dxtz 0HxE fSMjXY L8 F vh7 dmJ kE8 QA KDo1 FqML HOZ2iL 9i I m3L Kva YiN K9 sb48 NxwY NR0nx2 t5 b WCk x2a 31k a8 fUIa RGzr 7oigRX 5s m 9PQ 7Sr 5St ZE Ymp8 VIWS hdzgDI 9v R F5J 81x 33n Ne fjBT VvGP vGsxQh Al G Fbe 1bQ i6J ap OJJa ceGq 1vvb8r F2 F 3M6 8eD lzG tX tVm5 y14v mwIXa2 OG Y hxU sXJ 0qg l5 ZGAt HPZd oDWrSb BS u NKi 6KW gr3 9s 9tc7 WM4A ws1PzI 5c C O7Z 8y9 lMT LA dwhz Mxz9 hjlWHj bJ 5 CqM jht y9l Mn 4rc7 6Amk KJimvH 9r O tbc tCK rsi B0 4cFV Dl1g cvfWh6 5n x y9Z S4W Pyo QB yr3v fBkj TZKtEZ 7r U fd8ThswELzXU3X7Ebd1KdZ7v1rN3GiirRXGKWK099ovBM0FDJCvkopYNQ2aN94Z7k0UnUKamE3OjU8DFYFFokbSI2J9V9gVlM8ALWThDPnPu3EL7HPD2VDaZTggzcCCmbvc70qqPcC9mt60ogcrTiA3HEjwTK8ymKeuJMc4q6dVz200XnYUtLR9GYjPXvFOVr6W1zUK1WbPToaWJJuKnxBLnd0ftDEbMmj4loHYyhZyMjM91zQS4p7z8eKa9h0JrbacekcirexG0z4n3xz0QOWSvFj3jLhWXUIU21iIAwJtI3RbWa90I7rzAIqI3UElUJG7tLtUXzw4KQNETvXzqWaujEMenYlNIzLGxgB3AuJEQ132}     \end{align} where we used \eqref{8ThswELzXU3X7Ebd1KdZ7v1rN3GiirRXGKWK099ovBM0FDJCvkopYNQ2aN94Z7k0UnUKamE3OjU8DFYFFokbSI2J9V9gVlM8ALWThDPnPu3EL7HPD2VDaZTggzcCCmbvc70qqPcC9mt60ogcrTiA3HEjwTK8ymKeuJMc4q6dVz200XnYUtLR9GYjPXvFOVr6W1zUK1WbPToaWJJuKnxBLnd0ftDEbMmj4loHYyhZyMjM91zQS4p7z8eKa9h0JrbacekcirexG0z4n3xz0QOWSvFj3jLhWXUIU21iIAwJtI3RbWa90I7rzAIqI3UElUJG7tLtUXzw4KQNETvXzqWaujEMenYlNIzLGxgB3AuJEQ131} in the last inequality. Therefore, \eqref{8ThswELzXU3X7Ebd1KdZ7v1rN3GiirRXGKWK099ovBM0FDJCvkopYNQ2aN94Z7k0UnUKamE3OjU8DFYFFokbSI2J9V9gVlM8ALWThDPnPu3EL7HPD2VDaZTggzcCCmbvc70qqPcC9mt60ogcrTiA3HEjwTK8ymKeuJMc4q6dVz200XnYUtLR9GYjPXvFOVr6W1zUK1WbPToaWJJuKnxBLnd0ftDEbMmj4loHYyhZyMjM91zQS4p7z8eKa9h0JrbacekcirexG0z4n3xz0QOWSvFj3jLhWXUIU21iIAwJtI3RbWa90I7rzAIqI3UElUJG7tLtUXzw4KQNETvXzqWaujEMenYlNIzLGxgB3AuJEQ16} and \eqref{8ThswELzXU3X7Ebd1KdZ7v1rN3GiirRXGKWK099ovBM0FDJCvkopYNQ2aN94Z7k0UnUKamE3OjU8DFYFFokbSI2J9V9gVlM8ALWThDPnPu3EL7HPD2VDaZTggzcCCmbvc70qqPcC9mt60ogcrTiA3HEjwTK8ymKeuJMc4q6dVz200XnYUtLR9GYjPXvFOVr6W1zUK1WbPToaWJJuKnxBLnd0ftDEbMmj4loHYyhZyMjM91zQS4p7z8eKa9h0JrbacekcirexG0z4n3xz0QOWSvFj3jLhWXUIU21iIAwJtI3RbWa90I7rzAIqI3UElUJG7tLtUXzw4KQNETvXzqWaujEMenYlNIzLGxgB3AuJEQ17} follow. \end{proof} \par The next lemma is a consequence of Theorem~\ref{T01}.  \par \cole \begin{Lemma} \label{L05} Under the same conditions as in Theorem~\ref{T01}, assume that $f \in C^{d-m+1, \alpha}_{L^p}(0)$ and $a_{\nu} \in C^{1-m+|\nu|, \alpha}$ for~$|\nu| = 0,1$. Then there exists $P_d \in \dPP_d$ such that   \begin{equation}   \sup_{r\leq1} \frac{\zxvczxbcvdfghasdfrtsdafasdfasdfdsfgsdgh u - P_d \zxvczxbcvdfghasdfrtsdafasdfasdfdsfgsdgh_{L^p(B_r)}}{r^{d+\alpha+n/p}}     < \infty    .    \label{8ThswELzXU3X7Ebd1KdZ7v1rN3GiirRXGKWK099ovBM0FDJCvkopYNQ2aN94Z7k0UnUKamE3OjU8DFYFFokbSI2J9V9gVlM8ALWThDPnPu3EL7HPD2VDaZTggzcCCmbvc70qqPcC9mt60ogcrTiA3HEjwTK8ymKeuJMc4q6dVz200XnYUtLR9GYjPXvFOVr6W1zUK1WbPToaWJJuKnxBLnd0ftDEbMmj4loHYyhZyMjM91zQS4p7z8eKa9h0JrbacekcirexG0z4n3xz0QOWSvFj3jLhWXUIU21iIAwJtI3RbWa90I7rzAIqI3UElUJG7tLtUXzw4KQNETvXzqWaujEMenYlNIzLGxgB3AuJEQ133}   \end{equation} Moreover, $u \in C^{d+1, \alpha}_{L^q}(0)$ for any $q \in [ 1, np/(n-mp))$, and    \begin{equation}       \lbrack u \rbrack_{C^{d+1,\alpha}_{L^q}(0)}       \les \zxvczxbcvdfghasdfrtsdafasdfasdfdsfgsdgh u \zxvczxbcvdfghasdfrtsdafasdfasdfdsfgsdgh_{L^p(B_1)}             + \lbrack f \rbrack_{C^{d-m}_{L^p}(0)}             + \lbrack f \rbrack_{C^{d-m+1,\alpha}_{L^p}(0)}   ,   \label{8ThswELzXU3X7Ebd1KdZ7v1rN3GiirRXGKWK099ovBM0FDJCvkopYNQ2aN94Z7k0UnUKamE3OjU8DFYFFokbSI2J9V9gVlM8ALWThDPnPu3EL7HPD2VDaZTggzcCCmbvc70qqPcC9mt60ogcrTiA3HEjwTK8ymKeuJMc4q6dVz200XnYUtLR9GYjPXvFOVr6W1zUK1WbPToaWJJuKnxBLnd0ftDEbMmj4loHYyhZyMjM91zQS4p7z8eKa9h0JrbacekcirexG0z4n3xz0QOWSvFj3jLhWXUIU21iIAwJtI3RbWa90I7rzAIqI3UElUJG7tLtUXzw4KQNETvXzqWaujEMenYlNIzLGxgB3AuJEQ134}   \end{equation} where the implicit constant in \eqref{8ThswELzXU3X7Ebd1KdZ7v1rN3GiirRXGKWK099ovBM0FDJCvkopYNQ2aN94Z7k0UnUKamE3OjU8DFYFFokbSI2J9V9gVlM8ALWThDPnPu3EL7HPD2VDaZTggzcCCmbvc70qqPcC9mt60ogcrTiA3HEjwTK8ymKeuJMc4q6dVz200XnYUtLR9GYjPXvFOVr6W1zUK1WbPToaWJJuKnxBLnd0ftDEbMmj4loHYyhZyMjM91zQS4p7z8eKa9h0JrbacekcirexG0z4n3xz0QOWSvFj3jLhWXUIU21iIAwJtI3RbWa90I7rzAIqI3UElUJG7tLtUXzw4KQNETvXzqWaujEMenYlNIzLGxgB3AuJEQ134} depends on $p$, $q$, $d$, and~$a_{\nu}$. \end{Lemma}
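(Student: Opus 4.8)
The strategy is to apply Theorem~\ref{T01} twice: first to $u$ at degree $d$, which produces the homogeneous polynomial $P_d$ of the first assertion, and then to the remainder $u-P_d$ at degree $d+1$. Since the hypotheses of Theorem~\ref{T01} are assumed, that theorem produces $P_d\in\dPP_d$ with $L(0)P_d=Q$, the bound $\|P_d\|_{L^p(B_r)}\les(\MM+c_0+\|Q\|_{L^p(B_1)})r^{d+n/p}$, and, choosing the admissible exponent $q=p$ in the error estimate of the theorem, $\|u-P_d\|_{L^p(B_r)}\les(\MM+c_0+\|Q\|_{L^p(B_1)})r^{d+\alpha+n/p}$ for $r\in(0,1/2]$; for $r\in(1/2,1]$ the same bound follows from $\|u-P_d\|_{L^p(B_r)}\le\|u\|_{L^p(B_1)}+\|P_d\|_{L^p(B_1)}\les\MM+c_0+\|Q\|_{L^p(B_1)}$, using the conclusion $\|u\|_{L^p(B_1)}\les\MM+c_0+\|Q\|_{L^p(B_1)}$ of Theorem~\ref{T01}. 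This is the first claimed estimate; in particular $u-P_d\in W^{m,p}(B_1)$ and $c_0':=\sup_{r\le1}\|u-P_d\|_{L^p(B_r)}/r^{d+\alpha+n/p}<\infty$.

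Next I would analyze the equation for the remainder. Since $L(0)P_d=Q$, it solves $L(u-P_d)=f-LP_d=(f-Q)+(L(0)-L)P_d$, and the point is that this source decomposes as $Q'+E$ with $Q'\in\dPP_{(d+1)-m}$ homogeneous and $\|E\|_{L^p(B_r)}\les M'\,r^{(d+1)-m+\alpha+n/p}$ on $(0,1]$. For $f-Q$: from $f\in C^{d-m+1,\alpha}_{L^p}(0)$ there is $\hat Q\in\PP_{d-m+1}$ with $\|f-\hat Q\|_{L^p(B_r)}\les[f]_{C^{d-m+1,\alpha}_{L^p}(0)}r^{d-m+1+\alpha+n/p}$; comparing with $\|f-Q\|_{L^p(B_r)}\le\MM r^{d-m+\alpha+n/p}$, the polynomial $\hat Q-Q$ of degree $\le d-m+1$ is $O(r^{d-m+\alpha+n/p})$ in $L^p(B_r)$ as $r\to0$, so the vanishing-order argument used in the proof of Lemma~\ref{L04} forces all its homogeneous components of degree $\le d-m$ to vanish; thus $\hat Q=Q+Q_1$ with $Q_1\in\dPP_{d-m+1}$ and $\|(f-Q)-Q_1\|_{L^p(B_r)}=\|f-\hat Q\|_{L^p(B_r)}$ is of the required order. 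For $(L(0)-L)P_d=\sum_{|\nu|=m}(a_\nu(0)-a_\nu)\partial^\nu P_d-\sum_{|\nu|<m}a_\nu\partial^\nu P_d$, I would use that $P_d$ is homogeneous of degree $d$, so $\partial^\nu P_d\in\dPP_{d-|\nu|}$: for $|\nu|\le m-2$ the crude bound $\|a_\nu\partial^\nu P_d\|_{L^p(B_r)}\les\|P_d\|_{L^p(B_1)}r^{d-|\nu|+n/p}\le\|P_d\|_{L^p(B_1)}r^{(d+1)-m+\alpha+n/p}$ (valid since $d-|\nu|\ge d-m+2>d-m+1+\alpha$) puts the term into $E$, while for the top two orders $|\nu|\in\{m-1,m\}$ the coefficient hypotheses provide, in the $L^p$ sense, $a_\nu=a_\nu(0)+O(|x|^{\alpha})$ and $a_\nu=a_\nu(0)+(\text{affine part})+O(|x|^{1+\alpha})$, respectively, and multiplying these expansions by the homogeneous polynomial $\partial^\nu P_d$ gives a homogeneous polynomial of degree $d-m+1$ plus an $L^p$-error of order $(d+1)-m+\alpha$. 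Collecting the degree-$(d-m+1)$ homogeneous pieces into $Q'$ and everything else into $E$ gives the splitting, with $M'$ and $\|Q'\|_{L^p(B_1)}$ bounded by a constant multiple of $[f]_{C^{d-m+1,\alpha}_{L^p}(0)}+\|P_d\|_{L^p(B_1)}$.

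By Step~1, $u-P_d\in W^{m,p}(B_1)$ satisfies $\sup_{r\le1}\|u-P_d\|_{L^p(B_r)}/r^{(d+1)-1+\alpha+n/p}=c_0'<\infty$, so the hypotheses of Theorem~\ref{T01} hold for $u-P_d$ with integer $d+1\ge m$, source split as above with $Q'\in\dPP_{(d+1)-m}$, and $\eta=\alpha\in(0,1]$. Theorem~\ref{T01} then produces $P_{d+1}\in\dPP_{d+1}$ with $L(0)P_{d+1}=Q'$ and, for every $q\in[1,np/(n-mp))$, $\|(u-P_d)-P_{d+1}\|_{L^q(B_r)}\les(M'+c_0'+\|Q'\|_{L^p(B_1)})r^{d+1+\alpha+n/q}$ on $(0,1/2]$, hence on $(0,1]$ as in Step~1. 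Setting $P:=P_d+P_{d+1}\in\PP_{d+1}$, we have $u-P=(u-P_d)-P_{d+1}$, so $\|u-P\|_{L^q(B_r)}\les(M'+c_0'+\|Q'\|_{L^p(B_1)})r^{d+1+\alpha+n/q}$, which by Definition~\ref{D01} gives $u\in C^{d+1,\alpha}_{L^q}(0)$ together with $[u]_{C^{d+1,\alpha}_{L^q}(0)}\les M'+c_0'+\|Q'\|_{L^p(B_1)}$. It remains to track constants: by Step~1, $c_0'$ and $\|P_d\|_{L^p(B_1)}$ are $\les\MM+c_0+\|Q\|_{L^p(B_1)}$, while $\MM$, $\|Q\|_{L^p(B_1)}$ and $\|Q_1\|_{L^p(B_1)}$ are controlled by $[f]_{C^{d-m}_{L^p}(0)}+[f]_{C^{d-m+1,\alpha}_{L^p}(0)}$ by the uniqueness of the $L^p$-polynomial expansions of $f$, and the conclusion $\|u\|_{L^p(B_r)}\les(\MM+c_0+\|Q\|_{L^p(B_1)})r^{d+n/p}$ of Theorem~\ref{T01} absorbs $c_0$ into $\|u\|_{L^p(B_1)}$; combining these yields the bound claimed in the lemma, with implicit constant depending only on $p$, $q$, $d$, $\alpha$, $K$ and the $L^p$-H\"older data of the $a_\nu$.

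The main obstacle is Step~2: organizing $(f-Q)+(L(0)-L)P_d$ into its true homogeneous degree-$(d-m+1)$ part $Q'$ and a genuine higher-order remainder $E$. This requires, on the one hand, the sharp uniqueness fact that an $L^p$-small polynomial has the expected vanishing homogeneous components — in order to isolate $Q_1$ from the expansion of $f$ — and, on the other, a careful multi-index count of which pieces of $(L(0)-L)P_d$ contribute at order exactly $d-m+1$; it is precisely here that the first-order $L^p$-Taylor regularity of the top-order coefficients and the $L^p$-H\"older continuity of the $(m-1)$-st order coefficients enter, and the homogeneity of $P_d$ is essential so that all lower-order multi-indices automatically yield higher-order terms.
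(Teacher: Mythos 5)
Your proof follows the same two-stage strategy as the paper's: obtain $P_d$ from Theorem~\ref{T01}, rewrite the equation for $u-P_d$, decompose the new source $(f-Q)+(L(0)-L)P_d$ into a homogeneous degree-$(d-m+1)$ polynomial plus an error of order $r^{d-m+1+\alpha+n/p}$ using the $L^{p}$-Taylor expansions of $f$ and of the coefficients $a_\nu$ with $|\nu|\in\{m-1,m\}$, and then apply Theorem~\ref{T01} once more at degree $d+1$. You make explicit two points the paper elides --- the vanishing-order argument identifying $\hat Q-Q$ as homogeneous of degree exactly $d-m+1$, and the crude absorption of the $|\nu|\le m-2$ terms into the error via $\partial^\nu P_d\in\dot{\mathcal P}_{d-|\nu|}$ with $d-|\nu|\ge d-m+2$ --- but the substance of the argument is the same.
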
 \colb \par \begin{proof}[Proof of Lemma~\ref{L05}] Let $P_d$ and $Q_{d-m}$ be $P$ and $Q$ in the proof of Theorem~\ref{T01} (the subindices are added to emphasize the orders of $P$ and $Q$). Then \eqref{8ThswELzXU3X7Ebd1KdZ7v1rN3GiirRXGKWK099ovBM0FDJCvkopYNQ2aN94Z7k0UnUKamE3OjU8DFYFFokbSI2J9V9gVlM8ALWThDPnPu3EL7HPD2VDaZTggzcCCmbvc70qqPcC9mt60ogcrTiA3HEjwTK8ymKeuJMc4q6dVz200XnYUtLR9GYjPXvFOVr6W1zUK1WbPToaWJJuKnxBLnd0ftDEbMmj4loHYyhZyMjM91zQS4p7z8eKa9h0JrbacekcirexG0z4n3xz0QOWSvFj3jLhWXUIU21iIAwJtI3RbWa90I7rzAIqI3UElUJG7tLtUXzw4KQNETvXzqWaujEMenYlNIzLGxgB3AuJEQ133} follows immediately from~\eqref{8ThswELzXU3X7Ebd1KdZ7v1rN3GiirRXGKWK099ovBM0FDJCvkopYNQ2aN94Z7k0UnUKamE3OjU8DFYFFokbSI2J9V9gVlM8ALWThDPnPu3EL7HPD2VDaZTggzcCCmbvc70qqPcC9mt60ogcrTiA3HEjwTK8ymKeuJMc4q6dVz200XnYUtLR9GYjPXvFOVr6W1zUK1WbPToaWJJuKnxBLnd0ftDEbMmj4loHYyhZyMjM91zQS4p7z8eKa9h0JrbacekcirexG0z4n3xz0QOWSvFj3jLhWXUIU21iIAwJtI3RbWa90I7rzAIqI3UElUJG7tLtUXzw4KQNETvXzqWaujEMenYlNIzLGxgB3AuJEQ117}. Recalling that $Q_{d-m} = \sum_{|\nu|=m} a_{\nu}(0) \partial^{\nu}P_d$, we rewrite $Lu = f$ as   \begin{equation}    L(u-P_d)    =  (f - Q_{d-m})    +\sum_{|\nu|=m} (a_{\nu}(0) - a_{\nu} ) \partial^{\nu}P_d    - \sum_{|\nu|<m} a_{\nu} \partial^{\nu} P_d    .   \label{8ThswELzXU3X7Ebd1KdZ7v1rN3GiirRXGKWK099ovBM0FDJCvkopYNQ2aN94Z7k0UnUKamE3OjU8DFYFFokbSI2J9V9gVlM8ALWThDPnPu3EL7HPD2VDaZTggzcCCmbvc70qqPcC9mt60ogcrTiA3HEjwTK8ymKeuJMc4q6dVz200XnYUtLR9GYjPXvFOVr6W1zUK1WbPToaWJJuKnxBLnd0ftDEbMmj4loHYyhZyMjM91zQS4p7z8eKa9h0JrbacekcirexG0z4n3xz0QOWSvFj3jLhWXUIU21iIAwJtI3RbWa90I7rzAIqI3UElUJG7tLtUXzw4KQNETvXzqWaujEMenYlNIzLGxgB3AuJEQ135}   \end{equation} Denoting the right hand side of \eqref{8ThswELzXU3X7Ebd1KdZ7v1rN3GiirRXGKWK099ovBM0FDJCvkopYNQ2aN94Z7k0UnUKamE3OjU8DFYFFokbSI2J9V9gVlM8ALWThDPnPu3EL7HPD2VDaZTggzcCCmbvc70qqPcC9mt60ogcrTiA3HEjwTK8ymKeuJMc4q6dVz200XnYUtLR9GYjPXvFOVr6W1zUK1WbPToaWJJuKnxBLnd0ftDEbMmj4loHYyhZyMjM91zQS4p7z8eKa9h0JrbacekcirexG0z4n3xz0QOWSvFj3jLhWXUIU21iIAwJtI3RbWa90I7rzAIqI3UElUJG7tLtUXzw4KQNETvXzqWaujEMenYlNIzLGxgB3AuJEQ135} by $\FF$, our goal is to show    \begin{equation}    \zxvczxbcvdfghasdfrtsdafasdfasdfdsfgsdgh \FF - \tilde Q \zxvczxbcvdfghasdfrtsdafasdfasdfdsfgsdgh_{L^p(B_r)}     \les r^{d-m+1+\alpha + n/p}    ,   \llabel{ZE Ymp8 VIWS hdzgDI 9v R F5J 81x 33n Ne fjBT VvGP vGsxQh Al G Fbe 1bQ i6J ap OJJa ceGq 1vvb8r F2 F 3M6 8eD lzG tX tVm5 y14v mwIXa2 OG Y hxU sXJ 0qg l5 ZGAt HPZd oDWrSb BS u NKi 6KW gr3 9s 9tc7 WM4A ws1PzI 5c C O7Z 8y9 lMT LA dwhz Mxz9 hjlWHj bJ 5 CqM jht y9l Mn 4rc7 6Amk KJimvH 9r O tbc tCK rsi B0 4cFV Dl1g cvfWh6 5n x y9Z S4W Pyo QB yr3v fBkj TZKtEZ 7r U fdM icd yCV qn D036 HJWM tYfL9f yX x O7m IcF E1O uL QsAQ NfWv 6kV8Im 7Q 6 GsX NCV 0YP oC jnWn 6L25 qUMTe7 1v a hnH DAo XAb Tc zhPc fjrj W5M5G0 nz N M5T nlJ WOP Lh M6U2 ZFxw pg4Nej P88ThswELzXU3X7Ebd1KdZ7v1rN3GiirRXGKWK099ovBM0FDJCvkopYNQ2aN94Z7k0UnUKamE3OjU8DFYFFokbSI2J9V9gVlM8ALWThDPnPu3EL7HPD2VDaZTggzcCCmbvc70qqPcC9mt60ogcrTiA3HEjwTK8ymKeuJMc4q6dVz200XnYUtLR9GYjPXvFOVr6W1zUK1WbPToaWJJuKnxBLnd0ftDEbMmj4loHYyhZyMjM91zQS4p7z8eKa9h0JrbacekcirexG0z4n3xz0QOWSvFj3jLhWXUIU21iIAwJtI3RbWa90I7rzAIqI3UElUJG7tLtUXzw4KQNETvXzqWaujEMenYlNIzLGxgB3AuJEQ136}   \end{equation} for some $\tilde Q \in \dPP_{d-m+1}$. Since $f \in C_{L^p}^{d-m+1,\alpha}$, there exists $Q_{d-m+1} \in \dPP_{d+1-m}$ such that   \begin{equation}    \zxvczxbcvdfghasdfrtsdafasdfasdfdsfgsdgh f - Q_{d-m} - Q_{d-m+1} \zxvczxbcvdfghasdfrtsdafasdfasdfdsfgsdgh_{L^p(B_r)}     \leq \lbrack f \rbrack_{C^{d-m+1,\alpha}}(0)             r^{d-m+1+\alpha+n/p}             .    \label{8ThswELzXU3X7Ebd1KdZ7v1rN3GiirRXGKWK099ovBM0FDJCvkopYNQ2aN94Z7k0UnUKamE3OjU8DFYFFokbSI2J9V9gVlM8ALWThDPnPu3EL7HPD2VDaZTggzcCCmbvc70qqPcC9mt60ogcrTiA3HEjwTK8ymKeuJMc4q6dVz200XnYUtLR9GYjPXvFOVr6W1zUK1WbPToaWJJuKnxBLnd0ftDEbMmj4loHYyhZyMjM91zQS4p7z8eKa9h0JrbacekcirexG0z4n3xz0QOWSvFj3jLhWXUIU21iIAwJtI3RbWa90I7rzAIqI3UElUJG7tLtUXzw4KQNETvXzqWaujEMenYlNIzLGxgB3AuJEQ137}   \end{equation}
Therefore,    \begin{equation}    \FF     = (f - Q_{d-m} - Q_{d-m+1})    + Q_{d-m+1}     +\sum_{|\nu|=m} (a_{\nu}(0) - a_{\nu} ) \partial^{\nu}P_d    - \sum_{|\nu|<m} a_{\nu} \partial^{\nu} P_d    .    \llabel{ gr3 9s 9tc7 WM4A ws1PzI 5c C O7Z 8y9 lMT LA dwhz Mxz9 hjlWHj bJ 5 CqM jht y9l Mn 4rc7 6Amk KJimvH 9r O tbc tCK rsi B0 4cFV Dl1g cvfWh6 5n x y9Z S4W Pyo QB yr3v fBkj TZKtEZ 7r U fdM icd yCV qn D036 HJWM tYfL9f yX x O7m IcF E1O uL QsAQ NfWv 6kV8Im 7Q 6 GsX NCV 0YP oC jnWn 6L25 qUMTe7 1v a hnH DAo XAb Tc zhPc fjrj W5M5G0 nz N M5T nlJ WOP Lh M6U2 ZFxw pg4Nej P8 U Q09 JX9 n7S kE WixE Rwgy Fvttzp 4A s v5F Tnn MzL Vh FUn5 6tFY CxZ1Bz Q3 E TfD lCa d7V fo MwPm ngrD HPfZV0 aY k Ojr ZUw 799 et oYuB MIC4 ovEY8D OL N URV Q5l ti1 iS NZAd wWr6 Q8oP8ThswELzXU3X7Ebd1KdZ7v1rN3GiirRXGKWK099ovBM0FDJCvkopYNQ2aN94Z7k0UnUKamE3OjU8DFYFFokbSI2J9V9gVlM8ALWThDPnPu3EL7HPD2VDaZTggzcCCmbvc70qqPcC9mt60ogcrTiA3HEjwTK8ymKeuJMc4q6dVz200XnYUtLR9GYjPXvFOVr6W1zUK1WbPToaWJJuKnxBLnd0ftDEbMmj4loHYyhZyMjM91zQS4p7z8eKa9h0JrbacekcirexG0z4n3xz0QOWSvFj3jLhWXUIU21iIAwJtI3RbWa90I7rzAIqI3UElUJG7tLtUXzw4KQNETvXzqWaujEMenYlNIzLGxgB3AuJEQ138}   \end{equation} Let $Q = \sum_{|\nu|=m} (a_{\nu}(0) - a_{\nu} ) \partial^{\nu}P_d - \sum_{|\nu|<m} a_{\nu} \partial^{\nu} P_d $. The assumptions on coefficients $a_{\nu}$ allow us to approximate $Q$ by a homogeneous polynomial of order~$d-m+1$. Indeed, we have   \begin{equation}       \zxvczxbcvdfghasdfrtsdafasdfasdfdsfgsdgh a_{\nu} - a_{\nu}(0) - p_{\nu}^{(1)} \zxvczxbcvdfghasdfrtsdafasdfasdfdsfgsdgh_{L^p(B_r)}       \leq \lbrack a_{\nu} \rbrack_{C^{1,\alpha}_{L^p}} (0)              r^{1+\alpha+n/p}       \comma        |\nu| = m       ,     \label{8ThswELzXU3X7Ebd1KdZ7v1rN3GiirRXGKWK099ovBM0FDJCvkopYNQ2aN94Z7k0UnUKamE3OjU8DFYFFokbSI2J9V9gVlM8ALWThDPnPu3EL7HPD2VDaZTggzcCCmbvc70qqPcC9mt60ogcrTiA3HEjwTK8ymKeuJMc4q6dVz200XnYUtLR9GYjPXvFOVr6W1zUK1WbPToaWJJuKnxBLnd0ftDEbMmj4loHYyhZyMjM91zQS4p7z8eKa9h0JrbacekcirexG0z4n3xz0QOWSvFj3jLhWXUIU21iIAwJtI3RbWa90I7rzAIqI3UElUJG7tLtUXzw4KQNETvXzqWaujEMenYlNIzLGxgB3AuJEQ139}   \end{equation} and      \begin{equation}       \zxvczxbcvdfghasdfrtsdafasdfasdfdsfgsdgh a_{\nu} - p_{\nu}^{(0)} \zxvczxbcvdfghasdfrtsdafasdfasdfdsfgsdgh_{L^p(B_r)}       \leq \lbrack a_{\nu} \rbrack_{C^{0,\alpha}_{L^p}}(0)       \comma        |\nu|=m-1       ,     \label{8ThswELzXU3X7Ebd1KdZ7v1rN3GiirRXGKWK099ovBM0FDJCvkopYNQ2aN94Z7k0UnUKamE3OjU8DFYFFokbSI2J9V9gVlM8ALWThDPnPu3EL7HPD2VDaZTggzcCCmbvc70qqPcC9mt60ogcrTiA3HEjwTK8ymKeuJMc4q6dVz200XnYUtLR9GYjPXvFOVr6W1zUK1WbPToaWJJuKnxBLnd0ftDEbMmj4loHYyhZyMjM91zQS4p7z8eKa9h0JrbacekcirexG0z4n3xz0QOWSvFj3jLhWXUIU21iIAwJtI3RbWa90I7rzAIqI3UElUJG7tLtUXzw4KQNETvXzqWaujEMenYlNIzLGxgB3AuJEQ140}     \end{equation} for some $p_{\nu}^{(1)} \in \dPP_1$ and $p_{\nu}^{(0)} \in \dPP_0$. Then $     \tilde P     =         - \sum_{|\nu|=m} p_{\nu}^{(1)} \partial^{\nu} P_d        - \sum_{|\nu|=m-1} p_{\nu}^{(0)} \partial^{\nu} P_d $ is a homogeneous polynomial of order $d-m+1$, so is $\tilde P + Q_{d-m+1}$. Combining \eqref{8ThswELzXU3X7Ebd1KdZ7v1rN3GiirRXGKWK099ovBM0FDJCvkopYNQ2aN94Z7k0UnUKamE3OjU8DFYFFokbSI2J9V9gVlM8ALWThDPnPu3EL7HPD2VDaZTggzcCCmbvc70qqPcC9mt60ogcrTiA3HEjwTK8ymKeuJMc4q6dVz200XnYUtLR9GYjPXvFOVr6W1zUK1WbPToaWJJuKnxBLnd0ftDEbMmj4loHYyhZyMjM91zQS4p7z8eKa9h0JrbacekcirexG0z4n3xz0QOWSvFj3jLhWXUIU21iIAwJtI3RbWa90I7rzAIqI3UElUJG7tLtUXzw4KQNETvXzqWaujEMenYlNIzLGxgB3AuJEQ137},  \eqref{8ThswELzXU3X7Ebd1KdZ7v1rN3GiirRXGKWK099ovBM0FDJCvkopYNQ2aN94Z7k0UnUKamE3OjU8DFYFFokbSI2J9V9gVlM8ALWThDPnPu3EL7HPD2VDaZTggzcCCmbvc70qqPcC9mt60ogcrTiA3HEjwTK8ymKeuJMc4q6dVz200XnYUtLR9GYjPXvFOVr6W1zUK1WbPToaWJJuKnxBLnd0ftDEbMmj4loHYyhZyMjM91zQS4p7z8eKa9h0JrbacekcirexG0z4n3xz0QOWSvFj3jLhWXUIU21iIAwJtI3RbWa90I7rzAIqI3UElUJG7tLtUXzw4KQNETvXzqWaujEMenYlNIzLGxgB3AuJEQ139}, and  \eqref{8ThswELzXU3X7Ebd1KdZ7v1rN3GiirRXGKWK099ovBM0FDJCvkopYNQ2aN94Z7k0UnUKamE3OjU8DFYFFokbSI2J9V9gVlM8ALWThDPnPu3EL7HPD2VDaZTggzcCCmbvc70qqPcC9mt60ogcrTiA3HEjwTK8ymKeuJMc4q6dVz200XnYUtLR9GYjPXvFOVr6W1zUK1WbPToaWJJuKnxBLnd0ftDEbMmj4loHYyhZyMjM91zQS4p7z8eKa9h0JrbacekcirexG0z4n3xz0QOWSvFj3jLhWXUIU21iIAwJtI3RbWa90I7rzAIqI3UElUJG7tLtUXzw4KQNETvXzqWaujEMenYlNIzLGxgB3AuJEQ140}, we have  \begin{equation} \zxvczxbcvdfghasdfrtsdafasdfasdfdsfgsdgh \tilde F - \tilde P - Q_{d-m+1} \zxvczxbcvdfghasdfrtsdafasdfasdfdsfgsdgh_{L^p(B_r)} \leq ( \lbrack f \rbrack_{C^{d-m+1,\alpha}}(0) 	+  \lbrack a_{\nu} \rbrack_{C^{1,\alpha}_{L^p}} (0) \zxvczxbcvdfghasdfrtsdafasdfasdfdsfgsdgh P_d \zxvczxbcvdfghasdfrtsdafasdfasdfdsfgsdgh_{L^p(B_1)} 	 +  \lbrack a_{\nu} \rbrack_{C^{0,\alpha}_{L^p}} (0) 	 ) r^{d-m+1+\alpha + n/p} 	 . \label{8ThswELzXU3X7Ebd1KdZ7v1rN3GiirRXGKWK099ovBM0FDJCvkopYNQ2aN94Z7k0UnUKamE3OjU8DFYFFokbSI2J9V9gVlM8ALWThDPnPu3EL7HPD2VDaZTggzcCCmbvc70qqPcC9mt60ogcrTiA3HEjwTK8ymKeuJMc4q6dVz200XnYUtLR9GYjPXvFOVr6W1zUK1WbPToaWJJuKnxBLnd0ftDEbMmj4loHYyhZyMjM91zQS4p7z8eKa9h0JrbacekcirexG0z4n3xz0QOWSvFj3jLhWXUIU21iIAwJtI3RbWa90I7rzAIqI3UElUJG7tLtUXzw4KQNETvXzqWaujEMenYlNIzLGxgB3AuJEQ141} \end{equation} Conditions \eqref{8ThswELzXU3X7Ebd1KdZ7v1rN3GiirRXGKWK099ovBM0FDJCvkopYNQ2aN94Z7k0UnUKamE3OjU8DFYFFokbSI2J9V9gVlM8ALWThDPnPu3EL7HPD2VDaZTggzcCCmbvc70qqPcC9mt60ogcrTiA3HEjwTK8ymKeuJMc4q6dVz200XnYUtLR9GYjPXvFOVr6W1zUK1WbPToaWJJuKnxBLnd0ftDEbMmj4loHYyhZyMjM91zQS4p7z8eKa9h0JrbacekcirexG0z4n3xz0QOWSvFj3jLhWXUIU21iIAwJtI3RbWa90I7rzAIqI3UElUJG7tLtUXzw4KQNETvXzqWaujEMenYlNIzLGxgB3AuJEQ133} with \eqref{8ThswELzXU3X7Ebd1KdZ7v1rN3GiirRXGKWK099ovBM0FDJCvkopYNQ2aN94Z7k0UnUKamE3OjU8DFYFFokbSI2J9V9gVlM8ALWThDPnPu3EL7HPD2VDaZTggzcCCmbvc70qqPcC9mt60ogcrTiA3HEjwTK8ymKeuJMc4q6dVz200XnYUtLR9GYjPXvFOVr6W1zUK1WbPToaWJJuKnxBLnd0ftDEbMmj4loHYyhZyMjM91zQS4p7z8eKa9h0JrbacekcirexG0z4n3xz0QOWSvFj3jLhWXUIU21iIAwJtI3RbWa90I7rzAIqI3UElUJG7tLtUXzw4KQNETvXzqWaujEMenYlNIzLGxgB3AuJEQ141} allow us to apply Theorem~\ref{T01} with $d$ replaced by~$d+1$. Then there exists a homogeneous polynomial $P_{d+1}$ of degree $d+1$ satisfying     \begin{equation}       \sum_{|\nu|=m} a_{\nu} (0) \partial^{\nu} P_{d+1}       = \tilde P + Q_{d-m+1}       = \tilde Q       .     \label{8ThswELzXU3X7Ebd1KdZ7v1rN3GiirRXGKWK099ovBM0FDJCvkopYNQ2aN94Z7k0UnUKamE3OjU8DFYFFokbSI2J9V9gVlM8ALWThDPnPu3EL7HPD2VDaZTggzcCCmbvc70qqPcC9mt60ogcrTiA3HEjwTK8ymKeuJMc4q6dVz200XnYUtLR9GYjPXvFOVr6W1zUK1WbPToaWJJuKnxBLnd0ftDEbMmj4loHYyhZyMjM91zQS4p7z8eKa9h0JrbacekcirexG0z4n3xz0QOWSvFj3jLhWXUIU21iIAwJtI3RbWa90I7rzAIqI3UElUJG7tLtUXzw4KQNETvXzqWaujEMenYlNIzLGxgB3AuJEQ142}     \end{equation} Moreover, we have the bound     \begin{equation}       \zxvczxbcvdfghasdfrtsdafasdfasdfdsfgsdgh u \zxvczxbcvdfghasdfrtsdafasdfasdfdsfgsdgh_{L^p(B_r)}       \les ( \lbrack f \rbrack_{C^{d-m+1,\alpha}_{L^p}(0)}                  + \zxvczxbcvdfghasdfrtsdafasdfasdfdsfgsdgh u \zxvczxbcvdfghasdfrtsdafasdfasdfdsfgsdgh_{L^p(B_1)}                 + \zxvczxbcvdfghasdfrtsdafasdfasdfdsfgsdgh \tilde Q \zxvczxbcvdfghasdfrtsdafasdfasdfdsfgsdgh_{L^p(B_{1/2}(0))}              ) r^{d+1+\alpha+n/p}        ,     \label{8ThswELzXU3X7Ebd1KdZ7v1rN3GiirRXGKWK099ovBM0FDJCvkopYNQ2aN94Z7k0UnUKamE3OjU8DFYFFokbSI2J9V9gVlM8ALWThDPnPu3EL7HPD2VDaZTggzcCCmbvc70qqPcC9mt60ogcrTiA3HEjwTK8ymKeuJMc4q6dVz200XnYUtLR9GYjPXvFOVr6W1zUK1WbPToaWJJuKnxBLnd0ftDEbMmj4loHYyhZyMjM91zQS4p7z8eKa9h0JrbacekcirexG0z4n3xz0QOWSvFj3jLhWXUIU21iIAwJtI3RbWa90I7rzAIqI3UElUJG7tLtUXzw4KQNETvXzqWaujEMenYlNIzLGxgB3AuJEQ143}     \end{equation} and, for any $q \in [ 1, np/(n-mp))$,     \begin{equation}      \zxvczxbcvdfghasdfrtsdafasdfasdfdsfgsdgh u - P_d - P_{d+1} \zxvczxbcvdfghasdfrtsdafasdfasdfdsfgsdgh_{L^q(B_r)}     \les ( \lbrack f \rbrack_{C^{d-m+1,\alpha}_{L^p}(0)} 	                 + \zxvczxbcvdfghasdfrtsdafasdfasdfdsfgsdgh u \zxvczxbcvdfghasdfrtsdafasdfasdfdsfgsdgh_{L^p(B_1)}                 + \zxvczxbcvdfghasdfrtsdafasdfasdfdsfgsdgh \tilde Q \zxvczxbcvdfghasdfrtsdafasdfasdfdsfgsdgh_{L^p(B_{1/2}(0))}              ) r^{d+1+\alpha+n/q}     ,     \label{8ThswELzXU3X7Ebd1KdZ7v1rN3GiirRXGKWK099ovBM0FDJCvkopYNQ2aN94Z7k0UnUKamE3OjU8DFYFFokbSI2J9V9gVlM8ALWThDPnPu3EL7HPD2VDaZTggzcCCmbvc70qqPcC9mt60ogcrTiA3HEjwTK8ymKeuJMc4q6dVz200XnYUtLR9GYjPXvFOVr6W1zUK1WbPToaWJJuKnxBLnd0ftDEbMmj4loHYyhZyMjM91zQS4p7z8eKa9h0JrbacekcirexG0z4n3xz0QOWSvFj3jLhWXUIU21iIAwJtI3RbWa90I7rzAIqI3UElUJG7tLtUXzw4KQNETvXzqWaujEMenYlNIzLGxgB3AuJEQ144}     \end{equation} for all~$r\leq1/4$. By definition, $\zxvczxbcvdfghasdfrtsdafasdfasdfdsfgsdgh \tilde Q \zxvczxbcvdfghasdfrtsdafasdfasdfdsfgsdgh_{L^p(B_r)} \les \lbrack a_{\nu} \rbrack_{C_{L^p}^{1-m+|\nu|}(0)} + \lbrack f \rbrack_{C^{d-m+1,\alpha}_{L^p}(0)} $, and then \eqref{8ThswELzXU3X7Ebd1KdZ7v1rN3GiirRXGKWK099ovBM0FDJCvkopYNQ2aN94Z7k0UnUKamE3OjU8DFYFFokbSI2J9V9gVlM8ALWThDPnPu3EL7HPD2VDaZTggzcCCmbvc70qqPcC9mt60ogcrTiA3HEjwTK8ymKeuJMc4q6dVz200XnYUtLR9GYjPXvFOVr6W1zUK1WbPToaWJJuKnxBLnd0ftDEbMmj4loHYyhZyMjM91zQS4p7z8eKa9h0JrbacekcirexG0z4n3xz0QOWSvFj3jLhWXUIU21iIAwJtI3RbWa90I7rzAIqI3UElUJG7tLtUXzw4KQNETvXzqWaujEMenYlNIzLGxgB3AuJEQ134} follows. \end{proof} \par \begin{Remark} {\rm Lemma~\ref{L05} demonstrates how we obtain the by one order higher H\"older norm for solution $u$ given the one order higher norm for $f$ and~$a_{\nu}$. To prove Theorem~\ref{T02}, we apply the argument in Lemma~\ref{L05} multiple times as long as $f$ and $a_{\nu}$ are controlled. } \end{Remark} \par \begin{proof}[Proof of Theorem~\ref{T02}] Theorem~\ref{T01} and Lemma~\ref{L05} are special cases of Theorem~\ref{T02} with $k=0$ and $k=1$ respectively. We complete the proof of Theorem~\ref{T02} by an induction argument on~$k$.  Repeating the argument in Lemma~\ref{L05} $k$ times, from \eqref{8ThswELzXU3X7Ebd1KdZ7v1rN3GiirRXGKWK099ovBM0FDJCvkopYNQ2aN94Z7k0UnUKamE3OjU8DFYFFokbSI2J9V9gVlM8ALWThDPnPu3EL7HPD2VDaZTggzcCCmbvc70qqPcC9mt60ogcrTiA3HEjwTK8ymKeuJMc4q6dVz200XnYUtLR9GYjPXvFOVr6W1zUK1WbPToaWJJuKnxBLnd0ftDEbMmj4loHYyhZyMjM91zQS4p7z8eKa9h0JrbacekcirexG0z4n3xz0QOWSvFj3jLhWXUIU21iIAwJtI3RbWa90I7rzAIqI3UElUJG7tLtUXzw4KQNETvXzqWaujEMenYlNIzLGxgB3AuJEQ143} we have   \begin{equation}   \sup_{r\leq1} \frac{ \zxvczxbcvdfghasdfrtsdafasdfasdfdsfgsdgh u \zxvczxbcvdfghasdfrtsdafasdfasdfdsfgsdgh_{L^p(B_r)}}  				{r^{d+k+\alpha + n/p}}   < \infty   .   \llabel{M icd yCV qn D036 HJWM tYfL9f yX x O7m IcF E1O uL QsAQ NfWv 6kV8Im 7Q 6 GsX NCV 0YP oC jnWn 6L25 qUMTe7 1v a hnH DAo XAb Tc zhPc fjrj W5M5G0 nz N M5T nlJ WOP Lh M6U2 ZFxw pg4Nej P8 U Q09 JX9 n7S kE WixE Rwgy Fvttzp 4A s v5F Tnn MzL Vh FUn5 6tFY CxZ1Bz Q3 E TfD lCa d7V fo MwPm ngrD HPfZV0 aY k Ojr ZUw 799 et oYuB MIC4 ovEY8D OL N URV Q5l ti1 iS NZAd wWr6 Q8oPFf ae 5 lAR 9gD RSi HO eJOW wxLv 20GoMt 2H z 7Yc aly PZx eR uFM0 7gaV 9UIz7S 43 k 5Tr ZiD Mt7 pE NCYi uHL7 gac7Gq yN 6 Z1u x56 YZh 2d yJVx 9MeU OMWBQf l0 E mIc 5Zr yfy 3i rahC y9Pi8ThswELzXU3X7Ebd1KdZ7v1rN3GiirRXGKWK099ovBM0FDJCvkopYNQ2aN94Z7k0UnUKamE3OjU8DFYFFokbSI2J9V9gVlM8ALWThDPnPu3EL7HPD2VDaZTggzcCCmbvc70qqPcC9mt60ogcrTiA3HEjwTK8ymKeuJMc4q6dVz200XnYUtLR9GYjPXvFOVr6W1zUK1WbPToaWJJuKnxBLnd0ftDEbMmj4loHYyhZyMjM91zQS4p7z8eKa9h0JrbacekcirexG0z4n3xz0QOWSvFj3jLhWXUIU21iIAwJtI3RbWa90I7rzAIqI3UElUJG7tLtUXzw4KQNETvXzqWaujEMenYlNIzLGxgB3AuJEQ145}   \end{equation} On the other hand, \eqref{8ThswELzXU3X7Ebd1KdZ7v1rN3GiirRXGKWK099ovBM0FDJCvkopYNQ2aN94Z7k0UnUKamE3OjU8DFYFFokbSI2J9V9gVlM8ALWThDPnPu3EL7HPD2VDaZTggzcCCmbvc70qqPcC9mt60ogcrTiA3HEjwTK8ymKeuJMc4q6dVz200XnYUtLR9GYjPXvFOVr6W1zUK1WbPToaWJJuKnxBLnd0ftDEbMmj4loHYyhZyMjM91zQS4p7z8eKa9h0JrbacekcirexG0z4n3xz0QOWSvFj3jLhWXUIU21iIAwJtI3RbWa90I7rzAIqI3UElUJG7tLtUXzw4KQNETvXzqWaujEMenYlNIzLGxgB3AuJEQ142} and \eqref{8ThswELzXU3X7Ebd1KdZ7v1rN3GiirRXGKWK099ovBM0FDJCvkopYNQ2aN94Z7k0UnUKamE3OjU8DFYFFokbSI2J9V9gVlM8ALWThDPnPu3EL7HPD2VDaZTggzcCCmbvc70qqPcC9mt60ogcrTiA3HEjwTK8ymKeuJMc4q6dVz200XnYUtLR9GYjPXvFOVr6W1zUK1WbPToaWJJuKnxBLnd0ftDEbMmj4loHYyhZyMjM91zQS4p7z8eKa9h0JrbacekcirexG0z4n3xz0QOWSvFj3jLhWXUIU21iIAwJtI3RbWa90I7rzAIqI3UElUJG7tLtUXzw4KQNETvXzqWaujEMenYlNIzLGxgB3AuJEQ144} implies there exists $P_{d+k} \in \PP_{d+k}$ and $\tilde Q_{d-m+k} \in \PP_{d-m+k}$ such that $\tilde Q_{d-m+k} = \sum_{|\nu| = m} a_{\nu} (0) \partial^{\nu} P_{d+k}$, and    \begin{equation} L(u-P_{d+k}) = (f - Q_{d+k-m}) + \sum_{|\nu|=m} (a_{\nu}(0) - a_{\nu}) \partial^{\nu} P_{d+k} - \sum_{|\nu|<m} a_{\nu} \partial^{\nu} P_{d+k} .    \llabel{ U Q09 JX9 n7S kE WixE Rwgy Fvttzp 4A s v5F Tnn MzL Vh FUn5 6tFY CxZ1Bz Q3 E TfD lCa d7V fo MwPm ngrD HPfZV0 aY k Ojr ZUw 799 et oYuB MIC4 ovEY8D OL N URV Q5l ti1 iS NZAd wWr6 Q8oPFf ae 5 lAR 9gD RSi HO eJOW wxLv 20GoMt 2H z 7Yc aly PZx eR uFM0 7gaV 9UIz7S 43 k 5Tr ZiD Mt7 pE NCYi uHL7 gac7Gq yN 6 Z1u x56 YZh 2d yJVx 9MeU OMWBQf l0 E mIc 5Zr yfy 3i rahC y9Pi MJ7ofo Op d enn sLi xZx Jt CjC9 M71v O0fxiR 51 m FIB QRo 1oW Iq 3gDP stD2 ntfoX7 YU o S5k GuV IGM cf HZe3 7ZoG A1dDmk XO 2 KYR LpJ jII om M6Nu u8O0 jO5Nab Ub R nZn 15k hG9 4S 21V48ThswELzXU3X7Ebd1KdZ7v1rN3GiirRXGKWK099ovBM0FDJCvkopYNQ2aN94Z7k0UnUKamE3OjU8DFYFFokbSI2J9V9gVlM8ALWThDPnPu3EL7HPD2VDaZTggzcCCmbvc70qqPcC9mt60ogcrTiA3HEjwTK8ymKeuJMc4q6dVz200XnYUtLR9GYjPXvFOVr6W1zUK1WbPToaWJJuKnxBLnd0ftDEbMmj4loHYyhZyMjM91zQS4p7z8eKa9h0JrbacekcirexG0z4n3xz0QOWSvFj3jLhWXUIU21iIAwJtI3RbWa90I7rzAIqI3UElUJG7tLtUXzw4KQNETvXzqWaujEMenYlNIzLGxgB3AuJEQ146}   \end{equation}  Therefore, we proceed similarly as Lemma~\ref{L05} and get the conclusion for $k+1$, which completes the induction argument. \end{proof} \par \startnewsection{Schauder estimates for parabolic equations}{sec04} The method used in the previous section may be extended to parabolic equations, as shown here. First, we introduce the corresponding terminology. For any $(x_0,t_0) \in \mathbb R^n \times \mathbb R$ and any $r>0$, we denote the parabolic ball centered at $(x_0, t_0)$ with radius $r$ by     \begin{equation}       Q_r(x_0,t_0)       = \{ (x,t) \in \mathbb R^n \times \mathbb R :       |x-x_0| < r,       -r^m < t - t_0 < 0       \},     \llabel{Ff ae 5 lAR 9gD RSi HO eJOW wxLv 20GoMt 2H z 7Yc aly PZx eR uFM0 7gaV 9UIz7S 43 k 5Tr ZiD Mt7 pE NCYi uHL7 gac7Gq yN 6 Z1u x56 YZh 2d yJVx 9MeU OMWBQf l0 E mIc 5Zr yfy 3i rahC y9Pi MJ7ofo Op d enn sLi xZx Jt CjC9 M71v O0fxiR 51 m FIB QRo 1oW Iq 3gDP stD2 ntfoX7 YU o S5k GuV IGM cf HZe3 7ZoG A1dDmk XO 2 KYR LpJ jII om M6Nu u8O0 jO5Nab Ub R nZn 15k hG9 4S 21V4 Ip45 7ooaiP u2 j hIz osW FDu O5 HdGr djvv tTLBjo vL L iCo 6L5 Lwa Pm vD6Z pal6 9Ljn11 re T 2CP mvj rL3 xH mDYK uv5T npC1fM oU R RTo Loi lk0 FE ghak m5M9 cOIPdQ lG D LnX erC ykJ C18ThswELzXU3X7Ebd1KdZ7v1rN3GiirRXGKWK099ovBM0FDJCvkopYNQ2aN94Z7k0UnUKamE3OjU8DFYFFokbSI2J9V9gVlM8ALWThDPnPu3EL7HPD2VDaZTggzcCCmbvc70qqPcC9mt60ogcrTiA3HEjwTK8ymKeuJMc4q6dVz200XnYUtLR9GYjPXvFOVr6W1zUK1WbPToaWJJuKnxBLnd0ftDEbMmj4loHYyhZyMjM91zQS4p7z8eKa9h0JrbacekcirexG0z4n3xz0QOWSvFj3jLhWXUIU21iIAwJtI3RbWa90I7rzAIqI3UElUJG7tLtUXzw4KQNETvXzqWaujEMenYlNIzLGxgB3AuJEQ152}     \end{equation} and write $Q_1$ for the parabolic ball centered at $(0,0)$ with radius~$1$. With $m\in 2\mathbb{N}$ fixed, consider a general order parabolic equation \begin{equation} \label{8ThswELzXU3X7Ebd1KdZ7v1rN3GiirRXGKWK099ovBM0FDJCvkopYNQ2aN94Z7k0UnUKamE3OjU8DFYFFokbSI2J9V9gVlM8ALWThDPnPu3EL7HPD2VDaZTggzcCCmbvc70qqPcC9mt60ogcrTiA3HEjwTK8ymKeuJMc4q6dVz200XnYUtLR9GYjPXvFOVr6W1zUK1WbPToaWJJuKnxBLnd0ftDEbMmj4loHYyhZyMjM91zQS4p7z8eKa9h0JrbacekcirexG0z4n3xz0QOWSvFj3jLhWXUIU21iIAwJtI3RbWa90I7rzAIqI3UElUJG7tLtUXzw4KQNETvXzqWaujEMenYlNIzLGxgB3AuJEQP01} Lu = f , \end{equation} where    \begin{equation}       L u        \equiv u_t                  - \sum_{|\nu|\leq m} a_{\nu} \partial^{\nu} u      \label{8ThswELzXU3X7Ebd1KdZ7v1rN3GiirRXGKWK099ovBM0FDJCvkopYNQ2aN94Z7k0UnUKamE3OjU8DFYFFokbSI2J9V9gVlM8ALWThDPnPu3EL7HPD2VDaZTggzcCCmbvc70qqPcC9mt60ogcrTiA3HEjwTK8ymKeuJMc4q6dVz200XnYUtLR9GYjPXvFOVr6W1zUK1WbPToaWJJuKnxBLnd0ftDEbMmj4loHYyhZyMjM91zQS4p7z8eKa9h0JrbacekcirexG0z4n3xz0QOWSvFj3jLhWXUIU21iIAwJtI3RbWa90I7rzAIqI3UElUJG7tLtUXzw4KQNETvXzqWaujEMenYlNIzLGxgB3AuJEQ147}     \end{equation} is an $m$-th order parabolic operator in $Q_1$, which satisfies the parabolicity condition     \begin{equation}       (-1)^{m/2-1} \sum_{|\nu|=m} a_{\nu} \xi^{\nu}        \geq       \frac{1}{K}       \comma       \xi \in \mathbb S^{n-1}       \comma       (x,t) \in Q_1       ,      \label{8ThswELzXU3X7Ebd1KdZ7v1rN3GiirRXGKWK099ovBM0FDJCvkopYNQ2aN94Z7k0UnUKamE3OjU8DFYFFokbSI2J9V9gVlM8ALWThDPnPu3EL7HPD2VDaZTggzcCCmbvc70qqPcC9mt60ogcrTiA3HEjwTK8ymKeuJMc4q6dVz200XnYUtLR9GYjPXvFOVr6W1zUK1WbPToaWJJuKnxBLnd0ftDEbMmj4loHYyhZyMjM91zQS4p7z8eKa9h0JrbacekcirexG0z4n3xz0QOWSvFj3jLhWXUIU21iIAwJtI3RbWa90I7rzAIqI3UElUJG7tLtUXzw4KQNETvXzqWaujEMenYlNIzLGxgB3AuJEQ148}     \end{equation} the boundedness      \begin{equation}       \sum_{|\nu|\leq m} |a_{\nu}(x,t)|        \leq K       \comma       (x,t) \in Q_1      ,     \llabel{ MJ7ofo Op d enn sLi xZx Jt CjC9 M71v O0fxiR 51 m FIB QRo 1oW Iq 3gDP stD2 ntfoX7 YU o S5k GuV IGM cf HZe3 7ZoG A1dDmk XO 2 KYR LpJ jII om M6Nu u8O0 jO5Nab Ub R nZn 15k hG9 4S 21V4 Ip45 7ooaiP u2 j hIz osW FDu O5 HdGr djvv tTLBjo vL L iCo 6L5 Lwa Pm vD6Z pal6 9Ljn11 re T 2CP mvj rL3 xH mDYK uv5T npC1fM oU R RTo Loi lk0 FE ghak m5M9 cOIPdQ lG D LnX erC ykJ C1 0FHh vvnY aTGuqU rf T QPv wEq iHO vO hD6A nXuv GlzVAv pz d Ok3 6ym yUo Fb AcAA BItO es52Vq d0 Y c7U 2gB t0W fF VQZh rJHr lBLdCx 8I o dWp AlD S8C HB rNLz xWp6 ypjuwW mg X toy 1vP b8ThswELzXU3X7Ebd1KdZ7v1rN3GiirRXGKWK099ovBM0FDJCvkopYNQ2aN94Z7k0UnUKamE3OjU8DFYFFokbSI2J9V9gVlM8ALWThDPnPu3EL7HPD2VDaZTggzcCCmbvc70qqPcC9mt60ogcrTiA3HEjwTK8ymKeuJMc4q6dVz200XnYUtLR9GYjPXvFOVr6W1zUK1WbPToaWJJuKnxBLnd0ftDEbMmj4loHYyhZyMjM91zQS4p7z8eKa9h0JrbacekcirexG0z4n3xz0QOWSvFj3jLhWXUIU21iIAwJtI3RbWa90I7rzAIqI3UElUJG7tLtUXzw4KQNETvXzqWaujEMenYlNIzLGxgB3AuJEQ149}     \end{equation} and the H\"older continuity of the leading coefficients     \begin{equation}       \sum_{|\nu|=m} |a_{\nu}(x,t) - a_{\nu}(0,0) |        \leq K|(x,t)|^{\alpha}       \comma        (x,t) \in Q_1       ,     \label{8ThswELzXU3X7Ebd1KdZ7v1rN3GiirRXGKWK099ovBM0FDJCvkopYNQ2aN94Z7k0UnUKamE3OjU8DFYFFokbSI2J9V9gVlM8ALWThDPnPu3EL7HPD2VDaZTggzcCCmbvc70qqPcC9mt60ogcrTiA3HEjwTK8ymKeuJMc4q6dVz200XnYUtLR9GYjPXvFOVr6W1zUK1WbPToaWJJuKnxBLnd0ftDEbMmj4loHYyhZyMjM91zQS4p7z8eKa9h0JrbacekcirexG0z4n3xz0QOWSvFj3jLhWXUIU21iIAwJtI3RbWa90I7rzAIqI3UElUJG7tLtUXzw4KQNETvXzqWaujEMenYlNIzLGxgB3AuJEQ150}     \end{equation} for some positive constants $K$, and $\alpha \in (0,1)$, where $|(x,t)|$ is the parabolic norm of $(x,t)$, i.e.,  $       |(x,t)|        = ( |x|^m + |t|)^{1/m}       . $ We allow all constants to depend on the space dimension $n$, the order of the differential operator $m$, as well as on $K$ and $\alpha$ without mention.   \par Denote by $W^{m,1}_p(Q_1)$ the Sobolev space of functions whose $x$-derivatives up to $m$-th order and the $t$-derivative of order $1$ belong to~$L^p(Q_1)$.  Similarly to the elliptic case,  let $\mathcal{Q}_d$ the set of polynomials in $n$ variables of degree at most $d$ and $\dQQ_d$ be the set of homogeneous polynomials of degree exactly $d$, with an addition of the zero polynomial. The homogeneity and the polynomial degree in this section are in parabolic sense, which are defined as follows. A function $f(x,t)$ is homogeneous of degree $d$ if for any $\lambda >0$ and $(x,t) \in \mathbb R^n \times \mathbb R \backslash \{ (0,0) \}$, we have     \begin{equation}     f ( \lambda x , \lambda^m t )      = \lambda^d f(x,t)     .     \llabel{ Ip45 7ooaiP u2 j hIz osW FDu O5 HdGr djvv tTLBjo vL L iCo 6L5 Lwa Pm vD6Z pal6 9Ljn11 re T 2CP mvj rL3 xH mDYK uv5T npC1fM oU R RTo Loi lk0 FE ghak m5M9 cOIPdQ lG D LnX erC ykJ C1 0FHh vvnY aTGuqU rf T QPv wEq iHO vO hD6A nXuv GlzVAv pz d Ok3 6ym yUo Fb AcAA BItO es52Vq d0 Y c7U 2gB t0W fF VQZh rJHr lBLdCx 8I o dWp AlD S8C HB rNLz xWp6 ypjuwW mg X toy 1vP bra uH yMNb kUrZ D6Ee2f zI D tkZ Eti Lmg re 1woD juLB BSdasY Vc F Uhy ViC xB1 5y Ltql qoUh gL3bZN YV k orz wa3 650 qW hF22 epiX cAjA4Z V4 b cXx uB3 NQN p0 GxW2 Vs1z jtqe2p LE B iS3 8ThswELzXU3X7Ebd1KdZ7v1rN3GiirRXGKWK099ovBM0FDJCvkopYNQ2aN94Z7k0UnUKamE3OjU8DFYFFokbSI2J9V9gVlM8ALWThDPnPu3EL7HPD2VDaZTggzcCCmbvc70qqPcC9mt60ogcrTiA3HEjwTK8ymKeuJMc4q6dVz200XnYUtLR9GYjPXvFOVr6W1zUK1WbPToaWJJuKnxBLnd0ftDEbMmj4loHYyhZyMjM91zQS4p7z8eKa9h0JrbacekcirexG0z4n3xz0QOWSvFj3jLhWXUIU21iIAwJtI3RbWa90I7rzAIqI3UElUJG7tLtUXzw4KQNETvXzqWaujEMenYlNIzLGxgB3AuJEQ151}     \end{equation} A polynomial $P(x,t)$ is of  degree at most $d$ if it can be decomposed into a sum of  homogeneous polynomials, whose  degree is at most~$d$. \par \begin{Definition}{\rm Let $u \in L^p(Q_1)$ for~$p \in [1, \infty]$. For an integer $d \geq 1$ and $\alpha \in (0,1)$,  we say that $u \in C^{d+\alpha, (d+\alpha)/m}_{L^p}(0)$ if there exists $P \in \mathcal{Q}_{d}$ satisfying   \begin{equation}       \sup_{r\leq1} \frac{\zxvczxbcvdfghasdfrtsdafasdfasdfdsfgsdgh u-P \zxvczxbcvdfghasdfrtsdafasdfasdfdsfgsdgh_{L^p(Q_r)}}                                          {r^{d+\alpha+(m+n)/p}}       < \infty       .     \llabel{ 0FHh vvnY aTGuqU rf T QPv wEq iHO vO hD6A nXuv GlzVAv pz d Ok3 6ym yUo Fb AcAA BItO es52Vq d0 Y c7U 2gB t0W fF VQZh rJHr lBLdCx 8I o dWp AlD S8C HB rNLz xWp6 ypjuwW mg X toy 1vP bra uH yMNb kUrZ D6Ee2f zI D tkZ Eti Lmg re 1woD juLB BSdasY Vc F Uhy ViC xB1 5y Ltql qoUh gL3bZN YV k orz wa3 650 qW hF22 epiX cAjA4Z V4 b cXx uB3 NQN p0 GxW2 Vs1z jtqe2p LE B iS3 0E0 NKH gY N50v XaK6 pNpwdB X2 Y v7V 0Ud dTc Pi dRNN CLG4 7Fc3PL Bx K 3Be x1X zyX cj 0Z6a Jk0H KuQnwd Dh P Q1Q rwA 05v 9c 3pnz ttzt x2IirW CZ B oS5 xlO KCi D3 WFh4 dvCL QANAQJ Gg y8ThswELzXU3X7Ebd1KdZ7v1rN3GiirRXGKWK099ovBM0FDJCvkopYNQ2aN94Z7k0UnUKamE3OjU8DFYFFokbSI2J9V9gVlM8ALWThDPnPu3EL7HPD2VDaZTggzcCCmbvc70qqPcC9mt60ogcrTiA3HEjwTK8ymKeuJMc4q6dVz200XnYUtLR9GYjPXvFOVr6W1zUK1WbPToaWJJuKnxBLnd0ftDEbMmj4loHYyhZyMjM91zQS4p7z8eKa9h0JrbacekcirexG0z4n3xz0QOWSvFj3jLhWXUIU21iIAwJtI3RbWa90I7rzAIqI3UElUJG7tLtUXzw4KQNETvXzqWaujEMenYlNIzLGxgB3AuJEQ160}   \end{equation} We also introduce the corresponding semi-norms as   \begin{equation}       \lbrack u \rbrack_{C^{i,i/m}_{L^p}(0)}        = \sum_{|\nu|+ml=i} | \partial^{\nu}_x \partial^l_t P(0) |       ,     \llabel{ra uH yMNb kUrZ D6Ee2f zI D tkZ Eti Lmg re 1woD juLB BSdasY Vc F Uhy ViC xB1 5y Ltql qoUh gL3bZN YV k orz wa3 650 qW hF22 epiX cAjA4Z V4 b cXx uB3 NQN p0 GxW2 Vs1z jtqe2p LE B iS3 0E0 NKH gY N50v XaK6 pNpwdB X2 Y v7V 0Ud dTc Pi dRNN CLG4 7Fc3PL Bx K 3Be x1X zyX cj 0Z6a Jk0H KuQnwd Dh P Q1Q rwA 05v 9c 3pnz ttzt x2IirW CZ B oS5 xlO KCi D3 WFh4 dvCL QANAQJ Gg y vOD NTD FKj Mc 0RJP m4HU SQkLnT Q4 Y 6CC MvN jAR Zb lir7 RFsI NzHiJl cg f xSC Hts ZOG 1V uOzk 5G1C LtmRYI eD 3 5BB uxZ JdY LO CwS9 lokS NasDLj 5h 8 yni u7h u3c di zYh1 PdwE l3m8Xt8ThswELzXU3X7Ebd1KdZ7v1rN3GiirRXGKWK099ovBM0FDJCvkopYNQ2aN94Z7k0UnUKamE3OjU8DFYFFokbSI2J9V9gVlM8ALWThDPnPu3EL7HPD2VDaZTggzcCCmbvc70qqPcC9mt60ogcrTiA3HEjwTK8ymKeuJMc4q6dVz200XnYUtLR9GYjPXvFOVr6W1zUK1WbPToaWJJuKnxBLnd0ftDEbMmj4loHYyhZyMjM91zQS4p7z8eKa9h0JrbacekcirexG0z4n3xz0QOWSvFj3jLhWXUIU21iIAwJtI3RbWa90I7rzAIqI3UElUJG7tLtUXzw4KQNETvXzqWaujEMenYlNIzLGxgB3AuJEQ161}   \end{equation} for $i=0,1,\cdots,d$, and     \begin{equation}       \lbrack u \rbrack_{C^{d+\alpha, (d+\alpha)/m}_{L^p}(0)}        = \sup_{0<r\leq1} \frac{\zxvczxbcvdfghasdfrtsdafasdfasdfdsfgsdgh u - P \zxvczxbcvdfghasdfrtsdafasdfasdfdsfgsdgh_{L^p(Q_r)}}                                          {r^{d+\alpha+(m+n)/p}}       .     \llabel{0E0 NKH gY N50v XaK6 pNpwdB X2 Y v7V 0Ud dTc Pi dRNN CLG4 7Fc3PL Bx K 3Be x1X zyX cj 0Z6a Jk0H KuQnwd Dh P Q1Q rwA 05v 9c 3pnz ttzt x2IirW CZ B oS5 xlO KCi D3 WFh4 dvCL QANAQJ Gg y vOD NTD FKj Mc 0RJP m4HU SQkLnT Q4 Y 6CC MvN jAR Zb lir7 RFsI NzHiJl cg f xSC Hts ZOG 1V uOzk 5G1C LtmRYI eD 3 5BB uxZ JdY LO CwS9 lokS NasDLj 5h 8 yni u7h u3c di zYh1 PdwE l3m8Xt yX Q RCA bwe aLi N8 qA9N 6DRE wy6gZe xs A 4fG EKH KQP PP KMbk sY1j M4h3Jj gS U One p1w RqN GA grL4 c18W v4kchD gR x 7Gj jIB zcK QV f7gA TrZx Oy6FF7 y9 3 iuu AQt 9TK Rx S5GO TFGx 48ThswELzXU3X7Ebd1KdZ7v1rN3GiirRXGKWK099ovBM0FDJCvkopYNQ2aN94Z7k0UnUKamE3OjU8DFYFFokbSI2J9V9gVlM8ALWThDPnPu3EL7HPD2VDaZTggzcCCmbvc70qqPcC9mt60ogcrTiA3HEjwTK8ymKeuJMc4q6dVz200XnYUtLR9GYjPXvFOVr6W1zUK1WbPToaWJJuKnxBLnd0ftDEbMmj4loHYyhZyMjM91zQS4p7z8eKa9h0JrbacekcirexG0z4n3xz0QOWSvFj3jLhWXUIU21iIAwJtI3RbWa90I7rzAIqI3UElUJG7tLtUXzw4KQNETvXzqWaujEMenYlNIzLGxgB3AuJEQ162}     \end{equation} } \end{Definition} \par The following is a parabolic analog of Theorem~\ref{T02}, asserting an $L^{p}$ Schauder estimate for the parabolic equation~\eqref{8ThswELzXU3X7Ebd1KdZ7v1rN3GiirRXGKWK099ovBM0FDJCvkopYNQ2aN94Z7k0UnUKamE3OjU8DFYFFokbSI2J9V9gVlM8ALWThDPnPu3EL7HPD2VDaZTggzcCCmbvc70qqPcC9mt60ogcrTiA3HEjwTK8ymKeuJMc4q6dVz200XnYUtLR9GYjPXvFOVr6W1zUK1WbPToaWJJuKnxBLnd0ftDEbMmj4loHYyhZyMjM91zQS4p7z8eKa9h0JrbacekcirexG0z4n3xz0QOWSvFj3jLhWXUIU21iIAwJtI3RbWa90I7rzAIqI3UElUJG7tLtUXzw4KQNETvXzqWaujEMenYlNIzLGxgB3AuJEQP01}. \par \cole \begin{Theorem} \label{T04} Let $u \in W^{m,1}_p(Q_1)$, where   \begin{equation}   \label{8ThswELzXU3X7Ebd1KdZ7v1rN3GiirRXGKWK099ovBM0FDJCvkopYNQ2aN94Z7k0UnUKamE3OjU8DFYFFokbSI2J9V9gVlM8ALWThDPnPu3EL7HPD2VDaZTggzcCCmbvc70qqPcC9mt60ogcrTiA3HEjwTK8ymKeuJMc4q6dVz200XnYUtLR9GYjPXvFOVr6W1zUK1WbPToaWJJuKnxBLnd0ftDEbMmj4loHYyhZyMjM91zQS4p7z8eKa9h0JrbacekcirexG0z4n3xz0QOWSvFj3jLhWXUIU21iIAwJtI3RbWa90I7rzAIqI3UElUJG7tLtUXzw4KQNETvXzqWaujEMenYlNIzLGxgB3AuJEQP03}   1 < p < 1 + \frac{n}{m}   ,   \end{equation} 
be a solution of $Lu=f$ in $Q_1$, where $f \in L^p(Q_1)$ and $L$ a parabolic operator \eqref{8ThswELzXU3X7Ebd1KdZ7v1rN3GiirRXGKWK099ovBM0FDJCvkopYNQ2aN94Z7k0UnUKamE3OjU8DFYFFokbSI2J9V9gVlM8ALWThDPnPu3EL7HPD2VDaZTggzcCCmbvc70qqPcC9mt60ogcrTiA3HEjwTK8ymKeuJMc4q6dVz200XnYUtLR9GYjPXvFOVr6W1zUK1WbPToaWJJuKnxBLnd0ftDEbMmj4loHYyhZyMjM91zQS4p7z8eKa9h0JrbacekcirexG0z4n3xz0QOWSvFj3jLhWXUIU21iIAwJtI3RbWa90I7rzAIqI3UElUJG7tLtUXzw4KQNETvXzqWaujEMenYlNIzLGxgB3AuJEQ147} satisfying~\eqref{8ThswELzXU3X7Ebd1KdZ7v1rN3GiirRXGKWK099ovBM0FDJCvkopYNQ2aN94Z7k0UnUKamE3OjU8DFYFFokbSI2J9V9gVlM8ALWThDPnPu3EL7HPD2VDaZTggzcCCmbvc70qqPcC9mt60ogcrTiA3HEjwTK8ymKeuJMc4q6dVz200XnYUtLR9GYjPXvFOVr6W1zUK1WbPToaWJJuKnxBLnd0ftDEbMmj4loHYyhZyMjM91zQS4p7z8eKa9h0JrbacekcirexG0z4n3xz0QOWSvFj3jLhWXUIU21iIAwJtI3RbWa90I7rzAIqI3UElUJG7tLtUXzw4KQNETvXzqWaujEMenYlNIzLGxgB3AuJEQ148}--\eqref{8ThswELzXU3X7Ebd1KdZ7v1rN3GiirRXGKWK099ovBM0FDJCvkopYNQ2aN94Z7k0UnUKamE3OjU8DFYFFokbSI2J9V9gVlM8ALWThDPnPu3EL7HPD2VDaZTggzcCCmbvc70qqPcC9mt60ogcrTiA3HEjwTK8ymKeuJMc4q6dVz200XnYUtLR9GYjPXvFOVr6W1zUK1WbPToaWJJuKnxBLnd0ftDEbMmj4loHYyhZyMjM91zQS4p7z8eKa9h0JrbacekcirexG0z4n3xz0QOWSvFj3jLhWXUIU21iIAwJtI3RbWa90I7rzAIqI3UElUJG7tLtUXzw4KQNETvXzqWaujEMenYlNIzLGxgB3AuJEQ150}. Suppose that      \begin{equation}     c     = \sup_{r \leq 1}\frac{\zxvczxbcvdfghasdfrtsdafasdfasdfdsfgsdgh u \zxvczxbcvdfghasdfrtsdafasdfasdfdsfgsdgh_{L^p(Q_r)}}                                      {r^{d+(m+n)/p}}     < \infty     \llabel{ vOD NTD FKj Mc 0RJP m4HU SQkLnT Q4 Y 6CC MvN jAR Zb lir7 RFsI NzHiJl cg f xSC Hts ZOG 1V uOzk 5G1C LtmRYI eD 3 5BB uxZ JdY LO CwS9 lokS NasDLj 5h 8 yni u7h u3c di zYh1 PdwE l3m8Xt yX Q RCA bwe aLi N8 qA9N 6DRE wy6gZe xs A 4fG EKH KQP PP KMbk sY1j M4h3Jj gS U One p1w RqN GA grL4 c18W v4kchD gR x 7Gj jIB zcK QV f7gA TrZx Oy6FF7 y9 3 iuu AQt 9TK Rx S5GO TFGx 4Xx1U3 R4 s 7U1 mpa bpD Hg kicx aCjk hnobr0 p4 c ody xTC kVj 8t W4iP 2OhT RF6kU2 k2 o oZJ Fsq Y4B FS NI3u W2fj OMFf7x Jv e ilb UVT ArC Tv qWLi vbRp g2wpAJ On l RUE PKh j9h dG M0Mi g8ThswELzXU3X7Ebd1KdZ7v1rN3GiirRXGKWK099ovBM0FDJCvkopYNQ2aN94Z7k0UnUKamE3OjU8DFYFFokbSI2J9V9gVlM8ALWThDPnPu3EL7HPD2VDaZTggzcCCmbvc70qqPcC9mt60ogcrTiA3HEjwTK8ymKeuJMc4q6dVz200XnYUtLR9GYjPXvFOVr6W1zUK1WbPToaWJJuKnxBLnd0ftDEbMmj4loHYyhZyMjM91zQS4p7z8eKa9h0JrbacekcirexG0z4n3xz0QOWSvFj3jLhWXUIU21iIAwJtI3RbWa90I7rzAIqI3UElUJG7tLtUXzw4KQNETvXzqWaujEMenYlNIzLGxgB3AuJEQ163}     \end{equation} and      \begin{equation}       c_f       = \sup_{r \leq 1}\frac{\zxvczxbcvdfghasdfrtsdafasdfasdfdsfgsdgh f \zxvczxbcvdfghasdfrtsdafasdfasdfdsfgsdgh_{L^p(Q_r)}}                                        {r^{d-m+(m+n)/p}}       < \infty,     \llabel{ yX Q RCA bwe aLi N8 qA9N 6DRE wy6gZe xs A 4fG EKH KQP PP KMbk sY1j M4h3Jj gS U One p1w RqN GA grL4 c18W v4kchD gR x 7Gj jIB zcK QV f7gA TrZx Oy6FF7 y9 3 iuu AQt 9TK Rx S5GO TFGx 4Xx1U3 R4 s 7U1 mpa bpD Hg kicx aCjk hnobr0 p4 c ody xTC kVj 8t W4iP 2OhT RF6kU2 k2 o oZJ Fsq Y4B FS NI3u W2fj OMFf7x Jv e ilb UVT ArC Tv qWLi vbRp g2wpAJ On l RUE PKh j9h dG M0Mi gcqQ wkyunB Jr T LDc Pgn OSC HO sSgQ sR35 MB7Bgk Pk 6 nJh 01P Cxd Ds w514 O648 VD8iJ5 4F W 6rs 6Sy qGz MK fXop oe4e o52UNB 4Q 8 f8N Uz8 u2n GO AXHW gKtG AtGGJs bm z 2qj vSv GBu 5e 48ThswELzXU3X7Ebd1KdZ7v1rN3GiirRXGKWK099ovBM0FDJCvkopYNQ2aN94Z7k0UnUKamE3OjU8DFYFFokbSI2J9V9gVlM8ALWThDPnPu3EL7HPD2VDaZTggzcCCmbvc70qqPcC9mt60ogcrTiA3HEjwTK8ymKeuJMc4q6dVz200XnYUtLR9GYjPXvFOVr6W1zUK1WbPToaWJJuKnxBLnd0ftDEbMmj4loHYyhZyMjM91zQS4p7z8eKa9h0JrbacekcirexG0z4n3xz0QOWSvFj3jLhWXUIU21iIAwJtI3RbWa90I7rzAIqI3UElUJG7tLtUXzw4KQNETvXzqWaujEMenYlNIzLGxgB3AuJEQ164}     \end{equation} for some integer~$d \geq m$. Assume additionally that $f \in C^{d-m+k, \alpha}_{L^p}(0)$ and $a_{\nu} \in C^{k-m+|\nu|+\alpha}_{L^p}(0)$ for some $\alpha \in (0,1)$ and $k \in \mathbb N$, for any $|\nu| \geq \max \{ m-k,0 \}$. Then for all $q \in [1, (m+n)p/(m+n-mp))$, we have $u \in C^{d+k+\alpha}_{L^{q}}(0)$, and      \begin{equation}       \sum_{i=d}^{d+k} \lbrack u \rbrack_{C^i_{L^{q}}(0)}       + \lbrack u \rbrack_{C^{d+k,\alpha}_{L^{q}}(0)}       \les \zxvczxbcvdfghasdfrtsdafasdfasdfdsfgsdgh u \zxvczxbcvdfghasdfrtsdafasdfasdfdsfgsdgh_{L^p(Q_1)}             + \sum_{i=d-m}^{d-m+k} \lbrack f \rbrack_{C^i_{L^p}(0)}             + \lbrack f \rbrack_{C^{d-m+k,\alpha}_{L^p}(0)}             +1        ,     \label{8ThswELzXU3X7Ebd1KdZ7v1rN3GiirRXGKWK099ovBM0FDJCvkopYNQ2aN94Z7k0UnUKamE3OjU8DFYFFokbSI2J9V9gVlM8ALWThDPnPu3EL7HPD2VDaZTggzcCCmbvc70qqPcC9mt60ogcrTiA3HEjwTK8ymKeuJMc4q6dVz200XnYUtLR9GYjPXvFOVr6W1zUK1WbPToaWJJuKnxBLnd0ftDEbMmj4loHYyhZyMjM91zQS4p7z8eKa9h0JrbacekcirexG0z4n3xz0QOWSvFj3jLhWXUIU21iIAwJtI3RbWa90I7rzAIqI3UElUJG7tLtUXzw4KQNETvXzqWaujEMenYlNIzLGxgB3AuJEQ165}     \end{equation} where the implicit constant in \eqref{8ThswELzXU3X7Ebd1KdZ7v1rN3GiirRXGKWK099ovBM0FDJCvkopYNQ2aN94Z7k0UnUKamE3OjU8DFYFFokbSI2J9V9gVlM8ALWThDPnPu3EL7HPD2VDaZTggzcCCmbvc70qqPcC9mt60ogcrTiA3HEjwTK8ymKeuJMc4q6dVz200XnYUtLR9GYjPXvFOVr6W1zUK1WbPToaWJJuKnxBLnd0ftDEbMmj4loHYyhZyMjM91zQS4p7z8eKa9h0JrbacekcirexG0z4n3xz0QOWSvFj3jLhWXUIU21iIAwJtI3RbWa90I7rzAIqI3UElUJG7tLtUXzw4KQNETvXzqWaujEMenYlNIzLGxgB3AuJEQ165} depends only on $p$, $d$, $k$, $c$, $c_f$, and $\lbrack a_{\nu} \rbrack_{C^i_{L^p}(0)}$ for $d-m \leq i \leq d-m+k $ and $\lbrack a_{\nu} \rbrack_{C_{L^p}^{l-m+|\nu|,\alpha}(0)}$ for $|\nu| \geq m-k$. \end{Theorem}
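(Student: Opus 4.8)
The plan is to transcribe the elliptic argument of Section~\ref{sec03} into the parabolic setting, systematically replacing the isotropic dimension $n$ by the parabolic dimension $m+n$, the Euclidean norm by the parabolic norm $|(x,t)|=(|x|^m+|t|)^{1/m}$, Euclidean balls $B_r$ by parabolic cylinders $Q_r$, and interpreting homogeneity and polynomial degree in the parabolic sense (with a $t$-derivative weighing as $m$ spatial derivatives). First I would set $L(0)=\partial_t-\sum_{|\nu|=m}a_{\nu}(0)\partial^{\nu}$ and let $\Gamma$ denote its fundamental solution, $L(0)\Gamma=\delta_0$. Since $L(0)$ lowers parabolic homogeneity by $m$ while $\delta_0$ on $\RR^n\times\RR$ is parabolic-homogeneous of degree $-(m+n)$, the function $\Gamma$ is parabolic-homogeneous of degree $-n$, is supported in $\{t\ge 0\}$, and is smooth away from the origin; hence $|\partial_x^{\beta}\partial_t^{l}\Gamma(x,t)|\les|(x,t)|^{-n-|\beta|-ml}$ for $(x,t)\ne(0,0)$, which is the parabolic counterpart of the bound used in the elliptic case (the case $m=2$ is classical, cf.~\cite{AV}; the general case follows from homogeneity together with interior regularity off the origin). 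The first auxiliary lemma is the parabolic version of Lemma~\ref{L02}: decomposing $Q_r$ and $Q_1\setminus Q_r$ into parabolic dyadic shells $Q_{2^{-j}r}\setminus Q_{2^{-j-1}r}$ and applying H\"older's inequality exactly as before, one obtains $\int_{Q_r}|g(y,s)|\,|(y,s)|^{-b}\,dy\,ds\les Mr^{a-b+(m+n)/p'}$ whenever $\norm{g}_{L^p(Q_\rho)}\le M\rho^{a}$ and $b<a+(m+n)/p'$, together with the complementary estimate on $Q_1\setminus Q_r$ when $b>a+(m+n)/p'$.

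Next I would establish the parabolic interior existence lemma (the analog of Lemmas~\ref{L01} and~\ref{L10}): if $\norm{f}_{L^p(Q_r)}\le\MM r^{d-m+\gamma+(m+n)/p}$ for $r\in(0,1]$, with $\gamma\in(0,1)$ and $d\ge m$, then for some $R>0$ there exists $u\in W^{m,1}_p(Q_R)$ solving $Lu=f$ in $Q_R$ with $\sum_{|\nu|+ml\le m}r^{|\nu|+ml}\norm{\partial_x^{\nu}\partial_t^{l}u}_{L^p(Q_r)}\les\MM r^{d+\gamma+(m+n)/p}$ for $r\le R$, and the corresponding parabolic Taylor refinement for the convolution. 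The construction is identical in structure: one first treats the constant-coefficient operator $L(0)$ by convolving $f$ with $\Gamma$ and subtracting the parabolic Taylor polynomial $P_w=\sum_{|\beta|+ml\le d}\frac{x^{\beta}t^{l}}{\beta!\,l!}\int_{Q_1}\partial_x^{\beta}\partial_t^{l}\Gamma(-y,-s)f(y,s)\,dy\,ds$; the three pieces $I_1,I_2,I_3$ are estimated as in the elliptic case using Young's inequality and the parabolic Lemma~\ref{L02}, the admissibility conditions again reducing to $\gamma>0$ and $\gamma<1$, while the remainder of $\Gamma$ is controlled by parabolic Taylor's theorem, $\bigl|\Gamma(x-y,t-s)-\sum_{|\beta|+ml\le d}\partial_x^{\beta}\partial_t^{l}\Gamma(-y,-s)\frac{x^{\beta}t^{l}}{\beta!\,l!}\bigr|\les|(x,t)|^{d+1}|(y,s)|^{-(m+n)-d-1}$ on $Q_1\setminus Q_{2r}$. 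One then passes to variable coefficients by the contraction-mapping argument on the convex set defined by a $W^{m,1}_p$-bound and a vanishing-order bound, after a parabolic dilation $(x,t)\mapsto(Rx,R^{m}t)$ rendering the leading coefficients uniformly close to $a_{\nu}(0)$ and annihilating the lower-order terms. The $L^q$ extension (analog of Lemma~\ref{L03}) is obtained by replacing $L^p(Q_r)$ with $L^q(Q_r)$ in the estimates of $I_1,I_2,I_3$: with $1/\tilde p+1/p=1+1/q$ one has $\Gamma\in L^{\tilde p}(Q_\rho)$ precisely when $q<(m+n)p/(m+n-mp)$, so Young's inequality gives $I_1\les\MM r^{d+\gamma+(m+n)/q}$, while $I_2$ and $I_3$ are dominated by their $L^{\infty}$ bounds times $|Q_r|^{1/q}$; the range in \eqref{8ThswELzXU3X7Ebd1KdZ7v1rN3GiirRXGKWK099ovBM0FDJCvkopYNQ2aN94Z7k0UnUKamE3OjU8DFYFFokbSI2J9V9gVlM8ALWThDPnPu3EL7HPD2VDaZTggzcCCmbvc70qqPcC9mt60ogcrTiA3HEjwTK8ymKeuJMc4q6dVz200XnYUtLR9GYjPXvFOVr6W1zUK1WbPToaWJJuKnxBLnd0ftDEbMmj4loHYyhZyMjM91zQS4p7z8eKa9h0JrbacekcirexG0z4n3xz0QOWSvFj3jLhWXUIU21iIAwJtI3RbWa90I7rzAIqI3UElUJG7tLtUXzw4KQNETvXzqWaujEMenYlNIzLGxgB3AuJEQP03} is exactly the complement of the parabolic Sobolev embedding $W^{m,1}_p\hookrightarrow L^{\infty}$, which requires $mp>m+n$.

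With these tools in place, the parabolic bootstrapping lemma (analog of Lemma~\ref{L04}) is proved verbatim: assuming $\norm{u}_{L^p(Q_r)}\les r^{d-1+\eta+k\alpha+(m+n)/p}$ and $(k+1)\alpha+\eta<1$, one applies the interior parabolic estimate, expands $L(0)u-Q$, writes $u=v+w$ with $w=\int_{Q_{1/2}}\Gamma(x-y,t-s)(L(0)u(y,s)-Q(y,s))\,dy\,ds$, uses the parabolic Lemma~\ref{L10} on $w$ and parabolic Taylor's theorem plus interior regularity for $L(0)v=Q$ on $v$, and finally rules out a nonzero polynomial remainder by the parabolic non-degeneracy estimate $\norm{P}_{L^p(Q_r)}\ges r^{\deg P+(m+n)/p}$, thereby upgrading the vanishing order to $d-1+\eta+(k+1)\alpha$ with the quantitative bound. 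Running this $k_0$ times (after perturbing $\eta$ so that $\eta/k_0\notin\mathbb{Q}$) and then performing one more $v$--$w$ decomposition exactly as in the proof of Theorem~\ref{T01} yields the parabolic analog of Theorem~\ref{T01}: a homogeneous parabolic polynomial $P\in\dQQ_{d}$ with $L(0)P=Q$, the bound $\norm{P}_{L^p(Q_r)}\les(\MM+c_0+\norm{Q}_{L^p(Q_1)})r^{d+(m+n)/p}$, and $\sum_{|\nu|+ml\le m}r^{|\nu|+ml}\norm{\partial_x^{\nu}\partial_t^{l}(u-P)}_{L^q(Q_r)}\les(\MM+c_0+\norm{Q}_{L^p(Q_1)})r^{d+\alpha+(m+n)/q}$ for $q$ in the admissible range. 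The parabolic version of Lemma~\ref{L05} then goes through: writing $L(u-P_d)=(f-Q_{d-m})+\sum_{|\nu|=m}(a_{\nu}(0)-a_{\nu})\partial^{\nu}P_d-\sum_{|\nu|<m}a_{\nu}\partial^{\nu}P_d$ and approximating the right-hand side by a homogeneous parabolic polynomial of parabolic degree $d-m+1$ using the H\"older hypotheses on $f$ and on the $a_{\nu}$, one applies the parabolic Theorem~\ref{T01} with $d$ replaced by $d+1$ to produce $P_{d+1}\in\dQQ_{d+1}$ and the attendant $L^q$ estimate. Finally, the Schauder estimate of Theorem~\ref{T04} follows by induction on $k$, at each stage applying the Lemma~\ref{L05}-type step to pass from parabolic order $d+j$ to $d+j+1$ as long as $f$ and the $a_{\nu}$ remain controlled; the accumulated pieces $P_d,\dots,P_{d+k}$ contribute the terms $\sum_{i=d}^{d+k}[u]_{C^i_{L^q}(0)}$, the final remainder $u-P_d-\dots-P_{d+k}$ contributes $[u]_{C^{d+k,\alpha}_{L^q}(0)}$, and tracking the constants through the iteration gives dependence only on $p$, $d$, $k$, $c$, $c_f$, and the indicated seminorms of the $a_{\nu}$.

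The hard part will not be the algebra but two structural points. First, one needs the sharp pointwise decay of $\partial_x^{\beta}\partial_t^{l}\Gamma$ for the general even-order constant-coefficient parabolic operator $L(0)$; this is classical for $m=2$ and follows in general from parabolic homogeneity and smoothness off the origin, but the bookkeeping of homogeneity degrees — a $t$-derivative counting as $m$ spatial derivatives — must be carried out so that the admissibility thresholds in the parabolic Lemma~\ref{L02} land precisely at $\gamma>0$ and $\gamma<1$, exactly as in the elliptic case. Second, parabolic polynomials of degree $d$ contain monomials $x^{\nu}t^{l}$ with $|\nu|+ml=d$, in particular pure powers of $t$ when $m\mid d$, so one must verify that the parabolic Taylor polynomials $P_w$, $P_v$ and the homogeneous pieces $P_{d+j}$ are genuinely parabolic-homogeneous of the claimed degree and that the non-degeneracy estimate used to force the spurious polynomial to vanish survives in the anisotropic setting; both are routine once the scaling $(x,t)\mapsto(\lambda x,\lambda^{m}t)$ is used consistently throughout, but they are the places where care is required.
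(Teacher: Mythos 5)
Your proposal is correct and follows essentially the same route as the paper. The paper itself states that the proof of Theorem~\ref{T04} is analogous to that of Theorem~\ref{T02} and omits it, and you have correctly identified and filled in the necessary parabolic analogs of the elliptic lemmas: the anisotropic kernel bound $|\partial_x^{\beta}\partial_t^{l}\Gamma|\lesssim|(x,t)|^{-n-|\beta|-ml}$ (the paper's~\eqref{8ThswELzXU3X7Ebd1KdZ7v1rN3GiirRXGKWK099ovBM0FDJCvkopYNQ2aN94Z7k0UnUKamE3OjU8DFYFFokbSI2J9V9gVlM8ALWThDPnPu3EL7HPD2VDaZTggzcCCmbvc70qqPcC9mt60ogcrTiA3HEjwTK8ymKeuJMc4q6dVz200XnYUtLR9GYjPXvFOVr6W1zUK1WbPToaWJJuKnxBLnd0ftDEbMmj4loHYyhZyMjM91zQS4p7z8eKa9h0JrbacekcirexG0z4n3xz0QOWSvFj3jLhWXUIU21iIAwJtI3RbWa90I7rzAIqI3UElUJG7tLtUXzw4KQNETvXzqWaujEMenYlNIzLGxgB3AuJEQP055}), the dyadic-shell integral estimates (Lemma~\ref{L06}), the interior $W^{m,1}_q$ existence and Taylor-polynomial lemmas (Lemmas~\ref{L07},~\ref{L08}), the bootstrap (Lemma~\ref{L09}), the proof of Theorem~\ref{T03}, and finally the Lemma~\ref{L05}-type iteration in $k$, with the dimension $n$ replaced by $m+n$ throughout and homogeneity interpreted parabolically.
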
  \colb \par Next, we estimate the high $L^p$-H\"older type norms around the points where $u$ vanishes of order $d$ in the $L^p$ sense.  As in the elliptic case, the result holds for the $L^q$-norm where $q\in [1, (m+n)p/(m+n-mp))$. \par \cole \begin{Theorem} \label{T03} Let $u \in W^{m,1}_p(Q_1)$, with $p$ as in \eqref{8ThswELzXU3X7Ebd1KdZ7v1rN3GiirRXGKWK099ovBM0FDJCvkopYNQ2aN94Z7k0UnUKamE3OjU8DFYFFokbSI2J9V9gVlM8ALWThDPnPu3EL7HPD2VDaZTggzcCCmbvc70qqPcC9mt60ogcrTiA3HEjwTK8ymKeuJMc4q6dVz200XnYUtLR9GYjPXvFOVr6W1zUK1WbPToaWJJuKnxBLnd0ftDEbMmj4loHYyhZyMjM91zQS4p7z8eKa9h0JrbacekcirexG0z4n3xz0QOWSvFj3jLhWXUIU21iIAwJtI3RbWa90I7rzAIqI3UElUJG7tLtUXzw4KQNETvXzqWaujEMenYlNIzLGxgB3AuJEQP03}, be a solution of $Lu = f$, where $L$ is the parabolic operator~\eqref{8ThswELzXU3X7Ebd1KdZ7v1rN3GiirRXGKWK099ovBM0FDJCvkopYNQ2aN94Z7k0UnUKamE3OjU8DFYFFokbSI2J9V9gVlM8ALWThDPnPu3EL7HPD2VDaZTggzcCCmbvc70qqPcC9mt60ogcrTiA3HEjwTK8ymKeuJMc4q6dVz200XnYUtLR9GYjPXvFOVr6W1zUK1WbPToaWJJuKnxBLnd0ftDEbMmj4loHYyhZyMjM91zQS4p7z8eKa9h0JrbacekcirexG0z4n3xz0QOWSvFj3jLhWXUIU21iIAwJtI3RbWa90I7rzAIqI3UElUJG7tLtUXzw4KQNETvXzqWaujEMenYlNIzLGxgB3AuJEQ147} satisfying \eqref{8ThswELzXU3X7Ebd1KdZ7v1rN3GiirRXGKWK099ovBM0FDJCvkopYNQ2aN94Z7k0UnUKamE3OjU8DFYFFokbSI2J9V9gVlM8ALWThDPnPu3EL7HPD2VDaZTggzcCCmbvc70qqPcC9mt60ogcrTiA3HEjwTK8ymKeuJMc4q6dVz200XnYUtLR9GYjPXvFOVr6W1zUK1WbPToaWJJuKnxBLnd0ftDEbMmj4loHYyhZyMjM91zQS4p7z8eKa9h0JrbacekcirexG0z4n3xz0QOWSvFj3jLhWXUIU21iIAwJtI3RbWa90I7rzAIqI3UElUJG7tLtUXzw4KQNETvXzqWaujEMenYlNIzLGxgB3AuJEQ148}--\eqref{8ThswELzXU3X7Ebd1KdZ7v1rN3GiirRXGKWK099ovBM0FDJCvkopYNQ2aN94Z7k0UnUKamE3OjU8DFYFFokbSI2J9V9gVlM8ALWThDPnPu3EL7HPD2VDaZTggzcCCmbvc70qqPcC9mt60ogcrTiA3HEjwTK8ymKeuJMc4q6dVz200XnYUtLR9GYjPXvFOVr6W1zUK1WbPToaWJJuKnxBLnd0ftDEbMmj4loHYyhZyMjM91zQS4p7z8eKa9h0JrbacekcirexG0z4n3xz0QOWSvFj3jLhWXUIU21iIAwJtI3RbWa90I7rzAIqI3UElUJG7tLtUXzw4KQNETvXzqWaujEMenYlNIzLGxgB3AuJEQ150} and~$f \in L^p(Q_1)$. Suppose that     \begin{equation}       \zxvczxbcvdfghasdfrtsdafasdfasdfdsfgsdgh f - Q \zxvczxbcvdfghasdfrtsdafasdfasdfdsfgsdgh_{L^p(Q_r)}       \leq \MM r^{d-m+\alpha+(m+n)/p}       \comma       r \in (0,1]       ,     \llabel{Xx1U3 R4 s 7U1 mpa bpD Hg kicx aCjk hnobr0 p4 c ody xTC kVj 8t W4iP 2OhT RF6kU2 k2 o oZJ Fsq Y4B FS NI3u W2fj OMFf7x Jv e ilb UVT ArC Tv qWLi vbRp g2wpAJ On l RUE PKh j9h dG M0Mi gcqQ wkyunB Jr T LDc Pgn OSC HO sSgQ sR35 MB7Bgk Pk 6 nJh 01P Cxd Ds w514 O648 VD8iJ5 4F W 6rs 6Sy qGz MK fXop oe4e o52UNB 4Q 8 f8N Uz8 u2n GO AXHW gKtG AtGGJs bm z 2qj vSv GBu 5e 4JgL Aqrm gMmS08 ZF s xQm 28M 3z4 Ho 1xxj j8Uk bMbm8M 0c L PL5 TS2 kIQ jZ Kb9Q Ux2U i5Aflw 1S L DGI uWU dCP jy wVVM 2ct8 cmgOBS 7d Q ViX R8F bta 1m tEFj TO0k owcK2d 6M Z iW8 PrK PI18ThswELzXU3X7Ebd1KdZ7v1rN3GiirRXGKWK099ovBM0FDJCvkopYNQ2aN94Z7k0UnUKamE3OjU8DFYFFokbSI2J9V9gVlM8ALWThDPnPu3EL7HPD2VDaZTggzcCCmbvc70qqPcC9mt60ogcrTiA3HEjwTK8ymKeuJMc4q6dVz200XnYUtLR9GYjPXvFOVr6W1zUK1WbPToaWJJuKnxBLnd0ftDEbMmj4loHYyhZyMjM91zQS4p7z8eKa9h0JrbacekcirexG0z4n3xz0QOWSvFj3jLhWXUIU21iIAwJtI3RbWa90I7rzAIqI3UElUJG7tLtUXzw4KQNETvXzqWaujEMenYlNIzLGxgB3AuJEQ154}     \end{equation} where $d\in \{m,m+1,m+2,\ldots\}$, for some $Q \in \dQQ_{d-m}$ and~$M > 0$. If     \begin{equation}       c_0       = \sup_{r\leq1} \frac{\zxvczxbcvdfghasdfrtsdafasdfasdfdsfgsdgh u \zxvczxbcvdfghasdfrtsdafasdfasdfdsfgsdgh_{L^p(Q_r)}}                                          {r^{d-1+\eta+(m+n)/p}}       < \infty      ,     \label{8ThswELzXU3X7Ebd1KdZ7v1rN3GiirRXGKWK099ovBM0FDJCvkopYNQ2aN94Z7k0UnUKamE3OjU8DFYFFokbSI2J9V9gVlM8ALWThDPnPu3EL7HPD2VDaZTggzcCCmbvc70qqPcC9mt60ogcrTiA3HEjwTK8ymKeuJMc4q6dVz200XnYUtLR9GYjPXvFOVr6W1zUK1WbPToaWJJuKnxBLnd0ftDEbMmj4loHYyhZyMjM91zQS4p7z8eKa9h0JrbacekcirexG0z4n3xz0QOWSvFj3jLhWXUIU21iIAwJtI3RbWa90I7rzAIqI3UElUJG7tLtUXzw4KQNETvXzqWaujEMenYlNIzLGxgB3AuJEQ155}     \end{equation} where $\eta \in (0,1]$,  there is $P \in \dQQ_d$ solving      \begin{equation}       P_t       - \sum_{|\nu|=m} a_{\nu}(0,0) \partial^{\nu} P       = Q     \llabel{cqQ wkyunB Jr T LDc Pgn OSC HO sSgQ sR35 MB7Bgk Pk 6 nJh 01P Cxd Ds w514 O648 VD8iJ5 4F W 6rs 6Sy qGz MK fXop oe4e o52UNB 4Q 8 f8N Uz8 u2n GO AXHW gKtG AtGGJs bm z 2qj vSv GBu 5e 4JgL Aqrm gMmS08 ZF s xQm 28M 3z4 Ho 1xxj j8Uk bMbm8M 0c L PL5 TS2 kIQ jZ Kb9Q Ux2U i5Aflw 1S L DGI uWU dCP jy wVVM 2ct8 cmgOBS 7d Q ViX R8F bta 1m tEFj TO0k owcK2d 6M Z iW8 PrK PI1 sX WJNB cREV Y4H5QQ GH b plP bwd Txp OI 5OQZ AKyi ix7Qey YI 9 1Ea 16r KXK L2 ifQX QPdP NL6EJi Hc K rBs 2qG tQb aq edOj Lixj GiNWr1 Pb Y SZe Sxx Fin aK 9Eki CHV2 a13f7G 3G 3 oDK K08ThswELzXU3X7Ebd1KdZ7v1rN3GiirRXGKWK099ovBM0FDJCvkopYNQ2aN94Z7k0UnUKamE3OjU8DFYFFokbSI2J9V9gVlM8ALWThDPnPu3EL7HPD2VDaZTggzcCCmbvc70qqPcC9mt60ogcrTiA3HEjwTK8ymKeuJMc4q6dVz200XnYUtLR9GYjPXvFOVr6W1zUK1WbPToaWJJuKnxBLnd0ftDEbMmj4loHYyhZyMjM91zQS4p7z8eKa9h0JrbacekcirexG0z4n3xz0QOWSvFj3jLhWXUIU21iIAwJtI3RbWa90I7rzAIqI3UElUJG7tLtUXzw4KQNETvXzqWaujEMenYlNIzLGxgB3AuJEQ156}     \end{equation} in $\mathbb R^n \times \mathbb R$ such that      \begin{equation}      \zxvczxbcvdfghasdfrtsdafasdfasdfdsfgsdgh P \zxvczxbcvdfghasdfrtsdafasdfasdfdsfgsdgh_{L^p(Q_r)}      \les ( M              +c_0              + \zxvczxbcvdfghasdfrtsdafasdfasdfdsfgsdgh Q \zxvczxbcvdfghasdfrtsdafasdfasdfdsfgsdgh_{L^p(Q_1)}             ) r^{d+(m+n)/p}     \llabel{JgL Aqrm gMmS08 ZF s xQm 28M 3z4 Ho 1xxj j8Uk bMbm8M 0c L PL5 TS2 kIQ jZ Kb9Q Ux2U i5Aflw 1S L DGI uWU dCP jy wVVM 2ct8 cmgOBS 7d Q ViX R8F bta 1m tEFj TO0k owcK2d 6M Z iW8 PrK PI1 sX WJNB cREV Y4H5QQ GH b plP bwd Txp OI 5OQZ AKyi ix7Qey YI 9 1Ea 16r KXK L2 ifQX QPdP NL6EJi Hc K rBs 2qG tQb aq edOj Lixj GiNWr1 Pb Y SZe Sxx Fin aK 9Eki CHV2 a13f7G 3G 3 oDK K0i bKV y4 53E2 nFQS 8Hnqg0 E3 2 ADd dEV nmJ 7H Bc1t 2K2i hCzZuy 9k p sHn 8Ko uAR kv sHKP y8Yo dOOqBi hF 1 Z3C vUF hmj gB muZq 7ggW Lg5dQB 1k p Fxk k35 GFo dk 00YD 13qI qqbLwy QC c y8ThswELzXU3X7Ebd1KdZ7v1rN3GiirRXGKWK099ovBM0FDJCvkopYNQ2aN94Z7k0UnUKamE3OjU8DFYFFokbSI2J9V9gVlM8ALWThDPnPu3EL7HPD2VDaZTggzcCCmbvc70qqPcC9mt60ogcrTiA3HEjwTK8ymKeuJMc4q6dVz200XnYUtLR9GYjPXvFOVr6W1zUK1WbPToaWJJuKnxBLnd0ftDEbMmj4loHYyhZyMjM91zQS4p7z8eKa9h0JrbacekcirexG0z4n3xz0QOWSvFj3jLhWXUIU21iIAwJtI3RbWa90I7rzAIqI3UElUJG7tLtUXzw4KQNETvXzqWaujEMenYlNIzLGxgB3AuJEQ157}     \end{equation} and      \begin{equation}       \sum_{|\nu|\leq m} r^{|\nu|} \zxvczxbcvdfghasdfrtsdafasdfasdfdsfgsdgh \partial^{\nu} (u-P) \zxvczxbcvdfghasdfrtsdafasdfasdfdsfgsdgh_{L^q(Q_r)}       \les (\MM                  + c_0               + \zxvczxbcvdfghasdfrtsdafasdfasdfdsfgsdgh Q \zxvczxbcvdfghasdfrtsdafasdfasdfdsfgsdgh_{L^p(Q_1)}               ) r^{d+\alpha+(m+n)/q}       \comma r \in (0, 1/2]     \llabel{ sX WJNB cREV Y4H5QQ GH b plP bwd Txp OI 5OQZ AKyi ix7Qey YI 9 1Ea 16r KXK L2 ifQX QPdP NL6EJi Hc K rBs 2qG tQb aq edOj Lixj GiNWr1 Pb Y SZe Sxx Fin aK 9Eki CHV2 a13f7G 3G 3 oDK K0i bKV y4 53E2 nFQS 8Hnqg0 E3 2 ADd dEV nmJ 7H Bc1t 2K2i hCzZuy 9k p sHn 8Ko uAR kv sHKP y8Yo dOOqBi hF 1 Z3C vUF hmj gB muZq 7ggW Lg5dQB 1k p Fxk k35 GFo dk 00YD 13qI qqbLwy QC c yZR wHA fp7 9o imtC c5CV 8cEuwU w7 k 8Q7 nCq WkM gY rtVR IySM tZUGCH XV 9 mr9 GHZ ol0 VE eIjQ vwgw 17pDhX JS F UcY bqU gnG V8 IFWb S1GX az0ZTt 81 w 7En IhF F72 v2 PkWO Xlkr w6IPu5 68ThswELzXU3X7Ebd1KdZ7v1rN3GiirRXGKWK099ovBM0FDJCvkopYNQ2aN94Z7k0UnUKamE3OjU8DFYFFokbSI2J9V9gVlM8ALWThDPnPu3EL7HPD2VDaZTggzcCCmbvc70qqPcC9mt60ogcrTiA3HEjwTK8ymKeuJMc4q6dVz200XnYUtLR9GYjPXvFOVr6W1zUK1WbPToaWJJuKnxBLnd0ftDEbMmj4loHYyhZyMjM91zQS4p7z8eKa9h0JrbacekcirexG0z4n3xz0QOWSvFj3jLhWXUIU21iIAwJtI3RbWa90I7rzAIqI3UElUJG7tLtUXzw4KQNETvXzqWaujEMenYlNIzLGxgB3AuJEQ158}     \end{equation} for $q \in [1, (m+n)p/(m+n - mp))$. Also,     \begin{equation}       \zxvczxbcvdfghasdfrtsdafasdfasdfdsfgsdgh u \zxvczxbcvdfghasdfrtsdafasdfasdfdsfgsdgh_{L^p(Q_r)}       \les ( c_0                + \MM                   + \zxvczxbcvdfghasdfrtsdafasdfasdfdsfgsdgh Q \zxvczxbcvdfghasdfrtsdafasdfasdfdsfgsdgh_{L^p (Q_1)}              ) r^{d+(m+n)/p}              ,     \llabel{i bKV y4 53E2 nFQS 8Hnqg0 E3 2 ADd dEV nmJ 7H Bc1t 2K2i hCzZuy 9k p sHn 8Ko uAR kv sHKP y8Yo dOOqBi hF 1 Z3C vUF hmj gB muZq 7ggW Lg5dQB 1k p Fxk k35 GFo dk 00YD 13qI qqbLwy QC c yZR wHA fp7 9o imtC c5CV 8cEuwU w7 k 8Q7 nCq WkM gY rtVR IySM tZUGCH XV 9 mr9 GHZ ol0 VE eIjQ vwgw 17pDhX JS F UcY bqU gnG V8 IFWb S1GX az0ZTt 81 w 7En IhF F72 v2 PkWO Xlkr w6IPu5 67 9 vcW 1f6 z99 lM 2LI1 Y6Na axfl18 gT 0 gDp tVl CN4 jf GSbC ro5D v78Cxa uk Y iUI WWy YDR w8 z7Kj Px7C hC7zJv b1 b 0rF d7n Mxk 09 1wHv y4u5 vLLsJ8 Nm A kWt xuf 4P5 Nw P23b 06sF NQ68ThswELzXU3X7Ebd1KdZ7v1rN3GiirRXGKWK099ovBM0FDJCvkopYNQ2aN94Z7k0UnUKamE3OjU8DFYFFokbSI2J9V9gVlM8ALWThDPnPu3EL7HPD2VDaZTggzcCCmbvc70qqPcC9mt60ogcrTiA3HEjwTK8ymKeuJMc4q6dVz200XnYUtLR9GYjPXvFOVr6W1zUK1WbPToaWJJuKnxBLnd0ftDEbMmj4loHYyhZyMjM91zQS4p7z8eKa9h0JrbacekcirexG0z4n3xz0QOWSvFj3jLhWXUIU21iIAwJtI3RbWa90I7rzAIqI3UElUJG7tLtUXzw4KQNETvXzqWaujEMenYlNIzLGxgB3AuJEQ159}     \end{equation}     where all the implicit constants depend only on $p$, $q$, and~$d$.  \end{Theorem}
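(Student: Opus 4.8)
The plan is to transcribe the proof of Theorem~\ref{T01} into the parabolic setting, following the dictionary: balls $B_r$ are replaced by parabolic cylinders $Q_r$, the Lebesgue exponent $n$ of the elliptic estimates is replaced by the parabolic dimension $n+m$ (so that $|Q_r|\sim r^{n+m}$), polynomial degrees and homogeneity are understood in the parabolic sense ($t$ carrying weight $m$), the constant-coefficient operator is $L(0)=\partial_t-\sum_{|\nu|=m}a_\nu(0,0)\partial^\nu$, and its fundamental solution $\Gamma(x,t)$, supported in $\{t>0\}$, obeys the bound
\[
  |\partial_x^\beta\partial_t^l\Gamma(x,t)|
  \les \frac{1}{|(x,t)|^{n+|\beta|+ml}},
\]
which is the parabolic analog of the Bers bound used in Section~\ref{sec03} (see~\cite{AV}). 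Under this dictionary the standing hypothesis $m<n$ becomes $mp<m+n$, i.e.\ the assumption $1<p<1+n/m$, and this is precisely what guarantees $\Gamma\in L^{\tilde p}_{\mathrm{loc}}$ for the Young-inequality conjugate exponent $\tilde p$ used to estimate the term $I_1$.

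First I would record the parabolic versions of the preparatory lemmas. Lemma~\ref{L02} carries over with the same dyadic-shell proof, decomposing $Q_r$ and $Q_1\setminus Q_r$ into parabolic annuli $Q_{2^{-j}r}\setminus Q_{2^{-j-1}r}$ and using $\Vert |(y,s)|^{-b}\Vert_{L^{p'}(\text{annulus})}\sim(2^{-j}r)^{-b+(n+m)/p'}$; the thresholds become $b<a+(n+m)/p'$ in~(i) and $b>a+(n+m)/p'$ in~(ii). Lemmas~\ref{L01}, \ref{L10}, and~\ref{L03} then follow by the same convolution/contraction-mapping scheme, using interior $W^{m,1}_p$ estimates for parabolic operators in place of the elliptic ones, once one has the parabolic Taylor-remainder bound: if $T^d\Gamma$ denotes the Taylor polynomial of $\Gamma(\,\cdot-y,\,\cdot-s)$ of parabolic degree $d$ (so $\partial_t$ is counted with weight $m$), then
\[
  \bigl|\Gamma(x-y,t-s)-T^d\Gamma(x-y,t-s)\bigr|
  \les \frac{|(x,t)|^{d+1}}{|(y,s)|^{n+d+1}},
  \qquad |(x,t)|\le\tfrac12|(y,s)|.
\]
With this, the estimates of $I_1,I_2,I_3$ go through verbatim with $n$ replaced by $n+m$, and the parabolic Sobolev embedding $W^{m,1}_p(Q_R)\hookrightarrow L^q$, valid for $1\le q<(m+n)p/(m+n-mp)$, upgrades the $L^p$ bound to the $L^q$ bound in Lemma~\ref{L03}.

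Next I would set up the bootstrapping lemma (the parabolic analog of Lemma~\ref{L04}) by writing $u=v+w$ with $w(x,t)=\int_{Q_{1/2}}\Gamma(x-y,t-s)\bigl(L(0)u-Q\bigr)(y,s)\,dy\,ds$ and $v=u-w$ solving $L(0)v=Q$, and approximating $v,w$ by their parabolic Taylor polynomials of degree $d-1$; the dichotomy forcing the approximating polynomial to vanish is unchanged, since a nonzero parabolic polynomial of degree $\le d-1$ satisfies $\Vert P\Vert_{L^p(Q_r)}\ges r^{d-1+(m+n)/p}$. After the preliminary reduction $\eta/k_0\notin\mathbb Q$ and the choice of $k_0$ with $k_0\alpha+\eta<1\le(k_0+1)\alpha+\eta$ (hence strict), applying this lemma $k_0$ times yields $\Vert u\Vert_{L^p(Q_r)}\les(c_0+M+\Vert Q\Vert_{L^p(Q_1)})r^{d+(m+n)/p}$, exactly as in Theorem~\ref{T01}. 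The homogeneous polynomial $P$ is then constructed by the same $v$--$w$ decomposition using Taylor polynomials of degree $d$, now solving the \emph{parabolic} identity $P_t-\sum_{|\nu|=m}a_\nu(0,0)\partial^\nu P=Q$, and the error estimate $\Vert u-P\Vert_{L^q(Q_r)}\les(\cdots)r^{d+\alpha+(m+n)/q}$ follows from the absorption argument applied to $L(u-P)=(f-Q)+(L(0)-L)P$ after a parabolic dilation $(x,t)\mapsto(Rx,R^m t)$ reducing to the case of arbitrarily small coefficient oscillation, together with the homogeneity bound on $\Vert P\Vert_{L^p(Q_r)}$ and the parabolic Sobolev embedding.

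The hard part will be the parabolic Taylor expansion and the accompanying derivative bounds for $\Gamma$: because $\Gamma$ is supported in $\{t>0\}$ and the time variable scales with weight $m$, the Taylor polynomial must be taken with respect to the parabolic scaling, and one must verify that the remainder of the degree-$d$ parabolic Taylor polynomial decays like $|(x,t)|^{d+1}|(y,s)|^{-(n+d+1)}$ uniformly in the direction of the increment — including increments purely in $t$, where a single $\partial_t$ costs $m$ units of homogeneity — as well as the companion bound $\Vert\Gamma\Vert_{L^{\tilde p}(Q_{3r})}\les r^{\,m-(n+m)+(n+m)/\tilde p}$ used for $I_1$. Once these parabolic kernel estimates are established, every remaining step is a routine transcription of the argument in Section~\ref{sec03}, with $n$ replaced by $n+m$ and balls replaced by parabolic cylinders.
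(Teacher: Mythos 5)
Your proposal follows essentially the same route as the paper: parabolic analogs of the preparatory dyadic-shell estimate (Lemma~\ref{L06}), the convolution/contraction existence lemma (Lemma~\ref{L07}), the Taylor-polynomial lemma (Lemma~\ref{L08}), the bootstrapping lemma (Lemma~\ref{L09}), and then the same $v$-$w$ decomposition followed by the absorption argument. The one substantive point where you diverge from the paper is in the Taylor-remainder bound for $\Gamma$: the paper carries the remainder as a sum $\sum_{j=d+1}^{m(d+1)}\frac{|(x,t)|^j}{|(y,s)|^{n+j}}$, reflecting the two types of contributions (the classical Taylor remainder of Euclidean degree $d+1$, plus the ``missing'' monomials with $|\beta|+l\le d$ but $|\beta|+ml>d$ that the parabolic Taylor polynomial omits), and then integrates each term separately via Lemma~\ref{L06}(ii). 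You collapse this sum to a single term $\frac{|(x,t)|^{d+1}}{|(y,s)|^{n+d+1}}$ using $|(x,t)|\le\frac12|(y,s)|$ to dominate the higher powers; this is correct, buys a slightly cleaner statement, and still satisfies the threshold $b>a+(m+n)/p'$ needed for Lemma~\ref{L06}(ii) (the condition reduces to $\gamma<1$, exactly as in the elliptic case), but it is worth being explicit in the writeup that the collapse is only valid in the regime $2r\le|(y,s)|$, which is the regime where $I_3$ lives. Everything else you describe — the choice of $k_0$, the application of the bootstrapping lemma $k_0$ times, the two Taylor-polynomial decompositions (degree $d-1$ for the vanishing-order claim, degree $d$ for the construction of $P$), the absorption after a parabolic dilation $(x,t)\mapsto(Rx,R^m t)$, and the parabolic Sobolev embedding $W^{m,1}_p\hookrightarrow L^q$ for $q<(m+n)p/(m+n-mp)$ — matches the paper's proof in both structure and detail.
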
 \colb  \par Throughout this section, we use the notation \begin{equation}    \llabel{ZR wHA fp7 9o imtC c5CV 8cEuwU w7 k 8Q7 nCq WkM gY rtVR IySM tZUGCH XV 9 mr9 GHZ ol0 VE eIjQ vwgw 17pDhX JS F UcY bqU gnG V8 IFWb S1GX az0ZTt 81 w 7En IhF F72 v2 PkWO Xlkr w6IPu5 67 9 vcW 1f6 z99 lM 2LI1 Y6Na axfl18 gT 0 gDp tVl CN4 jf GSbC ro5D v78Cxa uk Y iUI WWy YDR w8 z7Kj Px7C hC7zJv b1 b 0rF d7n Mxk 09 1wHv y4u5 vLLsJ8 Nm A kWt xuf 4P5 Nw P23b 06sF NQ6xgD hu R GbK 7j2 O4g y4 p4BL top3 h2kfyI 9w O 4Aa EWb 36Y yH YiI1 S3CO J7aN1r 0s Q OrC AC4 vL7 yr CGkI RlNu GbOuuk 1a w LDK 2zl Ka4 0h yJnD V4iF xsqO00 1r q CeO AO2 es7 DR aCpU G548ThswELzXU3X7Ebd1KdZ7v1rN3GiirRXGKWK099ovBM0FDJCvkopYNQ2aN94Z7k0UnUKamE3OjU8DFYFFokbSI2J9V9gVlM8ALWThDPnPu3EL7HPD2VDaZTggzcCCmbvc70qqPcC9mt60ogcrTiA3HEjwTK8ymKeuJMc4q6dVz200XnYUtLR9GYjPXvFOVr6W1zUK1WbPToaWJJuKnxBLnd0ftDEbMmj4loHYyhZyMjM91zQS4p7z8eKa9h0JrbacekcirexG0z4n3xz0QOWSvFj3jLhWXUIU21iIAwJtI3RbWa90I7rzAIqI3UElUJG7tLtUXzw4KQNETvXzqWaujEMenYlNIzLGxgB3AuJEQ177} L(0)  = \partial_t  - \sum_{|\nu|=m} a_{\nu}(0,0) \partial^{\nu} , \end{equation} and $\Gamma$ is the fundamental solution of~$L(0)$, which satisfies \begin{equation} \label{8ThswELzXU3X7Ebd1KdZ7v1rN3GiirRXGKWK099ovBM0FDJCvkopYNQ2aN94Z7k0UnUKamE3OjU8DFYFFokbSI2J9V9gVlM8ALWThDPnPu3EL7HPD2VDaZTggzcCCmbvc70qqPcC9mt60ogcrTiA3HEjwTK8ymKeuJMc4q6dVz200XnYUtLR9GYjPXvFOVr6W1zUK1WbPToaWJJuKnxBLnd0ftDEbMmj4loHYyhZyMjM91zQS4p7z8eKa9h0JrbacekcirexG0z4n3xz0QOWSvFj3jLhWXUIU21iIAwJtI3RbWa90I7rzAIqI3UElUJG7tLtUXzw4KQNETvXzqWaujEMenYlNIzLGxgB3AuJEQP055}       | \partial^{\beta}_x \partial^l_t \Gamma (x,t) |        \les \frac{1}{|(x,t)|^{n+|\beta|+ml}}    \comma \beta\in \mathbb{N}_0^{n}    \commaone l\in\mathbb{N}_0       . \end{equation} \par \subsection{Interior $W^{m,1}_q$ existence estimate} We now state an analog of Lemma~\ref{L03}. The proof is similar to the elliptic case except that the fundamental solution $\Gamma$ now satisfies \eqref{8ThswELzXU3X7Ebd1KdZ7v1rN3GiirRXGKWK099ovBM0FDJCvkopYNQ2aN94Z7k0UnUKamE3OjU8DFYFFokbSI2J9V9gVlM8ALWThDPnPu3EL7HPD2VDaZTggzcCCmbvc70qqPcC9mt60ogcrTiA3HEjwTK8ymKeuJMc4q6dVz200XnYUtLR9GYjPXvFOVr6W1zUK1WbPToaWJJuKnxBLnd0ftDEbMmj4loHYyhZyMjM91zQS4p7z8eKa9h0JrbacekcirexG0z4n3xz0QOWSvFj3jLhWXUIU21iIAwJtI3RbWa90I7rzAIqI3UElUJG7tLtUXzw4KQNETvXzqWaujEMenYlNIzLGxgB3AuJEQP055}. \par \cole  \begin{Lemma} \label{L07} Assume that $L = \partial_t -  \sum_{|\nu|\leq m} a_{\nu}(x) \partial^{\nu}$, defined in $Q_1$, satisfies the conditions~\eqref{8ThswELzXU3X7Ebd1KdZ7v1rN3GiirRXGKWK099ovBM0FDJCvkopYNQ2aN94Z7k0UnUKamE3OjU8DFYFFokbSI2J9V9gVlM8ALWThDPnPu3EL7HPD2VDaZTggzcCCmbvc70qqPcC9mt60ogcrTiA3HEjwTK8ymKeuJMc4q6dVz200XnYUtLR9GYjPXvFOVr6W1zUK1WbPToaWJJuKnxBLnd0ftDEbMmj4loHYyhZyMjM91zQS4p7z8eKa9h0JrbacekcirexG0z4n3xz0QOWSvFj3jLhWXUIU21iIAwJtI3RbWa90I7rzAIqI3UElUJG7tLtUXzw4KQNETvXzqWaujEMenYlNIzLGxgB3AuJEQ148}$-$\eqref{8ThswELzXU3X7Ebd1KdZ7v1rN3GiirRXGKWK099ovBM0FDJCvkopYNQ2aN94Z7k0UnUKamE3OjU8DFYFFokbSI2J9V9gVlM8ALWThDPnPu3EL7HPD2VDaZTggzcCCmbvc70qqPcC9mt60ogcrTiA3HEjwTK8ymKeuJMc4q6dVz200XnYUtLR9GYjPXvFOVr6W1zUK1WbPToaWJJuKnxBLnd0ftDEbMmj4loHYyhZyMjM91zQS4p7z8eKa9h0JrbacekcirexG0z4n3xz0QOWSvFj3jLhWXUIU21iIAwJtI3RbWa90I7rzAIqI3UElUJG7tLtUXzw4KQNETvXzqWaujEMenYlNIzLGxgB3AuJEQ150}. Suppose that $f \in L^p(Q_1)$ is such that there exist $\MM>0$ and $\gamma \in (0,1)$ satisfying     \begin{equation}       \zxvczxbcvdfghasdfrtsdafasdfasdfdsfgsdgh f \zxvczxbcvdfghasdfrtsdafasdfasdfdsfgsdgh_{L^p(Q_r)}       \leq \MM r^{d-m+\gamma + (m+n)/p}       \comma r \in (0, 1]       .     \llabel{7 9 vcW 1f6 z99 lM 2LI1 Y6Na axfl18 gT 0 gDp tVl CN4 jf GSbC ro5D v78Cxa uk Y iUI WWy YDR w8 z7Kj Px7C hC7zJv b1 b 0rF d7n Mxk 09 1wHv y4u5 vLLsJ8 Nm A kWt xuf 4P5 Nw P23b 06sF NQ6xgD hu R GbK 7j2 O4g y4 p4BL top3 h2kfyI 9w O 4Aa EWb 36Y yH YiI1 S3CO J7aN1r 0s Q OrC AC4 vL7 yr CGkI RlNu GbOuuk 1a w LDK 2zl Ka4 0h yJnD V4iF xsqO00 1r q CeO AO2 es7 DR aCpU G54F 2i97xS Qr c bPZ 6K8 Kud n9 e6SY o396 Fr8LUx yX O jdF sMr l54 Eh T8vr xxF2 phKPbs zr l pMA ubE RMG QA aCBu 2Lqw Gasprf IZ O iKV Vbu Vae 6a bauf y9Kc Fk6cBl Z5 r KUj htW E1C nt 9Rm8ThswELzXU3X7Ebd1KdZ7v1rN3GiirRXGKWK099ovBM0FDJCvkopYNQ2aN94Z7k0UnUKamE3OjU8DFYFFokbSI2J9V9gVlM8ALWThDPnPu3EL7HPD2VDaZTggzcCCmbvc70qqPcC9mt60ogcrTiA3HEjwTK8ymKeuJMc4q6dVz200XnYUtLR9GYjPXvFOVr6W1zUK1WbPToaWJJuKnxBLnd0ftDEbMmj4loHYyhZyMjM91zQS4p7z8eKa9h0JrbacekcirexG0z4n3xz0QOWSvFj3jLhWXUIU21iIAwJtI3RbWa90I7rzAIqI3UElUJG7tLtUXzw4KQNETvXzqWaujEMenYlNIzLGxgB3AuJEQ174}     \end{equation}  Then there exist $R>0$, depending on $\alpha$, and $u \in W^{m,1}_q(Q_1)$ solving     $       Lu = f     $ in $Q_R$ such that     \begin{equation}       \sum_{|\nu|\leq m} r^{|\nu|} \zxvczxbcvdfghasdfrtsdafasdfasdfdsfgsdgh \partial^{\nu} u \zxvczxbcvdfghasdfrtsdafasdfasdfdsfgsdgh_{L^q(Q_r)}       \les \MM r^{d+\gamma +(m+n)/q}       \comma r \leq R       ,     \label{8ThswELzXU3X7Ebd1KdZ7v1rN3GiirRXGKWK099ovBM0FDJCvkopYNQ2aN94Z7k0UnUKamE3OjU8DFYFFokbSI2J9V9gVlM8ALWThDPnPu3EL7HPD2VDaZTggzcCCmbvc70qqPcC9mt60ogcrTiA3HEjwTK8ymKeuJMc4q6dVz200XnYUtLR9GYjPXvFOVr6W1zUK1WbPToaWJJuKnxBLnd0ftDEbMmj4loHYyhZyMjM91zQS4p7z8eKa9h0JrbacekcirexG0z4n3xz0QOWSvFj3jLhWXUIU21iIAwJtI3RbWa90I7rzAIqI3UElUJG7tLtUXzw4KQNETvXzqWaujEMenYlNIzLGxgB3AuJEQ175}     \end{equation} for $q \in [1, (m+n)p/(m+n-mp))$. \end{Lemma}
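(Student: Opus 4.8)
The plan is to repeat, in parabolic variables, the arguments proving Lemma~\ref{L01} and Lemma~\ref{L03}, systematically replacing each Euclidean ball $B_r$ by the parabolic cylinder $Q_r$, the spatial dimension $n$ by the parabolic homogeneous dimension $m+n$ (so that $|Q_r|\sim r^{m+n}$ and the H\"older weight $n/p'$ becomes $(m+n)/p'$), and the elliptic decay bound on $\Gamma$ by its parabolic counterpart \eqref{8ThswELzXU3X7Ebd1KdZ7v1rN3GiirRXGKWK099ovBM0FDJCvkopYNQ2aN94Z7k0UnUKamE3OjU8DFYFFokbSI2J9V9gVlM8ALWThDPnPu3EL7HPD2VDaZTggzcCCmbvc70qqPcC9mt60ogcrTiA3HEjwTK8ymKeuJMc4q6dVz200XnYUtLR9GYjPXvFOVr6W1zUK1WbPToaWJJuKnxBLnd0ftDEbMmj4loHYyhZyMjM91zQS4p7z8eKa9h0JrbacekcirexG0z4n3xz0QOWSvFj3jLhWXUIU21iIAwJtI3RbWa90I7rzAIqI3UElUJG7tLtUXzw4KQNETvXzqWaujEMenYlNIzLGxgB3AuJEQP055}. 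Causality of the parabolic fundamental solution ($\Gamma(x,t)=0$ for $t\le 0$) makes the convolution below automatically well posed on $Q_1$.

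First I would record the parabolic analog of Lemma~\ref{L02}: if $\| f\|_{L^p(Q_r)}\le \MM r^{a}$ for $r\in(0,1]$, then $\int_{Q_r}|f(y,s)|\,|(y,s)|^{-b}\,dy\,ds\les \MM r^{a-b+(m+n)/p'}$ when $b<a+(m+n)/p'$, and the same bound holds with $\int_{Q_r}$ replaced by $\int_{Q_1\setminus Q_r}$ when $b>a+(m+n)/p'$. The proof is the dyadic-shell computation of Lemma~\ref{L02}, now decomposing into parabolic annuli $Q_{2^{-j}r}\setminus Q_{2^{-j-1}r}$, applying H\"older's inequality on each shell, and summing a geometric series, with the borderline case $bp'=m+n$ treated separately.

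Next, consider the constant-coefficient model $L(0)=\partial_t-\sum_{|\nu|=m}a_\nu(0,0)\partial^\nu$: set $w(x,t)=\int \Gamma(x-y,t-s)f(y,s)\,dy\,ds$ over the relevant portion of $Q_1$ (causality restricts the integration to $s<t$), which solves $L(0)w=f$, and let $P_w$ be its parabolic Taylor polynomial of parabolic degree at most $d$, obtained by replacing $\Gamma(x-y,t-s)$ by $\sum_{|\beta|+ml\le d}\partial_x^\beta\partial_t^l\Gamma(-y,-s)\,x^\beta t^l/(\beta!\,l!)$; the coefficients converge by the parabolic version of Lemma~\ref{L02} with $a=d-m+\gamma+(m+n)/p$ and $b=n+|\beta|+ml$, whose admissibility condition reads $|\beta|+ml<d+\gamma$. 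Since $\| f\|_{L^p(Q_r)}\le \MM r^{d-m+\gamma+(m+n)/p}$ forces the parabolic Taylor polynomial of $f$ of degree $d-m$ to vanish, and $L(0)$ lowers parabolic degree by exactly $m$, one gets $L(0)P_w=0$, so $u:=w-P_w$ solves $L(0)u=f$. For $r\le1/2$ I would then estimate $\| u\|_{L^q(Q_r)}$ by the three-piece splitting of Lemma~\ref{L01}, $u=I_1+I_2+I_3$, with $I_1$ the convolution of $\Gamma$ against $f\chi_{Q_{2r}}$, $I_2$ the Taylor part integrated over $Q_{2r}$, and $I_3$ the far piece of $\Gamma$ minus its Taylor polynomial over $Q_1\setminus Q_{2r}$. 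For $I_1$, Young's inequality gives $\| I_1\|_{L^q(Q_r)}\le\|\Gamma\|_{L^{\tilde p}(Q_{3r})}\| f\|_{L^p(Q_{2r})}$ with $1/\tilde p+1/p=1+1/q$; since $|\Gamma|\les|(x,t)|^{-n}$ by the aforementioned decay estimate, one has $\|\Gamma\|_{L^{\tilde p}(Q_{3r})}\les r^{(m+n)/\tilde p-n}$ precisely when $\tilde p<(m+n)/n$, which is equivalent to $q<(m+n)p/(m+n-mp)$, and a short exponent count using $1/\tilde p+1/p=1+1/q$ yields $\| I_1\|_{L^q(Q_r)}\les\MM r^{d+\gamma+(m+n)/q}$ --- this is the only place the range of $q$ enters. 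For $I_2$ one first proves the pointwise bound $|I_2(x,t)|\les\MM r^{d+\gamma}$ on $Q_r$ using $|x^\beta t^l|\le r^{|\beta|+ml}$, the derivative bound on $\Gamma$, and the parabolic Lemma~\ref{L02}(i), and then integrates over $Q_r$. For $I_3$, a parabolic Taylor-with-remainder bound (anisotropic, measured in the parabolic norm) gives $|I_3(x,t)|\les |(x,t)|^{d+1}\int_{Q_1\setminus Q_{2r}}|f(y,s)|\,|(y,s)|^{-(n+d+1)}\,dy\,ds\les\MM r^{d+1}r^{\gamma-1}=\MM r^{d+\gamma}$ on $Q_r$ by the parabolic Lemma~\ref{L02}(ii) (whose hypothesis becomes $\gamma<1$), again followed by integration. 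This yields $\| u\|_{L^q(Q_r)}\les\MM r^{d+\gamma+(m+n)/q}$ for $r\le1/2$, and the derivative estimates $\sum_{|\nu|\le m}r^{|\nu|}\|\partial^\nu u\|_{L^q(Q_r)}\les\MM r^{d+\gamma+(m+n)/q}$ follow from the interior parabolic $W^{m,1}_q$ estimate together with the parabolic Sobolev embedding $W^{m,1}_p\hookrightarrow L^q$, the time derivative being controlled by $\partial_t u=f+\sum a_\nu\partial^\nu u$.

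Finally, for variable coefficients close to the model, i.e.\ when $\sum_{|\nu|=m}|a_\nu-a_\nu(0,0)|+\sum_{|\nu|<m}|a_\nu|\le\eps$ with $\eps=\eps(p,q,d,\gamma)$ small, one rewrites $Lu=f$ as $L(0)u=f+(L(0)-L)u$ and runs the contraction argument of Lemma~\ref{L01} on the convex set of $u\in W^{m,1}_q(Q_1)$ with $\| u\|_{W^{m,1}_q(Q_1)}\le M_1$ and $\sum_{|\nu|\le m}r^{|\nu|}\|\partial^\nu u\|_{L^q(Q_r)}\le M_2 r^{d+\gamma+(m+n)/q}$ for all $r<1$, where $T(u)=\Gamma*(f+(L(0)-L)u)$ maps this set into itself and is a contraction by the global parabolic $W^{m,1}_q$ estimate and the smallness of $\eps$; the fixed point is the desired $u$. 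The general case follows by the parabolic dilation $(x,t)\mapsto(Rx,R^m t)$, which preserves the form of the parabolicity, boundedness, and H\"older hypotheses and, for $R=R(K,\alpha)$ small, forces the closeness above --- the H\"older condition on the leading coefficients being exactly what makes this possible. The main obstacle I anticipate is twofold: checking that $\tilde p<(m+n)/n$ is genuinely equivalent to $q<(m+n)p/(m+n-mp)$, so that the Young-inequality step in the $I_1$ estimate is sharp, and handling the top-order spatial derivatives in the $L^q$ bound, where $\partial^\nu\Gamma$ with $|\nu|=m$ is a Calder\'on--Zygmund rather than a weakly singular kernel; routing the $W^{m,1}_q$ estimate through the two regimes $q\le p$ and $q>p$ --- using $\| f\|_{L^q(Q_r)}\les r^{(m+n)(1/q-1/p)}\| f\|_{L^p(Q_r)}$ in the first and parabolic Sobolev embedding in the second --- sidesteps this difficulty.
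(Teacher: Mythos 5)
Your proposal is correct and follows essentially the same route as the paper's proof: the convolution $w=\Gamma*f$ with $P_w$ its parabolic Taylor polynomial, the three-piece decomposition $I_1+I_2+I_3$ estimated via Young's inequality, the parabolic shell-integral lemma (the paper's Lemma~\ref{L06}), and a parabolic Taylor remainder, followed by the contraction mapping and parabolic dilation $(x,t)\mapsto(Rx,R^mt)$ for variable coefficients. One small remark worth keeping in mind: your Young pairing $\|\Gamma\|_{L^{\tilde p}(Q_{3r})}\|f\|_{L^p(Q_{2r})}$ with $1/\tilde p+1/p=1+1/q$ is the correct one for the full range $q\in[1,(m+n)p/(m+n-mp))$ and matches the paper's Lemma~\ref{L03}; the pairing $\|\Gamma\|_{L^1(Q_{3r})}\|f\|_{L^q(Q_{2r})}$ as literally written in the paper's sketch of Lemma~\ref{L07} only closes when $q\le p$, so your version is the sharper and cleaner one.
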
 \colb  \par To prove Lemma~\ref{L07}, we first need an estimate for the integral  $\zxvczxbcvdfghasdfrtsdafasdfasdfdsfgsdgf (\fractext{|f(y,s)|}{|(y,s)|^b}) \, dy \, ds$ over~$Q_r$ and $Q_1 \backslash Q_r$.  \par \cole \begin{Lemma} \label{L06} Assume that $f\in L^p(Q_1)$, where $p\in [1,\infty]$, satisfies   \begin{equation}    \zxvczxbcvdfghasdfrtsdafasdfasdfdsfgsdgh f\zxvczxbcvdfghasdfrtsdafasdfasdfdsfgsdgh_{L^{p}(Q_r)}    \leq M r^{a}    \comma r\in(0,1]    ,    \llabel{xgD hu R GbK 7j2 O4g y4 p4BL top3 h2kfyI 9w O 4Aa EWb 36Y yH YiI1 S3CO J7aN1r 0s Q OrC AC4 vL7 yr CGkI RlNu GbOuuk 1a w LDK 2zl Ka4 0h yJnD V4iF xsqO00 1r q CeO AO2 es7 DR aCpU G54F 2i97xS Qr c bPZ 6K8 Kud n9 e6SY o396 Fr8LUx yX O jdF sMr l54 Eh T8vr xxF2 phKPbs zr l pMA ubE RMG QA aCBu 2Lqw Gasprf IZ O iKV Vbu Vae 6a bauf y9Kc Fk6cBl Z5 r KUj htW E1C nt 9Rmd whJR ySGVSO VT v 9FY 4uz yAH Sp 6yT9 s6R6 oOi3aq Zl L 7bI vWZ 18c Fa iwpt C1nd Fyp4oK xD f Qz2 813 6a8 zX wsGl Ysh9 Gp3Tal nr R UKt tBK eFr 45 43qU 2hh3 WbYw09 g2 W LIX zvQ zMk j8ThswELzXU3X7Ebd1KdZ7v1rN3GiirRXGKWK099ovBM0FDJCvkopYNQ2aN94Z7k0UnUKamE3OjU8DFYFFokbSI2J9V9gVlM8ALWThDPnPu3EL7HPD2VDaZTggzcCCmbvc70qqPcC9mt60ogcrTiA3HEjwTK8ymKeuJMc4q6dVz200XnYUtLR9GYjPXvFOVr6W1zUK1WbPToaWJJuKnxBLnd0ftDEbMmj4loHYyhZyMjM91zQS4p7z8eKa9h0JrbacekcirexG0z4n3xz0QOWSvFj3jLhWXUIU21iIAwJtI3RbWa90I7rzAIqI3UElUJG7tLtUXzw4KQNETvXzqWaujEMenYlNIzLGxgB3AuJEQ166}   \end{equation} where~$a \in \mathbb{R}$.\\ (i) If for some $b\in\mathbb{R}$, we have   \begin{equation}
   b < a + \frac{m+n}{p'}    ,    \label{8ThswELzXU3X7Ebd1KdZ7v1rN3GiirRXGKWK099ovBM0FDJCvkopYNQ2aN94Z7k0UnUKamE3OjU8DFYFFokbSI2J9V9gVlM8ALWThDPnPu3EL7HPD2VDaZTggzcCCmbvc70qqPcC9mt60ogcrTiA3HEjwTK8ymKeuJMc4q6dVz200XnYUtLR9GYjPXvFOVr6W1zUK1WbPToaWJJuKnxBLnd0ftDEbMmj4loHYyhZyMjM91zQS4p7z8eKa9h0JrbacekcirexG0z4n3xz0QOWSvFj3jLhWXUIU21iIAwJtI3RbWa90I7rzAIqI3UElUJG7tLtUXzw4KQNETvXzqWaujEMenYlNIzLGxgB3AuJEQ167}   \end{equation} then   \begin{equation}    \zxvczxbcvdfghasdfrtsdafasdfasdfdsfgsdgf_{Q_r}    \frac{|f(y,s)|\,dy \, ds}{|(y,s)|^{b}}    \les    M r^{a-b+(m+n)/p'}    \comma r\in (0,1]    ,    \llabel{F 2i97xS Qr c bPZ 6K8 Kud n9 e6SY o396 Fr8LUx yX O jdF sMr l54 Eh T8vr xxF2 phKPbs zr l pMA ubE RMG QA aCBu 2Lqw Gasprf IZ O iKV Vbu Vae 6a bauf y9Kc Fk6cBl Z5 r KUj htW E1C nt 9Rmd whJR ySGVSO VT v 9FY 4uz yAH Sp 6yT9 s6R6 oOi3aq Zl L 7bI vWZ 18c Fa iwpt C1nd Fyp4oK xD f Qz2 813 6a8 zX wsGl Ysh9 Gp3Tal nr R UKt tBK eFr 45 43qU 2hh3 WbYw09 g2 W LIX zvQ zMk j5 f0xL seH9 dscinG wu P JLP 1gE N5W qY sSoW Peqj MimTyb Hj j cbn 0NO 5hz P9 W40r 2w77 TAoz70 N1 a u09 boc DSx Gc 3tvK LXaC 1dKgw9 H3 o 2kE oul In9 TS PyL2 HXO7 tSZse0 1Z 9 Hds lDq 8ThswELzXU3X7Ebd1KdZ7v1rN3GiirRXGKWK099ovBM0FDJCvkopYNQ2aN94Z7k0UnUKamE3OjU8DFYFFokbSI2J9V9gVlM8ALWThDPnPu3EL7HPD2VDaZTggzcCCmbvc70qqPcC9mt60ogcrTiA3HEjwTK8ymKeuJMc4q6dVz200XnYUtLR9GYjPXvFOVr6W1zUK1WbPToaWJJuKnxBLnd0ftDEbMmj4loHYyhZyMjM91zQS4p7z8eKa9h0JrbacekcirexG0z4n3xz0QOWSvFj3jLhWXUIU21iIAwJtI3RbWa90I7rzAIqI3UElUJG7tLtUXzw4KQNETvXzqWaujEMenYlNIzLGxgB3AuJEQ168}   \end{equation} where the implicit constant depends on $a$, $b$, and~$p$.\\ (ii) If   \begin{equation}       b       > a + \frac{m+n}{p'}    ,    \llabel{d whJR ySGVSO VT v 9FY 4uz yAH Sp 6yT9 s6R6 oOi3aq Zl L 7bI vWZ 18c Fa iwpt C1nd Fyp4oK xD f Qz2 813 6a8 zX wsGl Ysh9 Gp3Tal nr R UKt tBK eFr 45 43qU 2hh3 WbYw09 g2 W LIX zvQ zMk j5 f0xL seH9 dscinG wu P JLP 1gE N5W qY sSoW Peqj MimTyb Hj j cbn 0NO 5hz P9 W40r 2w77 TAoz70 N1 a u09 boc DSx Gc 3tvK LXaC 1dKgw9 H3 o 2kE oul In9 TS PyL2 HXO7 tSZse0 1Z 9 Hds lDq 0tm SO AVqt A1FQ zEMKSb ak z nw8 39w nH1 Dp CjGI k5X3 B6S6UI 7H I gAa f9E V33 Bk kuo3 FyEi 8Ty2AB PY z SWj Pj5 tYZ ET Yzg6 Ix5t ATPMdl Gk e 67X b7F ktE sz yFyc mVhG JZ29aP gz k Yj48ThswELzXU3X7Ebd1KdZ7v1rN3GiirRXGKWK099ovBM0FDJCvkopYNQ2aN94Z7k0UnUKamE3OjU8DFYFFokbSI2J9V9gVlM8ALWThDPnPu3EL7HPD2VDaZTggzcCCmbvc70qqPcC9mt60ogcrTiA3HEjwTK8ymKeuJMc4q6dVz200XnYUtLR9GYjPXvFOVr6W1zUK1WbPToaWJJuKnxBLnd0ftDEbMmj4loHYyhZyMjM91zQS4p7z8eKa9h0JrbacekcirexG0z4n3xz0QOWSvFj3jLhWXUIU21iIAwJtI3RbWa90I7rzAIqI3UElUJG7tLtUXzw4KQNETvXzqWaujEMenYlNIzLGxgB3AuJEQ169}   \end{equation} we have   \begin{equation}    \zxvczxbcvdfghasdfrtsdafasdfasdfdsfgsdgf_{Q_1\backslash Q_r}    \frac{|f(y,s)|\,dy \, ds}{|(y,s)|^{b}}    \les    M r^{a-b+(m+n)/p'}    \comma r\in (0,1]    ,    \llabel{5 f0xL seH9 dscinG wu P JLP 1gE N5W qY sSoW Peqj MimTyb Hj j cbn 0NO 5hz P9 W40r 2w77 TAoz70 N1 a u09 boc DSx Gc 3tvK LXaC 1dKgw9 H3 o 2kE oul In9 TS PyL2 HXO7 tSZse0 1Z 9 Hds lDq 0tm SO AVqt A1FQ zEMKSb ak z nw8 39w nH1 Dp CjGI k5X3 B6S6UI 7H I gAa f9E V33 Bk kuo3 FyEi 8Ty2AB PY z SWj Pj5 tYZ ET Yzg6 Ix5t ATPMdl Gk e 67X b7F ktE sz yFyc mVhG JZ29aP gz k Yj4 cEr HCd P7 XFHU O9zo y4AZai SR O pIn 0tp 7kZ zU VHQt m3ip 3xEd41 By 7 2ux IiY 8BC Lb OYGo LDwp juza6i Pa k Zdh aD3 xSX yj pdOw oqQq Jl6RFg lO t X67 nm7 s1l ZJ mGUr dIdX Q7jps7 rc 8ThswELzXU3X7Ebd1KdZ7v1rN3GiirRXGKWK099ovBM0FDJCvkopYNQ2aN94Z7k0UnUKamE3OjU8DFYFFokbSI2J9V9gVlM8ALWThDPnPu3EL7HPD2VDaZTggzcCCmbvc70qqPcC9mt60ogcrTiA3HEjwTK8ymKeuJMc4q6dVz200XnYUtLR9GYjPXvFOVr6W1zUK1WbPToaWJJuKnxBLnd0ftDEbMmj4loHYyhZyMjM91zQS4p7z8eKa9h0JrbacekcirexG0z4n3xz0QOWSvFj3jLhWXUIU21iIAwJtI3RbWa90I7rzAIqI3UElUJG7tLtUXzw4KQNETvXzqWaujEMenYlNIzLGxgB3AuJEQ170}   \end{equation} where the implicit constant depends on $a$, $b$, and~$p$. \par \end{Lemma}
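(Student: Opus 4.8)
The plan is to mirror the proof of Lemma~\ref{L02} almost verbatim, with the Euclidean balls $B_r$ replaced by the parabolic cylinders $Q_r$ and the Euclidean norm by the parabolic norm $|(x,t)|=(|x|^m+|t|)^{1/m}$. The two structural facts that make this transfer work are the parabolic scaling $|Q_r|\approx r^{m+n}$ (with $|x|<r$ contributing $r^n$ and $-r^m<t<0$ contributing $r^m$) and the comparability $|(y,s)|\approx\rho$ for $(y,s)\in Q_\rho\setminus Q_{\rho/2}$; these take over the roles played by $|B_r|\approx r^n$ and $|y|\approx\rho$ in the elliptic argument. As in the statement, $p'$ denotes the H\"older conjugate of $p$.

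For part (i) I would decompose $Q_r=\bigcup_{j\ge0}\big(Q_{2^{-j}r}\setminus Q_{2^{-j-1}r}\big)$ and estimate, on each dyadic parabolic shell, by H\"older's inequality
\[
\int_{Q_{2^{-j}r}\setminus Q_{2^{-j-1}r}}\frac{|f(y,s)|\,dy\,ds}{|(y,s)|^{b}}
\le \|f\|_{L^p(Q_{2^{-j}r})}\,\big\||(y,s)|^{-b}\big\|_{L^{p'}(Q_{2^{-j}r}\setminus Q_{2^{-j-1}r})}.
\]
The first factor is at most $M(2^{-j}r)^a$ by hypothesis, while the second is $\les(2^{-j}r)^{-b+(m+n)/p'}$, since $|(y,s)|^{-b}\approx(2^{-j}r)^{-b}$ on the shell and the shell has measure $\approx(2^{-j}r)^{m+n}$. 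Summing the geometric series $\sum_{j\ge0}M(2^{-j}r)^{a-b+(m+n)/p'}$, which converges with sum $\les M r^{a-b+(m+n)/p'}$ precisely because the assumption $b<a+(m+n)/p'$ makes the exponent $a-b+(m+n)/p'$ positive, yields the claimed bound.

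For part (ii) I would choose $m_0\in\mathbb{N}_0$ with $2^{-m_0}<r\le 2^{-m_0+1}$, use the inclusion $Q_1\setminus Q_r\subseteq\bigcup_{j=0}^{m_0}\big(Q_{2^{-j}}\setminus Q_{2^{-j-1}}\big)$, and run the same H\"older estimate on each shell, this time bounding $\|f\|_{L^p(Q_{2^{-j}})}\le M(2^{-j})^a$. Since the assumption $b>a+(m+n)/p'$ makes $a-b+(m+n)/p'$ negative, the finite sum $\sum_{j=0}^{m_0}M(2^{-j})^{a-b+(m+n)/p'}$ is dominated by its last term, $\approx M(2^{-m_0})^{a-b+(m+n)/p'}\approx M r^{a-b+(m+n)/p'}$, which is exactly the desired estimate.

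The computation is essentially routine, so there is no real obstacle here; the only points that need slight care — exactly as flagged for Lemma~\ref{L02} — are the borderline case $bp'=m+n$ when evaluating $\||(y,s)|^{-b}\|_{L^{p'}}$ on a shell by a direct integration (the logarithm that appears there still produces the same power of the radius), and the endpoint $p=1$, i.e. $p'=\infty$, where the weight's $L^\infty$ norm on the shell is again $\approx\rho^{-b}=\rho^{-b+(m+n)/p'}$ because $(m+n)/p'=0$; one checks directly that all cases give the stated power. Beyond this bookkeeping of the anisotropic parabolic scaling there is nothing further to do.
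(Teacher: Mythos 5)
Your argument matches the paper's proof of Lemma~\ref{L06} essentially verbatim: dyadic decomposition of $Q_r$ (resp.\ $Q_1\setminus Q_r$) into parabolic shells, H\"older's inequality on each shell with the weight's $L^{p'}$ norm scaling like $\rho^{-b+(m+n)/p'}$, and summation of the resulting geometric series, with the sign of $a-b+(m+n)/p'$ controlling which part of the series dominates. The endpoint remarks (borderline $bp'=m+n$ and $p\in\{1,\infty\}$) are exactly the ones the paper flags as well.
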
 \colb \begin{proof}[Proof of Lemma~\ref{L06}] We prove the statement when $p \in (1, \infty)$; it is easy to check that the same holds if $p = 1$ or~$p = \infty$. \par (i) We write   \begin{align}    \begin{split}    \zxvczxbcvdfghasdfrtsdafasdfasdfdsfgsdgf_{Q_r}    \frac{|f(y,s)|\,dy\,ds}{|(y,s)|^{b}}    &    =    \sum_{m=0}^{\infty}    \zxvczxbcvdfghasdfrtsdafasdfasdfdsfgsdgf_{Q_{2^{-m} r}\backslash Q_{2^{-m-1}r}}    \frac{|f(y,s)|\,dy\, ds}{|(y,s)|^{b}}    \\&    \les    \sum_{m=0}^{\infty}    \zxvczxbcvdfghasdfrtsdafasdfasdfdsfgsdgh f\zxvczxbcvdfghasdfrtsdafasdfasdfdsfgsdgh_{L^p(Q_{2^{-m}r})}    \zxvczxbcvdfghasdfrtsdafasdfasdfdsfgsdgh |(y,s)|^{-b}\zxvczxbcvdfghasdfrtsdafasdfasdfdsfgsdgh_{L^{p'}(Q_{2^{-m} r}\backslash Q_{2^{-m-1}r})}    \\&    \les    \sum_{m=0}^{\infty}    M (2^{-m}r)^{a-b+(m+n)/p'}    \les    M r^{a-b+(m+n)/p'}    ,    \end{split}    \label{8ThswELzXU3X7Ebd1KdZ7v1rN3GiirRXGKWK099ovBM0FDJCvkopYNQ2aN94Z7k0UnUKamE3OjU8DFYFFokbSI2J9V9gVlM8ALWThDPnPu3EL7HPD2VDaZTggzcCCmbvc70qqPcC9mt60ogcrTiA3HEjwTK8ymKeuJMc4q6dVz200XnYUtLR9GYjPXvFOVr6W1zUK1WbPToaWJJuKnxBLnd0ftDEbMmj4loHYyhZyMjM91zQS4p7z8eKa9h0JrbacekcirexG0z4n3xz0QOWSvFj3jLhWXUIU21iIAwJtI3RbWa90I7rzAIqI3UElUJG7tLtUXzw4KQNETvXzqWaujEMenYlNIzLGxgB3AuJEQ171}   \end{align} under the condition~\eqref{8ThswELzXU3X7Ebd1KdZ7v1rN3GiirRXGKWK099ovBM0FDJCvkopYNQ2aN94Z7k0UnUKamE3OjU8DFYFFokbSI2J9V9gVlM8ALWThDPnPu3EL7HPD2VDaZTggzcCCmbvc70qqPcC9mt60ogcrTiA3HEjwTK8ymKeuJMc4q6dVz200XnYUtLR9GYjPXvFOVr6W1zUK1WbPToaWJJuKnxBLnd0ftDEbMmj4loHYyhZyMjM91zQS4p7z8eKa9h0JrbacekcirexG0z4n3xz0QOWSvFj3jLhWXUIU21iIAwJtI3RbWa90I7rzAIqI3UElUJG7tLtUXzw4KQNETvXzqWaujEMenYlNIzLGxgB3AuJEQ167}. \par (ii) With $m\in \mathbb{N}_0$ such that, we have   \begin{align}    \begin{split}    \zxvczxbcvdfghasdfrtsdafasdfasdfdsfgsdgf_{Q_1\backslash Q_r}    \frac{|f(y,s)|\,dy \, ds}{|(y,s)|^{b}}    &    \leq    \sum_{m=0}^{m_0}    \zxvczxbcvdfghasdfrtsdafasdfasdfdsfgsdgf_{Q_{2^{-m}}\backslash Q_{2^{-m-1}}}    \frac{|f(y,s)|\,dy \, ds}{|(y,s)|^{b}}    \les    \sum_{m=0}^{m_0}    \zxvczxbcvdfghasdfrtsdafasdfasdfdsfgsdgh f\zxvczxbcvdfghasdfrtsdafasdfasdfdsfgsdgh_{L^p(Q_{2^{-m}})}    \zxvczxbcvdfghasdfrtsdafasdfasdfdsfgsdgh |(y,s)|^{-b}\zxvczxbcvdfghasdfrtsdafasdfasdfdsfgsdgh_{L^{p'}(Q_{2^{-m} }\backslash Q_{2^{-m-1}})}    \\&    \les    \sum_{m=0}^{m_0}    (2^{-m})^{a-b+(m+n)/p'}    \les    (2^{-m_0})^{a-b+(m+n)/p'}    \les    r^{a-b+(m+n)/p'}    ,    \end{split}    \llabel{0tm SO AVqt A1FQ zEMKSb ak z nw8 39w nH1 Dp CjGI k5X3 B6S6UI 7H I gAa f9E V33 Bk kuo3 FyEi 8Ty2AB PY z SWj Pj5 tYZ ET Yzg6 Ix5t ATPMdl Gk e 67X b7F ktE sz yFyc mVhG JZ29aP gz k Yj4 cEr HCd P7 XFHU O9zo y4AZai SR O pIn 0tp 7kZ zU VHQt m3ip 3xEd41 By 7 2ux IiY 8BC Lb OYGo LDwp juza6i Pa k Zdh aD3 xSX yj pdOw oqQq Jl6RFg lO t X67 nm7 s1l ZJ mGUr dIdX Q7jps7 rc d ACY ZMs BKA Nx tkqf Nhkt sbBf2O BN Z 5pf oqS Xtd 3c HFLN tLgR oHrnNl wR n ylZ NWV NfH vO B1nU Ayjt xTWW4o Cq P Rtu Vua nMk Lv qbxp Ni0x YnOkcd FB d rw1 Nu7 cKy bL jCF7 P4dx j0Sbz8ThswELzXU3X7Ebd1KdZ7v1rN3GiirRXGKWK099ovBM0FDJCvkopYNQ2aN94Z7k0UnUKamE3OjU8DFYFFokbSI2J9V9gVlM8ALWThDPnPu3EL7HPD2VDaZTggzcCCmbvc70qqPcC9mt60ogcrTiA3HEjwTK8ymKeuJMc4q6dVz200XnYUtLR9GYjPXvFOVr6W1zUK1WbPToaWJJuKnxBLnd0ftDEbMmj4loHYyhZyMjM91zQS4p7z8eKa9h0JrbacekcirexG0z4n3xz0QOWSvFj3jLhWXUIU21iIAwJtI3RbWa90I7rzAIqI3UElUJG7tLtUXzw4KQNETvXzqWaujEMenYlNIzLGxgB3AuJEQ172}   \end{align} and the proof is concluded. \end{proof} \par \begin{proof}[Proof of Lemma~\ref{L07}] We only argue for the case where $a_{\nu} \equiv a_{\nu}(0,0)$ for $|\nu|=m$ and $a_{\nu} \equiv 0$ for~$|\nu|<m$. The other cases follow by the contraction mapping argument and rescaling analogously to Lemma~\ref{L01}. Without loss of generality, we may assume $f(x,t) = 0$ for~$|(x,t)|>1$.  For the integral     \begin{equation}       w(x,t)        = \zxvczxbcvdfghasdfrtsdafasdfasdfdsfgsdgf_{|(y,s)| < 1} \Gamma(x-y,t-s) f(y,s) \, dy \, ds     \llabel{ cEr HCd P7 XFHU O9zo y4AZai SR O pIn 0tp 7kZ zU VHQt m3ip 3xEd41 By 7 2ux IiY 8BC Lb OYGo LDwp juza6i Pa k Zdh aD3 xSX yj pdOw oqQq Jl6RFg lO t X67 nm7 s1l ZJ mGUr dIdX Q7jps7 rc d ACY ZMs BKA Nx tkqf Nhkt sbBf2O BN Z 5pf oqS Xtd 3c HFLN tLgR oHrnNl wR n ylZ NWV NfH vO B1nU Ayjt xTWW4o Cq P Rtu Vua nMk Lv qbxp Ni0x YnOkcd FB d rw1 Nu7 cKy bL jCF7 P4dx j0Sbz9 fa V CWk VFo s9t 2a QIPK ORuE jEMtbS Hs Y eG5 Z7u MWW Aw RnR8 FwFC zXVVxn FU f yKL Nk4 eOI ly n3Cl I5HP 8XP6S4 KF f Il6 2Vl bXg ca uth8 61pU WUx2aQ TW g rZw cAx 52T kq oZXV g0QG 8ThswELzXU3X7Ebd1KdZ7v1rN3GiirRXGKWK099ovBM0FDJCvkopYNQ2aN94Z7k0UnUKamE3OjU8DFYFFokbSI2J9V9gVlM8ALWThDPnPu3EL7HPD2VDaZTggzcCCmbvc70qqPcC9mt60ogcrTiA3HEjwTK8ymKeuJMc4q6dVz200XnYUtLR9GYjPXvFOVr6W1zUK1WbPToaWJJuKnxBLnd0ftDEbMmj4loHYyhZyMjM91zQS4p7z8eKa9h0JrbacekcirexG0z4n3xz0QOWSvFj3jLhWXUIU21iIAwJtI3RbWa90I7rzAIqI3UElUJG7tLtUXzw4KQNETvXzqWaujEMenYlNIzLGxgB3AuJEQ176}     \end{equation} we have     $       L(0)w = f      $ in $Q_1$ and     \begin{equation}       \zxvczxbcvdfghasdfrtsdafasdfasdfdsfgsdgh w \zxvczxbcvdfghasdfrtsdafasdfasdfdsfgsdgh_{W^{m,1}_p(Q_1)}       \les \zxvczxbcvdfghasdfrtsdafasdfasdfdsfgsdgh f \zxvczxbcvdfghasdfrtsdafasdfasdfdsfgsdgh_{L^p(Q_1)}       \les \MM       .     \llabel{d ACY ZMs BKA Nx tkqf Nhkt sbBf2O BN Z 5pf oqS Xtd 3c HFLN tLgR oHrnNl wR n ylZ NWV NfH vO B1nU Ayjt xTWW4o Cq P Rtu Vua nMk Lv qbxp Ni0x YnOkcd FB d rw1 Nu7 cKy bL jCF7 P4dx j0Sbz9 fa V CWk VFo s9t 2a QIPK ORuE jEMtbS Hs Y eG5 Z7u MWW Aw RnR8 FwFC zXVVxn FU f yKL Nk4 eOI ly n3Cl I5HP 8XP6S4 KF f Il6 2Vl bXg ca uth8 61pU WUx2aQ TW g rZw cAx 52T kq oZXV g0QG rBrrpe iw u WyJ td9 ooD 8t UzAd LSnI tarmhP AW B mnm nsb xLI qX 4RQS TyoF DIikpe IL h WZZ 8ic JGa 91 HxRb 97kn Whp9sA Vz P o85 60p RN2 PS MGMM FK5X W52OnW Iy o Yng xWn o86 8S Kbbu 8ThswELzXU3X7Ebd1KdZ7v1rN3GiirRXGKWK099ovBM0FDJCvkopYNQ2aN94Z7k0UnUKamE3OjU8DFYFFokbSI2J9V9gVlM8ALWThDPnPu3EL7HPD2VDaZTggzcCCmbvc70qqPcC9mt60ogcrTiA3HEjwTK8ymKeuJMc4q6dVz200XnYUtLR9GYjPXvFOVr6W1zUK1WbPToaWJJuKnxBLnd0ftDEbMmj4loHYyhZyMjM91zQS4p7z8eKa9h0JrbacekcirexG0z4n3xz0QOWSvFj3jLhWXUIU21iIAwJtI3RbWa90I7rzAIqI3UElUJG7tLtUXzw4KQNETvXzqWaujEMenYlNIzLGxgB3AuJEQ178}     \end{equation} Since the polynomial \begin{equation} P_w(x,t)  = \sum_{0 \leq |\beta| + ml \leq d}  \frac{x^{\beta} t^l}{\beta! l!} \zxvczxbcvdfghasdfrtsdafasdfasdfdsfgsdgf_{|(y,s)|<1}  \partial^{\beta}_x \partial^l_t \Gamma (-y,-s) f(y,s) \, dy \, ds    \llabel{9 fa V CWk VFo s9t 2a QIPK ORuE jEMtbS Hs Y eG5 Z7u MWW Aw RnR8 FwFC zXVVxn FU f yKL Nk4 eOI ly n3Cl I5HP 8XP6S4 KF f Il6 2Vl bXg ca uth8 61pU WUx2aQ TW g rZw cAx 52T kq oZXV g0QG rBrrpe iw u WyJ td9 ooD 8t UzAd LSnI tarmhP AW B mnm nsb xLI qX 4RQS TyoF DIikpe IL h WZZ 8ic JGa 91 HxRb 97kn Whp9sA Vz P o85 60p RN2 PS MGMM FK5X W52OnW Iy o Yng xWn o86 8S Kbbu 1Iq1 SyPkHJ VC v seV GWr hUd ew Xw6C SY1b e3hD9P Kh a 1y0 SRw yxi AG zdCM VMmi JaemmP 8x r bJX bKL DYE 1F pXUK ADtF 9ewhNe fd 2 XRu tTl 1HY JV p5cA hM1J fK7UIc pk d TbE ndM 6FW HA 8ThswELzXU3X7Ebd1KdZ7v1rN3GiirRXGKWK099ovBM0FDJCvkopYNQ2aN94Z7k0UnUKamE3OjU8DFYFFokbSI2J9V9gVlM8ALWThDPnPu3EL7HPD2VDaZTggzcCCmbvc70qqPcC9mt60ogcrTiA3HEjwTK8ymKeuJMc4q6dVz200XnYUtLR9GYjPXvFOVr6W1zUK1WbPToaWJJuKnxBLnd0ftDEbMmj4loHYyhZyMjM91zQS4p7z8eKa9h0JrbacekcirexG0z4n3xz0QOWSvFj3jLhWXUIU21iIAwJtI3RbWa90I7rzAIqI3UElUJG7tLtUXzw4KQNETvXzqWaujEMenYlNIzLGxgB3AuJEQ108} \end{equation} satisfies  $L(0)P_w = 0$
the difference $u = w - P_w$  solves~$L(0)u = f$. We now show that $u$ satisfies~\eqref{8ThswELzXU3X7Ebd1KdZ7v1rN3GiirRXGKWK099ovBM0FDJCvkopYNQ2aN94Z7k0UnUKamE3OjU8DFYFFokbSI2J9V9gVlM8ALWThDPnPu3EL7HPD2VDaZTggzcCCmbvc70qqPcC9mt60ogcrTiA3HEjwTK8ymKeuJMc4q6dVz200XnYUtLR9GYjPXvFOVr6W1zUK1WbPToaWJJuKnxBLnd0ftDEbMmj4loHYyhZyMjM91zQS4p7z8eKa9h0JrbacekcirexG0z4n3xz0QOWSvFj3jLhWXUIU21iIAwJtI3RbWa90I7rzAIqI3UElUJG7tLtUXzw4KQNETvXzqWaujEMenYlNIzLGxgB3AuJEQ175}. For any positive $r \leq 1/2$ and $q \in [1, (m+n)p/(m+n-mp))$, we have \begin{align} \begin{split} \zxvczxbcvdfghasdfrtsdafasdfasdfdsfgsdgh u \zxvczxbcvdfghasdfrtsdafasdfasdfdsfgsdgh_{L^q(Q_r)} &\leq \biggl \zxvczxbcvdfghasdfrtsdafasdfasdfdsfgsdgh \zxvczxbcvdfghasdfrtsdafasdfasdfdsfgsdgf_{|(y,s)| \leq 2r} \Gamma(x-y,t-s) f(y,s) \,dy \,ds \biggr \zxvczxbcvdfghasdfrtsdafasdfasdfdsfgsdgh_{L^q(Q_r)} \\& \indeq + \biggl \zxvczxbcvdfghasdfrtsdafasdfasdfdsfgsdgh \zxvczxbcvdfghasdfrtsdafasdfasdfdsfgsdgf_{|(y,s)| \leq 2r}         \sum_{0 \leq |\beta| + ml \leq d}  \partial^{\beta}_x \partial^l_t \Gamma (-y,-s) \frac{x^{\beta} t^l}{\beta! l!}  f(y,s) \, dy \, ds       \biggr \zxvczxbcvdfghasdfrtsdafasdfasdfdsfgsdgh_{L^q(Q_r)}       \\& \indeq  + \biggl \zxvczxbcvdfghasdfrtsdafasdfasdfdsfgsdgh \zxvczxbcvdfghasdfrtsdafasdfasdfdsfgsdgf_{2r\leq|(y,s)| < 1}           \biggl( \Gamma(x-y,t-s)             -\sum_{0 \leq |\beta| + ml \leq d}           \partial^{\beta}_x \partial^l_t \Gamma (-y,-s) \frac{x^{\beta} t^l}{\beta! l!}           \biggr) f(y,s) \,dy \,ds          \biggr\zxvczxbcvdfghasdfrtsdafasdfasdfdsfgsdgh_{L^q(Q_r)} \\& = I_1 + I_2 + I_3. \end{split}    \llabel{rBrrpe iw u WyJ td9 ooD 8t UzAd LSnI tarmhP AW B mnm nsb xLI qX 4RQS TyoF DIikpe IL h WZZ 8ic JGa 91 HxRb 97kn Whp9sA Vz P o85 60p RN2 PS MGMM FK5X W52OnW Iy o Yng xWn o86 8S Kbbu 1Iq1 SyPkHJ VC v seV GWr hUd ew Xw6C SY1b e3hD9P Kh a 1y0 SRw yxi AG zdCM VMmi JaemmP 8x r bJX bKL DYE 1F pXUK ADtF 9ewhNe fd 2 XRu tTl 1HY JV p5cA hM1J fK7UIc pk d TbE ndM 6FW HA 72Pg LHzX lUo39o W9 0 BuD eJS lnV Rv z8VD V48t Id4Dtg FO O a47 LEH 8Qw nR GNBM 0RRU LluASz jx x wGI BHm Vyy Ld kGww 5eEg HFvsFU nz l 0vg OaQ DCV Ez 64r8 UvVH TtDykr Eu F aS3 5p5 yn8ThswELzXU3X7Ebd1KdZ7v1rN3GiirRXGKWK099ovBM0FDJCvkopYNQ2aN94Z7k0UnUKamE3OjU8DFYFFokbSI2J9V9gVlM8ALWThDPnPu3EL7HPD2VDaZTggzcCCmbvc70qqPcC9mt60ogcrTiA3HEjwTK8ymKeuJMc4q6dVz200XnYUtLR9GYjPXvFOVr6W1zUK1WbPToaWJJuKnxBLnd0ftDEbMmj4loHYyhZyMjM91zQS4p7z8eKa9h0JrbacekcirexG0z4n3xz0QOWSvFj3jLhWXUIU21iIAwJtI3RbWa90I7rzAIqI3UElUJG7tLtUXzw4KQNETvXzqWaujEMenYlNIzLGxgB3AuJEQ153} \end{align} Using Young's inequality and \eqref{8ThswELzXU3X7Ebd1KdZ7v1rN3GiirRXGKWK099ovBM0FDJCvkopYNQ2aN94Z7k0UnUKamE3OjU8DFYFFokbSI2J9V9gVlM8ALWThDPnPu3EL7HPD2VDaZTggzcCCmbvc70qqPcC9mt60ogcrTiA3HEjwTK8ymKeuJMc4q6dVz200XnYUtLR9GYjPXvFOVr6W1zUK1WbPToaWJJuKnxBLnd0ftDEbMmj4loHYyhZyMjM91zQS4p7z8eKa9h0JrbacekcirexG0z4n3xz0QOWSvFj3jLhWXUIU21iIAwJtI3RbWa90I7rzAIqI3UElUJG7tLtUXzw4KQNETvXzqWaujEMenYlNIzLGxgB3AuJEQP055}, we have     \begin{align}     \begin{split}       I_1        &\leq         \biggl\zxvczxbcvdfghasdfrtsdafasdfasdfdsfgsdgh \zxvczxbcvdfghasdfrtsdafasdfasdfdsfgsdgf  \Gamma(x-y,t-s)  f (y,s)\chi_{Q_{2r}}(y,s) \,dy \,ds \biggr\zxvczxbcvdfghasdfrtsdafasdfasdfdsfgsdgh_{L^q(Q_r)}       \leq \zxvczxbcvdfghasdfrtsdafasdfasdfdsfgsdgh \Gamma \zxvczxbcvdfghasdfrtsdafasdfasdfdsfgsdgh_{L^1(Q_{3r})}               \zxvczxbcvdfghasdfrtsdafasdfasdfdsfgsdgh f \zxvczxbcvdfghasdfrtsdafasdfasdfdsfgsdgh_{L^q(Q_{2r})}        \\&       \les \MM r^{d-m+\gamma+(m+n)/q}              \zxvczxbcvdfghasdfrtsdafasdfasdfdsfgsdgf_{|(x,t)|\leq3r} \frac{1}{|(x,t)|^{n}} \,dx \,dt        \les \MM              r^{d+\gamma+(m+n)/q}              .     \end{split}     \llabel{1Iq1 SyPkHJ VC v seV GWr hUd ew Xw6C SY1b e3hD9P Kh a 1y0 SRw yxi AG zdCM VMmi JaemmP 8x r bJX bKL DYE 1F pXUK ADtF 9ewhNe fd 2 XRu tTl 1HY JV p5cA hM1J fK7UIc pk d TbE ndM 6FW HA 72Pg LHzX lUo39o W9 0 BuD eJS lnV Rv z8VD V48t Id4Dtg FO O a47 LEH 8Qw nR GNBM 0RRU LluASz jx x wGI BHm Vyy Ld kGww 5eEg HFvsFU nz l 0vg OaQ DCV Ez 64r8 UvVH TtDykr Eu F aS3 5p5 yn6 QZ UcX3 mfET Exz1kv qE p OVV EFP IVp zQ lMOI Z2yT TxIUOm 0f W L1W oxC tlX Ws 9HU4 EF0I Z1WDv3 TP 4 2LN 7Tr SuR 8u Mv1t Lepv ZoeoKL xf 9 zMJ 6PU In1 S8 I4KY 13wJ TACh5X l8 O 5g0 Z8ThswELzXU3X7Ebd1KdZ7v1rN3GiirRXGKWK099ovBM0FDJCvkopYNQ2aN94Z7k0UnUKamE3OjU8DFYFFokbSI2J9V9gVlM8ALWThDPnPu3EL7HPD2VDaZTggzcCCmbvc70qqPcC9mt60ogcrTiA3HEjwTK8ymKeuJMc4q6dVz200XnYUtLR9GYjPXvFOVr6W1zUK1WbPToaWJJuKnxBLnd0ftDEbMmj4loHYyhZyMjM91zQS4p7z8eKa9h0JrbacekcirexG0z4n3xz0QOWSvFj3jLhWXUIU21iIAwJtI3RbWa90I7rzAIqI3UElUJG7tLtUXzw4KQNETvXzqWaujEMenYlNIzLGxgB3AuJEQ187}     \end{align} To estimate $I_2$,  we first claim that    \begin{equation}    \biggl| \zxvczxbcvdfghasdfrtsdafasdfasdfdsfgsdgf_{|(y,s)|\leq2r}  \partial_x^{\beta} \partial_t^l \Gamma(-y, -s) \frac{x^{\beta} t^l}{\beta!l!}f(y,s) \,dy \,ds    \biggr|  \les \MM r^{d+\gamma} ,    \llabel{72Pg LHzX lUo39o W9 0 BuD eJS lnV Rv z8VD V48t Id4Dtg FO O a47 LEH 8Qw nR GNBM 0RRU LluASz jx x wGI BHm Vyy Ld kGww 5eEg HFvsFU nz l 0vg OaQ DCV Ez 64r8 UvVH TtDykr Eu F aS3 5p5 yn6 QZ UcX3 mfET Exz1kv qE p OVV EFP IVp zQ lMOI Z2yT TxIUOm 0f W L1W oxC tlX Ws 9HU4 EF0I Z1WDv3 TP 4 2LN 7Tr SuR 8u Mv1t Lepv ZoeoKL xf 9 zMJ 6PU In1 S8 I4KY 13wJ TACh5X l8 O 5g0 ZGw Ddt u6 8wvr vnDC oqYjJ3 nF K WMA K8V OeG o4 DKxn EOyB wgmttc ES 8 dmT oAD 0YB Fl yGRB pBbo 8tQYBw bS X 2lc YnU 0fh At myR3 CKcU AQzzET Ng b ghH T64 KdO fL qFWu k07t DkzfQ1 dg B 8ThswELzXU3X7Ebd1KdZ7v1rN3GiirRXGKWK099ovBM0FDJCvkopYNQ2aN94Z7k0UnUKamE3OjU8DFYFFokbSI2J9V9gVlM8ALWThDPnPu3EL7HPD2VDaZTggzcCCmbvc70qqPcC9mt60ogcrTiA3HEjwTK8ymKeuJMc4q6dVz200XnYUtLR9GYjPXvFOVr6W1zUK1WbPToaWJJuKnxBLnd0ftDEbMmj4loHYyhZyMjM91zQS4p7z8eKa9h0JrbacekcirexG0z4n3xz0QOWSvFj3jLhWXUIU21iIAwJtI3RbWa90I7rzAIqI3UElUJG7tLtUXzw4KQNETvXzqWaujEMenYlNIzLGxgB3AuJEQ173} \end{equation}  for all~$|(x,t)| \leq r$.  Indeed, by \eqref{8ThswELzXU3X7Ebd1KdZ7v1rN3GiirRXGKWK099ovBM0FDJCvkopYNQ2aN94Z7k0UnUKamE3OjU8DFYFFokbSI2J9V9gVlM8ALWThDPnPu3EL7HPD2VDaZTggzcCCmbvc70qqPcC9mt60ogcrTiA3HEjwTK8ymKeuJMc4q6dVz200XnYUtLR9GYjPXvFOVr6W1zUK1WbPToaWJJuKnxBLnd0ftDEbMmj4loHYyhZyMjM91zQS4p7z8eKa9h0JrbacekcirexG0z4n3xz0QOWSvFj3jLhWXUIU21iIAwJtI3RbWa90I7rzAIqI3UElUJG7tLtUXzw4KQNETvXzqWaujEMenYlNIzLGxgB3AuJEQP055}, we have     \begin{align}     \begin{split}       &\biggl| \zxvczxbcvdfghasdfrtsdafasdfasdfdsfgsdgf_{|(y,s)|\leq2r}  \partial_x^{\beta} \partial_t^l \Gamma(-y, -s) \frac{x^{\beta} t^l}{\beta!l!}f(y,s) \,dy \,ds  \biggr|        \les |(x,t)|^{|\beta|+ml}              \zxvczxbcvdfghasdfrtsdafasdfasdfdsfgsdgf_{|(y,s)|\leq2r} \frac{|f(y,s)|}{|(y,s)|^{n+|\beta|+ml}} \,dy \,ds       \\& \indeq \indeq \indeq \indeq        \les r^{|\beta| + ml}               \zxvczxbcvdfghasdfrtsdafasdfasdfdsfgsdgf_{|(y,s)| \leq 2r} \frac{|f(y,s)|}{|(y,s)|^{n+|\beta|+ml}} \,dy \,ds       \les M r^{|\beta|+ml} r^{d+\gamma-|\beta|-ml}       \les r^{d+\gamma}       ,     \end{split}     \llabel{6 QZ UcX3 mfET Exz1kv qE p OVV EFP IVp zQ lMOI Z2yT TxIUOm 0f W L1W oxC tlX Ws 9HU4 EF0I Z1WDv3 TP 4 2LN 7Tr SuR 8u Mv1t Lepv ZoeoKL xf 9 zMJ 6PU In1 S8 I4KY 13wJ TACh5X l8 O 5g0 ZGw Ddt u6 8wvr vnDC oqYjJ3 nF K WMA K8V OeG o4 DKxn EOyB wgmttc ES 8 dmT oAD 0YB Fl yGRB pBbo 8tQYBw bS X 2lc YnU 0fh At myR3 CKcU AQzzET Ng b ghH T64 KdO fL qFWu k07t DkzfQ1 dg B cw0 LSY lr7 9U 81QP qrdf H1tb8k Kn D l52 FhC j7T Xi P7GF C7HJ KfXgrP 4K O Og1 8BM 001 mJ PTpu bQr6 1JQu6o Gr 4 baj 60k zdX oD gAOX 2DBk LymrtN 6T 7 us2 Cp6 eZm 1a VJTY 8vYP OzMnsA 8ThswELzXU3X7Ebd1KdZ7v1rN3GiirRXGKWK099ovBM0FDJCvkopYNQ2aN94Z7k0UnUKamE3OjU8DFYFFokbSI2J9V9gVlM8ALWThDPnPu3EL7HPD2VDaZTggzcCCmbvc70qqPcC9mt60ogcrTiA3HEjwTK8ymKeuJMc4q6dVz200XnYUtLR9GYjPXvFOVr6W1zUK1WbPToaWJJuKnxBLnd0ftDEbMmj4loHYyhZyMjM91zQS4p7z8eKa9h0JrbacekcirexG0z4n3xz0QOWSvFj3jLhWXUIU21iIAwJtI3RbWa90I7rzAIqI3UElUJG7tLtUXzw4KQNETvXzqWaujEMenYlNIzLGxgB3AuJEQ188}     \end{align} where we used Lemma~\ref{L06} (i) with $a = d-m+\gamma + (m+n)/p$ and $b= n+|\beta|+ml$ in the third inequality. Therefore,     \begin{align}     \begin{split}       I_2        &=         \biggl( \zxvczxbcvdfghasdfrtsdafasdfasdfdsfgsdgf_{|(x,t)|\leq r}           \biggl|  \zxvczxbcvdfghasdfrtsdafasdfasdfdsfgsdgf_{|(y,s)| \leq 2r}         \sum_{0 \leq |\beta| + ml \leq d}  \partial^{\beta}_x \partial^l_t \Gamma (-y,-s) \frac{x^{\beta} t^l}{\beta! l!}  f(y,s) \, dy \, ds                   \biggr|^q \,dx \,dt           \biggr)^{1/q}      \\&      \les \biggl( \zxvczxbcvdfghasdfrtsdafasdfasdfdsfgsdgf_{|(x,t)|\leq r} \MM^q r^{(d+\gamma)q} \,dx \,dt \biggr)^{1/q}      \les \MM r^{d+\gamma+(m+n)/q}      .     \end{split}     \llabel{Gw Ddt u6 8wvr vnDC oqYjJ3 nF K WMA K8V OeG o4 DKxn EOyB wgmttc ES 8 dmT oAD 0YB Fl yGRB pBbo 8tQYBw bS X 2lc YnU 0fh At myR3 CKcU AQzzET Ng b ghH T64 KdO fL qFWu k07t DkzfQ1 dg B cw0 LSY lr7 9U 81QP qrdf H1tb8k Kn D l52 FhC j7T Xi P7GF C7HJ KfXgrP 4K O Og1 8BM 001 mJ PTpu bQr6 1JQu6o Gr 4 baj 60k zdX oD gAOX 2DBk LymrtN 6T 7 us2 Cp6 eZm 1a VJTY 8vYP OzMnsA qs 3 RL6 xHu mXN AB 5eXn ZRHa iECOaa MB w Ab1 5iF WGu cZ lU8J niDN KiPGWz q4 1 iBj 1kq bak ZF SvXq vSiR bLTriS y8 Q YOa mQU ZhO rG HYHW guPB zlAhua o5 9 RKU trF 5Kb js KseT PXhU qR8ThswELzXU3X7Ebd1KdZ7v1rN3GiirRXGKWK099ovBM0FDJCvkopYNQ2aN94Z7k0UnUKamE3OjU8DFYFFokbSI2J9V9gVlM8ALWThDPnPu3EL7HPD2VDaZTggzcCCmbvc70qqPcC9mt60ogcrTiA3HEjwTK8ymKeuJMc4q6dVz200XnYUtLR9GYjPXvFOVr6W1zUK1WbPToaWJJuKnxBLnd0ftDEbMmj4loHYyhZyMjM91zQS4p7z8eKa9h0JrbacekcirexG0z4n3xz0QOWSvFj3jLhWXUIU21iIAwJtI3RbWa90I7rzAIqI3UElUJG7tLtUXzw4KQNETvXzqWaujEMenYlNIzLGxgB3AuJEQ190}     \end{align} We now estimate~$I_3$. Similarly as for $I_2$, it suffices to show that for all $|x|\leq r$, we have     \begin{equation}       I(x)       \les \MM r^{d+\gamma}       ,        \llabel{cw0 LSY lr7 9U 81QP qrdf H1tb8k Kn D l52 FhC j7T Xi P7GF C7HJ KfXgrP 4K O Og1 8BM 001 mJ PTpu bQr6 1JQu6o Gr 4 baj 60k zdX oD gAOX 2DBk LymrtN 6T 7 us2 Cp6 eZm 1a VJTY 8vYP OzMnsA qs 3 RL6 xHu mXN AB 5eXn ZRHa iECOaa MB w Ab1 5iF WGu cZ lU8J niDN KiPGWz q4 1 iBj 1kq bak ZF SvXq vSiR bLTriS y8 Q YOa mQU ZhO rG HYHW guPB zlAhua o5 9 RKU trF 5Kb js KseT PXhU qRgnNA LV t aw4 YJB tK9 fN 7bN9 IEwK LTYGtn Cc c 2nf Mcx 7Vo Bt 1IC5 teMH X4g3JK 4J s deo Dl1 Xgb m9 xWDg Z31P chRS1R 8W 1 hap 5Rh 6Jj yT NXSC Uscx K4275D 72 g pRW xcf AbZ Y7 Apto 5S8ThswELzXU3X7Ebd1KdZ7v1rN3GiirRXGKWK099ovBM0FDJCvkopYNQ2aN94Z7k0UnUKamE3OjU8DFYFFokbSI2J9V9gVlM8ALWThDPnPu3EL7HPD2VDaZTggzcCCmbvc70qqPcC9mt60ogcrTiA3HEjwTK8ymKeuJMc4q6dVz200XnYUtLR9GYjPXvFOVr6W1zUK1WbPToaWJJuKnxBLnd0ftDEbMmj4loHYyhZyMjM91zQS4p7z8eKa9h0JrbacekcirexG0z4n3xz0QOWSvFj3jLhWXUIU21iIAwJtI3RbWa90I7rzAIqI3UElUJG7tLtUXzw4KQNETvXzqWaujEMenYlNIzLGxgB3AuJEQ191}     \end{equation} where      \begin{equation}       I(x)        = \biggl| \zxvczxbcvdfghasdfrtsdafasdfasdfdsfgsdgf_{2r\leq|(y,s)| < 1}           \biggl( \Gamma(x-y,t-s)             -\sum_{0 \leq |\beta| + ml \leq d}  \partial^{\beta}_x \partial^l_t \Gamma (-y,-s) \frac{x^{\beta} t^l}{\beta! l!}           \biggr) f(y,s) \,dy \,ds       \biggr|       .     \llabel{qs 3 RL6 xHu mXN AB 5eXn ZRHa iECOaa MB w Ab1 5iF WGu cZ lU8J niDN KiPGWz q4 1 iBj 1kq bak ZF SvXq vSiR bLTriS y8 Q YOa mQU ZhO rG HYHW guPB zlAhua o5 9 RKU trF 5Kb js KseT PXhU qRgnNA LV t aw4 YJB tK9 fN 7bN9 IEwK LTYGtn Cc c 2nf Mcx 7Vo Bt 1IC5 teMH X4g3JK 4J s deo Dl1 Xgb m9 xWDg Z31P chRS1R 8W 1 hap 5Rh 6Jj yT NXSC Uscx K4275D 72 g pRW xcf AbZ Y7 Apto 5SpT zO1dPA Vy Z JiW Clu OjO tE wxUB 7cTt EDqcAb YG d ZQZ fsQ 1At Hy xnPL 5K7D 91u03s 8K 2 0ro fZ9 w7T jx yG7q bCAh ssUZQu PK 7 xUe K7F 4HK fr CEPJ rgWH DZQpvR kO 8 Xve aSB OXS ee XV8ThswELzXU3X7Ebd1KdZ7v1rN3GiirRXGKWK099ovBM0FDJCvkopYNQ2aN94Z7k0UnUKamE3OjU8DFYFFokbSI2J9V9gVlM8ALWThDPnPu3EL7HPD2VDaZTggzcCCmbvc70qqPcC9mt60ogcrTiA3HEjwTK8ymKeuJMc4q6dVz200XnYUtLR9GYjPXvFOVr6W1zUK1WbPToaWJJuKnxBLnd0ftDEbMmj4loHYyhZyMjM91zQS4p7z8eKa9h0JrbacekcirexG0z4n3xz0QOWSvFj3jLhWXUIU21iIAwJtI3RbWa90I7rzAIqI3UElUJG7tLtUXzw4KQNETvXzqWaujEMenYlNIzLGxgB3AuJEQ192}     \end{equation} By Taylor's theorem, there exists $\xi$ and $K$ in $(0,1)$ such that      \begin{align}     \begin{split}       &\biggl|           \Gamma(x-y,t-s)             -\sum_{0 \leq |\beta| + ml \leq d}  \partial^{\beta}_x \partial^l_t \Gamma (-y,-s) \frac{x^{\beta} t^l}{\beta! l!}           \biggr|       \\& \indeq       \les \sum_{i=0}^d \sum_{|\beta|+l=i, |\beta|+ml \geq d+1}               |\partial^{\beta}_x \partial^l_t \Gamma (-y,-s) |              \frac{|x|^{\beta}|t|^l}{\beta ! l!}              + \sum_{|\beta|+l=d+1}               |\partial^{\beta}_x \partial^l_t  \Gamma (\xi x - y , K t - s )|              \frac{|x|^{\beta}|t|^l}{\beta ! l !}       \\& \indeq        \les \sum_{i=0}^d \sum_{|\beta|+l=i, |\beta|+ml \geq d+1}               \frac{1}{|(y,s)|^{n+|\beta|+ml}}              \frac{|x|^{\beta}|t|^l}{\beta ! l!}              + \sum_{|\beta|+l=d+1}               \frac{1}{|(\xi x - y , K t - s )|^{n+|\beta|+ml}}              \frac{|x|^{\beta}|t|^l}{\beta ! l !}       .     \end{split}     \llabel{gnNA LV t aw4 YJB tK9 fN 7bN9 IEwK LTYGtn Cc c 2nf Mcx 7Vo Bt 1IC5 teMH X4g3JK 4J s deo Dl1 Xgb m9 xWDg Z31P chRS1R 8W 1 hap 5Rh 6Jj yT NXSC Uscx K4275D 72 g pRW xcf AbZ Y7 Apto 5SpT zO1dPA Vy Z JiW Clu OjO tE wxUB 7cTt EDqcAb YG d ZQZ fsQ 1At Hy xnPL 5K7D 91u03s 8K 2 0ro fZ9 w7T jx yG7q bCAh ssUZQu PK 7 xUe K7F 4HK fr CEPJ rgWH DZQpvR kO 8 Xve aSB OXS ee XV5j kgzL UTmMbo ma J fxu 8gA rnd zS IB0Y QSXv cZW8vo CO o OHy rEu GnS 2f nGEj jaLz ZIocQe gw H fSF KjW 2Lb KS nIcG 9Wnq Zya6qA YM S h2M mEA sw1 8n sJFY Anbr xZT45Z wB s BvK 9gS Ugy 8ThswELzXU3X7Ebd1KdZ7v1rN3GiirRXGKWK099ovBM0FDJCvkopYNQ2aN94Z7k0UnUKamE3OjU8DFYFFokbSI2J9V9gVlM8ALWThDPnPu3EL7HPD2VDaZTggzcCCmbvc70qqPcC9mt60ogcrTiA3HEjwTK8ymKeuJMc4q6dVz200XnYUtLR9GYjPXvFOVr6W1zUK1WbPToaWJJuKnxBLnd0ftDEbMmj4loHYyhZyMjM91zQS4p7z8eKa9h0JrbacekcirexG0z4n3xz0QOWSvFj3jLhWXUIU21iIAwJtI3RbWa90I7rzAIqI3UElUJG7tLtUXzw4KQNETvXzqWaujEMenYlNIzLGxgB3AuJEQ193}     \end{align} Using $|(y,s)| \leq 2 |(\xi x - y , K t - s )|$, we obtain     \begin{align}     \begin{split}      &       \biggl|          \Gamma(x-y,t-s)             -\sum_{0 \leq |\beta| + ml \leq d}  \partial^{\beta}_x \partial^l_t \Gamma (-y,-s) \frac{x^{\beta} t^l}{\beta! l!}        \biggr|       \\&\indeq       \les \sum_{j = d+1}^{m(d+1)}               \biggl( \frac{1}{|(y,s)|^{n+j}}                        + \frac{1}{|(\xi x - y , K t - s ) |^{n+j}}               \biggr) |(x,t)|^j       \les \sum_{j = d+1}^{m(d+1)}               \frac{|(x,t)|^j}{|(y,s)|^{n+j}}              .    \llabel{pT zO1dPA Vy Z JiW Clu OjO tE wxUB 7cTt EDqcAb YG d ZQZ fsQ 1At Hy xnPL 5K7D 91u03s 8K 2 0ro fZ9 w7T jx yG7q bCAh ssUZQu PK 7 xUe K7F 4HK fr CEPJ rgWH DZQpvR kO 8 Xve aSB OXS ee XV5j kgzL UTmMbo ma J fxu 8gA rnd zS IB0Y QSXv cZW8vo CO o OHy rEu GnS 2f nGEj jaLz ZIocQe gw H fSF KjW 2Lb KS nIcG 9Wnq Zya6qA YM S h2M mEA sw1 8n sJFY Anbr xZT45Z wB s BvK 9gS Ugy Bk 3dHq dvYU LhWgGK aM f Fk7 8mP 20m eV aQp2 NWIb 6hVBSe SV w nEq bq6 ucn X8 JLkI RJbJ EbwEYw nv L BgM 94G plc lu 2s3U m15E YAjs1G Ln h zG8 vmh ghs Qc EDE1 KnaH wtuxOg UD L BE5 9FL8ThswELzXU3X7Ebd1KdZ7v1rN3GiirRXGKWK099ovBM0FDJCvkopYNQ2aN94Z7k0UnUKamE3OjU8DFYFFokbSI2J9V9gVlM8ALWThDPnPu3EL7HPD2VDaZTggzcCCmbvc70qqPcC9mt60ogcrTiA3HEjwTK8ymKeuJMc4q6dVz200XnYUtLR9GYjPXvFOVr6W1zUK1WbPToaWJJuKnxBLnd0ftDEbMmj4loHYyhZyMjM91zQS4p7z8eKa9h0JrbacekcirexG0z4n3xz0QOWSvFj3jLhWXUIU21iIAwJtI3RbWa90I7rzAIqI3UElUJG7tLtUXzw4KQNETvXzqWaujEMenYlNIzLGxgB3AuJEQ194}    \end{split}     \end{align} Therefore,     \begin{equation}       I(x)       \les \sum_{j=d+1}^{m(d+1)}               |(x,t)|^j              \zxvczxbcvdfghasdfrtsdafasdfasdfdsfgsdgf_{2r \leq |(y,s)|<1} \frac{|f(y,s)|}{|(y,s)|^{n+j}} \,dy \,ds       \les \sum_{j=d+1}^{m(d+1)}               r^j              \sum_{i=1}^{N-1}              \zxvczxbcvdfghasdfrtsdafasdfasdfdsfgsdgf_{2^i r < |(y,s)|<2^{i+1}r} \frac{|f(y,s)|}{|(y,s)|^{n+j}} \,dy \,ds       ,      \llabel{5j kgzL UTmMbo ma J fxu 8gA rnd zS IB0Y QSXv cZW8vo CO o OHy rEu GnS 2f nGEj jaLz ZIocQe gw H fSF KjW 2Lb KS nIcG 9Wnq Zya6qA YM S h2M mEA sw1 8n sJFY Anbr xZT45Z wB s BvK 9gS Ugy Bk 3dHq dvYU LhWgGK aM f Fk7 8mP 20m eV aQp2 NWIb 6hVBSe SV w nEq bq6 ucn X8 JLkI RJbJ EbwEYw nv L BgM 94G plc lu 2s3U m15E YAjs1G Ln h zG8 vmh ghs Qc EDE1 KnaH wtuxOg UD L BE5 9FL xIp vu KfJE UTQS EaZ6hu BC a KXr lni r1X mL KH3h VPrq ixmTkR zh 0 OGp Obo N6K LC E0Ga Udta nZ9Lvt 1K Z eN5 GQc LQL L0 P9GX uakH m6kqk7 qm X UVH 2bU Hga v0 Wp6Q 8JyI TzlpqW 0Y k 1f8ThswELzXU3X7Ebd1KdZ7v1rN3GiirRXGKWK099ovBM0FDJCvkopYNQ2aN94Z7k0UnUKamE3OjU8DFYFFokbSI2J9V9gVlM8ALWThDPnPu3EL7HPD2VDaZTggzcCCmbvc70qqPcC9mt60ogcrTiA3HEjwTK8ymKeuJMc4q6dVz200XnYUtLR9GYjPXvFOVr6W1zUK1WbPToaWJJuKnxBLnd0ftDEbMmj4loHYyhZyMjM91zQS4p7z8eKa9h0JrbacekcirexG0z4n3xz0QOWSvFj3jLhWXUIU21iIAwJtI3RbWa90I7rzAIqI3UElUJG7tLtUXzw4KQNETvXzqWaujEMenYlNIzLGxgB3AuJEQ195}     \end{equation} where $N$ is an integer satisfying $2^{N-1}r \leq 1 \leq 2^Nr$. Applying Lemma~\ref{L06} (ii), we get     \begin{align}     \begin{split}
      I(x)       &       \les \MM r^{d+\gamma}       .            \end{split}     \llabel{Bk 3dHq dvYU LhWgGK aM f Fk7 8mP 20m eV aQp2 NWIb 6hVBSe SV w nEq bq6 ucn X8 JLkI RJbJ EbwEYw nv L BgM 94G plc lu 2s3U m15E YAjs1G Ln h zG8 vmh ghs Qc EDE1 KnaH wtuxOg UD L BE5 9FL xIp vu KfJE UTQS EaZ6hu BC a KXr lni r1X mL KH3h VPrq ixmTkR zh 0 OGp Obo N6K LC E0Ga Udta nZ9Lvt 1K Z eN5 GQc LQL L0 P9GX uakH m6kqk7 qm X UVH 2bU Hga v0 Wp6Q 8JyI TzlpqW 0Y k 1fX 8gj Gci bR arme Si8l w03Win NX w 1gv vcD eDP Sa bsVw Zu4h aO1V2D qw k JoR Shj MBg ry glA9 3DBd S0mYAc El 5 aEd pII DT5 mb SVuX o8Nl Y24WCA 6d f CVF 6Al a6i Ns 7GCh OvFA hbxw9Q 718ThswELzXU3X7Ebd1KdZ7v1rN3GiirRXGKWK099ovBM0FDJCvkopYNQ2aN94Z7k0UnUKamE3OjU8DFYFFokbSI2J9V9gVlM8ALWThDPnPu3EL7HPD2VDaZTggzcCCmbvc70qqPcC9mt60ogcrTiA3HEjwTK8ymKeuJMc4q6dVz200XnYUtLR9GYjPXvFOVr6W1zUK1WbPToaWJJuKnxBLnd0ftDEbMmj4loHYyhZyMjM91zQS4p7z8eKa9h0JrbacekcirexG0z4n3xz0QOWSvFj3jLhWXUIU21iIAwJtI3RbWa90I7rzAIqI3UElUJG7tLtUXzw4KQNETvXzqWaujEMenYlNIzLGxgB3AuJEQ196}     \end{align} Similarly to $I_2$, we obtain that $I_3 \les \MM r^{d+\gamma+(m+n)/q}$. Therefore, $       \zxvczxbcvdfghasdfrtsdafasdfasdfdsfgsdgh u \zxvczxbcvdfghasdfrtsdafasdfasdfdsfgsdgh_{L^q(Q_r)}        \les \MM r^{d+\gamma+(m+n)/q} $, for $r\in(0,1/2]$, and \eqref{8ThswELzXU3X7Ebd1KdZ7v1rN3GiirRXGKWK099ovBM0FDJCvkopYNQ2aN94Z7k0UnUKamE3OjU8DFYFFokbSI2J9V9gVlM8ALWThDPnPu3EL7HPD2VDaZTggzcCCmbvc70qqPcC9mt60ogcrTiA3HEjwTK8ymKeuJMc4q6dVz200XnYUtLR9GYjPXvFOVr6W1zUK1WbPToaWJJuKnxBLnd0ftDEbMmj4loHYyhZyMjM91zQS4p7z8eKa9h0JrbacekcirexG0z4n3xz0QOWSvFj3jLhWXUIU21iIAwJtI3RbWa90I7rzAIqI3UElUJG7tLtUXzw4KQNETvXzqWaujEMenYlNIzLGxgB3AuJEQ175} follows. \end{proof} \par As in the elliptic case, we point out that the proof of Lemma~\ref{L07} also gives the next statement. \par \cole \begin{Lemma} \label{L08} Suppose that $f \in L^p(Q_1)$ satisfies     \begin{equation}       \zxvczxbcvdfghasdfrtsdafasdfasdfdsfgsdgh f \zxvczxbcvdfghasdfrtsdafasdfasdfdsfgsdgh_{L^p(Q_r)}       \leq \MM r^{d-m+\alpha + (m+n)/p}       \comma r \leq 1       ,    \llabel{ xIp vu KfJE UTQS EaZ6hu BC a KXr lni r1X mL KH3h VPrq ixmTkR zh 0 OGp Obo N6K LC E0Ga Udta nZ9Lvt 1K Z eN5 GQc LQL L0 P9GX uakH m6kqk7 qm X UVH 2bU Hga v0 Wp6Q 8JyI TzlpqW 0Y k 1fX 8gj Gci bR arme Si8l w03Win NX w 1gv vcD eDP Sa bsVw Zu4h aO1V2D qw k JoR Shj MBg ry glA9 3DBd S0mYAc El 5 aEd pII DT5 mb SVuX o8Nl Y24WCA 6d f CVF 6Al a6i Ns 7GCh OvFA hbxw9Q 71 Z RC8 yRi 1zZ dM rpt7 3dou ogkAkG GE 4 87V ii4 Ofw Je sXUR dzVL HU0zms 8W 2 Ztz iY5 mw9 aB ZIwk 5WNm vNM2Hd jn e wMR 8qp 2Vv up cV4P cjOG eu35u5 cQ X NTy kfT ZXA JH UnSs 4zxf Hwf18ThswELzXU3X7Ebd1KdZ7v1rN3GiirRXGKWK099ovBM0FDJCvkopYNQ2aN94Z7k0UnUKamE3OjU8DFYFFokbSI2J9V9gVlM8ALWThDPnPu3EL7HPD2VDaZTggzcCCmbvc70qqPcC9mt60ogcrTiA3HEjwTK8ymKeuJMc4q6dVz200XnYUtLR9GYjPXvFOVr6W1zUK1WbPToaWJJuKnxBLnd0ftDEbMmj4loHYyhZyMjM91zQS4p7z8eKa9h0JrbacekcirexG0z4n3xz0QOWSvFj3jLhWXUIU21iIAwJtI3RbWa90I7rzAIqI3UElUJG7tLtUXzw4KQNETvXzqWaujEMenYlNIzLGxgB3AuJEQ198}     \end{equation} for some integer $d \geq m $ and $ \alpha \in (0,1)$. For any solution $u \in W^{m,1}_p(Q_1)$ of \eqref{8ThswELzXU3X7Ebd1KdZ7v1rN3GiirRXGKWK099ovBM0FDJCvkopYNQ2aN94Z7k0UnUKamE3OjU8DFYFFokbSI2J9V9gVlM8ALWThDPnPu3EL7HPD2VDaZTggzcCCmbvc70qqPcC9mt60ogcrTiA3HEjwTK8ymKeuJMc4q6dVz200XnYUtLR9GYjPXvFOVr6W1zUK1WbPToaWJJuKnxBLnd0ftDEbMmj4loHYyhZyMjM91zQS4p7z8eKa9h0JrbacekcirexG0z4n3xz0QOWSvFj3jLhWXUIU21iIAwJtI3RbWa90I7rzAIqI3UElUJG7tLtUXzw4KQNETvXzqWaujEMenYlNIzLGxgB3AuJEQP01}, there exists a polynomial $P_d$ of  degree at most $d$ such that      \begin{equation}     L(0) P_d = 0     ,    \llabel{X 8gj Gci bR arme Si8l w03Win NX w 1gv vcD eDP Sa bsVw Zu4h aO1V2D qw k JoR Shj MBg ry glA9 3DBd S0mYAc El 5 aEd pII DT5 mb SVuX o8Nl Y24WCA 6d f CVF 6Al a6i Ns 7GCh OvFA hbxw9Q 71 Z RC8 yRi 1zZ dM rpt7 3dou ogkAkG GE 4 87V ii4 Ofw Je sXUR dzVL HU0zms 8W 2 Ztz iY5 mw9 aB ZIwk 5WNm vNM2Hd jn e wMR 8qp 2Vv up cV4P cjOG eu35u5 cQ X NTy kfT ZXA JH UnSs 4zxf Hwf10r it J Yox Rto 5OM FP hakR gzDY Pm02mG 18 v mfV 11N n87 zS X59D E0cN 99uEUz 2r T h1F P8x jrm q2 Z7ut pdRJ 2DdYkj y9 J Yko c38 Kdu Z9 vydO wkO0 djhXSx Sv H wJo XE7 9f8 qh iBr8 KYTx8ThswELzXU3X7Ebd1KdZ7v1rN3GiirRXGKWK099ovBM0FDJCvkopYNQ2aN94Z7k0UnUKamE3OjU8DFYFFokbSI2J9V9gVlM8ALWThDPnPu3EL7HPD2VDaZTggzcCCmbvc70qqPcC9mt60ogcrTiA3HEjwTK8ymKeuJMc4q6dVz200XnYUtLR9GYjPXvFOVr6W1zUK1WbPToaWJJuKnxBLnd0ftDEbMmj4loHYyhZyMjM91zQS4p7z8eKa9h0JrbacekcirexG0z4n3xz0QOWSvFj3jLhWXUIU21iIAwJtI3RbWa90I7rzAIqI3UElUJG7tLtUXzw4KQNETvXzqWaujEMenYlNIzLGxgB3AuJEQ199}     \end{equation} and for any $q$ such that $1\leq q < (m+n)p/(m+n-mp)$, we have     \begin{equation}     \zxvczxbcvdfghasdfrtsdafasdfasdfdsfgsdgh u - P_d \zxvczxbcvdfghasdfrtsdafasdfasdfdsfgsdgh_{L^q(Q_r)}     \les (\MM + \zxvczxbcvdfghasdfrtsdafasdfasdfdsfgsdgh u \zxvczxbcvdfghasdfrtsdafasdfasdfdsfgsdgh_{L^p(Q_1)})            r^{d+\alpha+(m+n)/q}      \comma       (x,t) \in Q_{1/2}      ,    \llabel{ Z RC8 yRi 1zZ dM rpt7 3dou ogkAkG GE 4 87V ii4 Ofw Je sXUR dzVL HU0zms 8W 2 Ztz iY5 mw9 aB ZIwk 5WNm vNM2Hd jn e wMR 8qp 2Vv up cV4P cjOG eu35u5 cQ X NTy kfT ZXA JH UnSs 4zxf Hwf10r it J Yox Rto 5OM FP hakR gzDY Pm02mG 18 v mfV 11N n87 zS X59D E0cN 99uEUz 2r T h1F P8x jrm q2 Z7ut pdRJ 2DdYkj y9 J Yko c38 Kdu Z9 vydO wkO0 djhXSx Sv H wJo XE7 9f8 qh iBr8 KYTx OfcYYF sM y j0H vK3 ayU wt 4nA5 H76b wUqyJQ od O u8U Gjb t6v lc xYZt 6AUx wpYr18 uO v 62v jnw FrC rf Z4nl vJuh 2SpVLO vp O lZn PTG 07V Re ixBm XBxO BzpFW5 iB I O7R Vmo GnJ u8 Axol8ThswELzXU3X7Ebd1KdZ7v1rN3GiirRXGKWK099ovBM0FDJCvkopYNQ2aN94Z7k0UnUKamE3OjU8DFYFFokbSI2J9V9gVlM8ALWThDPnPu3EL7HPD2VDaZTggzcCCmbvc70qqPcC9mt60ogcrTiA3HEjwTK8ymKeuJMc4q6dVz200XnYUtLR9GYjPXvFOVr6W1zUK1WbPToaWJJuKnxBLnd0ftDEbMmj4loHYyhZyMjM91zQS4p7z8eKa9h0JrbacekcirexG0z4n3xz0QOWSvFj3jLhWXUIU21iIAwJtI3RbWa90I7rzAIqI3UElUJG7tLtUXzw4KQNETvXzqWaujEMenYlNIzLGxgB3AuJEQ200}     \end{equation} where the implicit constants depending only on $p$, $q$, and~$d$. \end{Lemma}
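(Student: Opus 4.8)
The plan is to follow the proof of Lemma~\ref{L07} (the parabolic counterpart of the argument behind the elliptic Lemma~\ref{L10}) and to split an arbitrary solution into a potential part and a homogeneous part. As in the proof of Lemma~\ref{L07}, the variable-coefficient case reduces, by a small dilation $(x,t)\mapsto(Rx,R^{m}t)$ together with the contraction-mapping scheme, to the constant-coefficient operator $L=L(0)$, and $\Gamma$ is precisely the fundamental solution of $L(0)$; so I would work with $L=L(0)$ from the outset. Extending $f$ by zero outside $Q_{1}$, set $w(x,t)=\int\Gamma(x-y,t-s)f(y,s)\,dy\,ds$, the convolution already appearing in the proof of Lemma~\ref{L07}, and let $P_{w}(x,t)=\sum_{|\beta|+ml\le d}(x^{\beta}t^{l}/\beta!\,l!)\int\partial_{x}^{\beta}\partial_{t}^{l}\Gamma(-y,-s)f(y,s)\,dy\,ds$ be its parabolic Taylor polynomial of degree $d$; the integrals converge by Lemma~\ref{L06}(i), exactly as in Lemma~\ref{L07}. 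The decay estimate $\|w-P_{w}\|_{L^{q}(Q_{r})}\les M r^{d+\alpha+(m+n)/q}$ for $r\in(0,1/2]$ and $q\in[1,(m+n)p/(m+n-mp))$ is then what the estimates of $I_{1},I_{2},I_{3}$ in the proof of Lemma~\ref{L07} produce, once $L^{p}(Q_{r})$ is replaced by $L^{q}(Q_{r})$ there (with $\gamma$ there playing the role of $\alpha$).

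Second, I would verify $L(0)P_{w}\equiv0$. Applying $L(0)=\partial_{t}-\sum_{|\nu|=m}a_{\nu}(0,0)\partial^{\nu}$ termwise to $P_{w}$ yields a polynomial of parabolic degree at most $d-m$ whose $x^{\gamma}t^{j}$-coefficient is, up to a combinatorial constant, $\int\partial_{x}^{\gamma}\partial_{t}^{j}\big[(L(0)\Gamma)(-y,-s)\big]f(y,s)\,dy\,ds$; since $L(0)\Gamma=\delta_{(0,0)}$ vanishes identically on $(\mathbb{R}^{n}\times\mathbb{R})\setminus\{(0,0)\}$, each such integrand is zero a.e.\ and the claim follows. (An alternative, in the spirit of the proof of Lemma~\ref{L04}: $L(0)P_{w}=f-L(0)(w-P_{w})$ has $L^{p}(Q_{r})$-norm $\les r^{d-m+\alpha+(m+n)/p}$ by the interior estimate and the decay just obtained, and a polynomial of parabolic degree $\le d-m$ with that decay must be identically zero because $\alpha>0$.)

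Third, for a general solution $u\in W^{m,1}_{p}(Q_{1})$ of $L(0)u=f$ I would put $h=u-w$, so that $L(0)h=0$ in $Q_{1}$ and $h\in W^{m,1}_{p}(Q_{1})$. Since $\|w\|_{W^{m,1}_{p}(Q_{1})}\les\|f\|_{L^{p}(Q_{1})}\le M$, hypoellipticity and interior estimates for the constant-coefficient parabolic operator $L(0)$ give $h\in C^{\infty}(Q_{3/4})$ with $\|h\|_{C^{k}(Q_{1/2})}\les\|h\|_{L^{p}(Q_{3/4})}\les M+\|u\|_{L^{p}(Q_{1})}$ for every $k$. Let $P_{h}$ be the parabolic Taylor polynomial of $h$ at $(0,0)$ of degree $d$; matching the parabolic-degree $\le d-m$ parts in $L(0)h=0$ (using that $L(0)$ lowers parabolic degree by $m$) gives $L(0)P_{h}=0$. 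Expanding $h$ by the ordinary Taylor theorem to Euclidean degree $m(d+1)$ and re-sorting the monomials $x^{\beta}t^{l}$ by parabolic weight $|\beta|+ml$—those with $|\beta|+ml\ge d+1$ contributing $\les\|h\|_{C^{m(d+1)}(Q_{1/2})}r^{|\beta|+ml}$ on $Q_{r}$, with Euclidean remainder $\les\|h\|_{C^{m(d+1)}(Q_{1/2})}r^{m(d+1)+1}$—I obtain $\|h-P_{h}\|_{L^{q}(Q_{r})}\les(M+\|u\|_{L^{p}(Q_{1})})r^{d+1+(m+n)/q}$ for $r\in(0,1/2]$.

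Finally, with $P_{d}=P_{w}+P_{h}\in\mathcal{Q}_{d}$ one has $L(0)P_{d}=0$, and adding the two estimates yields, for $r\le1/2$, $\|u-P_{d}\|_{L^{q}(Q_{r})}\le\|w-P_{w}\|_{L^{q}(Q_{r})}+\|h-P_{h}\|_{L^{q}(Q_{r})}\les M r^{d+\alpha+(m+n)/q}+(M+\|u\|_{L^{p}(Q_{1})})r^{d+1+(m+n)/q}\les(M+\|u\|_{L^{p}(Q_{1})})r^{d+\alpha+(m+n)/q}$ since $\alpha\le1$, which is the assertion. The main obstacle I anticipate is the parabolic bookkeeping rather than any new analytic input: carrying the $L^{p}\!\to\!L^{q}$ replacement cleanly through the $I_{1},I_{2},I_{3}$ estimates of Lemma~\ref{L07}, justifying $L(0)P_{w}=0$ via the support of $\delta_{(0,0)}$ and the differentiated fundamental-solution bound, and making the anisotropic Taylor remainder for $h$ precise—the parabolic weight $|\beta|+ml$ does not coincide with the Euclidean degree $|\beta|+l$, so the remainder must be resorted by parabolic weight rather than read off directly from Taylor's theorem.
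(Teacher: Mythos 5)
Your proposal is essentially correct and is exactly the content needed to fill in the gap the paper leaves, since the paper merely asserts that Lemma~\ref{L08} ``also follows from'' the proof of Lemma~\ref{L07} without writing out the argument. The decomposition $u = w + h$ with $w = \Gamma * f$ and $h$ caloric, the estimate on $w - P_w$ via the $I_1, I_2, I_3$ estimates of Lemma~\ref{L07} (Lemma~\ref{L06} being the workhorse), the Taylor-remainder estimate on $h - P_h$ after sorting monomials by parabolic weight rather than Euclidean degree, and the verification $L(0)P_w = L(0)P_h = 0$, are all exactly what the paper's proof of Theorem~\ref{T03} implicitly relies on (there the paper handles $w$ via Lemma~\ref{L08} and $v = u - w$ by Taylor expansion, the same two pieces your argument supplies). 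Your ``alternative'' argument for $L(0)P_w = 0$ via the decay of $L(0)P_w = f - L(0)(w-P_w)$ is the cleaner of the two routes you give, since the direct argument via the support of $\delta_{(0,0)}$ is only formal when $f$ is merely $L^p$.

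One small caveat: your remark that ``the variable-coefficient case reduces, by a small dilation together with the contraction-mapping scheme, to the constant-coefficient operator'' is slightly off as a matter of logic. The contraction mapping in Lemma~\ref{L07} produces \emph{a} solution with the desired decay; it does not transform an arbitrary solution of the variable-coefficient problem into a solution of the constant-coefficient one. If $L$ has variable coefficients and $Lu = f$, then $h = u - w$ satisfies $L(0)h = (L(0) - L)u \neq 0$, so $h$ is not caloric and your decomposition breaks. This does not undermine the proof, because Lemma~\ref{L08} is only ever invoked in the paper for the constant-coefficient operator $L(0)$ (it is applied to $w = \Gamma * (L(0)u - Q)$ in the proof of Theorem~\ref{T03}), and the reference to the variable-coefficient equation~\eqref{8ThswELzXU3X7Ebd1KdZ7v1rN3GiirRXGKWK099ovBM0FDJCvkopYNQ2aN94Z7k0UnUKamE3OjU8DFYFFokbSI2J9V9gVlM8ALWThDPnPu3EL7HPD2VDaZTggzcCCmbvc70qqPcC9mt60ogcrTiA3HEjwTK8ymKeuJMc4q6dVz200XnYUtLR9GYjPXvFOVr6W1zUK1WbPToaWJJuKnxBLnd0ftDEbMmj4loHYyhZyMjM91zQS4p7z8eKa9h0JrbacekcirexG0z4n3xz0QOWSvFj3jLhWXUIU21iIAwJtI3RbWa90I7rzAIqI3UElUJG7tLtUXzw4KQNETvXzqWaujEMenYlNIzLGxgB3AuJEQP01} in the statement is best read as the constant-coefficient model problem; you would do well to simply state this rather than assert a reduction that does not literally exist.
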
  \colb \par \subsection{The bootstrap argument} The following bootstrapping argument is needed in the proof of Theorem~\ref{T03}.  Since the argument is similar to Lemma~\ref{L04}, we omit the proof. \par \cole \begin{Lemma} \label{L09} Under the assumptions of Theorem~\ref{T03}, suppose that there exists $k \in \mathbb{N}_0$ such that $(k+1) \alpha +  \eta < 1$. Then if $u$ vanishes of order $d-1+ \eta+k\alpha+(m+n)/p$, i.e.,     \begin{equation}       c_k        = \sup_{r \leq 1}\frac{\zxvczxbcvdfghasdfrtsdafasdfasdfdsfgsdgh u \zxvczxbcvdfghasdfrtsdafasdfasdfdsfgsdgh_{L^p(Q_r)}}{r^{d-1+ \eta+k\alpha+(m+n)/p}}       < \infty       ,     \llabel{0r it J Yox Rto 5OM FP hakR gzDY Pm02mG 18 v mfV 11N n87 zS X59D E0cN 99uEUz 2r T h1F P8x jrm q2 Z7ut pdRJ 2DdYkj y9 J Yko c38 Kdu Z9 vydO wkO0 djhXSx Sv H wJo XE7 9f8 qh iBr8 KYTx OfcYYF sM y j0H vK3 ayU wt 4nA5 H76b wUqyJQ od O u8U Gjb t6v lc xYZt 6AUx wpYr18 uO v 62v jnw FrC rf Z4nl vJuh 2SpVLO vp O lZn PTG 07V Re ixBm XBxO BzpFW5 iB I O7R Vmo GnJ u8 Axol YAxl JUrYKV Kk p aIk VCu PiD O8 IHPU ndze LPTILB P5 B qYy DLZ DZa db jcJA T644 Vp6byb 1g 4 dE7 Ydz keO YL hCRe Ommx F9zsu0 rp 8 Ajz d2v Heo 7L 5zVn L8IQ WnYATK KV 1 f14 s2J geC b38ThswELzXU3X7Ebd1KdZ7v1rN3GiirRXGKWK099ovBM0FDJCvkopYNQ2aN94Z7k0UnUKamE3OjU8DFYFFokbSI2J9V9gVlM8ALWThDPnPu3EL7HPD2VDaZTggzcCCmbvc70qqPcC9mt60ogcrTiA3HEjwTK8ymKeuJMc4q6dVz200XnYUtLR9GYjPXvFOVr6W1zUK1WbPToaWJJuKnxBLnd0ftDEbMmj4loHYyhZyMjM91zQS4p7z8eKa9h0JrbacekcirexG0z4n3xz0QOWSvFj3jLhWXUIU21iIAwJtI3RbWa90I7rzAIqI3UElUJG7tLtUXzw4KQNETvXzqWaujEMenYlNIzLGxgB3AuJEQ201}     \end{equation} then $u$ vanishes of $L^p$-order $d-1+\eta+(k+1)\alpha+(m+n)/p$, and moreover,      \begin{equation}      c_{k+1}      = \sup_{r \leq 1}\frac{\zxvczxbcvdfghasdfrtsdafasdfasdfdsfgsdgh u \zxvczxbcvdfghasdfrtsdafasdfasdfdsfgsdgh_{L^p(Q_r)}}{r^{d-1+ \eta+(k+1)\alpha+(m+n)/p}}      \les c_k + M   + \zxvczxbcvdfghasdfrtsdafasdfasdfdsfgsdgh Q \zxvczxbcvdfghasdfrtsdafasdfasdfdsfgsdgh_{L^p(Q_1)}      < \infty     \label{8ThswELzXU3X7Ebd1KdZ7v1rN3GiirRXGKWK099ovBM0FDJCvkopYNQ2aN94Z7k0UnUKamE3OjU8DFYFFokbSI2J9V9gVlM8ALWThDPnPu3EL7HPD2VDaZTggzcCCmbvc70qqPcC9mt60ogcrTiA3HEjwTK8ymKeuJMc4q6dVz200XnYUtLR9GYjPXvFOVr6W1zUK1WbPToaWJJuKnxBLnd0ftDEbMmj4loHYyhZyMjM91zQS4p7z8eKa9h0JrbacekcirexG0z4n3xz0QOWSvFj3jLhWXUIU21iIAwJtI3RbWa90I7rzAIqI3UElUJG7tLtUXzw4KQNETvXzqWaujEMenYlNIzLGxgB3AuJEQ202}     \end{equation} also holds, where $M$ and $Q$ are defined in Theorem~\ref{T03}, and the implicit constant in \eqref{8ThswELzXU3X7Ebd1KdZ7v1rN3GiirRXGKWK099ovBM0FDJCvkopYNQ2aN94Z7k0UnUKamE3OjU8DFYFFokbSI2J9V9gVlM8ALWThDPnPu3EL7HPD2VDaZTggzcCCmbvc70qqPcC9mt60ogcrTiA3HEjwTK8ymKeuJMc4q6dVz200XnYUtLR9GYjPXvFOVr6W1zUK1WbPToaWJJuKnxBLnd0ftDEbMmj4loHYyhZyMjM91zQS4p7z8eKa9h0JrbacekcirexG0z4n3xz0QOWSvFj3jLhWXUIU21iIAwJtI3RbWa90I7rzAIqI3UElUJG7tLtUXzw4KQNETvXzqWaujEMenYlNIzLGxgB3AuJEQ202} depends on $d$, $p$, and~$q$. \end{Lemma}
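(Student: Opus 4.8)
The plan is to follow the proof of Lemma~\ref{L04} essentially line by line, replacing every elliptic object by its parabolic counterpart: the Euclidean balls $B_r$ become the parabolic cylinders $Q_r$; the constant-coefficient operator is $L(0)=\partial_t-\sum_{|\nu|=m}a_{\nu}(0,0)\partial^{\nu}$, whose fundamental solution $\Gamma$ obeys the parabolic decay bound recorded above; the Sobolev exponent $n/p$ is replaced by the parabolic exponent $(m+n)/p$; polynomial degree and homogeneity are understood in the parabolic sense, so a monomial $x^{\beta}t^{l}$ has degree $|\beta|+ml$; and the interior elliptic estimate is replaced by the interior $W^{m,1}_{p}$ parabolic estimate. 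The hypothesis $(k+1)\alpha+\eta<1$ ensures $\gamma:=\eta+(k+1)\alpha\in(0,1)$, which is exactly what makes the approximation lemmas applicable.

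First I would apply the interior $W^{m,1}_{p}$ estimate to $Lu=f$ on $Q_{2r}$, split $f=(f-Q)+Q$, and use the hypothesis on $c_{k}$, the bound $\|f-Q\|_{L^{p}(Q_{r})}\les M r^{d-m+\alpha+(m+n)/p}$, and the parabolic homogeneity of $Q\in\dQQ_{d-m}$; since $k\alpha+\eta<1-\alpha<1$ this yields
\[
  \sum_{|\nu|\le m}r^{|\nu|}\|\partial^{\nu}u\|_{L^{p}(Q_{r})}\les\bigl(c_{k}+M+\|Q\|_{L^{p}(Q_{1})}\bigr)r^{d-1+\eta+k\alpha+(m+n)/p}\comma r\in(0,1/2].
\]
Next, rewriting the equation as $L(0)u-Q=\sum_{|\nu|=m}(a_{\nu}(0,0)-a_{\nu})\partial^{\nu}u-\sum_{|\nu|<m}a_{\nu}\partial^{\nu}u+(f-Q)$, taking the $L^{p}(Q_{r})$ norm, and using the H\"older continuity of the leading coefficients (which supplies the extra factor $r^{\alpha}$), the boundedness of the lower order coefficients, and the bound on $\|f-Q\|_{L^{p}(Q_{r})}$, I would obtain $\|L(0)u-Q\|_{L^{p}(Q_{r})}\les(c_{k}+M+\|Q\|_{L^{p}(Q_{1})})r^{d-1-m+\eta+(k+1)\alpha+(m+n)/p}$ for $r\in(0,1/2]$; here each of the three groups of terms is checked to be dominated by the right-hand side using $(k+1)\alpha+\eta<1$.

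Then I would split $u=v+w$ with $w(x,t)=\int_{|(y,s)|\le1/2}\Gamma(x-y,t-s)\bigl(L(0)u(y,s)-Q(y,s)\bigr)\,dy\,ds$, so that $L(0)w=L(0)u-Q$ and $v=u-w$ solves $L(0)v=Q$. Applying to $w$ the parabolic Taylor-polynomial approximation obtained in the proof of Lemma~\ref{L07} (the parabolic analog of Lemma~\ref{L10}) with exponent $\gamma=\eta+(k+1)\alpha\in(0,1)$ gives $\|w-P_{w}\|_{L^{q}(Q_{r})}\les(c_{k}+M+\|Q\|_{L^{p}(Q_{1})})r^{d-1+\eta+(k+1)\alpha+(m+n)/q}$ for $r\in(0,1/2]$, where $P_{w}$ is the parabolic Taylor polynomial of $w$ of degree $d-1$; the first step also gives $\|u\|_{W^{m,1}_{p}(Q_{1/2})}\les c_{k}+M+\|Q\|_{L^{p}(Q_{1})}$, hence $\|v\|_{L^{p}(Q_{1/2})}\les c_{k}+M+\|Q\|_{L^{p}(Q_{1})}$. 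Taking $P_{v}$ to be the parabolic Taylor polynomial of $v$ of degree $d-1$ and controlling the parabolic Taylor remainder of $v$ of order $d$ by interior regularity for $L(0)v=Q$, I would get $\|v-P_{v}\|_{L^{p}(Q_{r})}\les(c_{k}+M+\|Q\|_{L^{p}(Q_{1})})r^{d+(m+n)/p}$. Setting $P=P_{v}+P_{w}$, which is a parabolic polynomial of degree at most $d-1$, and using $u-P=(v-P_{v})+(w-P_{w})$ together with $(k+1)\alpha+\eta<1$, this yields $\|u-P\|_{L^{p}(Q_{r})}\les(c_{k}+M+\|Q\|_{L^{p}(Q_{1})})r^{d-1+\eta+(k+1)\alpha+(m+n)/p}$ for $r\le1/4$. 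If $P\not\equiv0$, then $\|P\|_{L^{p}(Q_{r})}\ges r^{d-1+(m+n)/p}$ as $r\to0$, which contradicts $\|P\|_{L^{p}(Q_{r})}\le\|u-P\|_{L^{p}(Q_{r})}+\|u\|_{L^{p}(Q_{r})}\les r^{d-1+\eta+k\alpha+(m+n)/p}$ since $\alpha,\eta>0$; hence $P\equiv0$, and the previous bound becomes $\|u\|_{L^{p}(Q_{r})}\les(c_{k}+M+\|Q\|_{L^{p}(Q_{1})})r^{d-1+\eta+(k+1)\alpha+(m+n)/p}$, i.e. the asserted finiteness and bound for $c_{k+1}$.

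The step I expect to require the most care is not any single inequality but the systematic bookkeeping forced by the parabolic scaling: one must carry the parabolic homogeneity degree $|\beta|+ml$ through all the Taylor expansions, use the parabolic fundamental solution bound $|\partial^{\beta}_{x}\partial^{l}_{t}\Gamma(x,t)|\les|(x,t)|^{-(n+|\beta|+ml)}$ in place of its elliptic counterpart, and verify at each invocation of Lemma~\ref{L06} (the parabolic analog of Lemma~\ref{L02}) that the conditions $b<a+(m+n)/p'$ or $b>a+(m+n)/p'$ hold — this is where the range $1<p<1+n/m$, equivalently the admissibility of $q\in[1,(m+n)p/(m+n-mp))$, enters, just as $1<p<n/m$ did in the elliptic case. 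Once these parabolic replacements are made, every estimate above is the verbatim transcription of the corresponding line in the proof of Lemma~\ref{L04}.
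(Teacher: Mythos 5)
Your proposal is correct and matches the paper's intended approach exactly: the paper omits the proof of Lemma~\ref{L09} precisely because it is ``similar to Lemma~\ref{L04},'' and your parabolic transcription (cylinders for balls, parabolic degree $|\beta|+ml$, the exponent $(m+n)/p$, the parabolic decay of $\Gamma$, the $W^{m,1}_p$ interior estimate, and the role of $(k+1)\alpha+\eta<1$ in both the Taylor-approximation step with $\gamma=\eta+(k+1)\alpha$ and the contradiction forcing $P\equiv0$) carries over every step that matters. The only slip is a reference label --- the parabolic analog of Lemma~\ref{L10} that you need for the $w$-piece is Lemma~\ref{L08}, not a by-product of Lemma~\ref{L07} --- but the content you invoke is the right one.
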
 \colb \par We point out that, as in the elliptic case, we have the inequalities \begin{equation} \label{8ThswELzXU3X7Ebd1KdZ7v1rN3GiirRXGKWK099ovBM0FDJCvkopYNQ2aN94Z7k0UnUKamE3OjU8DFYFFokbSI2J9V9gVlM8ALWThDPnPu3EL7HPD2VDaZTggzcCCmbvc70qqPcC9mt60ogcrTiA3HEjwTK8ymKeuJMc4q6dVz200XnYUtLR9GYjPXvFOVr6W1zUK1WbPToaWJJuKnxBLnd0ftDEbMmj4loHYyhZyMjM91zQS4p7z8eKa9h0JrbacekcirexG0z4n3xz0QOWSvFj3jLhWXUIU21iIAwJtI3RbWa90I7rzAIqI3UElUJG7tLtUXzw4KQNETvXzqWaujEMenYlNIzLGxgB3AuJEQP04} \sum_{|\nu|\leq m} r^{|\nu|} \zxvczxbcvdfghasdfrtsdafasdfasdfdsfgsdgh \partial^{\nu} u \zxvczxbcvdfghasdfrtsdafasdfasdfdsfgsdgh_{L^p(Q_r)}       \les \bigl( c_0 + \MM + \zxvczxbcvdfghasdfrtsdafasdfasdfdsfgsdgh Q \zxvczxbcvdfghasdfrtsdafasdfasdfdsfgsdgh_{L^p(Q_1)} \bigr)                r^{d-1+ \eta+k\alpha + (m+n)/p} \end{equation} and \begin{equation} \label{8ThswELzXU3X7Ebd1KdZ7v1rN3GiirRXGKWK099ovBM0FDJCvkopYNQ2aN94Z7k0UnUKamE3OjU8DFYFFokbSI2J9V9gVlM8ALWThDPnPu3EL7HPD2VDaZTggzcCCmbvc70qqPcC9mt60ogcrTiA3HEjwTK8ymKeuJMc4q6dVz200XnYUtLR9GYjPXvFOVr6W1zUK1WbPToaWJJuKnxBLnd0ftDEbMmj4loHYyhZyMjM91zQS4p7z8eKa9h0JrbacekcirexG0z4n3xz0QOWSvFj3jLhWXUIU21iIAwJtI3RbWa90I7rzAIqI3UElUJG7tLtUXzw4KQNETvXzqWaujEMenYlNIzLGxgB3AuJEQP05} \zxvczxbcvdfghasdfrtsdafasdfasdfdsfgsdgh L(0) u - Q \zxvczxbcvdfghasdfrtsdafasdfasdfdsfgsdgh_{L^p(Q_r)} \les \bigl( c_0 + M + \zxvczxbcvdfghasdfrtsdafasdfasdfdsfgsdgh Q \zxvczxbcvdfghasdfrtsdafasdfasdfdsfgsdgh_{L^p(Q_1)} \bigr)  r^{d-m+\eta+(k+1)\alpha - 1+ (m+n)/p} . \end{equation} \par \subsection{The Schauder estimates for parabolic equations.} \begin{proof}[Proof of Theorem~\ref{T03}] \par Let $k_0 \in \mathbb{N}_0$ be such that $k_0 \alpha + \eta < 1 \leq (k_0+1) \alpha + \eta$.  As above, we can assume that the second inequality is strict. We first show that  \begin{equation} \llabel{ OfcYYF sM y j0H vK3 ayU wt 4nA5 H76b wUqyJQ od O u8U Gjb t6v lc xYZt 6AUx wpYr18 uO v 62v jnw FrC rf Z4nl vJuh 2SpVLO vp O lZn PTG 07V Re ixBm XBxO BzpFW5 iB I O7R Vmo GnJ u8 Axol YAxl JUrYKV Kk p aIk VCu PiD O8 IHPU ndze LPTILB P5 B qYy DLZ DZa db jcJA T644 Vp6byb 1g 4 dE7 Ydz keO YL hCRe Ommx F9zsu0 rp 8 Ajz d2v Heo 7L 5zVn L8IQ WnYATK KV 1 f14 s2J geC b3 v9UJ djNN VBINix 1q 5 oyr SBM 2Xt gr v8RQ MaXk a4AN9i Ni n zfH xGp A57 uA E4jM fg6S 6eNGKv JL 3 tyH 3qw dPr x2 jFXW 2Wih pSSxDr aA 7 PXg jK6 GGl Og 5PkR d2n5 3eEx4N yG h d8Z RkO N8ThswELzXU3X7Ebd1KdZ7v1rN3GiirRXGKWK099ovBM0FDJCvkopYNQ2aN94Z7k0UnUKamE3OjU8DFYFFokbSI2J9V9gVlM8ALWThDPnPu3EL7HPD2VDaZTggzcCCmbvc70qqPcC9mt60ogcrTiA3HEjwTK8ymKeuJMc4q6dVz200XnYUtLR9GYjPXvFOVr6W1zUK1WbPToaWJJuKnxBLnd0ftDEbMmj4loHYyhZyMjM91zQS4p7z8eKa9h0JrbacekcirexG0z4n3xz0QOWSvFj3jLhWXUIU21iIAwJtI3RbWa90I7rzAIqI3UElUJG7tLtUXzw4KQNETvXzqWaujEMenYlNIzLGxgB3AuJEQP06} \zxvczxbcvdfghasdfrtsdafasdfasdfdsfgsdgh u \zxvczxbcvdfghasdfrtsdafasdfasdfdsfgsdgh_{L^p(Q_r)} \leq (c_0 + M + \zxvczxbcvdfghasdfrtsdafasdfasdfdsfgsdgh Q \zxvczxbcvdfghasdfrtsdafasdfasdfdsfgsdgh_{L^p(Q_r)}) 	r^{d+(m+n)/p} 	. \end{equation} Applying Lemma~\ref{L09} $k_0$ times, we obtain  \begin{equation} \llabel{ YAxl JUrYKV Kk p aIk VCu PiD O8 IHPU ndze LPTILB P5 B qYy DLZ DZa db jcJA T644 Vp6byb 1g 4 dE7 Ydz keO YL hCRe Ommx F9zsu0 rp 8 Ajz d2v Heo 7L 5zVn L8IQ WnYATK KV 1 f14 s2J geC b3 v9UJ djNN VBINix 1q 5 oyr SBM 2Xt gr v8RQ MaXk a4AN9i Ni n zfH xGp A57 uA E4jM fg6S 6eNGKv JL 3 tyH 3qw dPr x2 jFXW 2Wih pSSxDr aA 7 PXg jK6 GGl Og 5PkR d2n5 3eEx4N yG h d8Z RkO NMQ qL q4sE RG0C ssQkdZ Ua O vWr pla BOW rS wSG1 SM8I z9qkpd v0 C RMs GcZ LAz 4G k70e O7k6 df4uYn R6 T 5Du KOT say 0D awWQ vn2U OOPNqQ T7 H 4Hf iKY Jcl Rq M2g9 lcQZ cvCNBP 2B b tjv 8ThswELzXU3X7Ebd1KdZ7v1rN3GiirRXGKWK099ovBM0FDJCvkopYNQ2aN94Z7k0UnUKamE3OjU8DFYFFokbSI2J9V9gVlM8ALWThDPnPu3EL7HPD2VDaZTggzcCCmbvc70qqPcC9mt60ogcrTiA3HEjwTK8ymKeuJMc4q6dVz200XnYUtLR9GYjPXvFOVr6W1zUK1WbPToaWJJuKnxBLnd0ftDEbMmj4loHYyhZyMjM91zQS4p7z8eKa9h0JrbacekcirexG0z4n3xz0QOWSvFj3jLhWXUIU21iIAwJtI3RbWa90I7rzAIqI3UElUJG7tLtUXzw4KQNETvXzqWaujEMenYlNIzLGxgB3AuJEQP07} c_{k_0}  = \sup_{r\leq1} \frac{\zxvczxbcvdfghasdfrtsdafasdfasdfdsfgsdgh u \zxvczxbcvdfghasdfrtsdafasdfasdfdsfgsdgh_{L^p(Q_r)}}{r^{d-1+\eta+k_0\alpha+(m+n)/p}} \les c_0 + M + \zxvczxbcvdfghasdfrtsdafasdfasdfdsfgsdgh Q \zxvczxbcvdfghasdfrtsdafasdfasdfdsfgsdgh_{L^p(Q_1)} , \end{equation} where $c_0$ is defined in~\eqref{8ThswELzXU3X7Ebd1KdZ7v1rN3GiirRXGKWK099ovBM0FDJCvkopYNQ2aN94Z7k0UnUKamE3OjU8DFYFFokbSI2J9V9gVlM8ALWThDPnPu3EL7HPD2VDaZTggzcCCmbvc70qqPcC9mt60ogcrTiA3HEjwTK8ymKeuJMc4q6dVz200XnYUtLR9GYjPXvFOVr6W1zUK1WbPToaWJJuKnxBLnd0ftDEbMmj4loHYyhZyMjM91zQS4p7z8eKa9h0JrbacekcirexG0z4n3xz0QOWSvFj3jLhWXUIU21iIAwJtI3RbWa90I7rzAIqI3UElUJG7tLtUXzw4KQNETvXzqWaujEMenYlNIzLGxgB3AuJEQ155}. As in \eqref{8ThswELzXU3X7Ebd1KdZ7v1rN3GiirRXGKWK099ovBM0FDJCvkopYNQ2aN94Z7k0UnUKamE3OjU8DFYFFokbSI2J9V9gVlM8ALWThDPnPu3EL7HPD2VDaZTggzcCCmbvc70qqPcC9mt60ogcrTiA3HEjwTK8ymKeuJMc4q6dVz200XnYUtLR9GYjPXvFOVr6W1zUK1WbPToaWJJuKnxBLnd0ftDEbMmj4loHYyhZyMjM91zQS4p7z8eKa9h0JrbacekcirexG0z4n3xz0QOWSvFj3jLhWXUIU21iIAwJtI3RbWa90I7rzAIqI3UElUJG7tLtUXzw4KQNETvXzqWaujEMenYlNIzLGxgB3AuJEQP04}, we have \begin{equation} \label{8ThswELzXU3X7Ebd1KdZ7v1rN3GiirRXGKWK099ovBM0FDJCvkopYNQ2aN94Z7k0UnUKamE3OjU8DFYFFokbSI2J9V9gVlM8ALWThDPnPu3EL7HPD2VDaZTggzcCCmbvc70qqPcC9mt60ogcrTiA3HEjwTK8ymKeuJMc4q6dVz200XnYUtLR9GYjPXvFOVr6W1zUK1WbPToaWJJuKnxBLnd0ftDEbMmj4loHYyhZyMjM91zQS4p7z8eKa9h0JrbacekcirexG0z4n3xz0QOWSvFj3jLhWXUIU21iIAwJtI3RbWa90I7rzAIqI3UElUJG7tLtUXzw4KQNETvXzqWaujEMenYlNIzLGxgB3AuJEQP08} \sum_{|\nu|\leq m} r^{|\nu|} \zxvczxbcvdfghasdfrtsdafasdfasdfdsfgsdgh \partial^{\nu} u \zxvczxbcvdfghasdfrtsdafasdfasdfdsfgsdgh_{L^p(Q_r)} \les \bigl( c_0 + \MM + \zxvczxbcvdfghasdfrtsdafasdfasdfdsfgsdgh Q \zxvczxbcvdfghasdfrtsdafasdfasdfdsfgsdgh_{L^p(Q_1)} \bigr) r^{d-1+ \eta+k_0\alpha + (m+n)/p} \comma r \in (0, 1/2] , \end{equation} while as in \eqref{8ThswELzXU3X7Ebd1KdZ7v1rN3GiirRXGKWK099ovBM0FDJCvkopYNQ2aN94Z7k0UnUKamE3OjU8DFYFFokbSI2J9V9gVlM8ALWThDPnPu3EL7HPD2VDaZTggzcCCmbvc70qqPcC9mt60ogcrTiA3HEjwTK8ymKeuJMc4q6dVz200XnYUtLR9GYjPXvFOVr6W1zUK1WbPToaWJJuKnxBLnd0ftDEbMmj4loHYyhZyMjM91zQS4p7z8eKa9h0JrbacekcirexG0z4n3xz0QOWSvFj3jLhWXUIU21iIAwJtI3RbWa90I7rzAIqI3UElUJG7tLtUXzw4KQNETvXzqWaujEMenYlNIzLGxgB3AuJEQP05}, we may bound \begin{equation} \label{8ThswELzXU3X7Ebd1KdZ7v1rN3GiirRXGKWK099ovBM0FDJCvkopYNQ2aN94Z7k0UnUKamE3OjU8DFYFFokbSI2J9V9gVlM8ALWThDPnPu3EL7HPD2VDaZTggzcCCmbvc70qqPcC9mt60ogcrTiA3HEjwTK8ymKeuJMc4q6dVz200XnYUtLR9GYjPXvFOVr6W1zUK1WbPToaWJJuKnxBLnd0ftDEbMmj4loHYyhZyMjM91zQS4p7z8eKa9h0JrbacekcirexG0z4n3xz0QOWSvFj3jLhWXUIU21iIAwJtI3RbWa90I7rzAIqI3UElUJG7tLtUXzw4KQNETvXzqWaujEMenYlNIzLGxgB3AuJEQP09} \zxvczxbcvdfghasdfrtsdafasdfasdfdsfgsdgh L(0) u - Q \zxvczxbcvdfghasdfrtsdafasdfasdfdsfgsdgh_{L^p(Q_r)} \les \bigl( c_0 + M + \zxvczxbcvdfghasdfrtsdafasdfasdfdsfgsdgh Q \zxvczxbcvdfghasdfrtsdafasdfasdfdsfgsdgh_{L^p(Q_1)} \bigr)  r^{d-m+\eta+(k_0+1)\alpha -1 + (m+n)/p} \comma r \in (0, 1/2] . \end{equation} Defining \begin{equation} \llabel{ v9UJ djNN VBINix 1q 5 oyr SBM 2Xt gr v8RQ MaXk a4AN9i Ni n zfH xGp A57 uA E4jM fg6S 6eNGKv JL 3 tyH 3qw dPr x2 jFXW 2Wih pSSxDr aA 7 PXg jK6 GGl Og 5PkR d2n5 3eEx4N yG h d8Z RkO NMQ qL q4sE RG0C ssQkdZ Ua O vWr pla BOW rS wSG1 SM8I z9qkpd v0 C RMs GcZ LAz 4G k70e O7k6 df4uYn R6 T 5Du KOT say 0D awWQ vn2U OOPNqQ T7 H 4Hf iKY Jcl Rq M2g9 lcQZ cvCNBP 2B b tjv VYj ojr rh 78tW R886 ANdxeA SV P hK3 uPr QRs 6O SW1B wWM0 yNG9iB RI 7 opG CXk hZp Eo 2JNt kyYO pCY9HL 3o 7 Zu0 J9F Tz6 tZ GLn8 HAes o9umpy uc s 4l3 CA6 DCQ 0m 0llF Pbc8 z5Ad2l GN w8ThswELzXU3X7Ebd1KdZ7v1rN3GiirRXGKWK099ovBM0FDJCvkopYNQ2aN94Z7k0UnUKamE3OjU8DFYFFokbSI2J9V9gVlM8ALWThDPnPu3EL7HPD2VDaZTggzcCCmbvc70qqPcC9mt60ogcrTiA3HEjwTK8ymKeuJMc4q6dVz200XnYUtLR9GYjPXvFOVr6W1zUK1WbPToaWJJuKnxBLnd0ftDEbMmj4loHYyhZyMjM91zQS4p7z8eKa9h0JrbacekcirexG0z4n3xz0QOWSvFj3jLhWXUIU21iIAwJtI3RbWa90I7rzAIqI3UElUJG7tLtUXzw4KQNETvXzqWaujEMenYlNIzLGxgB3AuJEQP10} w(x) = \zxvczxbcvdfghasdfrtsdafasdfasdfdsfgsdgf_{|(y,s)| \leq 1/2} \Gamma (x-y, t-s) (L(0) u(y,s) - Q(y,s)) \, dy \, ds , \end{equation} we have $L(0) w = L(0)u - Q$. So we apply Lemma~\ref{L08} and use \eqref{8ThswELzXU3X7Ebd1KdZ7v1rN3GiirRXGKWK099ovBM0FDJCvkopYNQ2aN94Z7k0UnUKamE3OjU8DFYFFokbSI2J9V9gVlM8ALWThDPnPu3EL7HPD2VDaZTggzcCCmbvc70qqPcC9mt60ogcrTiA3HEjwTK8ymKeuJMc4q6dVz200XnYUtLR9GYjPXvFOVr6W1zUK1WbPToaWJJuKnxBLnd0ftDEbMmj4loHYyhZyMjM91zQS4p7z8eKa9h0JrbacekcirexG0z4n3xz0QOWSvFj3jLhWXUIU21iIAwJtI3RbWa90I7rzAIqI3UElUJG7tLtUXzw4KQNETvXzqWaujEMenYlNIzLGxgB3AuJEQP09} to get \begin{equation} \label{8ThswELzXU3X7Ebd1KdZ7v1rN3GiirRXGKWK099ovBM0FDJCvkopYNQ2aN94Z7k0UnUKamE3OjU8DFYFFokbSI2J9V9gVlM8ALWThDPnPu3EL7HPD2VDaZTggzcCCmbvc70qqPcC9mt60ogcrTiA3HEjwTK8ymKeuJMc4q6dVz200XnYUtLR9GYjPXvFOVr6W1zUK1WbPToaWJJuKnxBLnd0ftDEbMmj4loHYyhZyMjM91zQS4p7z8eKa9h0JrbacekcirexG0z4n3xz0QOWSvFj3jLhWXUIU21iIAwJtI3RbWa90I7rzAIqI3UElUJG7tLtUXzw4KQNETvXzqWaujEMenYlNIzLGxgB3AuJEQP11} \zxvczxbcvdfghasdfrtsdafasdfasdfdsfgsdgh w - P_w \zxvczxbcvdfghasdfrtsdafasdfasdfdsfgsdgh_{L^p(Q_r)} \les (c_0 + M + \zxvczxbcvdfghasdfrtsdafasdfasdfdsfgsdgh Q \zxvczxbcvdfghasdfrtsdafasdfasdfdsfgsdgh_{L^p(Q_1)})r^{d+ \eta + (k_0+1)\alpha-1+ (m+n)/p} , \end{equation} where $P_w$ is the $d$-degree Taylor polynomial of~$w$.  Thus $v = u - w$ satisfies $L(0)v =Q$ and  \begin{equation} \llabel{MQ qL q4sE RG0C ssQkdZ Ua O vWr pla BOW rS wSG1 SM8I z9qkpd v0 C RMs GcZ LAz 4G k70e O7k6 df4uYn R6 T 5Du KOT say 0D awWQ vn2U OOPNqQ T7 H 4Hf iKY Jcl Rq M2g9 lcQZ cvCNBP 2B b tjv VYj ojr rh 78tW R886 ANdxeA SV P hK3 uPr QRs 6O SW1B wWM0 yNG9iB RI 7 opG CXk hZp Eo 2JNt kyYO pCY9HL 3o 7 Zu0 J9F Tz6 tZ GLn8 HAes o9umpy uc s 4l3 CA6 DCQ 0m 0llF Pbc8 z5Ad2l GN w SgA XeN HTN pw dS6e 3ila 2tlbXN 7c 1 itX aDZ Fak df Jkz7 TzaO 4kbVhn YH f Tda 9C3 WCb tw MXHW xoCC c4Ws2C UH B sNL FEf jS4 SG I4I4 hqHh 2nCaQ4 nM p nzY oYE 5fD sX hCHJ zTQO cbKmvE8ThswELzXU3X7Ebd1KdZ7v1rN3GiirRXGKWK099ovBM0FDJCvkopYNQ2aN94Z7k0UnUKamE3OjU8DFYFFokbSI2J9V9gVlM8ALWThDPnPu3EL7HPD2VDaZTggzcCCmbvc70qqPcC9mt60ogcrTiA3HEjwTK8ymKeuJMc4q6dVz200XnYUtLR9GYjPXvFOVr6W1zUK1WbPToaWJJuKnxBLnd0ftDEbMmj4loHYyhZyMjM91zQS4p7z8eKa9h0JrbacekcirexG0z4n3xz0QOWSvFj3jLhWXUIU21iIAwJtI3RbWa90I7rzAIqI3UElUJG7tLtUXzw4KQNETvXzqWaujEMenYlNIzLGxgB3AuJEQP12} \zxvczxbcvdfghasdfrtsdafasdfasdfdsfgsdgh v \zxvczxbcvdfghasdfrtsdafasdfasdfdsfgsdgh_{L^p(Q_{1/2})} \les \zxvczxbcvdfghasdfrtsdafasdfasdfdsfgsdgh u \zxvczxbcvdfghasdfrtsdafasdfasdfdsfgsdgh_{L^p(B_{1/2})}  + \zxvczxbcvdfghasdfrtsdafasdfasdfdsfgsdgh w \zxvczxbcvdfghasdfrtsdafasdfasdfdsfgsdgh_{L^p(B_{1/2})} \les \zxvczxbcvdfghasdfrtsdafasdfasdfdsfgsdgh u \zxvczxbcvdfghasdfrtsdafasdfasdfdsfgsdgh_{W^{m,1}_p(B_{1/2})} + \zxvczxbcvdfghasdfrtsdafasdfasdfdsfgsdgh Q \zxvczxbcvdfghasdfrtsdafasdfasdfdsfgsdgh_{L^p(Q_1)} \les c_0 + M + \zxvczxbcvdfghasdfrtsdafasdfasdfdsfgsdgh Q \zxvczxbcvdfghasdfrtsdafasdfasdfdsfgsdgh_{L^p(Q_1)} , \end{equation} where we used \eqref{8ThswELzXU3X7Ebd1KdZ7v1rN3GiirRXGKWK099ovBM0FDJCvkopYNQ2aN94Z7k0UnUKamE3OjU8DFYFFokbSI2J9V9gVlM8ALWThDPnPu3EL7HPD2VDaZTggzcCCmbvc70qqPcC9mt60ogcrTiA3HEjwTK8ymKeuJMc4q6dVz200XnYUtLR9GYjPXvFOVr6W1zUK1WbPToaWJJuKnxBLnd0ftDEbMmj4loHYyhZyMjM91zQS4p7z8eKa9h0JrbacekcirexG0z4n3xz0QOWSvFj3jLhWXUIU21iIAwJtI3RbWa90I7rzAIqI3UElUJG7tLtUXzw4KQNETvXzqWaujEMenYlNIzLGxgB3AuJEQP08} in the last inequality. We now let $P_v$ be the Taylor polynomial of $v$ with the degree~$d$. Then $L(0) P_v = Q$ and that  \begin{equation} \label{8ThswELzXU3X7Ebd1KdZ7v1rN3GiirRXGKWK099ovBM0FDJCvkopYNQ2aN94Z7k0UnUKamE3OjU8DFYFFokbSI2J9V9gVlM8ALWThDPnPu3EL7HPD2VDaZTggzcCCmbvc70qqPcC9mt60ogcrTiA3HEjwTK8ymKeuJMc4q6dVz200XnYUtLR9GYjPXvFOVr6W1zUK1WbPToaWJJuKnxBLnd0ftDEbMmj4loHYyhZyMjM91zQS4p7z8eKa9h0JrbacekcirexG0z4n3xz0QOWSvFj3jLhWXUIU21iIAwJtI3RbWa90I7rzAIqI3UElUJG7tLtUXzw4KQNETvXzqWaujEMenYlNIzLGxgB3AuJEQP13} \zxvczxbcvdfghasdfrtsdafasdfasdfdsfgsdgh v - P_v \zxvczxbcvdfghasdfrtsdafasdfasdfdsfgsdgh_{L^p(Q_r)} \les (c_0 + M + \zxvczxbcvdfghasdfrtsdafasdfasdfdsfgsdgh Q \zxvczxbcvdfghasdfrtsdafasdfasdfdsfgsdgh_{L^p(Q_1)}
) r^{d+1+(m+n)/p} . \end{equation} Therefore, we let $\bar P = P_v + P_w$ and combine \eqref{8ThswELzXU3X7Ebd1KdZ7v1rN3GiirRXGKWK099ovBM0FDJCvkopYNQ2aN94Z7k0UnUKamE3OjU8DFYFFokbSI2J9V9gVlM8ALWThDPnPu3EL7HPD2VDaZTggzcCCmbvc70qqPcC9mt60ogcrTiA3HEjwTK8ymKeuJMc4q6dVz200XnYUtLR9GYjPXvFOVr6W1zUK1WbPToaWJJuKnxBLnd0ftDEbMmj4loHYyhZyMjM91zQS4p7z8eKa9h0JrbacekcirexG0z4n3xz0QOWSvFj3jLhWXUIU21iIAwJtI3RbWa90I7rzAIqI3UElUJG7tLtUXzw4KQNETvXzqWaujEMenYlNIzLGxgB3AuJEQP11} with \eqref{8ThswELzXU3X7Ebd1KdZ7v1rN3GiirRXGKWK099ovBM0FDJCvkopYNQ2aN94Z7k0UnUKamE3OjU8DFYFFokbSI2J9V9gVlM8ALWThDPnPu3EL7HPD2VDaZTggzcCCmbvc70qqPcC9mt60ogcrTiA3HEjwTK8ymKeuJMc4q6dVz200XnYUtLR9GYjPXvFOVr6W1zUK1WbPToaWJJuKnxBLnd0ftDEbMmj4loHYyhZyMjM91zQS4p7z8eKa9h0JrbacekcirexG0z4n3xz0QOWSvFj3jLhWXUIU21iIAwJtI3RbWa90I7rzAIqI3UElUJG7tLtUXzw4KQNETvXzqWaujEMenYlNIzLGxgB3AuJEQP13} to obtain \begin{align} \begin{split} \llabel{VYj ojr rh 78tW R886 ANdxeA SV P hK3 uPr QRs 6O SW1B wWM0 yNG9iB RI 7 opG CXk hZp Eo 2JNt kyYO pCY9HL 3o 7 Zu0 J9F Tz6 tZ GLn8 HAes o9umpy uc s 4l3 CA6 DCQ 0m 0llF Pbc8 z5Ad2l GN w SgA XeN HTN pw dS6e 3ila 2tlbXN 7c 1 itX aDZ Fak df Jkz7 TzaO 4kbVhn YH f Tda 9C3 WCb tw MXHW xoCC c4Ws2C UH B sNL FEf jS4 SG I4I4 hqHh 2nCaQ4 nM p nzY oYE 5fD sX hCHJ zTQO cbKmvE pl W Und VUo rrq iJ zRqT dIWS QBL96D FU d 64k 5gv Qh0 dj rGlw 795x V6KzhT l5 Y FtC rpy bHH 86 h3qn Lyzy ycGoqm Cb f h9h prB CQp Fe CxhU Z2oJ F3aKgQ H8 R yIm F9t Eks gP FMMJ TAIy z8ThswELzXU3X7Ebd1KdZ7v1rN3GiirRXGKWK099ovBM0FDJCvkopYNQ2aN94Z7k0UnUKamE3OjU8DFYFFokbSI2J9V9gVlM8ALWThDPnPu3EL7HPD2VDaZTggzcCCmbvc70qqPcC9mt60ogcrTiA3HEjwTK8ymKeuJMc4q6dVz200XnYUtLR9GYjPXvFOVr6W1zUK1WbPToaWJJuKnxBLnd0ftDEbMmj4loHYyhZyMjM91zQS4p7z8eKa9h0JrbacekcirexG0z4n3xz0QOWSvFj3jLhWXUIU21iIAwJtI3RbWa90I7rzAIqI3UElUJG7tLtUXzw4KQNETvXzqWaujEMenYlNIzLGxgB3AuJEQP14} \zxvczxbcvdfghasdfrtsdafasdfasdfdsfgsdgh u - \bar P \zxvczxbcvdfghasdfrtsdafasdfasdfdsfgsdgh_{L^p(Q_r)}  &\les (c_0 + M + \zxvczxbcvdfghasdfrtsdafasdfasdfdsfgsdgh Q \zxvczxbcvdfghasdfrtsdafasdfasdfdsfgsdgh_{L^p(Q_1)}) r^{d + \min(\eta + (k_0+1)\alpha-1, 1) + (m+n)/p} \\& \les (c_0 + M + \zxvczxbcvdfghasdfrtsdafasdfasdfdsfgsdgh Q \zxvczxbcvdfghasdfrtsdafasdfasdfdsfgsdgh_{L^p(Q_1)}) r^{d + \eta + (k_0+1)\alpha-1 + (m+n)/p} . \end{split} \end{align} Let $\tilde L = \partial_t - \sum_{|\nu|=m} a_{\nu} \partial^{\nu}$.  Denoting $\phi =\sum_{|\nu|<m} a_{\nu} \partial^{\nu} u + (f-Q) $, we get by \eqref{8ThswELzXU3X7Ebd1KdZ7v1rN3GiirRXGKWK099ovBM0FDJCvkopYNQ2aN94Z7k0UnUKamE3OjU8DFYFFokbSI2J9V9gVlM8ALWThDPnPu3EL7HPD2VDaZTggzcCCmbvc70qqPcC9mt60ogcrTiA3HEjwTK8ymKeuJMc4q6dVz200XnYUtLR9GYjPXvFOVr6W1zUK1WbPToaWJJuKnxBLnd0ftDEbMmj4loHYyhZyMjM91zQS4p7z8eKa9h0JrbacekcirexG0z4n3xz0QOWSvFj3jLhWXUIU21iIAwJtI3RbWa90I7rzAIqI3UElUJG7tLtUXzw4KQNETvXzqWaujEMenYlNIzLGxgB3AuJEQP08} that     \begin{align}     \begin{split}       \zxvczxbcvdfghasdfrtsdafasdfasdfdsfgsdgh \phi \zxvczxbcvdfghasdfrtsdafasdfasdfdsfgsdgh_{L^p(Q_r)}       &\les r^{1-m} \zxvczxbcvdfghasdfrtsdafasdfasdfdsfgsdgh u \zxvczxbcvdfghasdfrtsdafasdfasdfdsfgsdgh_{L^p(Q_{2r})}              + r \zxvczxbcvdfghasdfrtsdafasdfasdfdsfgsdgh f \zxvczxbcvdfghasdfrtsdafasdfasdfdsfgsdgh_{L^p(Q_r)}             + \zxvczxbcvdfghasdfrtsdafasdfasdfdsfgsdgh f - Q \zxvczxbcvdfghasdfrtsdafasdfasdfdsfgsdgh_{L^p(Q_{2r})}       \\&       \les (c_0 + \MM + \zxvczxbcvdfghasdfrtsdafasdfasdfdsfgsdgh Q \zxvczxbcvdfghasdfrtsdafasdfasdfdsfgsdgh_{L^p(Q_1)})              r^{d-m+\min( \eta + k_0 \alpha, \alpha) + (m+n)/p}        \comma        r \in (0, 1/2]        .     \end{split}     \label{8ThswELzXU3X7Ebd1KdZ7v1rN3GiirRXGKWK099ovBM0FDJCvkopYNQ2aN94Z7k0UnUKamE3OjU8DFYFFokbSI2J9V9gVlM8ALWThDPnPu3EL7HPD2VDaZTggzcCCmbvc70qqPcC9mt60ogcrTiA3HEjwTK8ymKeuJMc4q6dVz200XnYUtLR9GYjPXvFOVr6W1zUK1WbPToaWJJuKnxBLnd0ftDEbMmj4loHYyhZyMjM91zQS4p7z8eKa9h0JrbacekcirexG0z4n3xz0QOWSvFj3jLhWXUIU21iIAwJtI3RbWa90I7rzAIqI3UElUJG7tLtUXzw4KQNETvXzqWaujEMenYlNIzLGxgB3AuJEQ206}     \end{align} We rewrite the equation $Lu = f$ as     \begin{equation}       \tilde L (u-\bar P)       = (L(0) - \tilde L) \bar P + \phi       = \sum_{|\nu|=m} (a_{\nu} - a_{\nu}(0,0)) \partial^{\nu} \bar P + \phi       .     \label{8ThswELzXU3X7Ebd1KdZ7v1rN3GiirRXGKWK099ovBM0FDJCvkopYNQ2aN94Z7k0UnUKamE3OjU8DFYFFokbSI2J9V9gVlM8ALWThDPnPu3EL7HPD2VDaZTggzcCCmbvc70qqPcC9mt60ogcrTiA3HEjwTK8ymKeuJMc4q6dVz200XnYUtLR9GYjPXvFOVr6W1zUK1WbPToaWJJuKnxBLnd0ftDEbMmj4loHYyhZyMjM91zQS4p7z8eKa9h0JrbacekcirexG0z4n3xz0QOWSvFj3jLhWXUIU21iIAwJtI3RbWa90I7rzAIqI3UElUJG7tLtUXzw4KQNETvXzqWaujEMenYlNIzLGxgB3AuJEQ207}     \end{equation} Applying H\"older's inequality yields     \begin{equation}       \sum_{|\nu|=m} \zxvczxbcvdfghasdfrtsdafasdfasdfdsfgsdgh (a_{\nu} - a_{\nu}(0,0)) \partial^{\nu} \bar P \zxvczxbcvdfghasdfrtsdafasdfasdfdsfgsdgh_{L^p(Q_r)}       \les \sum_{|\nu|=m} \zxvczxbcvdfghasdfrtsdafasdfasdfdsfgsdgh a_{\nu} - a_{\nu}(0,0) \zxvczxbcvdfghasdfrtsdafasdfasdfdsfgsdgh_{L^{\infty}(Q_r)}                                        \zxvczxbcvdfghasdfrtsdafasdfasdfdsfgsdgh \partial^{\nu} \bar P \zxvczxbcvdfghasdfrtsdafasdfasdfdsfgsdgh_{L^p(Q_r)}       \les \zxvczxbcvdfghasdfrtsdafasdfasdfdsfgsdgh \bar P \zxvczxbcvdfghasdfrtsdafasdfasdfdsfgsdgh_{L^p(Q_1) }r^{d-m+\alpha+(m+n)/p}       ,     \label{8ThswELzXU3X7Ebd1KdZ7v1rN3GiirRXGKWK099ovBM0FDJCvkopYNQ2aN94Z7k0UnUKamE3OjU8DFYFFokbSI2J9V9gVlM8ALWThDPnPu3EL7HPD2VDaZTggzcCCmbvc70qqPcC9mt60ogcrTiA3HEjwTK8ymKeuJMc4q6dVz200XnYUtLR9GYjPXvFOVr6W1zUK1WbPToaWJJuKnxBLnd0ftDEbMmj4loHYyhZyMjM91zQS4p7z8eKa9h0JrbacekcirexG0z4n3xz0QOWSvFj3jLhWXUIU21iIAwJtI3RbWa90I7rzAIqI3UElUJG7tLtUXzw4KQNETvXzqWaujEMenYlNIzLGxgB3AuJEQ208}     \end{equation} for all~$r \leq 1$. By combining \eqref{8ThswELzXU3X7Ebd1KdZ7v1rN3GiirRXGKWK099ovBM0FDJCvkopYNQ2aN94Z7k0UnUKamE3OjU8DFYFFokbSI2J9V9gVlM8ALWThDPnPu3EL7HPD2VDaZTggzcCCmbvc70qqPcC9mt60ogcrTiA3HEjwTK8ymKeuJMc4q6dVz200XnYUtLR9GYjPXvFOVr6W1zUK1WbPToaWJJuKnxBLnd0ftDEbMmj4loHYyhZyMjM91zQS4p7z8eKa9h0JrbacekcirexG0z4n3xz0QOWSvFj3jLhWXUIU21iIAwJtI3RbWa90I7rzAIqI3UElUJG7tLtUXzw4KQNETvXzqWaujEMenYlNIzLGxgB3AuJEQ206} and~\eqref{8ThswELzXU3X7Ebd1KdZ7v1rN3GiirRXGKWK099ovBM0FDJCvkopYNQ2aN94Z7k0UnUKamE3OjU8DFYFFokbSI2J9V9gVlM8ALWThDPnPu3EL7HPD2VDaZTggzcCCmbvc70qqPcC9mt60ogcrTiA3HEjwTK8ymKeuJMc4q6dVz200XnYUtLR9GYjPXvFOVr6W1zUK1WbPToaWJJuKnxBLnd0ftDEbMmj4loHYyhZyMjM91zQS4p7z8eKa9h0JrbacekcirexG0z4n3xz0QOWSvFj3jLhWXUIU21iIAwJtI3RbWa90I7rzAIqI3UElUJG7tLtUXzw4KQNETvXzqWaujEMenYlNIzLGxgB3AuJEQ208}, we have the estimate for the right hand side of \eqref{8ThswELzXU3X7Ebd1KdZ7v1rN3GiirRXGKWK099ovBM0FDJCvkopYNQ2aN94Z7k0UnUKamE3OjU8DFYFFokbSI2J9V9gVlM8ALWThDPnPu3EL7HPD2VDaZTggzcCCmbvc70qqPcC9mt60ogcrTiA3HEjwTK8ymKeuJMc4q6dVz200XnYUtLR9GYjPXvFOVr6W1zUK1WbPToaWJJuKnxBLnd0ftDEbMmj4loHYyhZyMjM91zQS4p7z8eKa9h0JrbacekcirexG0z4n3xz0QOWSvFj3jLhWXUIU21iIAwJtI3RbWa90I7rzAIqI3UElUJG7tLtUXzw4KQNETvXzqWaujEMenYlNIzLGxgB3AuJEQ207} which reads \begin{equation}       \biggl \zxvczxbcvdfghasdfrtsdafasdfasdfdsfgsdgh \sum_{|\nu| = m} (a_{\nu} - a_{\nu}(0,0)) \partial^{\nu}\bar P + \phi \biggr \zxvczxbcvdfghasdfrtsdafasdfasdfdsfgsdgh_{L^p(Q_r)}       \les (c_0               + \MM                + \zxvczxbcvdfghasdfrtsdafasdfasdfdsfgsdgh Q \zxvczxbcvdfghasdfrtsdafasdfasdfdsfgsdgh_{L^p(Q_1)}              + \zxvczxbcvdfghasdfrtsdafasdfasdfdsfgsdgh \bar P \zxvczxbcvdfghasdfrtsdafasdfasdfdsfgsdgh_{L^p(Q_1)}              ) r^{d-m+\min( \eta + k_0 \alpha,\alpha) + (m+n)/p}       ,     \label{8ThswELzXU3X7Ebd1KdZ7v1rN3GiirRXGKWK099ovBM0FDJCvkopYNQ2aN94Z7k0UnUKamE3OjU8DFYFFokbSI2J9V9gVlM8ALWThDPnPu3EL7HPD2VDaZTggzcCCmbvc70qqPcC9mt60ogcrTiA3HEjwTK8ymKeuJMc4q6dVz200XnYUtLR9GYjPXvFOVr6W1zUK1WbPToaWJJuKnxBLnd0ftDEbMmj4loHYyhZyMjM91zQS4p7z8eKa9h0JrbacekcirexG0z4n3xz0QOWSvFj3jLhWXUIU21iIAwJtI3RbWa90I7rzAIqI3UElUJG7tLtUXzw4KQNETvXzqWaujEMenYlNIzLGxgB3AuJEQ209}     \end{equation} for all~$r \in (0, 1/2]$. We now rewrite the equation as      \begin{align}     \begin{split}       L(0) (u-\bar P)       &= (L(0) - \tilde L) ( u - \bar P)           + \sum_{|\nu| = m} (a_{\nu} - a_{\nu}(0,0)) \partial^{\nu}\bar P + \phi       \\&       = \sum_{|\nu|=m} (a_{\nu} - a_{\nu}(0,0)) \partial^{\nu} (u-\bar P)          +\sum_{|\nu| = m} (a_{\nu} - a_{\nu}(0,0)) \partial^{\nu}\bar P + \phi          .     \label{8ThswELzXU3X7Ebd1KdZ7v1rN3GiirRXGKWK099ovBM0FDJCvkopYNQ2aN94Z7k0UnUKamE3OjU8DFYFFokbSI2J9V9gVlM8ALWThDPnPu3EL7HPD2VDaZTggzcCCmbvc70qqPcC9mt60ogcrTiA3HEjwTK8ymKeuJMc4q6dVz200XnYUtLR9GYjPXvFOVr6W1zUK1WbPToaWJJuKnxBLnd0ftDEbMmj4loHYyhZyMjM91zQS4p7z8eKa9h0JrbacekcirexG0z4n3xz0QOWSvFj3jLhWXUIU21iIAwJtI3RbWa90I7rzAIqI3UElUJG7tLtUXzw4KQNETvXzqWaujEMenYlNIzLGxgB3AuJEQ211}     \end{split}     \end{align} The first term on the right hand side of \eqref{8ThswELzXU3X7Ebd1KdZ7v1rN3GiirRXGKWK099ovBM0FDJCvkopYNQ2aN94Z7k0UnUKamE3OjU8DFYFFokbSI2J9V9gVlM8ALWThDPnPu3EL7HPD2VDaZTggzcCCmbvc70qqPcC9mt60ogcrTiA3HEjwTK8ymKeuJMc4q6dVz200XnYUtLR9GYjPXvFOVr6W1zUK1WbPToaWJJuKnxBLnd0ftDEbMmj4loHYyhZyMjM91zQS4p7z8eKa9h0JrbacekcirexG0z4n3xz0QOWSvFj3jLhWXUIU21iIAwJtI3RbWa90I7rzAIqI3UElUJG7tLtUXzw4KQNETvXzqWaujEMenYlNIzLGxgB3AuJEQ211} is estimated as     \begin{align}     \begin{split}       &\sum_{|\nu|=m} \zxvczxbcvdfghasdfrtsdafasdfasdfdsfgsdgh (a_{\nu} - a_{\nu}(0,0)) \partial^{\nu} (u-\bar P) \zxvczxbcvdfghasdfrtsdafasdfasdfdsfgsdgh_{L^p(Q_r)}       \\& \indeq \indeq \indeq       \les \sum_{|\nu|=m} \zxvczxbcvdfghasdfrtsdafasdfasdfdsfgsdgh a_{\nu} - a_{\nu}(0,0)\zxvczxbcvdfghasdfrtsdafasdfasdfdsfgsdgh_{L^{\infty}(Q_r)}               \zxvczxbcvdfghasdfrtsdafasdfasdfdsfgsdgh \partial^{\nu} (u-\bar P) \zxvczxbcvdfghasdfrtsdafasdfasdfdsfgsdgh_{L^p(Q_r)}        \\& \indeq \indeq \indeq       \les (c_0              + \MM               + \zxvczxbcvdfghasdfrtsdafasdfasdfdsfgsdgh Q \zxvczxbcvdfghasdfrtsdafasdfasdfdsfgsdgh_{L^p(Q_1)}              + \zxvczxbcvdfghasdfrtsdafasdfasdfdsfgsdgh \bar P \zxvczxbcvdfghasdfrtsdafasdfasdfdsfgsdgh_{L^p(Q_1)}              ) r^{d-m-1+ \eta+\alpha+(m+n)/p}               .     \end{split}     \label{8ThswELzXU3X7Ebd1KdZ7v1rN3GiirRXGKWK099ovBM0FDJCvkopYNQ2aN94Z7k0UnUKamE3OjU8DFYFFokbSI2J9V9gVlM8ALWThDPnPu3EL7HPD2VDaZTggzcCCmbvc70qqPcC9mt60ogcrTiA3HEjwTK8ymKeuJMc4q6dVz200XnYUtLR9GYjPXvFOVr6W1zUK1WbPToaWJJuKnxBLnd0ftDEbMmj4loHYyhZyMjM91zQS4p7z8eKa9h0JrbacekcirexG0z4n3xz0QOWSvFj3jLhWXUIU21iIAwJtI3RbWa90I7rzAIqI3UElUJG7tLtUXzw4KQNETvXzqWaujEMenYlNIzLGxgB3AuJEQ212}     \end{align} Let $\tilde \phi$ be the right hand side of~\eqref{8ThswELzXU3X7Ebd1KdZ7v1rN3GiirRXGKWK099ovBM0FDJCvkopYNQ2aN94Z7k0UnUKamE3OjU8DFYFFokbSI2J9V9gVlM8ALWThDPnPu3EL7HPD2VDaZTggzcCCmbvc70qqPcC9mt60ogcrTiA3HEjwTK8ymKeuJMc4q6dVz200XnYUtLR9GYjPXvFOVr6W1zUK1WbPToaWJJuKnxBLnd0ftDEbMmj4loHYyhZyMjM91zQS4p7z8eKa9h0JrbacekcirexG0z4n3xz0QOWSvFj3jLhWXUIU21iIAwJtI3RbWa90I7rzAIqI3UElUJG7tLtUXzw4KQNETvXzqWaujEMenYlNIzLGxgB3AuJEQ211}. By combining \eqref{8ThswELzXU3X7Ebd1KdZ7v1rN3GiirRXGKWK099ovBM0FDJCvkopYNQ2aN94Z7k0UnUKamE3OjU8DFYFFokbSI2J9V9gVlM8ALWThDPnPu3EL7HPD2VDaZTggzcCCmbvc70qqPcC9mt60ogcrTiA3HEjwTK8ymKeuJMc4q6dVz200XnYUtLR9GYjPXvFOVr6W1zUK1WbPToaWJJuKnxBLnd0ftDEbMmj4loHYyhZyMjM91zQS4p7z8eKa9h0JrbacekcirexG0z4n3xz0QOWSvFj3jLhWXUIU21iIAwJtI3RbWa90I7rzAIqI3UElUJG7tLtUXzw4KQNETvXzqWaujEMenYlNIzLGxgB3AuJEQ209} and \eqref{8ThswELzXU3X7Ebd1KdZ7v1rN3GiirRXGKWK099ovBM0FDJCvkopYNQ2aN94Z7k0UnUKamE3OjU8DFYFFokbSI2J9V9gVlM8ALWThDPnPu3EL7HPD2VDaZTggzcCCmbvc70qqPcC9mt60ogcrTiA3HEjwTK8ymKeuJMc4q6dVz200XnYUtLR9GYjPXvFOVr6W1zUK1WbPToaWJJuKnxBLnd0ftDEbMmj4loHYyhZyMjM91zQS4p7z8eKa9h0JrbacekcirexG0z4n3xz0QOWSvFj3jLhWXUIU21iIAwJtI3RbWa90I7rzAIqI3UElUJG7tLtUXzw4KQNETvXzqWaujEMenYlNIzLGxgB3AuJEQ212}, we obtain \begin{equation} \zxvczxbcvdfghasdfrtsdafasdfasdfdsfgsdgh \tilde \phi \zxvczxbcvdfghasdfrtsdafasdfasdfdsfgsdgh_{L^p(Q_r)}       \les (c_0              + \MM               + \zxvczxbcvdfghasdfrtsdafasdfasdfdsfgsdgh Q \zxvczxbcvdfghasdfrtsdafasdfasdfdsfgsdgh_{L^p(Q_1)}              + \zxvczxbcvdfghasdfrtsdafasdfasdfdsfgsdgh \bar P \zxvczxbcvdfghasdfrtsdafasdfasdfdsfgsdgh_{L^p(Q_1)}              ) r^{d-m-1+ \eta+\alpha+(m+n)/p}     \llabel{ SgA XeN HTN pw dS6e 3ila 2tlbXN 7c 1 itX aDZ Fak df Jkz7 TzaO 4kbVhn YH f Tda 9C3 WCb tw MXHW xoCC c4Ws2C UH B sNL FEf jS4 SG I4I4 hqHh 2nCaQ4 nM p nzY oYE 5fD sX hCHJ zTQO cbKmvE pl W Und VUo rrq iJ zRqT dIWS QBL96D FU d 64k 5gv Qh0 dj rGlw 795x V6KzhT l5 Y FtC rpy bHH 86 h3qn Lyzy ycGoqm Cb f h9h prB CQp Fe CxhU Z2oJ F3aKgQ H8 R yIm F9t Eks gP FMMJ TAIy z3ohWj Hx M R86 KJO NKT c3 uyRN nSKH lhb11Q 9C w rf8 iiX qyY L4 zh9s 8NTE ve539G zL g vhD N7F eXo 5k AWAT 6Vrw htDQwy tu H Oa5 UIO Exb Mp V2AH puuC HWItfO ru x YfF qsa P8u fH F16C E8ThswELzXU3X7Ebd1KdZ7v1rN3GiirRXGKWK099ovBM0FDJCvkopYNQ2aN94Z7k0UnUKamE3OjU8DFYFFokbSI2J9V9gVlM8ALWThDPnPu3EL7HPD2VDaZTggzcCCmbvc70qqPcC9mt60ogcrTiA3HEjwTK8ymKeuJMc4q6dVz200XnYUtLR9GYjPXvFOVr6W1zUK1WbPToaWJJuKnxBLnd0ftDEbMmj4loHYyhZyMjM91zQS4p7z8eKa9h0JrbacekcirexG0z4n3xz0QOWSvFj3jLhWXUIU21iIAwJtI3RbWa90I7rzAIqI3UElUJG7tLtUXzw4KQNETvXzqWaujEMenYlNIzLGxgB3AuJEQ82} \end{equation} and $L(0)(u-\bar P) = \tilde \phi$. Thus there exists a polynomial $P_0$ of degree less than $d$ such that    \begin{align}   \begin{split}       &       \zxvczxbcvdfghasdfrtsdafasdfasdfdsfgsdgh u(x,t) - \bar P(x,t) - P_0(x,t) \zxvczxbcvdfghasdfrtsdafasdfasdfdsfgsdgh_{L^p(Q_r)}       \\&\indeq       \les ( c_0                + \MM                + \zxvczxbcvdfghasdfrtsdafasdfasdfdsfgsdgh Q \zxvczxbcvdfghasdfrtsdafasdfasdfdsfgsdgh_{L^p(Q_1)}               + \zxvczxbcvdfghasdfrtsdafasdfasdfdsfgsdgh u \zxvczxbcvdfghasdfrtsdafasdfasdfdsfgsdgh_{L^p(Q_1)}               + \zxvczxbcvdfghasdfrtsdafasdfasdfdsfgsdgh \bar P \zxvczxbcvdfghasdfrtsdafasdfasdfdsfgsdgh_{L^p(Q_1)}              ) r^{d-1+ \eta + (k_0+1)\alpha + (m+n)/q}     ,     \end{split}     \llabel{ pl W Und VUo rrq iJ zRqT dIWS QBL96D FU d 64k 5gv Qh0 dj rGlw 795x V6KzhT l5 Y FtC rpy bHH 86 h3qn Lyzy ycGoqm Cb f h9h prB CQp Fe CxhU Z2oJ F3aKgQ H8 R yIm F9t Eks gP FMMJ TAIy z3ohWj Hx M R86 KJO NKT c3 uyRN nSKH lhb11Q 9C w rf8 iiX qyY L4 zh9s 8NTE ve539G zL g vhD N7F eXo 5k AWAT 6Vrw htDQwy tu H Oa5 UIO Exb Mp V2AH puuC HWItfO ru x YfF qsa P8u fH F16C EBXK tj6ohs uv T 8BB PDN gGf KQ g6MB K2x9 jqRbHm jI U EKB Im0 bbK ac wqIX ijrF uq9906 Vy m 3Ve 1gB dMy 9i hnbA 3gBo 5aBKK5 gf J SmN eCW wOM t9 xutz wDkX IY7nNh Wd D ppZ UOq 2Ae 0a W8ThswELzXU3X7Ebd1KdZ7v1rN3GiirRXGKWK099ovBM0FDJCvkopYNQ2aN94Z7k0UnUKamE3OjU8DFYFFokbSI2J9V9gVlM8ALWThDPnPu3EL7HPD2VDaZTggzcCCmbvc70qqPcC9mt60ogcrTiA3HEjwTK8ymKeuJMc4q6dVz200XnYUtLR9GYjPXvFOVr6W1zUK1WbPToaWJJuKnxBLnd0ftDEbMmj4loHYyhZyMjM91zQS4p7z8eKa9h0JrbacekcirexG0z4n3xz0QOWSvFj3jLhWXUIU21iIAwJtI3RbWa90I7rzAIqI3UElUJG7tLtUXzw4KQNETvXzqWaujEMenYlNIzLGxgB3AuJEQ213}     \end{align} and then    \begin{align}   \begin{split}     &       \zxvczxbcvdfghasdfrtsdafasdfasdfdsfgsdgh u(x,t)  - P_0(x,t) \zxvczxbcvdfghasdfrtsdafasdfasdfdsfgsdgh_{L^p(Q_r)}     \\&\indeq       \les ( c_0                + \MM                + \zxvczxbcvdfghasdfrtsdafasdfasdfdsfgsdgh Q \zxvczxbcvdfghasdfrtsdafasdfasdfdsfgsdgh_{L^p(Q_1)}               + \zxvczxbcvdfghasdfrtsdafasdfasdfdsfgsdgh u \zxvczxbcvdfghasdfrtsdafasdfasdfdsfgsdgh_{L^p(Q_1)}               + \zxvczxbcvdfghasdfrtsdafasdfasdfdsfgsdgh \bar P \zxvczxbcvdfghasdfrtsdafasdfasdfdsfgsdgh_{L^p(Q_1)}              ) r^{d-1+ \eta + (k_0+1)\alpha + (m+n)/p}       \comma       (x,t) \in Q_R       .     \end{split}     \llabel{3ohWj Hx M R86 KJO NKT c3 uyRN nSKH lhb11Q 9C w rf8 iiX qyY L4 zh9s 8NTE ve539G zL g vhD N7F eXo 5k AWAT 6Vrw htDQwy tu H Oa5 UIO Exb Mp V2AH puuC HWItfO ru x YfF qsa P8u fH F16C EBXK tj6ohs uv T 8BB PDN gGf KQ g6MB K2x9 jqRbHm jI U EKB Im0 bbK ac wqIX ijrF uq9906 Vy m 3Ve 1gB dMy 9i hnbA 3gBo 5aBKK5 gf J SmN eCW wOM t9 xutz wDkX IY7nNh Wd D ppZ UOq 2Ae 0a W7A6 XoIc TSLNDZ yf 2 XjB cUw eQT Zt cuXI DYsD hdAu3V MB B BKW IcF NWQ dO u3Fb c6F8 VN77Da IH E 3MZ luL YvB mN Z2wE auXX DGpeKR nw o UVB 2oM VVe hW 0ejG gbgz Iw9FwQ hN Y rFI 4pT lqr8ThswELzXU3X7Ebd1KdZ7v1rN3GiirRXGKWK099ovBM0FDJCvkopYNQ2aN94Z7k0UnUKamE3OjU8DFYFFokbSI2J9V9gVlM8ALWThDPnPu3EL7HPD2VDaZTggzcCCmbvc70qqPcC9mt60ogcrTiA3HEjwTK8ymKeuJMc4q6dVz200XnYUtLR9GYjPXvFOVr6W1zUK1WbPToaWJJuKnxBLnd0ftDEbMmj4loHYyhZyMjM91zQS4p7z8eKa9h0JrbacekcirexG0z4n3xz0QOWSvFj3jLhWXUIU21iIAwJtI3RbWa90I7rzAIqI3UElUJG7tLtUXzw4KQNETvXzqWaujEMenYlNIzLGxgB3AuJEQ214}     \end{align} Arguing similarly as in Theorem~\ref{T01}, we have $P_0 \equiv 0$ by \eqref{8ThswELzXU3X7Ebd1KdZ7v1rN3GiirRXGKWK099ovBM0FDJCvkopYNQ2aN94Z7k0UnUKamE3OjU8DFYFFokbSI2J9V9gVlM8ALWThDPnPu3EL7HPD2VDaZTggzcCCmbvc70qqPcC9mt60ogcrTiA3HEjwTK8ymKeuJMc4q6dVz200XnYUtLR9GYjPXvFOVr6W1zUK1WbPToaWJJuKnxBLnd0ftDEbMmj4loHYyhZyMjM91zQS4p7z8eKa9h0JrbacekcirexG0z4n3xz0QOWSvFj3jLhWXUIU21iIAwJtI3RbWa90I7rzAIqI3UElUJG7tLtUXzw4KQNETvXzqWaujEMenYlNIzLGxgB3AuJEQ155} and therefore,     \begin{equation}       \sup_{r\leq1} \frac{\zxvczxbcvdfghasdfrtsdafasdfasdfdsfgsdgh u \zxvczxbcvdfghasdfrtsdafasdfasdfdsfgsdgh_{L^p(Q_r)}}{r^{d+(m+n)/p}}       < \infty       ,     \llabel{BXK tj6ohs uv T 8BB PDN gGf KQ g6MB K2x9 jqRbHm jI U EKB Im0 bbK ac wqIX ijrF uq9906 Vy m 3Ve 1gB dMy 9i hnbA 3gBo 5aBKK5 gf J SmN eCW wOM t9 xutz wDkX IY7nNh Wd D ppZ UOq 2Ae 0a W7A6 XoIc TSLNDZ yf 2 XjB cUw eQT Zt cuXI DYsD hdAu3V MB B BKW IcF NWQ dO u3Fb c6F8 VN77Da IH E 3MZ luL YvB mN Z2wE auXX DGpeKR nw o UVB 2oM VVe hW 0ejG gbgz Iw9FwQ hN Y rFI 4pT lqr Wn Xzz2 qBba lv3snl 2j a vzU Snc pwh cG J0Di 3Lr3 rs6F23 6o b LtD vN9 KqA pO uold 3sec xqgSQN ZN f w5t BGX Pdv W0 k6G4 Byh9 V3IicO nR 2 obf x3j rwt 37 u82f wxwj SmOQq0 pq 4 qfv rN8ThswELzXU3X7Ebd1KdZ7v1rN3GiirRXGKWK099ovBM0FDJCvkopYNQ2aN94Z7k0UnUKamE3OjU8DFYFFokbSI2J9V9gVlM8ALWThDPnPu3EL7HPD2VDaZTggzcCCmbvc70qqPcC9mt60ogcrTiA3HEjwTK8ymKeuJMc4q6dVz200XnYUtLR9GYjPXvFOVr6W1zUK1WbPToaWJJuKnxBLnd0ftDEbMmj4loHYyhZyMjM91zQS4p7z8eKa9h0JrbacekcirexG0z4n3xz0QOWSvFj3jLhWXUIU21iIAwJtI3RbWa90I7rzAIqI3UElUJG7tLtUXzw4KQNETvXzqWaujEMenYlNIzLGxgB3AuJEQ215}     \end{equation} since $1\leq (k_0+1)\alpha + \eta$. Then, instead of \eqref{8ThswELzXU3X7Ebd1KdZ7v1rN3GiirRXGKWK099ovBM0FDJCvkopYNQ2aN94Z7k0UnUKamE3OjU8DFYFFokbSI2J9V9gVlM8ALWThDPnPu3EL7HPD2VDaZTggzcCCmbvc70qqPcC9mt60ogcrTiA3HEjwTK8ymKeuJMc4q6dVz200XnYUtLR9GYjPXvFOVr6W1zUK1WbPToaWJJuKnxBLnd0ftDEbMmj4loHYyhZyMjM91zQS4p7z8eKa9h0JrbacekcirexG0z4n3xz0QOWSvFj3jLhWXUIU21iIAwJtI3RbWa90I7rzAIqI3UElUJG7tLtUXzw4KQNETvXzqWaujEMenYlNIzLGxgB3AuJEQ212}, we have   \begin{align}   \begin{split}
    &       \sum_{|\nu|=m} \zxvczxbcvdfghasdfrtsdafasdfasdfdsfgsdgh (a_{\nu} - a_{\nu}(0,0) ) \partial^{\nu} \ppsi \zxvczxbcvdfghasdfrtsdafasdfasdfdsfgsdgh_{L^p(Q_r)}     \\&\indeq       \les ( c_k               + \MM               + \zxvczxbcvdfghasdfrtsdafasdfasdfdsfgsdgh Q \zxvczxbcvdfghasdfrtsdafasdfasdfdsfgsdgh_{L^p(Q_1)}              + \zxvczxbcvdfghasdfrtsdafasdfasdfdsfgsdgh \bar P \zxvczxbcvdfghasdfrtsdafasdfasdfdsfgsdgh_{L^p(Q_1)}              ) r^{d-m-1+ \eta+k \alpha + (m+n)/p}       \comma r \leq R       .   \end{split}     \llabel{7A6 XoIc TSLNDZ yf 2 XjB cUw eQT Zt cuXI DYsD hdAu3V MB B BKW IcF NWQ dO u3Fb c6F8 VN77Da IH E 3MZ luL YvB mN Z2wE auXX DGpeKR nw o UVB 2oM VVe hW 0ejG gbgz Iw9FwQ hN Y rFI 4pT lqr Wn Xzz2 qBba lv3snl 2j a vzU Snc pwh cG J0Di 3Lr3 rs6F23 6o b LtD vN9 KqA pO uold 3sec xqgSQN ZN f w5t BGX Pdv W0 k6G4 Byh9 V3IicO nR 2 obf x3j rwt 37 u82f wxwj SmOQq0 pq 4 qfv rN4 kFW hP HRmy lxBx 1zCUhs DN Y INv Ldt VDG 35 kTMT 0ChP EdjSG4 rW N 6v5 IIM TVB 5y cWuY OoU6 Sevyec OT f ZJv BjS ZZk M6 8vq4 NOpj X0oQ7r vM v myK ftb ioR l5 c4ID 72iF H0VbQz hj H U8ThswELzXU3X7Ebd1KdZ7v1rN3GiirRXGKWK099ovBM0FDJCvkopYNQ2aN94Z7k0UnUKamE3OjU8DFYFFokbSI2J9V9gVlM8ALWThDPnPu3EL7HPD2VDaZTggzcCCmbvc70qqPcC9mt60ogcrTiA3HEjwTK8ymKeuJMc4q6dVz200XnYUtLR9GYjPXvFOVr6W1zUK1WbPToaWJJuKnxBLnd0ftDEbMmj4loHYyhZyMjM91zQS4p7z8eKa9h0JrbacekcirexG0z4n3xz0QOWSvFj3jLhWXUIU21iIAwJtI3RbWa90I7rzAIqI3UElUJG7tLtUXzw4KQNETvXzqWaujEMenYlNIzLGxgB3AuJEQ218}     \end{align} By Lemma~\ref{L08}, there exists a polynomial $P_2$ of  degree strictly less than $d+1$ such that $L(0) P_2 = 0$ and      \begin{equation}       \zxvczxbcvdfghasdfrtsdafasdfasdfdsfgsdgh u(x,t) - \bar P(x,t) - P_2(x,t) \zxvczxbcvdfghasdfrtsdafasdfasdfdsfgsdgh_{L^q(Q_r)}       \les (c_0              + \MM               + \zxvczxbcvdfghasdfrtsdafasdfasdfdsfgsdgh u \zxvczxbcvdfghasdfrtsdafasdfasdfdsfgsdgh_{L^p(Q_1)}              + \zxvczxbcvdfghasdfrtsdafasdfasdfdsfgsdgh Q \zxvczxbcvdfghasdfrtsdafasdfasdfdsfgsdgh_{L^p(Q_1)}              + \zxvczxbcvdfghasdfrtsdafasdfasdfdsfgsdgh \bar P \zxvczxbcvdfghasdfrtsdafasdfasdfdsfgsdgh_{L^p(Q_1)}              ) r^{d+\alpha + (m+n)/q}              ,     \llabel{ 3vs 0Ep 3g M2Ew lPGj RVX6cx lb V OfA ll7 g6y L9 PWyo 58h0 e07HO0 qz 8 kbe 85Z BVC YO KxNN La4a FZ7mw7 mo A CU1 q1l pfm E5 qXTA 0QqV MnRsbK zH o 5vX 1tp MVZ XC znmS OM73 CRHwQP Tl v VN7 lKX I06 KT 6MTj O3Yb 87pgoz ox y dVJ HPL 3k2 KR yx3b 0yPB sJmNjE TP J i4k m2f xMh 35 MtRo irNE 9bU7lM o4 b nj9 GgY A6v sE sONR tNmD FJej96 ST n 3lJ U2u 16o TE Xogv Mqwh D0BKr1 Ci s VYb A2w kfX 0n 4hD5 Lbr8 l7Erfu N8 O cUj qeq zCC yx 6hPA yMrL eB8Cwl kT h ixd Izv iEW uw I8qK a0VZ EqOroD UP G phf IOF SKZ 3i cda7 Vh3y wUSzkk W8 S fU1 yHN 0A1 4z nyPU Ll6h 8ThswELzXU3X7Ebd1KdZ7v1rN3GiirRXGKWK099ovBM0FDJCvkopYNQ2aN94Z7k0UnUKamE3OjU8DFYFFokbSI2J9V9gVlM8ALWThDPnPu3EL7HPD2VDaZTggzcCCmbvc70qqPcC9mt60ogcrTiA3HEjwTK8ymKeuJMc4q6dVz200XnYUtLR9GYjPXvFOVr6W1zUK1WbPToaWJJuKnxBLnd0ftDEbMmj4loHYyhZyMjM91zQS4p7z8eKa9h0JrbacekcirexG0z4n3xz0QOWSvFj3jLhWXUIU21iIAwJtI3RbWa90I7rzAIqI3UElUJG7tLtUXzw4KQNETvXzqWaujEMenYlNIzLGxgB3AuJEQ219}     \end{equation} for all $(x,t) \in Q_R$. Let $P = \bar P+P_2$. Then $P$ is  homogeneous polynomial of degree $d$ and~$L(0)P = Q$. Letting     \begin{equation}       \tilde c        = \sup_{r \leq 0} \frac{\zxvczxbcvdfghasdfrtsdafasdfasdfdsfgsdgh u \zxvczxbcvdfghasdfrtsdafasdfasdfdsfgsdgh_{L^p(Q_r)}}{r^{d+(m+n)/p}}       < \infty       ,     \llabel{8Thsv VN7 lKX I06 KT 6MTj O3Yb 87pgoz ox y dVJ HPL 3k2 KR yx3b 0yPB sJmNjE TP J i4k m2f xMh 35 MtRo irNE 9bU7lM o4 b nj9 GgY A6v sE sONR tNmD FJej96 ST n 3lJ U2u 16o TE Xogv Mqwh D0BKr1 Ci s VYb A2w kfX 0n 4hD5 Lbr8 l7Erfu N8 O cUj qeq zCC yx 6hPA yMrL eB8Cwl kT h ixd Izv iEW uw I8qK a0VZ EqOroD UP G phf IOF SKZ 3i cda7 Vh3y wUSzkk W8 S fU1 yHN 0A1 4z nyPU Ll6h pzlkq7 SK N aFq g9Y hj2 hJ 3pWS mi9X gjapmM Z6 H V8y jig pSN lI 9T8e Lhc1 eRRgZ8 85 e NJ8 w3s ecl 5i lCdo zV1B oOIk9g DZ N Y5q gVQ cFe TD VxhP mwPh EU41Lq 35 g CzP tc2 oPu gV KOp5 wELzXU3X7Ebd1KdZ7v1rN3GiirRXGKWK099ovBM0FDJCvkopYNQ2aN94Z7k0UnUKamE3OjU8DFYFFokbSI2J9V9gVlM8ALWThDPnPu3EL7HPD2VDaZTggzcCCmbvc70qqPcC9mt60ogcrTiA3HEjwTK8ymKeuJMc4q6dVz200XnYUtLR9GYjPXvFOVr6W1zUK1WbPToaWJJuKnxBLnd0ftDEbMmj4loHYyhZyMjM91zQS4p7z8eKa9h0JrbacekcirexG0z4n3xz0QOWSvFj3jLhWXUIU21iIAwJtI3RbWa90I7rzAIqI3UElUJG7tLtUXzw4KQNETvXzqWaujEMenYlNIzLGxgB3AuJEQ220}     \end{equation} we have     \begin{equation}       \zxvczxbcvdfghasdfrtsdafasdfasdfdsfgsdgh u - P \zxvczxbcvdfghasdfrtsdafasdfasdfdsfgsdgh_{L^q(Q_r)}       \les ( \tilde c                + \MM                + \zxvczxbcvdfghasdfrtsdafasdfasdfdsfgsdgh u \zxvczxbcvdfghasdfrtsdafasdfasdfdsfgsdgh_{L^p(Q_1)}               + \zxvczxbcvdfghasdfrtsdafasdfasdfdsfgsdgh Q \zxvczxbcvdfghasdfrtsdafasdfasdfdsfgsdgh_{L^p(Q_1)}               + \zxvczxbcvdfghasdfrtsdafasdfasdfdsfgsdgh \bar P \zxvczxbcvdfghasdfrtsdafasdfasdfdsfgsdgh_{L^p(Q_1)}             ) r^{d+\alpha+(m+n)/q}             ,     \llabel{1 Ci s VYb A2w kfX 0n 4hD5 Lbr8 l7Erfu N8 O cUj qeq zCC yx 6hPA yMrL eB8Cwl kT h ixd Izv iEW uw I8qK a0VZ EqOroD UP G phf IOF SKZ 3i cda7 Vh3y wUSzkk W8 S fU1 yHN 0A1 4z nyPU Ll6h pzlkq7 SK N aFq g9Y hj2 hJ 3pWS mi9X gjapmM Z6 H V8y jig pSN lI 9T8e Lhc1 eRRgZ8 85 e NJ8 w3s ecl 5i lCdo zV1B oOIk9g DZ N Y5q gVQ cFe TD VxhP mwPh EU41Lq 35 g CzP tc2 oPu gV KOp5 Gsf7 DFBlek to b d2y uDt ElX xm j1us DJJ6 hj0HBV Fa n Tva bFA VwM 51 nUH6 0GvT 9fAjTO 4M Q VzN NAQ iwS lS xf2p Q8qv tdjnvu pL A TIw ym4 nEY ES fMav UgZo yehtoe 9R T N15 EI1 aKJ SC 8ThswELzXU3X7Ebd1KdZ7v1rN3GiirRXGKWK099ovBM0FDJCvkopYNQ2aN94Z7k0UnUKamE3OjU8DFYFFokbSI2J9V9gVlM8ALWThDPnPu3EL7HPD2VDaZTggzcCCmbvc70qqPcC9mt60ogcrTiA3HEjwTK8ymKeuJMc4q6dVz200XnYUtLR9GYjPXvFOVr6W1zUK1WbPToaWJJuKnxBLnd0ftDEbMmj4loHYyhZyMjM91zQS4p7z8eKa9h0JrbacekcirexG0z4n3xz0QOWSvFj3jLhWXUIU21iIAwJtI3RbWa90I7rzAIqI3UElUJG7tLtUXzw4KQNETvXzqWaujEMenYlNIzLGxgB3AuJEQ221}     \end{equation} for any~$(x,t) \in Q_R$. The estimation of $u-P$ and $P$ then follows similarly as in the proof of Theorem~\ref{T01}.  \end{proof} \colb The proof of Theorem~\ref{T04} is analogous to Theorem~\ref{T02}, and is thus  omitted. \par \colb \section*{Acknowledgments} The authors were supported in part by the NSF grant DMS-2205493. \par  


\begin{thebibliography}{[AMRV]} \normalsize \bibitem[AV]{AV}  G.~Alessandrini and S.~Vessella,    \emph{Local behaviour of solutions to parabolic equations},    Comm.\ Partial Differential Equations \textbf{13} (1988), no.~9, 1041--1058.  \bibitem[B]{B} L.~Bers,    \emph{Local behavior of solutions of general linear elliptic equations},    Comm. Pure Appl. Math. \textbf{8} (1955), 473--496.  \bibitem[Br]{Br}  M.~Bramanti,    \emph{Schauder estimates for parabolic and elliptic nondivergence operators of {H}\"{o}rmander type},    Subelliptic {PDE}'s and applications to geometry and finance, Lect.\ Notes Semin.\ Interdiscip.\ Mat., vol.~6, Semin.\ Interdiscip.\ Mat.~(S.I.M.), Potenza, 2007, pp.~69--82. \bibitem[CK]{CK}  G.~\c{C}amliyurt and I.~Kukavica,    \emph{A local asymptotic expansion for a solution of the {S}tokes system},    Evol.\ Equ.\ Control Theory~\textbf{5} (2016), no.~4, 647--659.  \bibitem[D]{D}  H.~Dong,    \emph{Nonstandard {S}chauder estimates for parabolic equations},   Lectures on the analysis of nonlinear partial differential equations. {P}art 5, Morningside Lect.\ Math., vol.~5, Int.\ Press, Somerville, MA, 2018, pp.~65--117.  \bibitem[DK1]{DK1} H.~Dong and D.~Kim,    \emph{Schauder estimates for a class of non-local elliptic equations},    Discrete Contin.\ Dyn.\ Syst.~\textbf{33} (2013), no.~6, 2319--2347.  \bibitem[DK2]{DK2}  H.~Dong and S.~Kim,    \emph{Partial {S}chauder estimates for second-order elliptic and parabolic equations: a revisit},    Int.\ Math.\ Res.\ Not.\ IMRN (2019), no.~7, 2085--2136.  \bibitem[GT]{GT}  D.~Gilbarg and N.S.~Trudinger,    \emph{Elliptic partial differential equations of second order},    Classics in Mathematics, Springer-Verlag, Berlin, 2001,    Reprint of the 1998 edition.  \bibitem[H1]{H1}  Q.~Han,    \emph{Schauder estimates for elliptic operators with applications to nodal sets},    J.~Geom.\ Anal.~\textbf{10} (2000), no.~3, 455--480.  \bibitem[H2]{H2}  Q.~Han,    \emph{On the {S}chauder estimates of solutions to parabolic equations},    Ann.\ Scuola Norm.\ Sup.\ Pisa Cl.\ Sci.~(4) \textbf{27} (1998), no.~1, 1--26 (1999).  \bibitem[L]{L}  G.M.~Lieberman,    \emph{Schauder estimates for singular parabolic and elliptic equations of {K}eldysh type},    Discrete Contin.\ Dyn.\ Syst.\ Ser.~B~\textbf{21} (2016), no.~5, 1525--1566.  \bibitem[M]{M}  Y.~Miyazaki,    \emph{Schauder theory for {D}irichlet elliptic operators in divergence form},    J.~Evol.\ Equ.~\textbf{13} (2013), no.~2, 443--480. \bibitem[S]{S}  V.~A. Solonnikov,    \emph{Proof of {S}chauder estimates for parabolic initial-boundary value model problems via~{O}.~{A}.~{L}adyzhenskaya's {F}ourier multipliers theorem}, Zap.\ Nauchn.\ Sem.\ S.-Peterburg.\ Otdel.\ Mat.\ Inst.\ Steklov.\ (POMI)~\textbf{444} (2016), no.~Kraevye Zadachi Matematichesko\u{\i} Fiziki i Smezhnye Voprosy Teorii Funktsi\u{\i}.\ 45, 133--156.  \bibitem[T]{T}  N.S.~Trudinger,    \emph{A new approach to the {S}chauder estimates for linear elliptic equations},    Miniconference on operator theory and partial differential equations ({N}orth {R}yde, 1986), Proc.\ Centre Math.\ Anal.\ Austral.\ Nat.\ Univ., vol.~14, Austral.\ Nat.\ Univ., Canberra, 1986, pp.~52--59. \bibitem[W]{W}  X.-J.~Wang,    \emph{Schauder estimates for elliptic and parabolic equations},   Chinese Ann.\ Math.\ Ser.~B~\textbf{27} (2006), no.~6, 637--642.  \end{thebibliography}
\end{document}